\newcommand{\st}[1]{\ensuremath{^{\scriptstyle \textrm{#1}}}}
\newcommand\bigcheck[1]{#1 \raise1ex\hbox{$\hspace{-1ex}{}^\vee$}}
\newcommand\sucheck[1]{#1 \raise0.5ex\hbox{$\hspace{-1ex}{}^\vee$}}
\newcommand{\alphaparenlist}{% changes enumerate 1st level to (a) ...(z).
  \renewcommand{\theenumi}{\alph{enumi}}%
  \renewcommand{\labelenumi}{(\theenumi)}%
}
\newcommand{\romanparenlist}{% changes enumerate 1st level to (i)...(ix)
  \renewcommand{\theenumi}{\roman{enumi}}%
  \renewcommand{\labelenumi}{(\theenumi)}%
}
\newcommand{\ad}{\mathop{\rm ad}\,}
\newcommand{\add}{{\rm add}}
\newcommand{\ch}{{\rm ch}}
\newcommand{\corank}{\mathop{\rm corank \, }}
\newcommand{\End}{\mathop{\rm End }}
\newcommand{\gl}{g\ell}
\renewcommand{\Im}{\mathop{\rm Im  \, }}
\newcommand{\Res}{\mathop{\rm Res  \, }}
\newcommand{\rank}{\rm rank \, }
\renewcommand{\sl}{s\ell}
\newcommand{\str}{{\rm str}}
\renewcommand{\span}{{\rm span}}
\newcommand{\sdim}{\mathop{\rm sdim \, }}
\newcommand{\sign}{\rm sign \, }
\newcommand{\tr}{\rm tr \, }
\newcommand{\tw}{\rm tw \, }
\newcommand{\tpm}{\tilde{\Phi}^{[m]}}
\newcommand{\bo}{\bar{1}}
\newcommand{\bz}{\bar{0}}
\renewcommand{\Re}{\mathop{\rm Re  \, }}
\newcommand{\bracm}{{{[m]}}}
\newcommand{\A}{\mathcal{A}}
\newcommand{\CC}{\mathbb{C}}
\newcommand{\NN}{\mathbb{N}}
\newcommand{\QQ}{\mathbb{Q}}
\newcommand{\RR}{\mathbb{R}}
\newcommand{\ZZ}{\mathbb{Z}}
\newcommand{\fb}{\mathfrak{b}}
\newcommand{\fg}{\mathfrak{g}}
\newcommand{\fh}{\mathfrak{h}}
\newcommand{\fn}{\mathfrak{n}}
\newcommand{\rmv}{\text{v}}
\newcommand\bl{(\, . \, | \, . \, )}
\newcommand{\NRH}{\overset{\,\,\text{\tiny{N}}}{R}^{{\raisebox{-1ex}{\scriptsize{($\frac12 -\epsilon$)}}}}_{\frac12}}
\newcommand{\NRE}{\overset{\,\,\text{\tiny{N}}}{R}^{{\raisebox{-1ex}{\scriptsize{($\epsilon$)}}}}_{\frac12}}
 \newcommand{\NOE}{\overset{\,\,\text{\tiny{N}}}{R}^{{\raisebox{-1ex}{\scriptsize{($\epsilon$)}}}}_0}
\newcommand{\ON}{\overset{\text{\hspace{-.55ex}\tiny{N}}}{R}^{{\raisebox{-1ex}{\scriptsize{$\pm$}}}}}
\newcommand{\TON}{\overset{\text{\hspace{-.55ex}\tiny{N}}}{R}^{{\raisebox{-1ex}{\scriptsize{$\tw,\pm$}}}}}
\newcommand{\EON}{\overset{\,\,\text{\tiny{N}}}{R}^{{\raisebox{-1ex}{\scriptsize{($\epsilon$)}}}}_{\epsilon'}}
\newcommand{\fee}{\overset{\,\,\text{\tiny{4}}}{R}^{{\raisebox{-1ex}{\scriptsize{($\epsilon$)}}}}_{\epsilon'}}
\newcommand{\feee}{\overset{\,\,\text{\tiny{4}}}{R}^{{\raisebox{-1ex}{\scriptsize{($|\epsilon-\epsilon'|$)}}}}_{\epsilon'}}
\newcommand{\fpm}{\overset{\,\,\text{\tiny{4}}}{R}^{{\raisebox{-1ex}{\scriptsize{$\pm$}}}}}
\newcommand{\ftwpm}{\overset{\,\,\text{\tiny{4}}}{R}^{{\raisebox{-1ex}{\scriptsize{\tw, $\pm$}}}}}
\newcommand{\ftwp}{\overset{\,\,\text{\tiny{4}}}{R}^{{\raisebox{-1ex}{\scriptsize{\tw \!\!\!\!\!$+$}}}}}
\newcommand{\fRh}{\overset{\,\,\text{\tiny{4}}}{R}^{{\raisebox{-1ex}{\scriptsize{$\frac12$}}}}}
\newcommand{\fReh}{\overset{\,\,\text{\tiny{4}}}{R}^{{\raisebox{-1ex}{\scriptsize{$(\epsilon)$}}}}_{\frac12}}
\newcommand{\fRez}{\overset{\,\,\text{\tiny{4}}}{R}^{{\raisebox{-1ex}{\scriptsize{($\epsilon$)}}}}_0}
\newcommand{\NEO}{\overset{\,\,\text{\tiny{N}}}{R}^{{\raisebox{-1ex}{\scriptsize{($\epsilon'$)}}}}_{\epsilon}}
\newcommand{\OR}{\overset{\,\,\text{\tiny{2}}}{R}^{{\raisebox{-1ex}{\scriptsize{($\epsilon$)}}}}_{\epsilon'}}
\newcommand{\ORO}{\overset{\,\,\text{\tiny{2}}}{R}^{{\raisebox{-1ex}{\scriptsize{($\epsilon$)}}}}_0}
 \newcommand{\OX}{\overset{\,\,\text{\tiny{2}}}{R}^{{\raisebox{-1ex}{\scriptsize{($\epsilon'$)}}}}_\epsilon}
\newcommand{\ORH}{\overset{\,\,\text{\tiny{2}}}{R}^{{\raisebox{-1ex}{\scriptsize{($\epsilon$)}}}}_{\frac12}}
 \newcommand{\ORMH}{\overset{\,\,\text{\tiny{2}}}{R}^{{\raisebox{-1ex}{\scriptsize{($\frac12-\epsilon$)}}}}_{\frac12}}
\renewcommand{\tilde}{\widetilde}
\renewcommand{\hat}{\widehat}
\renewcommand\section{\@startsection {section}{1}{\z@}%
                                   {-3.5ex \@plus -1ex \@minus -.2ex}%
                                   {2.3ex \@plus.2ex}%
                                   {\normalfont\large\bfseries}}
\renewcommand\subsection{\@startsection{subsection}{2}{\z@}%
                                     {-3.25ex\@plus -1ex \@minus -.2ex}%
                                     {0ex \@plus .0ex}%
                                     {\normalfont\normalsize\bfseries}}
\newtheorem{theorem}{Theorem}[section]
\newtheorem{definition}[theorem]{Definition}
\newtheorem{lemma}[theorem]{Lemma}
\newtheorem{corollary}[theorem]{Corollary}
\newtheorem{proposition}[theorem]{Proposition}
\newtheorem{conjecture}[theorem]{Conjecture}
\newtheorem*{lemma*}{Lemma}
\theoremstyle{remark}
\newtheorem{remark}[theorem]{Remark}
\newtheorem{example}[theorem]{Example}
\def\@maketitle{\newpage
 \null
 \vskip 2em
 \begin{center}%
 \vskip 3em
  {\Large\bf \@title \par}%
  \vskip 1.5em
  {\normalsize
   \lineskip .5em
   \begin{tabular}[t]{c}\@author
   \end{tabular}\par}%
  \vskip 2em

 \end{center}%
 \par
 \vskip 2.5em}
\renewcommand{\epsilon}{\varepsilon}
\definecolor{light}{gray}{.9}
\begin{document}

\title{Representations of affine superalgebras and mock theta functions}

\author{ Victor G. Kac and Minoru Wakimoto }

% \thanks{Department of Mathematics, M.I.T.,
% Cambridge, MA 02139, USA.\\kac@math.mit.edu\\Supported in part by NSF
% grants~~DMS-9970007 and DMS-0201017}
% %~~and}
%an Minoru Wakimoto
% \thanks{~~wakimoto6.dion.ne.jp~~~~Supported in part by
% Department of Mathematics, M.I.T.}

\author{Victor G.~Kac \thanks{Department of Mathematics, M.I.T, 
Cambridge, MA 02139, USA  kac@math.mit.edu Supported in part by an NSF
grant and Simons Fellowship} 
and Minoru Wakimoto  \thanks{~~wakimoto@r6.dion.ne.jp~~~~
Supported in part by Department of Mathematics, M.I.T.}}

\maketitle

\begin{center}{To Evgenii  Borisovich Dynkin on his 90th  birthday}\end{center}

\begin{abstract}
  We show that the normalized supercharacters of principal admissible
  modules over the affine Lie superalgebra
  $\hat{\sl}_{2|1}$ (resp. $\hat{p\sl}_{2|2}$) can
  be modified, using Zwegers' real analytic corrections, to form
  a modular (resp. $S$-) invariant family of functions.  Applying
  the quantum Hamiltonian reduction, this leads to a new family
  of positive energy modules over the $N=2$ (resp. $N=4$)
  superconformal algebras with central charge $3\left(
    1-\frac{2m+2}{M}\right)$, where $m \in \ZZ_{\geq 0}$, $M\in \ZZ_{\geq 2}$,
    $\gcd(2m+2,M)=1$ if $m>0$ 
    (resp. $6\left( \frac{m}{M}-1 \right)$, where $m \in \ZZ_{\geq 1}, 
   M\in \ZZ_{\geq 2}$,   $\gcd(2m,M)=1$ if $m>1$), whose modified characters
   and  supercharacters form a  modular invariant family. 
%For   $m=0$ (resp. $m=1$) we get thereby a family of modules over the 
%  $N=2$ (resp. $N=4$) superconformal algebra, whose supercharacters
%  form a modular invariant family. The $N=2$ family is well known,
%  but the $N=4$ family is probably new.
\end{abstract}

\setcounter{section}{-1}
\section{Introduction}
\label{sec:intro}

Modular invariance of characters of affine Lie algebras have been
playing an important role in their representation theory and
applications to physics (see \cite{K2} and references there).

Recall that an {\em affine Lie algebra } $\hat{\fg}$, associated
to a simple finite-dimensional Lie algebra $\fg$ over $\CC$ endowed
with a suitably normalized invariant symmetric bilinear form
$\bl$, is the infinite-dimensional Lie algebra over $\CC$:
\begin{equation}
  \label{eq:0.1}
  \hat{\fg} = \fg [t,t^{-1}]\oplus \CC K \oplus \CC d \,,
\end{equation}
with the following commutation relations $(a,b \in \fg,\,\, m,n \in\ZZ)$:
\begin{equation}
  \label{eq:0.2}
  [at^m,bt^n] = [a,b]t^{m+n}+m\delta_{m,-n} (a|b)K\,,\,
  [d,at^m]=m \,at^m\,,\, [K,\hat{\fg}]=0 \,.
\end{equation}
We identify $\fg$ with the subalgebra $1 \otimes \fg$.  The
bilinear form $\bl$ extends from $\fg$ to a non-degenerate
symmetric invariant bilinear form on $\hat{\fg}$ by:
\begin{equation}
  \label{eq:0.3}
  (at^m | bt^n) = \delta_{m,-n} (a|b)\,,\, (\fg [t,t^{-1}]| \CC
  K + \CC d)=0\,,\, (K|K)= (d|d)=0 \,, \, (K|d)=1\,.
\end{equation}
Choosing a Cartan subalgebra $\fh$ of $\fg$, one defines the
corresponding {\em Cartan subalgebra} of $\hat{\fg}$:
\begin{equation}
  \label{eq:0.4}
  \hat{\fh} = \CC d \oplus \fh \oplus \CC K \,.
\end{equation}
The restriction of the bilinear form $\bl$ from $\hat{\fg}$ to
$\hat{\fh}$ is non-degenerate, hence we shall identify
$\hat{\fh}$ with its dual $\hat{\fh}^*$ via this form.

One uses the following coordinates on $\hat{\fh}$:
\begin{equation}
  \label{eq:0.5}
  \hat{\fh}\ni h = 2\pi i (-\tau d + z + tK)\,,\,\hbox{\,\,where\,\,}\,
   \tau , t \in \CC\,,\, z \in \fh\,.
\end{equation}

Choosing a Borel subalgebra $\fb=\fh \oplus \fn_+$ of $\fg$
containing $\fh$, where $\fn_+$ is a maximal nilpotent subalgebra of $\fg$,
we define the corresponding Borel subalgebra of
$\hat{\fg}$:
\begin{equation*}
  \hat{\fb} = \hat{\fh} \oplus \fn_+ \oplus (\oplus_{n>0}\fg t^n)\,.
\end{equation*}
Given $\Lambda \in \hat{\fh}^*$, one extends it to a linear function
on $\hat{\fb}$ by zero on all other summands,
and defines the {\em highest weight
  module} $L(\Lambda)$ over $\hat{\fg}$ as the irreducible module,
which admits an eigenvector of $\hat{\fb}$ with weight $\Lambda $.
% such that 
%
%\begin{equation*}
%  bv_\Lambda  = \Lambda (b)v_\Lambda \hbox{\,\, for \,\,  } b
%  \in b\,.
%\end{equation*}
%
Since $K$ is a central element of $\hat{\fg}$, it is represented on 
$L(\Lambda)$ 
by a scalar $\Lambda(K)$, 
called the {\em level} of $L(\Lambda)$ (and of $\Lambda$).  

A $\hat{\fg}$-module $L(\Lambda)$ is called {\em integrable} if
any nilpotent element of $\hat{\fg}$ is represented by a locally
nilpotent operator (hence this module can be ``integrated'' to a 
representation of the group,
associated to $\hat{\fg}$).    It is well known \cite{K2} that a
$\hat{\fg}$-module $L(\Lambda)$ is integrable iff for all simple
roots $\alpha_1,\ldots,\alpha_\ell$ and the highest root $\theta$
the numbers
\begin{equation*}
  2 (\Lambda |\alpha_i)/(\alpha_i|\alpha_i)\,,\, 
  i=1,\ldots,\ell, \hbox{\,\, and \,\,} 
  2 (\Lambda|K-\theta)/(\theta |\theta)
\end{equation*}
are non-negative integers.  It is easy to deduce that if the
bilinear form on $\fg$ is normalized by the condition $(\theta
|\theta)=2$, then the level $(\Lambda |K)$ is a non-negative
integer and $(\Lambda|\theta) \leq (\Lambda |K)$.

The {\em character} of $L(\Lambda)$ is defined as the following
series, corresponding to the weight space decomposition with
respect to $\hat{\fh}$, cf. (\ref{eq:0.5}):
\begin{equation*}
  \ch_{L(\Lambda)} (\tau,z,t)=\tr_{L(\Lambda)}
  e^{2\pi i (-\tau d +z+tK)}\, .
\end{equation*}
 It is known \cite{KP}, \cite{K2} that for an integrable
 $L(\Lambda)$ this series converges in the domain
 \begin{equation}
   \label{eq:0.6}
   X=\{ h \in \hat{\fh}|\,\,\Re (h|K)>0\}
       = \{ (\tau,z,t)|\,\,\Im \tau >0 \}
 \end{equation}
to a holomorphic function.

Note that, as a $\hat{\fg}'=\fg [t,t^{-1}]\oplus \CC K$-module, $L
(\Lambda)$ remains irreducible, and it is unchanged if we replace 
$\Lambda$ by $\Lambda
+aK$, $a\in \CC$, and the character of the $\hat{\fg}$-module
gets multiplied by $q^a$.  Here and further $q=e^{2\pi i \tau}$.
Note also that the set of highest weights $\Lambda$ of level~$K$
of integrable $\hat{\fg}$-modules $L(\Lambda)$ is finite $\mod
\CC K$.  We denote this finite set by $P^k_+$.

An important property of integrable $\hat{\fg}$-modules is
modular invariance of its normalized characters, discovered in
\cite{KP}.  Recall that the {\em normalized character}
$\ch_\Lambda$ is defined as
\begin{equation*}
  \ch_\Lambda (\tau,z,t)=q^{m_\Lambda} \ch_{L(\Lambda)}(\tau,z,t)\,,
\end{equation*}
where $m_\Lambda \in \QQ$ is the ``modular anomaly'' (see
formula~(\ref{eq:4.1})).  Note that
$\ch_{\Lambda+aK}=\ch_\Lambda\,,\, a \in \CC$.  Recall the action
of $SL_2 (\RR)$ in the domain $X$ in coordinates (\ref{eq:0.5}):
\begin{equation}
  \label{eq:0.7}
  \binom{a\,\, b}{c \,\, d} \cdot (\tau,z,t) = \left(
    \frac{a\tau +b}{c\tau +d}\,,\, \frac{z}{c\tau +d}\,,\,
    t-\frac{c(z|z)}{2(c\tau +d)}\right) \,.
\end{equation}
The modular invariance of normalized characters of integrable
$\hat{\fg}$-modules means that the $\CC$-span of the finite set
$\{ \ch_\Lambda |\,\, \Lambda \in P^k_+ \}$ is $SL_2 (\ZZ)$-invariant
(for the action~(\ref{eq:0.7})).

The proof of modular invariance of normalized characters of
integrable modules $L(\Lambda)$ relies on the Weyl--Kac character
formula
\begin{equation}
  \label{eq:0.8}
  \hat{R}\ch_{L(\Lambda)}=\sum_{w\in \hat{W}}(\det w)
  \,w(e^{\Lambda +\hat{\rho}})\,,
\end{equation}
where $\hat{R}$ is the affine Weyl denominator, $\hat{W}$ is the affine
Weyl group, $\hat{\rho}$ is the affine Weyl vector (see \cite{K2}
for details).  One has \cite{K2}:
\begin{equation*}
  \hat{W} = W \ltimes \{t_\alpha |\,\, \alpha \in L \}\,,
\end{equation*}
where $W$ is the Weyl group of $\fg$, $L\subset \fh$ is the
coroot lattice, and $t_\alpha \in \End \hat{\fh}$ is defined by
(recall that $k=\Lambda (K)$ is the level):
\begin{equation}
  \label{eq:0.9}
    t_\alpha (\Lambda) = \Lambda + k\alpha - ((\Lambda
    |\alpha)+\frac{k}{2} (\alpha |\alpha))K\, .
\end{equation}
Using this, (\ref{eq:0.8}) can be rewritten, after multiplying
both sides by a suitable power of $q$, as
\begin{equation}
  \label{eq:0.10}
  q^{\frac{\dim\fg}{24}} \hat{R} \ch_\Lambda =\sum_{w\in W}{(\det  w)}
   \, w(\Theta_{ \Lambda +\hat{\rho}})\, .
\end{equation}
Here, for $\lambda \in \hat{\fh}$, such that $n=(\lambda |K)$ is a
positive integer, the theta function (= Jacobi form)
$\Theta_\lambda$ of degree $n$ is defined by
\begin{equation}
  \label{eq:0.11}
  \Theta_\lambda = q^{\frac{(\lambda | \lambda)}{2n}}
  \sum_{\alpha \in L} t_\alpha (e^{\lambda})\, .
\end{equation}
This series converges on $X$ to a holomorphic function, which in
coordinates (\ref{eq:0.5}) takes the usual form, going back to
Jacobi:
\begin{equation}
  \label{eq:0.12}
  \Theta_\lambda (\tau ,z,t)=e^{2\pi i nt}\sum_{\gamma \in
    \frac{\bar{\lambda}}{n}+L} q^{n\frac{(\gamma | \gamma)}{2}} e^{2\pi
    in (\gamma |z)}\, ,
\end{equation}
where $\bar{\lambda}$ denotes the orthogonal projection of
$\lambda$ on $\fh$.  Now modular invariance of Jacobi forms
(which we recall in the Appendix) easily implies the modular
invariance of the numerators of normalized characters of
integrable modules, and modular invariance of the normalized
denominator $q^{\frac{\dim \fg}{24}} \hat{R}$ easily follows from
the Jacobi triple product identity.

As we have discovered out in \cite{KW1}, \cite{KW2}, modular
invariance of normalized characters holds for a much larger class
of irreducible highest weight modules $L(\Lambda)$ over
$\hat{\fg}$, which we 
called {\em admissible} modules (and we
conjectured that these are all $L(\Lambda)$ with modular
invariance property, which we were able to verify only for $\fg =
\sl_2$).  Roughly speaking, a $\hat{\fg}$-module $L(\Lambda)$ is
called admissible, if the $\QQ$-span of coroots of $\hat{\fg}$
coincides with that of $\Lambda$-integral coroots, and with
respect to the corresponding affine Lie algebra
$\hat{\fg}_{\Lambda}$ the weight $\Lambda$ becomes integrable
after a shift by the Weyl vectors.

We showed in \cite{KW1} that a formula similar to (\ref{eq:0.8})
holds for $\ch_{L(\Lambda)}$ if $\Lambda$ is an admissible weight:
one just has to replace $\hat{W}$ by the subgroup, generated by
reflections with respect to non-isotropic $\Lambda$-integral
coroots.  It follows that the numerators of normalized
admissible characters are again expressed as linear combinations
of Jacobi forms, which again implies modular invariance of
normalized characters of admissible $\hat{\fg}$-modules.

Furthermore, using the quantum Hamiltonian reduction, one can
transfer the modular invariance property from the admissible
modules over affine Lie algebras to the ``minimal models'' of
$W$-algebras \cite{FKW}, \cite{KRW}, \cite{A2}, the simplest example
being the Virasoro algebra. (Note that the integrable modules are
``erased'' by the reduction!)

A natural question arises whether the theory of integrable and
admissible modules over affine Lie algebras $\hat{\fg}$ extends to
the case when $\fg$ is a finite-dimensional simple Lie
superalgebra.  Of course, we need to assume that $\fg$ carries  a
non-degenerate supersymmetric invariant bilinear form, and also
that its even part $\fg_{\bz}$ is a reductive Lie
algebra. According to the classification of \cite{K1}, a complete
list of such Lie superalgebras consists of the classical series
$\sl_{m|n} (m>n \geq 1)$, $p\sl_{n|n} (n \geq 2)$, $osp_{m|n} (m
\geq 1, n \geq 2 \hbox{\,\, even})$ and three exceptional
superalgebras.

Of all these Lie superalgebras, 
the above mentioned results extend without difficulty only for 
$\fg = osp_{1|n}$, in particular,
modular invariance property of normalized characters and supercharacters of
integrable and admissible modules still holds \cite{KW1}, and the
quantum Hamiltonian reduction in the case of $\fg = osp_{1|2}$
leads to modular invariance of characters of the Neveu--Schwarz
and Ramond superalgebras ($N=1$ superconformal algebras).

In general, for a Lie superalgebra $\fg$ in question, a
$\hat{\fg}$-module is integrable iff it is integrable with
respect to $\hat{\fg}_{\bz}$. However, such non-trivial $\hat{\fg}$-modules
$L(\Lambda)$ exist iff 
$\fg_{\bz}$ has only one simple component (which is
the case only when $\fg = \sl_{n|1}$,
$osp_{1|2n}$, or $osp_{2|2n}$).  
In all other cases one considers {\em partially integrable} modules, namely
those, for which integrability holds for the affine subalgebra,
associated to one of the simple
components of $\fg_{\bz}$.

Partially integrable $\hat{\fg}$-modules $L (\Lambda)$ in the
``super'' case have been classified in \cite{KW4}, but the
computation of their characters is a very difficult problem in
general.  However, in the special case of ``tame'' modules (see
Definition~\ref{def:3.5}) we found a conjectural formula for the characters
(see formula (\ref{eq:3.10})).  This formula has been proved in
all cases that are considered in the present paper (see
\cite{KW4}, \cite{S}, \cite{GK}).  Note also that in the
``super'' case one has to study supercharacters along with the
characters (when the trace is replaced by the supertrace), but
one can pass from one to the other without difficulty.

The formula for the supercharacter of a tame partially integrable
module $L(\Lambda)$ over an affine Lie superalgebra $\hat{\fg}$
differs little from formula (\ref{eq:0.10}) for the character of
an integrable module $L (\Lambda)$ over an affine Lie
algebra.  One just has to replace the theta function
$\Theta_{\Lambda+\hat{\rho}}$, defined in (\ref{eq:0.11}) (or
(\ref{eq:0.12})) by the mock theta function!  Given a finite subset $T
\subset \fh$ of pairwise orthogonal
vectors, which are also orthogonal to $\lambda$, the following
series converges to a meromorphic function on $X$, called a {\em mock
theta function} of degree~$n$:
  \begin{equation}
    \label{eq:0.13}
    \Theta_{\lambda,T} (\tau,z,t) = e^{2\pi int}
       \sum_{\gamma \in \frac{\bar{\lambda}}{n} +L}
         \frac{q^{n\frac{(\gamma |\gamma)}{2}} 
           e^{2\pi in (\gamma  |z)}}{\prod_{\beta \in T}
           \left(1-q^{-(\gamma|\beta)} e^{-2\pi i (\beta |z)}\right)}\,.
  \end{equation}
These kind of functions (when $\#T=1$ and $\rank L =1$) 
first appeared in the work
of Appell \cite{Ap} in the 1880s in his study of elliptic functions ``of
the third kind'', and also,  a few years later, in the work of Lerch
\cite{L}.  More than 100 years later these
functions made their way to the representation theory of affine Lie
superalgebras \cite{KW4}.

One of the simplest results of \cite{KW4} is the following formula for
the normalized supercharacter of the integrable $\hat{\sl}_{2|1}$-module
$L(d)$ (of level~1), obtained via the super boson-fermion correspondence:
\begin{equation}
  \label{eq:0.14} 
ch^-_d (\tau,z_1,z_2) = \eta (\tau)^{-3}\vartheta_{11} (\tau,z_1)
\vartheta_{11}(\tau, z_2) \mu (\tau, z_1, z_2)\, ,
\end{equation}
where $\eta (\tau)$ is the Dedekind  eta-function,
$\vartheta_{11} (\tau,z)$ is one of the standard Jacobi forms
(see the Appendix), and 
\begin{equation}
\label{eq:0.15}
\mu (\tau,z_1,z_2) = \frac{e^{\pi i
    z_1}}{\vartheta_{11}(\tau,z_2)} \sum_{n \in \ZZ} \frac{(-1)^n
q^{\frac12 (n^2+n)} e^{2\pi i nz_2}}{1-e^{2\pi i z_1}q^n}\,.
\end{equation}
The function $\mu (\tau,z_1,z_2)$, up to the factor
$\vartheta_{11} (\tau,z_1+z_2)$, is a difference of two simplest
mock theta functions (see (\ref{eq:5.16})), which we denote by
$\Phi^{[1]} (\tau,z_1,z_2,0)$ (see (\ref{eq:5.3}), (\ref{eq:5.4})).

It is the function $\mu (\tau,z_1,z_2)$ that plays a central role
in the work of Zwegers on mock theta functions \cite{Z}, which
has been a major advance in the understanding of 
Ramanujan's mock $\vartheta$-functions.  
Ramanujan defined a mock $\vartheta$-function as a function $f$ of the complex
variable~$q$, defined by a $q$-series of a particular type, which
converges for $|q|<1$ and satisfies the following conditions (see
\cite{Z}):
\begin{list}{}{}
\item (i) infinitely many roots of unity are exponential
  singularities,
\item (ii)  for every root  of unity $\xi$ there is a
  $\vartheta$-function $\vartheta_\xi (q)$, such that 
  $f(q)-\vartheta_\xi (q)$ is bounded as $q \to \xi$ radially, 
\item (iii)  there is no $\vartheta$-function that works for all
  $\xi$.
\end{list}
It turns out that the function $\mu (\tau,z_1,z_2)$ is the
prototype for a mock theta function in the sense that,
specializing the complex variables $z_1$ and $z_2$ to torsion
points (i.e. elements of $\QQ+\QQ\tau$), one gets mock $\vartheta$-functions
\cite{Z1}.

An important discovery of Zwegers is the real analytic function $R
(\tau,u)$, $\tau, u \in \CC$, $\Im \tau >0$, such that the
modified function
\begin{equation*}
  \tilde{\mu} (\tau,z_1,z_2) = \mu (\tau,z_1,z_2) + \frac{i}{2} R
  (\tau,z_1-z_2)
\end{equation*}
is a modular invariant function with nice elliptic transformation
properties (\cite{Z}, Theorem~1.11).  Furthermore, Zwegers
introduces real analytic functions $R_{m;\ell} (\tau,u)$, similar
to $R (\tau ,u)$ (they are related by (\ref{eq:5.15})), such that, 
adding to a rank $1$ mock theta function of
arbitrary degree $m>0$ (and $\# T=1$ in our terminology) a
suitable linear combination of rank $1$ Jacobi forms $\Theta_{m,\ell}$
as coefficients, he obtains a modular invariant real analytic
function (\cite{Z}, Proposition~3.5).  The latter functions are
used in the study of Ramanujan's mock theta functions (\cite{Z},
Chapter~4).

In our paper (Section \ref{sec:5}) we use the functions $R_{m+1;\ell}$ of 
Zwegers in
order to modify the normalized supercharacter of the
$\hat{\sl}_{2|1}$-module $L (md)$, where $m$ is a positive
integer.  The normalized supercharacter is given in this case by
the following formula:
  \begin{equation}
    \label{eq:0.16}
     \hat{R}^- \ch^-_{md} (\tau,z_1,z_2, t) = \Phi^{[m]}(\tau,z_1,z_2,t)\,,
  \end{equation}
where $\hat{R}^-$ is the superdenominator (see (\ref{eq:4.10}) for
its expression in terms of the Jacobi theta function
$\vartheta_{11}(\tau,z$)), and $\Phi^{[m]}$ is the following mock
theta function
  \begin{equation*}
\Phi^{[m]} (\tau, z_1,z_2,t) =  e^{2\pi i (m+1)t}  %\\
 \sum_{j \in \ZZ}\left(\frac{e^{2\pi ij(m+1)(z_1+z_2)}q^{j^2 (m+1)}}
          {1-e^{2\pi i z_1}q^j} 
  - \frac{e^{-2\pi i j (m+1) (z_1+z_2)} q^{j^2(m+1)}}
      {1-e^{-2\pi i z_2} q^j} \right) \,.
  \end{equation*}
Following Zwegers' ideas, we introduce the real analytic modified
numerator
\begin{equation*}
  \tilde{\Phi}^{[m]} (\tau,z_1,z_2,t) = \Phi^{[m]}  (\tau,z_1,z_2,t)
    + \Phi^{[m]}_{\add} (\tau,z_1,z_2,t)\,,
\end{equation*}
where $\Phi^{[m]}_{\add}$ is 
a real analytic function, similar to Zwegers' correction in higher degree,
and prove the following modular transformation properties:
\begin{equation}
  \label{eq:0.17}
  \tilde{\Phi}^{[m]} \left( -\frac{1}{\tau} \,,\, \frac{z_1}{\tau}
    \,,\, \frac{z_2}{\tau}, t-\frac{z_1z_2}{\tau}\right) 
  = \tau \bar{\Phi}^{[m]} (\tau,z_1,z_2,t)\,,\,\,
    \Phi^{[m]} (\tau+1,z_1,z_2,t) = \Phi^{[m]} (\tau,z_1,z_2,t)\,,
\end{equation}
along with certain elliptic transformation properties (Theorem~\ref{th:5.9}
and Corollary~\ref{cor:5.10} ).  This establishes modular
invariance of the modified normalized supercharacter
$\tilde{\ch}^-_{md} =\tilde{\Phi}^{[m]} /\hat{R}^-$.

Next, in Sections~\ref{sec:6} and \ref{sec:7} we discuss modular
invariance properties of the modified normalized principal
admissible characters of $\hat{sl}_{2|1}$-modules, associated to
a compatible homomorphism of degree~$M$ (see Section~\ref{sec:3}
for the definition of these modules). The main result of
Section~\ref{sec:6} is the following modular transformation
formula (Theorem~\ref{th:6.5}(a)):
\begin{eqnarray}
  \label{eq:0.18}
\lefteqn{\hspace{-4.75in}  \tilde{\Phi}^{[m]} \left(- \frac{M}{\tau} \,,\, \frac{z_1}{\tau} \,,\,
      \frac{z_2}{\tau}\,,\, t-\frac{z_1z_2}{\tau M} \right)}\\
  =  \frac{\tau}{M} \sum_{j,k \in \ZZ /M \ZZ} q^{\frac{m+1}{M}jk}
    e^{\frac{2\pi i (m+1)}{M} (kz_1+jz_2)} \tilde{\Phi}^{[m]}
      (M\tau \,,\, z_1+j\tau \,,\, z_2+k\tau\, , \, t)\, ,\nonumber
\end{eqnarray}
provided that $\gcd (M,2m+2) =1$ if $m>0$.  We deduce that the
modified normalized principal admissible characters,
supercharacters, and their Ramond twisted analogues, form a modular
invariant family under the above conditions on $M$ and $m$
(Theorem~\ref{th:7.3}).

In Section~\ref{sec:8} we study the behavior under the modular
transformation $\displaystyle{S=\left(
  \begin{smallmatrix}
    0 & -1\\ 1 & \hspace{1ex}0
  \end{smallmatrix}\right)}$ of modified normalized characters of 
$\hat{A}_{1|1}$-modules $L (md)$, where $m$ is a
non-zero integer.  This is related to the $\hat{\sl}_{2|1}$ case,
using a simple connection between the numerators of
$\hat{A}_{1|1}$ and $\hat{\sl}_{2|1}$ normalized supercharacters
(for $m \geq 1$):
\begin{equation}
  \label{eq:0.19}
  \Phi^{A_{1|1}[m]} (\tau,z_1,z_2,t) = \frac{1}{2\pi i} \left(
  \frac{\partial}{\partial z_1} - \frac{\partial}{\partial z_2} \right)
   \Phi^{[m-1]} (\tau,z_1,z_2,t)\,.
\end{equation}
The $S$-transformation of modified normalized characters and
supercharacters, and their Ramond twisted analogues is given by
Theorem~\ref{th:8.4} for $m>0$ and Theorem~\ref{th:8.7} for
$m<0$.  Note that in either case we don't have $T=\left(
  \begin{smallmatrix}
    1 &1 \\0 & 1
  \end{smallmatrix}
\right)$-invariance, and that the process of modification is more complicated
than in the $\hat{\sl}_{2|1}$ case.

In Section~\ref{sec:9} (resp.~10) we study modular invariance of
the modified characters of modules, obtained 
%Section~\ref{sec:7} (resp.~8) 
from the principal admissible
$\hat{\sl}_{2|1}$ (resp.~$\hat{A}_{1|1}$)-modules by the quantum
Hamiltonian reduction (developed in \cite{KRW}, \cite{KW5},
\cite{KW6}).  

As a result we obtain in Section~\ref{sec:9} a modular invariant
family of $N=2$ modified characters, supercharacters, and their Ramond 
twisted analogs, of irreducible positive energy modules with central
charge $3 \left(1-\frac{2m+2}{M}\right)$, where 
$m\in \ZZ_{\geq 0}$, $M\in \ZZ_{\geq 2}$ and $\gcd(2m+2,M)=1$ if $m>0$.
(Theorem~\ref{th:9.2}).  If $m=0$ we obtain the famous $N=2$ unitary
discrete series, for which modular invariance holds without modification,
but for $m \geq 1$ we obtain some very interesting new positive
energy $N=2$ modules, which should be of great interest for the conformal
field theory.  For example, as shown in \cite{W}, the
fusion coefficients for the $N=2$ unitary discrete series are equal to 0 or~1. 
Remarkably, using the same method one can show that the same property holds for
arbitrary $m>0$, such that $\gcd (M,2m+2)=1$.

The quantum Hamiltonian reduction of principal admissible
$\hat{A}_{1|1}$-modules, studied in Section~\ref{sec:8}, produces
$N=4$ irreducible positive energy modules.  However we obtain
modular invariant  families of modified characters only for negative level, 
the central
charge being $6\left( \frac{m}{M} -1 \right)$, where 
$m\in \ZZ_{\geq 1}$, $M\in \ZZ_{\geq 2}$ and $\gcd(2m,M)=1$ if $m>1$
(Theorem~\ref{th:10.7}).

For the convenience of the reader, we provide in
Sections~\ref{sec:1}--\ref{sec:4} some necessary material on Lie
superalgebras and their highest weight modules, and in the Appendix
we give a brief review of some basic facts about Jacobi theta
functions, used throughout the paper.

Note that a general definition
of a mock modular form was given by Don Zagier (see \cite{DMZ}
for an introduction to the subject), and that connections of the
theory of mock modular forms to level~1 integrable
$\hat{\sl}_{n|1}$-characters, computed in \cite{KW4}, have been 
established in \cite{BF},
\cite{BO}, \cite{F}.

In our subsequent paper we will consider the remaining case of a rank $2$
simple finite-dimensional Lie superalgebra, $\fg=osp_{3|2}$, and the 
corresponding $N=3$ quantum Hamiltonian reduction.

The main results of the paper were reported by the authors at the
''Lie superalgebras'' conference in Rome in December 2012.

The authors worked on the paper during the second author's visit to M.I.T in 
the spring of 2012, and while both authors visited IHES, France, in the fall of
2012 and in the spring of 2013; the paper was completed while the first 
author was visiting IHES in the summer of 2013. We would like to thank these 
institutions for their hospitality.

We thank K. Bringmann, A. Folsom, K. Ono, and especially S.P. Zwegers,
for the very useful correspondence.

\section{Kac--Moody superalgebras and their highest weight modules}
\label{sec:1}

Let $I$ be a finite index set and let $A=(a_{ij})_{i,j \in I}$ be a
symmetric matrix over $\RR$.  A {\em realization} of the matrix
$A$ is a vector space $\fh_{\RR}$ of dimension $|I| +  \corank\, A$ over
$\RR$ with a linearly independent set of vectors $\{ h_i \}_{i \in
  I}$, and a linearly independent set of linear functions 
$\Pi = \{ \alpha_i\}_{i \in I}$, satisfying 
\begin{equation}
  \label{eq:1.1}
  \alpha_i (h_j) = a_{ij} \, , \qquad j \in I \, .
\end{equation}
The elements $\alpha_i \in \fh^* $, $i
\in I$, are called {\em simple roots}.

Let $\fh=\CC \otimes _{\RR}\fh_{\RR}$.
Given a subset $I_{\bo}\subset I$, one defines the Kac--Moody
superalgebra  $\fg (A,I_{\bo})$ as follows \cite{K1}.
First, denote by $\tilde{\fg} (A, I_{\bo})$ the Lie superalgebra
on generators $e_i, f_i,\, i \in I,$ and $\fh$, the  generators 
$e_i,f_i$ for $i \in I_{\bo}$ being  odd and
all the other generators being even, and the following {\em 
Chevalley relations} 
$(i,j\in I\, , \, h \in \fh):$
\begin{eqnarray*}
  [\fh , \fh] = 0, [e_i, f_j] = \delta_{ij} h_i\, , \, [h,e_i]%%\\
     = \alpha_i (h) e_i \, , \, [h,f_i] = -\alpha_i (h) f_i\, .
\end{eqnarray*}
The Lie superalgebra $\tilde{\fg} (A, I_{\bo})$ has a unique
maximal ideal $\tilde{J}$ among those intersecting the subspace
$\fh$ trivially, and we let
\begin{equation*}
  \fg (A,I_{\bo}) = \tilde{\fg} (A, I_{\bo})/ \tilde{J}\, .
\end{equation*}
%
%The subspace $\fh$ is called the {\em Cartan subalgebra} of $\fg(A,I_{\bo})$.
Fix a non-degenerate symmetric bilinear form
$(\, . \, | \, . \, )$ on $\fh_{\RR}$, such that 
$(h_i|h_j)=a_{ij}, \,i,j\in I$, and extend to $\fh$ by bilinearity.

Of course, $\fg (A,I_{\bo})$ is a Lie algebra iff $I_{\bo} =
\emptyset$.  In this case it is isomorphic to  a simple Lie
algebra $\fg $ if $A$ is the Cartan matrix of $\fg$, and to the
corresponding affine Lie algebra $\hat{\fg}$ if $A$ is the
extended Cartan matrix of $\fg$.

\begin{proposition}  \label{prop:1.1}(cf. [K2], Chapter~2). The Lie superalgebra $\fg (A, I_{\bo})$ carries a unique bilinear
form  
$(\, . \, | \, . \, )$
, extending that on $\fh$, which is
supersymmetric (i.e. $(a|b) =(-1)^{p(a)p(b)} (b|a)$) and
invariant (i.e.  $([a,b]|c)= (a|[b,c])$).  This bilinear form is
non-degenerate.  

\end{proposition}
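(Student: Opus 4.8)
The plan is to build the form outward from $\fh$ along the principal gradation, letting invariance force every value, and then to read off non-degeneracy directly from the defining property of $\fg(A,I_{\bo})$ as a quotient by the largest ideal meeting $\fh$ trivially. First I would record the structural input: $\fg=\fg(A,I_{\bo})$ has a root space decomposition $\fg=\bigoplus_\alpha\fg_\alpha$ with $\fg_0=\fh$, refined by the principal $\ZZ$-gradation $\fg=\bigoplus_{j\in\ZZ}\fg_j$, where $\fg_1=\sum_i\CC e_i$, $\fg_{-1}=\sum_i\CC f_i$, and $\fn_+=\bigoplus_{j>0}\fg_j$, $\fn_-=\bigoplus_{j<0}\fg_j$ are generated by the $e_i$ and the $f_i$ respectively. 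Since $\fh$ is even, testing invariance against $h\in\fh$ gives $(\alpha+\beta)(h)(x|y)=0$ for $x\in\fg_\alpha$, $y\in\fg_\beta$, so any invariant supersymmetric form satisfies $(\fg_\alpha|\fg_\beta)=0$ unless $\alpha+\beta=0$ (distinct roots are distinct functionals, as the $\alpha_i$ are linearly independent). Such a form is thus block-diagonal, pairing $\fg_\alpha$ with $\fg_{-\alpha}$ and prescribed on $\fg_0=\fh$. With the normalization of the form on $\fh$ (for which $(h_i|h_j)=a_{ij}=\alpha_i(h_j)$, i.e. $h_i$ is identified with $\alpha_i$), invariance against $h$ forces $(e_i|f_j)=\delta_{ij}$ on $\fg_{\pm1}$, and from there invariance determines the pairing $\fg_N\times\fg_{-N}$ in terms of lower degrees, which yields uniqueness by induction on $N$.

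For existence I would construct bilinear forms $(\cdot|\cdot)_N$ on $\fg_{(N)}=\bigoplus_{|j|\le N}\fg_j$ by induction, each supersymmetric, vanishing off the pairing of $\fg_j$ with $\fg_{-j}$, restricting to the given form on $\fh$, and invariant in the sense that $([a,b]|c)=(a|[b,c])$ whenever $a,b,c$ and the brackets in question all lie in $\fg_{(N)}$. Starting from the form on $\fh$ together with $(e_i|f_j)=\delta_{ij}$, the inductive step defines, for $x\in\fg_N$ and $y\in\fg_{-N}$ with $x=\sum_r[e_{i_r},u_r]$, $u_r\in\fg_{N-1}$, the value $(x|y)_N:=\sum_r(e_{i_r}|[u_r,y])_{N-1}$; this makes sense since $[u_r,y]\in\fg_{-1}$ and $e_{i_r}\in\fg_1$ already lie in $\fg_{(N-1)}$.

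The hard part will be well-definedness of this assignment, i.e. independence of the chosen expansion of $x$. I would establish it by proving that expanding $y=\sum_s[f_{j_s},v_s]$ instead gives the identity $\sum_r(e_{i_r}|[u_r,y])_{N-1}=\sum_s([x,f_{j_s}]|v_s)_{N-1}$: since the left-hand side uses only the expansion of $x$ and the right-hand side only that of $y$, their equality simultaneously forces independence of both. Proving this identity is the technical core: one substitutes both expansions, applies the super-Jacobi identity to each $[u_r,[f_{j_s},v_s]]$, and invokes the invariance of $(\cdot|\cdot)_{N-1}$ term by term, every intermediate bracket having degree at most $N-1$ in absolute value so that the induction hypothesis applies. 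Supersymmetry and the full invariance of $(\cdot|\cdot)_N$ follow by the same bookkeeping; the only novelty over the Lie algebra case is the consistent tracking of the Koszul signs $(-1)^{p(a)p(b)}$ arising from the odd generators $e_i,f_i$ ($i\in I_{\bo}$), which is routine. Taking $N\to\infty$ produces a supersymmetric invariant form on all of $\fg$ extending the one on $\fh$.

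Finally, for non-degeneracy I would consider the radical $\fr=\{x\in\fg:(x|\fg)=0\}$. Invariance makes $\fr$ an ideal, and the orthogonality $(\fg_\alpha|\fg_\beta)=0$ for $\alpha+\beta\ne0$ makes $\fr$ a graded ideal. Because the given form is non-degenerate on $\fh$, we have $\fr\cap\fh=0$. But $\fg(A,I_{\bo})=\tilde\fg(A,I_{\bo})/\tilde J$, where $\tilde J$ is the largest ideal of $\tilde\fg(A,I_{\bo})$ meeting $\fh$ trivially, so the quotient $\fg$ admits no nonzero ideal meeting $\fh$ trivially; hence $\fr=0$ and the form is non-degenerate.
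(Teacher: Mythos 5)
Your proposal is correct and follows essentially the same route as the proof the paper defers to ([K2], Chapter~2, Theorem~2.2, adapted to the super setting): orthogonality of root spaces forces the form to pair $\fg_N$ with $\fg_{-N}$, the form is built inductively along the principal gradation with well-definedness checked via the two-expansion identity, and non-degeneracy follows because the radical is an ideal meeting $\fh$ trivially, hence zero by the defining quotient by $\tilde J$. The only point worth keeping in mind is that invariance forces $(h_i|h)=\alpha_i(h)$ for \emph{all} $h\in\fh$ (not merely $h=h_j$), which is the standard normalization of the form on the realization and is implicit in your identification of $h_i$ with $\alpha_i$.
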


The abelian subalgebra $\fh$ is called the Cartan subalgebra of
the Kac--Moody algebra $\fg (A,I_{\bo})$.  As usual, we have the
{\em root space decomposition}:
\begin{equation}
  \label{eq:1.2}
  \fg (A,I_{\bo}) = \fh \oplus (\oplus_{\alpha \in \Delta} \fg_\alpha)\,,
\end{equation}
where 
$\fg_\alpha = \{ a \in \fg (A,I_{\bo}) |\,\, [h,a] = \alpha
(h) a$ for all $ h \in \fh \}$ and $ \Delta = \{\alpha \in
\fh^* |\,\, \alpha \neq 0 $ and $\fg_\alpha \neq 0\}$ is the {\em set of
  roots}.

Denoting by $\fn_+$ (resp. $   \fn_-$) the subalgebra of $\fg
(A,I_{\bo})$, generated by all the $e_i$ (resp. $f_i$), we have
the {\em triangular decomposition}:
\begin{equation}
  \label{eq:1.3}
  \fg (A, I_{\bo}) = \fn_- \oplus \fh \oplus \, \fn_+ \, .
\end{equation}

Let $Q = \ZZ \Pi \subset \fh^*$ be the {\em root lattice}, and
let $Q_+ = \ZZ_{\geq 0} \Pi$.  Let $\Delta_+ = \Delta \cap Q_+$ be the
set of {\em positive roots}.  Then $\fn_\pm = \oplus_{\alpha \in
  \Delta_+} \fg_{\pm \alpha}$.

Since $\fg (A,I_{\bo})$ has the anti-involution which exchanges
$e_i$ and $f_i$ and fixes $\fh$ pointwise, we conclude that $\dim
\fg_\alpha = \dim \fg_{-\alpha}$.

Since the bilinear form $(\, . \, | \, . \, )$
to $\fh$ is nondegenerate, we may (and will) identify $\fh$ with
$\fh^*$.  For a non-isotropic root $\alpha \in \Delta$, we let
\begin{equation}
  \label{eq:1.4}
  \alpha^\vee = 2 \alpha / (\alpha |\alpha)\, ,
\end{equation}
and define the {\em reflection} $r_\alpha \in GL (\fh^*)$ by
\begin{equation}
  \label{eq:1.5}
   r_\alpha (\lambda) = \lambda - (\lambda |\alpha^\vee)\, \alpha \, ,
   \quad \lambda \in \fh^*\, .
\end{equation}

If $\alpha \in \Delta_+$, then $\alpha = \sum_{i \in I}
k_i\alpha_i$, where $k_i \in \ZZ_{\geq 0}$, and its parity is $p
(\alpha) =\sum_i k_i p (\alpha_i) \mod 2$, which is the same as
the parity of $\fg_\alpha$.  Denote by $\Delta_{\bar0}$ and
$\Delta_{\bo}$ the subsets of $\Delta$, consisting of even and
odd roots respectively.

\begin{proposition}  \label{prop:1.2}  (cf. \cite{K2}, Chapter 3).
   Let $\alpha \in \Delta$ be an even non-isotropic root, and suppose
that $\ad \fg_\alpha$ is locally nilpotent on $\fg (A,I_{\bo})$.
Then $r_\alpha (\Delta ) \subset \Delta$.

\end{proposition}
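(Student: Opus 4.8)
I would realize $r_\alpha$ as coming from an honest automorphism of $\fg=\fg(A,I_{\bo})$ attached to the $\mathfrak{sl}_2$-triple of $\alpha$, and then use that an automorphism sends nonzero root spaces to nonzero root spaces. First I would build the triple. Since $\alpha$ is even and non-isotropic, $(\alpha|\alpha)\neq 0$ and the coroot $\alpha^\vee=2\alpha/(\alpha|\alpha)\in\fh$ satisfies $\alpha(\alpha^\vee)=2$. Fix $0\neq e\in\fg_\alpha$. Because the invariant form of Proposition~\ref{prop:1.1} is non-degenerate and pairs $\fg_\beta$ with $\fg_\gamma$ trivially unless $\beta+\gamma=0$, it pairs $\fg_\alpha$ with $\fg_{-\alpha}$ non-degenerately; hence there is $f\in\fg_{-\alpha}$ with $(e|f)\neq 0$, and after rescaling $f$ I may assume $[e,f]=\alpha^\vee$. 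Using invariance of the form and $\alpha(\alpha^\vee)=2$, one checks that $\{e,\alpha^\vee,f\}$ spans a copy of $\mathfrak{sl}_2$, all of whose elements are even.

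The decisive input is that both $\ad e$ and $\ad f$ act locally nilpotently on $\fg$. For $\ad e$ this is the hypothesis. For $\ad f$ I would invoke the anti-involution $\omega$ exchanging $e_i\leftrightarrow f_i$ and fixing $\fh$ pointwise (the one used just above to see $\dim\fg_\alpha=\dim\fg_{-\alpha}$): a short computation with $\omega([a,b])=[\omega(b),\omega(a)]$ gives $\omega(\fg_\alpha)=\fg_{-\alpha}$ and $\ad(\omega(x))=-\omega\,(\ad x)\,\omega^{-1}$, so $\ad(\omega(x))$ is locally nilpotent whenever $\ad x$ is. Applying this across all of $\fg_\alpha$ shows $\ad y$ is locally nilpotent for every $y\in\fg_{-\alpha}$, in particular for our $f$. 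Therefore $\exp(\ad e)$ and $\exp(-\ad f)$ are well-defined automorphisms of $\fg$, and so is
\[
  \tau_\alpha=\exp(\ad e)\,\exp(-\ad f)\,\exp(\ad e).
\]

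Next I would check that $\tau_\alpha$ preserves $\fh$ and acts there as $s_\alpha(h')=h'-\alpha(h')\,\alpha^\vee$. Splitting $\fh=\ker\alpha\oplus\CC\alpha^\vee$: each $h'\in\ker\alpha$ satisfies $[e,h']=-\alpha(h')e=0$ and $[f,h']=\alpha(h')f=0$, so it is fixed by $\tau_\alpha$; and the three-dimensional $\mathfrak{sl}_2=\CC e\oplus\CC\alpha^\vee\oplus\CC f$ is $\tau_\alpha$-invariant, with a direct $\mathfrak{sl}_2$-computation giving $\tau_\alpha(\alpha^\vee)=-\alpha^\vee$. Thus $\tau_\alpha|_\fh=s_\alpha$, and the one-line pairing identity $\beta(s_\alpha h')=(r_\alpha\beta)(h')$ for all $\beta\in\fh^*$, $h'\in\fh$ shows $s_\alpha$ is the transpose of $r_\alpha$. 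Finally, for $\beta\in\Delta$ and $a\in\fg_\beta$, since $\tau_\alpha$ is an automorphism and $\tau_\alpha^{-1}|_\fh=s_\alpha$,
\[
  [h',\tau_\alpha a]=\tau_\alpha[s_\alpha h',a]=\beta(s_\alpha h')\,\tau_\alpha a=(r_\alpha\beta)(h')\,\tau_\alpha a,
\]
so $\tau_\alpha(\fg_\beta)\subseteq\fg_{r_\alpha\beta}$. As $\tau_\alpha$ is injective and $\fg_\beta\neq 0$, we get $\fg_{r_\alpha\beta}\neq 0$, i.e. $r_\alpha\beta\in\Delta$, which is the claim.

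I expect the one genuinely substantive step to be the local nilpotency of $\ad f$: the rest is formal $\mathfrak{sl}_2$-bookkeeping, but transferring the hypothesis from $\fg_\alpha$ to all of $\fg_{-\alpha}$ hinges on using the anti-involution correctly (together with the non-degenerate pairing of $\fg_\alpha$ with $\fg_{-\alpha}$ needed to produce $f$ in the first place). Once both $\ad e$ and $\ad f$ are locally nilpotent, $\tau_\alpha$ exists as an automorphism and the reflection computation on $\fh$ is routine.
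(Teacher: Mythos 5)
Your proof is correct, and it is essentially the standard argument from the cited reference ([K2], Chapter~3), which the paper itself does not reproduce: build the even $\mathfrak{sl}_2$-triple $\{e,\alpha^\vee,f\}$ using the non-degenerate pairing of $\fg_\alpha$ with $\fg_{-\alpha}$, transfer local nilpotency from $\ad\fg_\alpha$ to $\ad\fg_{-\alpha}$ via the Chevalley anti-involution, and conjugate by $\exp(\ad e)\exp(-\ad f)\exp(\ad e)$ to realize $r_\alpha$ on root spaces. You correctly identify the only non-formal step (local nilpotency of $\ad f$), and your handling of it is sound since $\fg_\alpha$ is even, so no super-signs interfere with the identity $\ad(\omega(x))=-\omega\,(\ad x)\,\omega^{-1}$.
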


A root $\alpha$, satisfying conditions of Proposition~\ref{prop:1.2}, is
called {\em integrable}.  The group, generated by all reflections
$r_{\alpha}$, where $\alpha$ is integrable,
%Proposition~\ref{prop:1.2}, 
is called the {\em Weyl group} of the
Kac--Moody superalgebra $\fg (A,I_{\bo})$, and is denoted by $W
(\subset GL (\fh^*))$.  By Proposition~\ref{prop:1.2}, 
\begin{equation}
  \label{eq:1.6}
  W \Delta = \Delta \, .
\end{equation}

For $w\in W$ let $w=
r_{\gamma_1}...r_{\gamma_s}$ be a decomposition of $w$ in a product
of $s$ reflections with respect to integrable 
roots, and let $s_-$ be the number of those of them, for which the half 
is not a root.
Define 
$\epsilon_+(w)=(-1)^s$ and $\epsilon_-(w)=(-1)^{s_-}$.
(For $\fg = s\ell_{m|n}$, $m>n$, and $g\ell_{m|m}$ one has: 
$\epsilon_- (w) = \epsilon_+ (w)$.)

Let $\rho_{\bar{0}}$
(resp. $\rho_{\bar{1}}$) be the half of the sum of positive even
(resp. odd) roots. The element $\rho=\rho_{\bar{0}}-\rho_{\bar{1}}$ 
is called the Weyl vector. One has \cite{K1}:
\begin{equation}
  \label{eq:1.7}
  (\rho | \alpha_i) = \frac12 (\alpha_i |\alpha_i) (= \frac12
  a_{ii})\, , \quad i \in I\, .
\end{equation}
%
%It exists since simple roots $\alpha_i$ are linearly independent,
%but is not unique if $\det A =0$.

Let $V=V_{\bar{0}} \oplus V_{\bo}$ be a vector superspace (i.e. a
  vector space, decomposed in a direct sum of subspaces
  $V_{\bar{0}}$ and $V_{\bo}$, called even and odd
  respectively).  The associative algebra $\End V$ has the
  corresponding $\ZZ /2 \ZZ$-grading:
  \begin{equation*}
    \End V = (\End V)_{\bar{0}} \oplus (\End V)_{\bo}\, , \, 
   \, \ZZ /2\ZZ = \{ \bar{0}, \bar{1}\}, 
  \end{equation*}
where
\begin{equation*}
  (\End V)_\alpha = \{ a \in \End V |\, a V_\beta \subset V_{\alpha
  + \beta}\}\, , \, \alpha , \beta \in \ZZ/2 \ZZ \, .
\end{equation*}
One denotes by $\gl_V$ the vector superspace $\End V$ with the
bracket
\begin{equation*}
      [a,b] = ab - (-1)^{p (a) p (b)} ba \, .
 \end{equation*}

A {\em module} $V$ over a Lie superalgebra $\fg$ is a
homomorphism of $\fg$ to the Lie superalgebra $\gl_V$.   If $\dim
V < \infty$, one defines the {\em supertrace} on $\End V$ by
\begin{equation}\label{eq:1.8}
   \str\, a = \tr F a\, , \, \hbox{\,\, where \,\,} F |_{V_\alpha}
     =   (-1)^\alpha\, , \quad \alpha \in \ZZ/2\ZZ\, ,
\end{equation}
and the superdimension of $V$ by $\sdim V = \str\,  I_V = \dim
V_{\bar{0}} -\dim V_{\bo}$\, .  In this case $\gl_V$ contains the
subalgebra $\sl_V = \{ a \in \gl_V |$    $\str\, a\,  =0 \}$.

For each $\Lambda \in \fh^*$  one defines an {\em irreducible
  highest weight module} $L (\Lambda)$ 
over the Kac--Moody
superalgebra $\fg (A,I_{\bo})$ as the (unique) irreducible $\fg
(A,I_{\bo})$-module for which there exists an even non-zero
vector $\rmv_\Lambda$, such that 
\begin{eqnarray*}
  h\rmv_\Lambda = \Lambda (h) \rmv_{\Lambda} \hbox{\,\, for all \,\,} h \in
  \fh \, , \quad \fn_+ \rmv_\Lambda =0\, .
\end{eqnarray*}
One has the {\em weight space decomposition} with respect to $\fh$:
\begin{equation}
  \label{eq:1.9}
  L (\Lambda) = \oplus_{\lambda \in \fh^*} L (\Lambda)_\lambda\,,\quad
  \hbox{\,\, where \,\,} L(\Lambda)_\lambda
 = \{ \rmv \in L (\Lambda) |\, h \rmv = \lambda (h) \rmv\, ,
  \quad  h \in \fh \}.
\end{equation}
Since $L (\Lambda) = U (\fn_-)\rmv_\Lambda$, it follows that $\dim L
(\Lambda)_\Lambda =1$ and $\dim L (\Lambda)_\lambda < \infty$.

One then defines the {\em character}
\begin{equation*}
  \ch^+_{L (\Lambda)} =\sum_{\lambda \in \fh^*} ( \dim L (\Lambda)_\lambda)e^\lambda
\end{equation*}
and the {\em supercharacter}
\begin{equation*}
 \ch^-_{L (\Lambda)} =\sum_{\lambda \in \fh^*} (\sdim L (\Lambda)_\lambda)e^\lambda\,.
\end{equation*}
Note that for $h \in \fh$ one has:
\begin{equation*}
  \ch^+_{L (\Lambda)}(h)  =\tr_{L(\Lambda)}\, e^h \, ,\,\, \, 
 \ch^-_{L (\Lambda)}(h) =  \str_{L(\Lambda)} \,e^h \,.
\end{equation*}

An integrable root $\alpha$ is called $\Lambda$-{\em integrable} if
$\fg_\alpha$ and $\fg_{-\alpha}$ are locally nilpotent on $L
(\Lambda)$ (note that $\fg_\alpha$ with $\alpha \in \Delta_+$ is
always locally nilpotent).

\begin{proposition} (cf. \cite{K2}, Chapter 3).
  \label{prop:1.3}
Let $\alpha$ be a $\Lambda$-integrable root.  Then $\dim L
(\Lambda)_\lambda =\dim L (\Lambda)_{r_\alpha (\lambda)}$, and
the same holds for $\sdim$.  Equivalently, $\ch^\pm_{L (\Lambda)}$ 
%and $\sch_{L (\Lambda)}$ 
are $r_\alpha$-invariant.
\end{proposition}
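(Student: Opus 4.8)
The plan is to realize the reflection $r_\alpha$ by an explicit invertible operator on $L(\Lambda)$ built from the rank-one subalgebra attached to $\alpha$, and to read off the multiplicity equalities from the fact that this operator permutes weight spaces.

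First I would set up the relevant copy of $\fsl_2$. Since $\alpha$ is even and non-isotropic, I choose root vectors $e\in\fg_\alpha$ and $f\in\fg_{-\alpha}$ and let $h_\alpha\in\fh$ be the coroot corresponding to $\alpha^\vee$ under the identification $\fh\cong\fh^*$, normalized so that $\alpha(h_\alpha)=(\alpha|\alpha^\vee)=2$; then $e,f,h_\alpha$ span a subalgebra $\fs\cong\fsl_2$ inside the even part $\fg_{\bz}$, exactly as in the Lie algebra case. Because $\alpha$ is $\Lambda$-integrable, both $e$ and $f$ act locally nilpotently on $L(\Lambda)$, so the exponentials $\exp(e)$ and $\exp(f)$ are well-defined invertible operators, and I may form
\[
  \tau_\alpha=\exp(e)\exp(-f)\exp(e)\in GL(L(\Lambda)).
\]

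The heart of the argument is the conjugation identity $\tau_\alpha^{-1}\,h\,\tau_\alpha=r_\alpha(h)$ for all $h\in\fh$, viewed as operators on $L(\Lambda)$. This follows from the formal relation $\tau_\alpha^{-1}\,x\,\tau_\alpha=\mathrm{Ad}(\tau_\alpha)^{-1}(x)$, valid for locally nilpotent actions, together with the rank-one computation $\mathrm{Ad}(\tau_\alpha)|_{\fh}=r_\alpha$ (an involution, so $\mathrm{Ad}(\tau_\alpha)^{-1}$ agrees with it on $\fh$), where $\mathrm{Ad}(\tau_\alpha)=\exp(\ad e)\exp(-\ad f)\exp(\ad e)$ acts on $\fg$; the latter is well defined because $\alpha$ is integrable, so $\ad e,\ad f$ are locally nilpotent on $\fg$ (cf. Proposition~\ref{prop:1.2}), and the identity is the standard $\fsl_2$-calculation, checked separately on $h_\alpha$ (where both sides give $-h_\alpha$) and on $\Ker\alpha$ (where both sides are the identity, using $\fh=\CC h_\alpha\oplus\Ker\alpha$). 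Granting this, for a weight vector $\rmv\in L(\Lambda)_\mu$ and any $h\in\fh$ I compute
\[
  h\,(\tau_\alpha \rmv)=\tau_\alpha\,\big(\tau_\alpha^{-1}h\,\tau_\alpha\big)\,\rmv
   =\tau_\alpha\,r_\alpha(h)\,\rmv=\mu\big(r_\alpha(h)\big)\,\tau_\alpha \rmv
   =(r_\alpha\mu)(h)\,\tau_\alpha \rmv,
\]
using $r_\alpha(h)\in\fh$, that $r_\alpha(h)\rmv=\mu(r_\alpha(h))\rmv$, and the self-adjointness $\mu(r_\alpha(h))=(r_\alpha\mu)(h)$ of the reflection. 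Hence $\tau_\alpha$ maps $L(\Lambda)_\mu$ into $L(\Lambda)_{r_\alpha\mu}$; applying the same to $\tau_\alpha^{-1}$ shows $\tau_\alpha$ is a linear isomorphism from $L(\Lambda)_\mu$ onto $L(\Lambda)_{r_\alpha\mu}$, so $\dim L(\Lambda)_\mu=\dim L(\Lambda)_{r_\alpha\mu}$.

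Finally, since $\alpha$ is even, $e,f\in\fg_{\bz}$ and $\tau_\alpha$ is parity-preserving, so it restricts to isomorphisms of the even and of the odd parts of each weight space separately; this gives $\sdim L(\Lambda)_\mu=\sdim L(\Lambda)_{r_\alpha\mu}$ as well, and the $r_\alpha$-invariance of $\ch^\pm_{L (\Lambda)}=\sum_\mu(\dim^\pm L(\Lambda)_\mu)e^\mu$ is then a restatement. I expect the main obstacle to be the careful justification of the conjugation identity $\mathrm{Ad}(\tau_\alpha)|_{\fh}=r_\alpha$ and the well-definedness of the exponentials; both rest on the local nilpotency furnished by integrability and $\Lambda$-integrability. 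As an alternative to the operator $\tau_\alpha$, one could note that local nilpotency makes $L(\Lambda)$ a locally finite, hence (by Weyl complete reducibility) completely reducible, $\fs$-module, and then deduce the symmetry directly from the weight-string structure of finite-dimensional irreducible $\fsl_2$-modules, with the parity bookkeeping being equally transparent.
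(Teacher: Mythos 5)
Your argument is correct and is essentially the standard proof the paper is pointing to via the citation of \cite{K2}, Chapter~3: one realizes $r_\alpha$ on $L(\Lambda)$ by the operator $\exp(e)\exp(-f)\exp(e)$, which is well defined by $\Lambda$-integrability and conjugates $\fh$ by $r_\alpha$, hence gives parity-preserving isomorphisms $L(\Lambda)_\mu \to L(\Lambda)_{r_\alpha\mu}$. Nothing is missing; the parity bookkeeping for $\sdim$ is handled correctly since $e,f$ are even.
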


\begin{proposition} (cf. \cite{K2}, Chapter 10).
  \label{prop:1.4}
The series $\ch^\pm_{L (\Lambda)} $ 
%and $\sch_{L (\Lambda)}$ 
converge to a holomorphic function in a convex domain, containing the domain
$Y:=\{ h \in \fh|\,\, \Re \alpha_i (h) >0 \, , \, i \in I \}$.
\end{proposition}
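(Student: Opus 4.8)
The plan is to bound the two series from above, in absolute value, by the character of the Verma module and to prove convergence of the latter. Recall that $L(\Lambda)$ is the irreducible quotient of the Verma module $M(\Lambda)$, the universal highest weight module, which satisfies $M(\Lambda)\cong U(\fn_-)$ as an $\fh$-graded space once the grading is shifted by $\Lambda$; in particular all its weights lie in $\Lambda-Q_+$. Hence $\dim L(\Lambda)_\lambda\le\dim M(\Lambda)_\lambda$ for every $\lambda$, while $|\sdim L(\Lambda)_\lambda|\le\dim L(\Lambda)_\lambda$ by definition of the supertrace. Thus, term by term in absolute value, both $\ch^+_{L(\Lambda)}$ and $\ch^-_{L(\Lambda)}$ are dominated by $\sum_\lambda(\dim M(\Lambda)_\lambda)e^{\lambda}$, and it suffices to prove absolute convergence of $\ch^+_{M(\Lambda)}$. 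Convexity of the resulting domain then comes for free: the coefficients are non-negative, so if $\sum_\lambda c_\lambda e^{\lambda(h)}$ converges at $h'$ and at $h''$, the inequality $e^{\Re\lambda(\frac{h'+h''}{2})}\le\frac12\big(e^{\Re\lambda(h')}+e^{\Re\lambda(h'')}\big)$ forces convergence at the midpoint, and hence on the convex hull.

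Next I would write $\ch^+_{M(\Lambda)}$ as a product by choosing a homogeneous basis of $\fn_-$ made of root vectors and invoking the PBW theorem for Lie superalgebras: even root vectors contribute geometric factors, while odd ones occur with exponent $0$ or $1$ only (since the square of an odd element is even), giving
\[
\ch^+_{M(\Lambda)}=e^{\Lambda}\prod_{\alpha\in\Delta_+\cap\Delta_{\bar 0}}(1-e^{-\alpha})^{-\dim\fg_\alpha}\prod_{\alpha\in\Delta_+\cap\Delta_{\bo}}(1+e^{-\alpha})^{\dim\fg_\alpha}.
\]
After taking logarithms, the convergence of this product near a point $h$ is equivalent to that of the single positive series $\sum_{\alpha\in\Delta_+}(\dim\fg_\alpha)\,e^{-\Re\alpha(h)}$, because once $\Re\alpha(h)>0$ the contributions of $e^{-j\alpha}$ with $j\ge2$ are dominated by the $j=1$ term.

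The crux is therefore to show that $\sum_{\alpha\in\Delta_+}(\dim\fg_\alpha)\,e^{-\Re\alpha(h)}$ converges for every $h\in Y$, locally uniformly. Writing $\alpha=\sum_{i\in I}k_i\alpha_i$ and $\mathrm{ht}(\alpha)=\sum_i k_i$, and setting $\delta=\min_{i}\Re\alpha_i(h)>0$ on $Y$, one has $\Re\alpha(h)\ge\delta\,\mathrm{ht}(\alpha)$. Grouping the sum by height $N$ and using that the number of $\eta\in Q_+$ with $\mathrm{ht}(\eta)=N$ is polynomial in $N$, the whole question reduces to controlling the root multiplicities $\dim\fg_\alpha$ as a function of $\mathrm{ht}(\alpha)$.

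This multiplicity estimate is the main obstacle. The naive bound $\dim\fg_\alpha\le\dim U(\fn_-)_{-\alpha}\le|I|^{\mathrm{ht}(\alpha)}$, obtained by comparing $\fn_-$ with the free Lie superalgebra on the generators $f_i$, is too lossy: it only yields convergence on the proper subdomain $\{h:\Re\alpha_i(h)>\log|I|\}$, which fails to exhaust $Y$. To reach all of $Y$ one must exploit that for the algebras in question the multiplicities grow sub-exponentially in the height — for finite type $\Delta$ is finite and the product is finite, while for the affine superalgebras of this paper $\dim\fg_\alpha$ is bounded by a constant (the rank). Given any sub-exponential bound, the height-$N$ layer is majorized by $P(N)\,e^{-\delta N}$ with $P$ polynomial, and $\sum_N P(N)e^{-\delta N}<\infty$ for every $\delta>0$; this yields absolute and locally uniform convergence of $\ch^+_{M(\Lambda)}$, hence holomorphy, on all of $Y$, and the same for $\ch^\pm_{L(\Lambda)}$ by the first step. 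The scheme follows \cite{K2}, Chapter~10.
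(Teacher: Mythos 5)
The paper offers no proof of this proposition --- it is quoted with a ``cf.\ \cite{K2}, Chapter~10'' --- and your argument is exactly the standard one behind that citation: domination of $\ch^{\pm}_{L(\Lambda)}$ by the Verma character, the PBW product formula, reduction to the convergence of $\sum_{\alpha\in\Delta_+}(\dim\fg_\alpha)e^{-\Re\alpha(h)}$, and convexity of the convergence region from positivity of the coefficients (for the last point, use the weighted inequality $e^{ta+(1-t)b}\le te^{a}+(1-t)e^{b}$ directly rather than iterating midpoints, which by itself only gives dyadic convexity). The one issue you flag --- subexponential growth of $\sum_{\mathrm{ht}\,\alpha=n}\dim\fg_\alpha$ --- is indeed the crux and not merely a technical convenience: Kac--Moody (super)algebras of indefinite type have exponential growth in the principal gradation, so for them the Verma character genuinely fails to converge on all of $Y$ and this route cannot work. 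Your resolution (finiteness of $\Delta$ in the finite-dimensional case, boundedness of $\dim\fg_\alpha$ by the rank in the affine case, combined with the polynomial count of elements of $Q_+$ of given height) is therefore the right one, and it covers every case in which the paper actually invokes the proposition; just be aware that as stated for an arbitrary symmetric matrix $A$ the proposition should be read as restricted to the finite and affine situations treated here.
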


\begin{remark}   
\label{rem:1.5}
Let $\fg' (A,I_{\bo})$ denote the derived Lie superalgebra of
$\fg (A,I_{\bar{1}})$.  Then we have:
\begin{equation*}
  \fg (A,I_{\bar{1}}) = \fh + \fg' (A,I_{\bar{1}})\, ,\quad
  \fg'  (A,I_{\bar{1}}) \cap \fh = \fh' : = \span \{ h_i |\, i \in I \}\, .
\end{equation*}
It follows that $L (\Lambda)$ remains irreducible when restricted
to $\fg' (A,I_{\bar{1}})$, and that $\ch^\pm_{L (\Lambda)}$ 
%and $\sch_{L(\Lambda)}$ 
depend essentially only on $\Lambda |_{\fh'}$, namely:
\begin{equation*}
  \ch^\pm_{L (\Lambda)} = e^{\Lambda -\Lambda'} \ch^\pm_{L (\Lambda')} 
    \hbox{\,\, if \,\,} \Lambda |_{\fh'} = \Lambda' |_{\fh'}\, .
\end{equation*}
%
%and the same holds for $\sch$.
\end{remark}

\begin{definition}
  \label{def:1.6}

The $\fg (A, I_{\bar{1}})$-module $L (\Lambda)$ is called
  integrable  if any integrable
root $\alpha$ is $\Lambda$-integrable.

\end{definition}

If $A$ is the Cartan matrix of a Kac--Moody algebra $\fg(A)$, then
the integrable $\fg(A)$-modules $L(\Lambda)$ are
precisely the ones that can  be ``integrated'' to
the corresponding Kac--Moody group.

If $\fg (A,I_{\bar{1}})$ is a finite-dimensional Kac--Moody
superalgebra, then it is easy to show that  integrable
modules $L (\Lambda)$ are precisely all finite-dimensional
irreducible $\fg (A, I_{\bar{1}})$-modules.

If $\fg (A,I_{\bar{1}})$ is an affine Kac--Moody superalgebra
with $I_{\bo} \neq \emptyset$, then non $1$-dimensional
integrable modules $L (\Lambda)$ exist in only a few cases,
so that it is natural to consider ``partially'' integrable
modules instead.  They are classified in \cite{KW4}, and will be
discussed in \S3.

\begin{proposition}
  \label{prop:1.7}
Suppose that a Lie superalgebra carries two structures  of a
Kac--Moody superalgebra with the same set of even positive roots.
Then

\alphaparenlist
\begin{enumerate}
\item %%a
One of these Kac--Moody superalgebra structures can be obtained
from the other one by a sequence of odd reflections.

\item %%b
If $L (\Lambda)$ is an irreducible highest weight module with
highest weight vector $\rmv_\Lambda$ with respect to the first
structure, then it is an irreducible highest weight module with
respect to the second structure.  Explicitly, if the second
structure is obtained from the first one by an odd reflection
$r_i$, then the new highest weight vector is $f_i \rmv_\Lambda$ if
$(\Lambda |\alpha_i) \neq 0$, and is $\rmv_\Lambda$ if $(\Lambda
|\alpha_i) =0$.

\item %%c
If $L(\Lambda)$ is integrable  with
respect to the first Kac--Moody superalgebra structure, then it is also true
for the second one.

\end{enumerate}

\end{proposition}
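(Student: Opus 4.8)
The plan is to reduce everything to the case of a single odd reflection and then argue by a direct computation on $L(\Lambda)$. The heart of the matter is part~(a): I will show that two Kac--Moody superalgebra structures on the same $\fg$, sharing the same set $\Delta_{\bz}^+$ of positive even roots, differ by a finite sequence of odd reflections. Since the root system $\Delta$, the parities of roots, and hence $\Delta_{\bz}$ are intrinsic to $(\fg,\fh)$ and do not depend on the choice of positive system, the two structures differ only in which odd roots are declared positive. I would induct on the number $N$ of odd roots that are positive for the first structure but negative for the second; this set is finite (only odd roots can differ, and the discrepancy set is finite in all cases under consideration). When $N=0$ the two positive systems coincide, hence so do the sets of indecomposable positive roots, i.e.\ the simple roots, and with them the Cartan matrix $a_{ij}=(\alpha_i|\alpha_j)$ and the parities; thus the two structures are equal.

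For the inductive step I would pick a root $\beta$ of minimal $\Pi$-height among those positive for the first structure and negative for the second, and prove it is an odd isotropic \emph{simple} root of the first structure. Minimality forces simplicity: if $\beta=\gamma+\delta$ with $\gamma,\delta$ positive for the first structure, then one summand, say $\gamma$, is negative for the second; this $\gamma$ cannot be even (an even root positive for the first structure is positive for the second, as $\Delta_{\bz}^+$ agrees), so $\gamma$ would be a smaller discrepant odd root, a contradiction. That $\beta$ is odd is clear, and it must be isotropic: otherwise $2\beta\in\Delta_{\bz}$ would be positive for both structures, while $\beta<0$ for the second forces $2\beta<0$ there. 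Applying the odd reflection $r_\beta$ to the first structure yields a new Kac--Moody superalgebra structure with positive system $(\Delta^+_1\setminus\{\beta\})\cup\{-\beta\}$; since only the odd root $\beta$ changes sign, the even positive roots are untouched and the discrepancy count with the second structure drops to $N-1$, completing the induction.

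For (b), by (a) it suffices to treat a single odd reflection $r_i$ at an odd isotropic simple root $\alpha_i$ (so $a_{ii}=0$, whence $f_i^2=\tfrac12[f_i,f_i]=0$ as an operator since $2\alpha_i\notin\Delta$). The new raising operators are $f_i$ (for the new simple root $-\alpha_i$) together with $e_j$ or $[e_j,e_i]$ for $j\ne i$ (according to whether $(\alpha_i|\alpha_j)$ vanishes). I would check that $f_i\rmv_\Lambda$ is annihilated by all of them: $f_i(f_i\rmv_\Lambda)=f_i^2\rmv_\Lambda=0$, while for $j\ne i$ one moves the new raising operator past $f_i$ via the super Jacobi identity and $[e_j,f_i]=\delta_{ij}h_i=0$, reducing everything to $e_j\rmv_\Lambda=0$. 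Since $e_if_i+f_ie_i=h_i$ gives $e_if_i\rmv_\Lambda=(\Lambda|\alpha_i)\rmv_\Lambda$, we have $f_i\rmv_\Lambda\ne0$ exactly when $(\Lambda|\alpha_i)\ne0$, and then $f_i\rmv_\Lambda$ is the new highest weight vector of weight $\Lambda-\alpha_i$. When $(\Lambda|\alpha_i)=0$, the same computation shows $f_i\rmv_\Lambda$ is a singular vector of weight $\Lambda-\alpha_i\ne\Lambda$ in the irreducible module $L(\Lambda)$, forcing $f_i\rmv_\Lambda=0$; then $\rmv_\Lambda$ is killed by every new raising operator and is itself the new highest weight vector. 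As irreducibility is intrinsic to the $\fg$-module, $L(\Lambda)$ is again an irreducible highest weight module for the second structure.

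Finally, (c) is essentially immediate once the intrinsic nature of integrability is recognized: the notion of an integrable root (an even non-isotropic $\alpha$ with $\ad\fg_\alpha$ locally nilpotent on $\fg$) and of $\Lambda$-integrability (local nilpotency of $\fg_{\pm\alpha}$ on $L(\Lambda)$) refer only to $\fg$, its even roots, and the module $L(\Lambda)$, never to the choice of positive system. Since both structures share the same $\fg$, the same $\Delta_{\bz}$, and the same underlying module, Definition~\ref{def:1.6} holds for one iff it holds for the other. I expect the main obstacle to be part~(a): locating the odd isotropic simple root at which to reflect and verifying that the inductive quantity is finite and strictly decreases. Once this combinatorial step is in place, parts~(b) and~(c) follow from a direct super-bracket computation and the observation that integrability is intrinsic, respectively.
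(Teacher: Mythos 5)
Your proof is correct and follows essentially the route the paper intends: the paper simply cites \cite{K2}, Proposition~5.9 for the odd-reflection induction in (a) and \cite{KW4}, Lemma~1.4 for the computation in (b), and your write-up supplies exactly those arguments. The one point deserving a word more is the finiteness of the discrepancy set in the affine case, which you assert only parenthetically: it follows because within each $\delta$-string of odd roots the positive members for either structure form a ray (if $\alpha+n\delta>0$ then $\alpha+(n+1)\delta>0$, since $\delta>0$ for both structures), so the two structures disagree in only finitely many places on each of the finitely many strings.
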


\begin{proof}
  (a) is proved in the same way as Proposition~5.9 from
  \cite{K2}, by making use of odd reflections (described e.g. in
  \cite{KW4}).  (b) is Lemma~1.4 from \cite{KW4} and (c) follows
  from (a) and (b).
\end{proof}

%%%%%%%%%%%%%%%  new section 2  10/17/12  %%%%%%%%%%%%

%%%%%%%%%%%%%%%%%%%%%%%%%%%%%%%%%%%%%%%%%%%%%%%%%%%%%%%%%%%%5

\section{Examples of finite-dimensional and affine Kac--Moody superalgebras}
\label{sec:2}

Let$V=V_{\bar0} \oplus V_{\bo}$ be a finite-dimensional superspace
over $\CC$, let $m=\dim V_{\bar0}$, $n=\dim V_{\bo}$.  Then one
denotes $\gl_{m|n} = \gl_V$, $\sl_{m|n} = \sl_V$.  Since
$s\ell_{m|n} \simeq s\ell_{n|m}$, we shall always assume that $m
\geq n$.

Let $\fg =\sl_{m|n}$ if $m > n$, and $\fg = \gl_{n|n}$ if
$m=n$.  The Lie superalgebra $\fg$ carries several structures of
a Kac--Moody superalgebra, described below (it is easy to show that
there are no others).

Fix a basis $\rmv_1, \ldots , \rmv_{m+n}$ of $V$, such that each
vector $\rmv_i$ lies either in $V_{\bar0}$ or in $V_{\bo}$; we
write $p (\rmv_i)=\bar0$ or $\bar1$ respectively.  Let $I= \{
1,2, \ldots , m+n-1 \}$ and let $I_{\bar{1}} = \{ i\in I |\,\,p (\rmv_i)
\neq p (\rmv_{i+1})\}$.

The choice of basis of $V$ identifies $\End V$ with the space of
$(m+n) \times (m+n)$   matrices.  Let $\{ E_{ij}\}$ denote the
standard basis of this space, and let $\epsilon_i$ denote the
linear function on the space of
diagonal matrices, which picks
out the $i$\st{th} diagonal entry.

Let $\fh$ be the space of all diagonal matrices in $\fg$, and let

\begin{equation}
  \label{eq:2.1}
  h_{ij} = (-1)^{p(\rmv_i)} E_{ii}-(-1)^{p (\rmv_j)}E_{jj}\,.
\end{equation}
The elements $h_{ij}$ lie in $\fh$, and the elements  $h_i: =
h_{i,i+1}$, $i \in I$, are linearly independent.  Let
\begin{equation}
  \label{eq:2.2}
  \alpha_i = \epsilon_i - \epsilon_{i+1} \in \fh^*\, .
\end{equation}
Then the set $\Pi = \{ \alpha_i \}_{i \in I}$ is linearly
independent.

Let $a_{ij} = \alpha_i (h_j)$.  It is easy to see that $A =
(a_{ij})_{i,j \in I}$ is a symmetric matrix.  Moreover, this
matrix is $3$-diagonal, with diagonal entries
\begin{equation*}
  a_{ii}  = (-1)^{p (\rmv_{i})} + (-1)^{p (\rmv_{i+1})}\, ,
\end{equation*}
which are equal to $0$ if $i \in I_{\bo}$, and to $\pm 2$
otherwise, and the non-zero offdiagonal entries are $a_{i,i+1} =
a_{i+1,i} = -(-1)^{p(\rmv_{i+1})}$.

This matrix is depicted by the Dynkin diagram $\bullet- \bullet
- \bullet - \cdots - \bullet -\bullet$, consisting of $|I|$
nodes, where the $i$\st{th} node is $\otimes$, called grey, if 
$i \in I_{\bar{1}}$, and is $\bigcirc$, called white, otherwise.

Since the $h_i$ are supertraceless and $\Pi$, restricted to the
supertraceless diagonal matrices, is linearly independent
(resp. dependent) if $m> n$ (resp. $m =n$), we conclude that
$\corank \,A=0$ if $m> n$, and $\corank \,A=1$ if $m=n$.
Thus, we have constructed a realization of the matrix $A$.

The structure of a Kac--Moody superalgebra $\fg (A,I_{\bar{1}})$ in $\fg$ is
introduced by letting
\begin{equation}
  \label{eq:2.3}
  e_i = E_{i,i+1} \, , \, f_i = (-1)^{p(\rmv_i)} E_{i+1,i}\, , 
      \quad i \in I \, .
\end{equation}
Indeed, it is clear that $e_i$, $f_i$ and $\fh$ generate $\fg$,
and it is easy to check that they satisfy the Chevalley
relations.  Also $\fg$ has no ideals intersecting $\fh$
trivially.

Furthermore, the supertrace form:
\begin{equation*}
(a|b) = \str \,  ab \, .
\end{equation*}
is an invariant bilinear form on $\fg$. The induced bilinear form on $\fh^*$ 
is given by:
\begin{equation}
    \label{eq:2.4}
(\epsilon_i | \epsilon_j) = \delta_{ij} (-1)^{p(\rmv_i)} ,\, \, 
   i,j =1,2,  \ldots , m+n\, . 
\end{equation}

The set of roots is
\begin{equation}
  \label{eq:2.5}
  \Delta= \{ \epsilon_i-\epsilon_j |
\,\, i,j = 1,2,\ldots ,m+n ;\, i\neq j \} \,,
\end{equation}
a root $\epsilon_i-\epsilon_j$ being even if $p (\rmv_i)= p
(\rmv_j)$, and odd otherwise.  The set of positive roots is
\begin{equation}
  \label{eq:2.6}
  \Delta_+ = \{ \epsilon_i - \epsilon_j |\,\, i,j = 1,2,\ldots ,m+n \,; 
\, i <j \}\, .
\end{equation}
The root spaces are $\fg_{\epsilon_i-\epsilon_j} =\CC E_{ij}$,
and the triangular decomposition is the usual one:  $\fn_+$ (resp. $\fn_-$) consists of strictly upper (resp. lower)
triangular matrices.

Note that, in view of (\ref{eq:2.4}), a root
$\epsilon_i-\epsilon_j$ is odd if and only if it is isotropic
(this is not the case for other finite-dimensional Kac--Moody
superalgebras).  The orthogonal reflection with respect to an even root
$\epsilon_i-\epsilon_j$ (which is integrable since $\dim \fg <\infty$) is the
transposition of $i$ and $j$.  Hence the Weyl group $W$ of $\fg$
is $S_m \times S_n$, where $S_m$ (resp. $S_n$) is
the group of permutations of all even (resp. odd) vectors $\rmv_i$'s.

\begin{example}
  \label{ex:2.1}

The constructed above Kac--Moody superalgebra structures on $\fg = \sl_{2|1}$ correspond
to three Dynkin diagrams:
\begin{equation*}
 \otimes - \otimes \, ,\,\, \quad \bigcirc - \otimes \, , \,\, \quad \otimes -\bigcirc\,, 
\end{equation*}
and those on $\fg = \gl_{ 2|2}$ to the Dynkin diagrams:
\begin{equation*}
 \otimes -\bigcirc  -\otimes ,\,   \quad  \bigcirc - \otimes - \bigcirc \, ,\,
    \quad  \otimes - \otimes - \otimes \, .
\end{equation*}
The corresponding Cartan  matrices $A$ are respectively:
\begin{equation*}
\left(
  \begin{array}{rr}
     0 & 1 \\ 1 & 0 
  \end{array}  \right)\qquad ,
\left(
\begin{array}{rr}
     2 & -1 \\ -1 & 0 
  \end{array}  \right)\qquad ,\quad
\left(
  \begin{array}{rr}
     0 & -1 \\ -1 & 2 
  \end{array}  \right)\, ,
\end{equation*}
and, up to an overall sign (which does not change the Kac--Moody
superalgebra structure):
\begin{equation*}
\left(
  \begin{array}{rrr}
     0 & 1 &0\\ 1 & -2 &1\\0 & 1 & 0
  \end{array}  \right)\, , \qquad 
\left(
\begin{array}{rrr}
     2 & -1 & 0\\ -1 & 0 &1 \\0 & 1 & -2
  \end{array}  \right)\, , \qquad
\left(
  \begin{array}{rrr}
     0 & -1  & 0\\ -1 & 0 & 1 \\0 & 1 & 0 
  \end{array}  \right)\, .
\end{equation*}

\end{example}

Recall that a Kac--Moody superalgebra structure exists on an
almost simple finite-dimensional Lie superalgebra iff either
$\fg$ is a Lie algebra (then it is unique up to conjugacy), or
$\fg \simeq \sl_{m|n}$ with $m > n$, $\gl_{n|n}$, $osp_{m|n}$,
$D(2|1 ; a)$, $F(4)$ or $G(3)$ \cite{K1}.

Next, we proceed to construct examples of Kac--Moody superalgebra
structures in the affine superalgebra
\begin{equation*}
  \hat{\fg} = \tilde{\fg} \oplus \CC K \oplus \CC d \, , 
\end{equation*}
associated to a finite-dimensional Kac--Moody superalgebra
$\fg$.  
Here $\tilde{\fg} = \CC F + \sl_ { n|n} [t,t^{-1}]$ if
$\fg = \gl_{n|n}$, and $\tilde{\fg} = \fg [t,t^{-1}]$ in all
other cases, and $K$ is a non-zero central element of
$\hat{\fg}$.  The brackets of all other elements of $\hat{\fg}$
are as follows $(a,b \in \fg \, , m,n \in \ZZ)$:
\begin{equation*}
  [at^m ,bt^n] = [a,b]t^{m+n}+m (a|b)\delta_{m,-n}K,\,\, [d,at^m]=m
  at^m\, .
\end{equation*}
In particular, $\fg$ is a subalgebra of $\hat{\fg}$.

Let $\fg = \fg (A, I_{\bar{1}})$ be  a Kac--Moody subalgebra structure 
on $\fg$ with
$\{h_i \}_{i \in I} \subset \fh$, $A=(a_{ij})_{i,j \in I}$ a
symmetric matrix, and $\Pi = \{ \alpha_i \}_{i \in I}$ defined
by (\ref{eq:1.1}).  Let $\theta =\sum_{i \in I} k_i \alpha_i
\in \Delta_+$ be the highest root (i.e. $\sum_i k_i$ is maximal).

Let $\hat{I}= \{ 0 \} \bigcup I$, and let $\hat{I}_{\bo} = 
 I_{\bar1}$ if $\theta$ is an even root, and
$\hat{I}_{\bo} = \{ 0 \} \bigcup I_{\bar{1}}$ otherwise.  Let
$\hat{A}$ be the $\hat{I} \times \hat{I}$-matrix obtained from
$A$ by adding $0$\st{th} row and column, where
\begin{equation*}
  a_{00} = (\theta | \theta)\, , \, a_{0i}=a_{i0}=-(\theta |
  \alpha_i) \hbox{\,\, for \,\,} i \in I \, .
\end{equation*}
Note that $\det \hat{A} =0$.

Next, we construct a realization of the matrix $\hat{A}$.  Let

\begin{equation}
\label{eq:2.7a}
  \hat{\fh} = \fh + \CC K + \CC d \, , 
\end{equation}
and let $\delta$ be the linear function on $\hat{\fh}$, defined
by
\begin{equation}
  \label{eq:2.7}
  \delta |_{\fh + \CC K} = 0 \, , \, \delta (d) =1\, .
\end{equation}
Let $h_0 = K-\theta$, $\alpha_0 =\delta -\theta$.  Then $\hat{\fh}$ and
$\{h_i\}_{i \in \hat{I}}$ define a realization of $\hat{A}$, with
the set of simple roots $\hat{\Pi} = \{ \alpha_0 \} \bigcup
\Pi$.

We now consider in more detail an important for this paper example
$\fg = s\ell_{m|n}$ with $m>n$ or $g\ell_{n|n}$. 
We introduce a structure of a Kac--Moody superalgebra in
$\hat{\fg}$, associated to the basis $\rmv_1 , \ldots ,
\rmv_{m+n}$ of $V$ as follows.  Let $\hat{I} = \{ 0 \} \cup I$,
and let $\hat{I}_{\bo} = I_{\bo}$ if $\rmv_1$ and $\rmv_{m+n}$ have the
same parity, and $\hat{I}_{\bar{1}} = \{ 0 \} \cup I_{\bo}$ otherwise.
Let $\hat{A}$ be the $\hat{I} \times \hat{I}$-symmetric matrix,
obtained from $A$ by adding $0$\st{th} row and column, where 
$a_{00} = (-1)^{p (\rmv_1)} + (-1)^{p (\rmv_{m+n})}$,
$a_{01} = - (-1)^{p (\rmv_1)}$, $a_{0,m+n-1} =(-1)^{p
  (\rmv_{m+n})}$.  Note that the sum of entries of each row of
$\hat{A}$ is zero.  Such a matrix is depicted by the extended
Dynkin diagram, which is a cycle,
%\vspace{-20ex}
%\begin{center}
%\epsfig{file=mocktheta-dynkin-1.eps,height=3in,width=3in}
%\textbf{Figure 1.}
%\end{center}
%\vspace{-20ex}
% \begin{equation*}
% %
% \hbox{\begin{picture}(90,50)
% \put(-40,0){$\bullet - \bullet - \bullet - \cdots - \bullet - \bullet$}
% %\put(20,0){$G/H =\Xi $}
% \put(10,35){$\bullet$}
% \put(-30,10){\line(4,3){30}} %%%left line
% \put(25,29){\line(4,-3){30}}%%%right line
% %
% \end{picture}}
% \end{equation*}
%
%\vspace{2ex}
where the additional, $0$\st{th} mode, is grey if $0 \in
\hat{I}_{\bar{1}}$ and is white otherwise.

Clearly, $\corank \hat{A} \leq 1$ if $m >n$ and $\corank
\hat{A}\leq 2$ if $m=n$.  In fact, we have equalities since
$\sum_{i \in \hat{I}} \alpha_i = \delta$, and, if $m=n$, we have
a linear dependence of the $\alpha_i$ , restricted to
$\fh'$ (= span of the $h_{ij}$).
%\begin{equation*} 
% n \alpha_n + \sum_{i=1}^n i (\alpha_i + \alpha_{n+i}) =0 .
%\end{equation*} 
Hence $\dim \hat{\fh} = |\hat{I}| + \corank \hat{A}$, and we indeed have 
constructed a realization of the matrix $\hat{A}$.

The structure of a Kac--Moody superalgebra $\fg (\hat{A},
\hat{I}_{\bo})$ in $\hat{\fg}$ is introduced by letting $e_i$,
$f_i$ for $i \in I$ being the same as for $\fg$, and
\begin{equation*}
  e_0 = e_{-\theta} t \, , \, f_0 = e_\theta t^{-1}\, ,
\end{equation*}
where $e_{\pm \theta} \in \fg_{\pm \theta}$ are chosen such that
$(e_\theta | e_{-\theta}) =1$.  (For $\fg = s\ell_{m|n}$ with
$m>n$ and $g\ell_{n|n}$ we take $e_0 = E_{1,m+n} t$, $f_0 =
(-1)^{p (v_{m+n})} E_{m+n,1} t^{-1}$.)

The invariant bilinear form $(\, . \, | \, . \, )$ extends from
$\fg$ to the invariant bilinear form on $\hat{\fg}$ by the
following formulae ($a,b \in \fg, \,  \, i,j \in \ZZ$):

\begin{equation*}
(at^i| bt^j) = \delta_{i,-j} (a|b),\, (at^i |\CC K + \CC d) =
(K|K)=0,\, (d|d)=0,\, (K|d) =1.
\end{equation*}

We identify $\hat{\fh}^*$ with the space
\begin{equation*}
  \fh^* \oplus \CC \delta \oplus \CC \Lambda_0 \, ,
\end{equation*}
where $\delta$ is defined by (\ref{eq:2.7}), and 
\begin{equation}
  \label{eq:2.8}
  \Lambda_0 |_ {\fh + \CC d} =0 \, , \, \Lambda_0 (K) =1\, .
\end{equation}
Then the induced bilinear form on $\hat{\fh}^*$ is given by
(\ref{eq:2.4}) and
\begin{equation}
  \label{eq:2.9}
  (\fh^* | \CC \delta + \CC \Lambda_0) =0 \, , \, (\delta |\delta)
  =0\, , \, (\Lambda_0 | \Lambda_0)=0 \, , \, (\Lambda_0
  |\delta) =1 \, .
\end{equation}

The set of roots $\hat{\Delta}$ of $\hat{\fg}$ is the union of
the sets of real roots $\hat{\Delta}^{re}$ and imaginary roots
$\hat{\Delta}^{im}$, where (cf. (\ref{eq:2.5})):
\begin{equation*}
  \hat{\Delta}^{re} = \{ \alpha + s \delta |\, \alpha \in \Delta \,
  , \, s \in \ZZ \}\, , \quad \hat{\Delta}^{im}  = \{ s \delta |\,
  s \in \ZZ \backslash \{ 0 \}\}\, , 
\end{equation*}
the parity of a root $\alpha + s \delta$ equals that of $\alpha$,
and all $s \delta$ are even roots.  The set of positive roots is
the union of
\begin{equation*}
  \hat{\Delta}^{re}_+ = \{ \alpha + s \delta |\, \alpha \in \Delta
  \, , \, s > 0 \} \cup \Delta_+ \, , \quad \hat{\Delta}^{im}_+ =
  \{ s \delta |\, s > 0 \} \, .
\end{equation*}
The root spaces are $\hat{\fg}_ {\alpha + s \delta} = \fg_\alpha t^s$,
and $\hat{\fg}_{s\delta} = \fh t^s$, except for $\fg = g\ell_{n|n}$, where
$\hat{\fg}_{s\delta} = t^s\{ h \in \fh|\,  \str\, h =0 \} $\, .

The triangular decomposition is:
\begin{equation*}
  \hat{\fg} = \hat{\fn}_- \oplus \hat{\fh} \oplus \hat{\fn}_+ \, ,
\end{equation*}
where $\hat{\fn}_\pm = \fn_\pm + \sum_{s>0} t^{\pm s} \fg$,  For
example, if 
$\fg = s \ell_{m|n}$, $m>n$ or $g\ell_{n|n}$, then
$\hat{\fn}_+$ consists of all supertraceless matrices over $\CC
[t]$, which are strictly upper triangular at $t=0$.

Note that all imaginary roots are isotropic, and the even real
roots are not.  It follows from the description of root spaces
that all even real roots are integrable.  Hence, by definition, the
Weyl group $\hat{W}$ of $\hat{\fg}$ is generated by reflections
$r_\alpha$, $\alpha \in \Delta^{re}_{\bar{0}}$.

An important alternative description of the group $\hat{W}$  is
as follows.
Given $\alpha \in \fh^*$, define the following automorphism
$t_{\alpha}$ of the vector space $\hat{\fh}^*$:
\begin{equation}
  \label{eq:2.10}
  t_\alpha (\lambda) = \lambda + \lambda (K) \alpha - ((\lambda
  |\alpha) + \frac{(\alpha |\alpha)}{2} \lambda (K))\delta\, .
\end{equation}
It is easy to check that $t_\alpha$ preserves the bilinear form 
$\bl$ on $\hat{\fh}^*$ and that $t_\alpha t_\beta = t_{\alpha +
  \beta}$, $\alpha ,\beta \in \fh^*$.  Given an additive subgroup
$L \subset \fh^*$, let $t_L = \{ t_\alpha| \alpha \in L \}$.

Let $L = \ZZ \{ \alpha^\vee | \alpha \in \Delta_{\bar0}\}$.  Then
we have (\cite{K2}, Chapter 6):
\begin{equation}
  \label{eq:2.11}
  \hat{W} = W \ltimes t_L.
\end{equation}

\begin{remark}
\label{rem:2.2}
Note that 
$\epsilon_+(t_\alpha)=1$
for $\alpha \in L$, but $\epsilon_-(t_\alpha)$ is sometimes $-1$.
However for the affine superalgebra, associated to $s \ell_{m|n}$, $m>n$, 
or to $g\ell_{n|n}$, it is always $1$.
\end{remark}

\begin{example}
  \label{ex:2.2}
%Dynkin diagrams of the Dynkin diagrams for
%Example~\ref{ex:2.1} are as follows:
%\vspace{-20ex}
%\begin{center}
%\epsfig{file=mocktheta-dynkin-6.eps,height=3.5in,width=3.5in}
%\textbf{Figure 1.}
%\end{center}
%\vspace{-10ex}
%Note that in each row there is a unique diagram, for which the
%$0$\st{th} node is white.  The choice of these diagrams is very
%convenient for concrete computations later on.  The corresponding
The extended (symmetric) Cartan matrices $\hat{A}$ 
for Example~\ref{ex:2.1} are obtained from
the matrices $A$ using the property that the sum of entries in
each row and each column is zero.
\end{example}

\section{Partially integrable and admissible highest weight modules over
  affine superalgebras}
\label{sec:3}

Let $\fg$ be either a simple finite-dimensional Lie algebra or
one of the simple finite-dimensional Lie superalgebras
$s\ell_{m|n}, m>n$, $osp_{m|n}$, $D (2|1;\alpha)$, $F(4)$,
$G(3)$, or $g\ell_{n|n}$, endowed with a structure of a
Kac--Moody superalgebra $\fg (A,I_{\bo})$, and let $\hat{\fg}$ be the
corresponding affine superalgebra with the structure of the
Kac--Moody superalgebra $\fg (\hat{A}, \hat{I}_{\bo})$ 
(see Section \ref{sec:2}).

Let $h^\vee$ be the half of the eigenvalue of the Casimir
operator, associated to the bilinear form $\bl$ on $\fg$.  It is
given by the usual formulae \cite{K1}, p.~85:
\begin{equation}  
\label{eq:3.1}   
h^\vee = (\rho |\theta) + \frac12 (\theta | \theta)\,;\,\,h^\vee (a|b) = 
\sum_{\alpha \in \Delta_+}(-1)^{p(\alpha)}\alpha(a)\alpha(b),\,\,\, a,b\in \fh\, .
\end{equation}
Recall that $\theta \in \Delta_+$ is the highest root and $\rho$
is a Weyl vector, defined in Section 1. Recall also that we have
the following ``strange'' formula 
(cf. \cite{KW3}):
\begin{equation}  
\label{eq:3.0}   
 (\rho|\rho)= h^\vee (\sdim \fg) /12\, .
\end{equation}

Now we introduce the important subset $  \Delta^\#_{\bar0}$
of the set of even roots of $\fg$ \cite{KW3}. 
If $h^\vee \neq 0$ (which happens iff 
$\fg \neq g\ell_{n|n}$,
$osp_{2n+2|2n}$ or $D (2|1;\alpha)$ \cite{K1}), let 
\begin{equation}
  \label{eq:3.2}
  \Delta^\#_{\bar0} 
= \{ \alpha \in \Delta_{\bar{0}} |\,\, h^\vee
  (\alpha | \alpha) >0 \} .
\end{equation}
In the case $\fg = g\ell_{n|n}$ there are two choices of
 $ \Delta^\#_{\bar0}$, described in the example below.  
\begin{example}
  \label{ex:3.1}
Let $\fg = s\ell_{m|n},\, m>n$, or 
$g\ell_{n|n}$ 
with one of the
structures of a Kac--Moody algebra, described in Section \ref{sec:2}.
Then $h^\vee = m-n$ for the invariant bilinear form   
$(a|b) = \str $ $ab$ on $\fg$.
We have:  
$\Delta_{\bar{0}}^\# = 
\{ \epsilon_i - \epsilon_j | \,p (v_i)= p(v_j) = \bar{0},\,i\neq j \}$. 
If $m=n$, there is another choice:
$\Delta_{\bar{0}}^\# = \{ \epsilon_i - \epsilon_j |\, \,p (v_i)= p(v_j) = 
\bar{1},\,i\neq j \}$. 
For all $\alpha \in \Delta_{\bar{0}}^\#$ we have 
$(\alpha | \alpha)=2$, 
except for the second choice of
$\Delta_{\bar{0}}^\#$ in the case $m=n$ , when $(\alpha | \alpha)=-2$.
%=2$ if $h^\vee >0$ and $(\alpha | \alpha) =-2$ if $h^\vee =0$.
\end{example}
Note that in all cases the set $\Delta^\#_{\bar{0}}$ is $W$-invariant.
Let $L^\#$ be the $\ZZ$-span of  the set $\{ \alpha^\vee |\, \alpha
\in \Delta^\#_{\bar0} \}$, and let's introduce the following subgroup
of the Weyl group $\hat{W}$:
\begin{equation}
  \label{eq:3.4}
  \hat{W}^\# = W \ltimes t_{L^{\#}}\, .
\end{equation}

Let $\Lambda \in \hat{\fh}^*$ and let $L (\Lambda)$ be the
corresponding $\hat{\fg} = \fg (\hat{A}, \hat{I}_{\bo})$-module.
Then the central element $K$ acts as a scalar $\Lambda (K)$,
%$kI$, where 
called the {\em level} of $\Lambda$, which we denote by the same letter $K$,
unless where confusion may arise.

\begin{definition}
  \label{def:3.2}
The $\hat{\fg}$-module $L(\Lambda)$ is called {\em partially
  integrable} if 

(i) any root $\alpha + n\delta$, where
$\alpha \in \Delta^\#_{\bar0}$, $n \in \ZZ$,  is
$\Lambda$-integrable; 

(ii)~~any root $\alpha \in
\Delta_{\bar{0}}$ is $\Lambda$-integrable.  
\end{definition}
Condition (i) means
that $L (\Lambda)$ is integrable with respect to the subalgebra
$\hat{\fg}_{\bar0}^\#$ of $\hat{\fg}$ 
, where $\hat{\fg}_{\bar0}^\#$ is the affine subalgebra with the set
of real roots $\{\alpha+n\delta|\, \alpha\in \Delta_{\bar0}^\#, n\in \ZZ\}$
, and condition (ii) means that $L(\Lambda)$ is
integrable with respect to $\fg_{\bar0}$ ($=$ locally finite with
respect to $\fg$).

We let $\alpha_j^\vee =\alpha_j$ if $\alpha_j$ is a simple isotropic root,
and define it as in (\ref{eq:1.4}) otherwise.
Define the fundamental weights $\Lambda_i \in \hat{\fh}^*$, $i \in \hat{I}$, by
\begin{equation}
  \label{eq:3.5}
  (\Lambda _i | \alpha^\vee_j) = \delta_{ij}\, , \quad j \in
  \hat{I}\, , \quad \Lambda_j (d) =0\, ,
\end{equation}
and also $\Lambda_j(F)=0$ for $\fg = g\ell_{n|n}$ (which
is consistent with (\ref{eq:2.8})).

As has been explained in Section~1, we may assume that $\Lambda
(d) = 0$, and $\Lambda (F) =0$ if $\fg = g\ell_{n|n}$
for a highest weight $\Lambda$.  Then we can write:
\begin{equation}
  \label{eq:3.6}
  \Lambda = \sum_{i \in {\hat{I}}} m_i \Lambda_i
 \hbox{\,\, for some \,\,} m_i \in \CC\, .
\end{equation}
The numbers $m_i$ are called the {\em labels} of $\Lambda$ .

Define $\hat{\rho} \in \hat{\fh}^*$ by
\begin{equation}
  \label{eq:3.7}
  (\hat{\rho} | \alpha_j) =\frac{1}{2} a_{jj}\, , \quad j \in \hat{I} \,
  , \quad (\hat{\rho}|d)=0\, ,
\end{equation}
and, in addition, $\hat{\rho} (F)=0$ 
if $\fg=g\ell_{n|n}$.  It is easy to see that (cf. \cite {K2}, Chapter 12)
\begin{equation}
  \label{eq:3.8}
  \hat{\rho} = \rho  + h^\vee \Lambda_0 \,.
\end{equation}

In \cite{KW4} we gave a classification of partially integrable
modules $L (\Lambda)$ in terms of the labels $m_i$ of $\Lambda$.
Here we give the answer only for the relevant to this paper two
examples.  Unlike in the present paper, we used in \cite{KW4}
Dynkin diagrams with minimal number of grey nodes.  Here we use
diagrams with a white $0$\st{th} node and recalculate the results
of \cite{KW4} using Proposition~\ref{prop:1.7}(b).

\begin{example}
  \label{ex:3.3} $\fg = s\ell_{2|1}$; 
we use its first Dynkin diagram in Example \ref{ex:2.1} and the corresponding 
extended Dynkin diagram.
%\vspace{-20ex}
%\begin{center}
%\epsfig{file=mocktheta-dynkin-3.3.eps,height=3.5in,width=3.5in}
%\end{center}
%\vspace{-20ex}
The level of a $\hat{\fg}$-module $L(\Lambda)$, where $\Lambda$
has labels $m_0$, $m_1$, $m_2$, is:
\begin{equation*}
  K= m_0+ m_1 + m_2 \, .
\end{equation*}
We have 
$\Delta_{\bar{0}}^\# =\{\pm\theta\}$, where $\theta=\alpha_1+\alpha_2$. 
Let $K' = m_1 + m_2$.  Then $L (\Lambda)$ is partially integrable iff 
$m_0, K' \in \ZZ_{\geq 0}$ 
(hence $K \in \ZZ_{\geq 0}$), and $m_1=m_2=0$ if
$K'=0$ \cite{KW4} .  (Note that in these cases $L
(\Lambda)$ is integrable.  In fact, any partially integrable $\hat{\fg}$-module
$L(\Lambda)$ is integrable iff $\fg$ is either a Lie algebra, or
$\fg \simeq s\ell_{m|1}$ with $m>1$, or $\fg \simeq osp_{m|n}$ with
$m=1$ or $2$ and $n \geq 2$
\cite{KW4}).
\end{example}

\begin{example}
  \label{ex:3.4}$\fg = g\ell_{2|2}$; we use its first Dynkin diagram in 
Example \ref{ex:2.1} and the corresponding extended Dynkin diagram. 
%\vspace{-20ex}
%\begin{center}
%\epsfig{file=mocktheta-dynkin-3.4.eps,height=3.5in,width=3.5in}
%\end{center}
%\vspace{-25ex}
The level of a $\hat{\fg}$-module $L(\Lambda)$ is
\begin{equation*}
  K=m_0 + m_1 - m_2 + m_3  \, .
\end{equation*}
The first choice of 
$\Delta_{\bar{0}}^\#$ is $\{\pm\theta\}$, where 
$\theta=\alpha_1+\alpha_2+\alpha_3$. 
Let $K'=m_1-m_2+m_3$.  Then $L(\Lambda)$ is partially integrable
iff:  $m_0 , K' \in \ZZ_{\geq 0}$ (hence $K \in \ZZ_{\geq 0}$), and
$m_1= m_3 =0$ if   $K'=0$ and $m_1 m_3 = 0$ if $K'=1$. 
For this choice, $\theta $,
$\delta - \theta$ and $\alpha_2$ are $\Lambda$-integrable. 
The second choice is $\Delta_{\bar{0}}^\#=\{\pm\alpha_2\}$, i.e. 
$\alpha_2, \delta-\alpha_2$ and $\theta$ are $\Lambda$-integrable. 
Let then $K'' = m_0 + m_1 + m_3$. The partial integrability conditions
in this case are: $m_2\, , \, -K'' \in \ZZ_{\geq 0}$ 
(hence $-K \in \ZZ_{\geq 0}$), and $ m_1 =m_3 =0 $ if $K'' = 0$,
$m_1m_3 =0$ if $K'' =-1$. 
Thus, a $\hat{g\ell}_{2|2}$-module $L (\Lambda)$ is integrable iff $\dim L
(\Lambda) =1$.
\end{example}

\begin{definition}
  \label{def:3.5}
\begin{list}{}{}

\item (a)
Let $\lambda \in \fh^*$, where $\fh$ is a Cartan subalgebra of
$\fg$.  A $\lambda$-{\em maximal isotropic} subset of $\Delta$ is
a subset $T_\lambda$, consisting of the maximal number of
positive roots $\beta_i$, such that $(\lambda
|\beta_i) =0$ and $(\beta_i|\beta_j) =0$ for all $\beta_i,\beta_j
\in T_\lambda$.

\item (b)
A $\fg$-module $L(\Lambda)$ 
%of level  $K$ 
is called {\em tame} if there exists
a $\Lambda + \rho$-maximal isotropic subset of $\Delta$,
contained in the set of simple roots $\Pi$.

\item (c)
A $\hat{\fg}$-module $L (\Lambda)$ of level $K$ is called {\em tame} if the
$\fg$-module $L (\bar{\Lambda})$ is tame, where $\bar{\Lambda}
=\Lambda |_{\fh}$, and $K+h^\vee \neq 0$. 
%(where $k = \Lambda (K)$ is the level).

\end{list}

\end{definition}

As usual, we introduce the Weyl denominator $R^+$ and superdenominator $R^-$ 
by:
\begin{equation}
  \label{eq:3.9}
  R^\pm = \frac{e^\rho \prod_{\alpha \in \Delta_{\bar{0}+}}(1-e^{-\alpha})}
  {\prod_{\alpha \in \Delta_{\bo +}}(1\pm e^{-\alpha})} \, . 
%   \quad R_s= \frac{e^\rho \prod_{\alpha \in \Delta_{\bar{0}_+}}(1-e^{-\alpha})}
 %  {\prod_{\alpha \in \Delta_{\bo +}}(1+e^{-\alpha})}\, .
\end{equation}

\begin{conjecture}
  \label{conj:3.6}

\cite{KW3}.  Let $L (\lambda)$ be a tame finite-dimensional
$\fg$-module.  Then there exists a non-zero integer $j_\lambda$,
such that
\begin{equation}
  \label{eq:3.10}
  j_\lambda R^+ \ch^+_{L (\lambda)} = \sum_{w \in W} \epsilon_+(w)
w \frac{e^{\lambda +\rho}}{\prod_{\beta \in T_{\lambda + \rho}}(1+e^{-\beta})},
\end{equation}
where $T_{\lambda +\rho} \subset \Pi$ is a $\lambda +
\rho$-maximal subset of $\Delta$, and $\epsilon_+ (w) = \det_ \fh w$.

\end{conjecture}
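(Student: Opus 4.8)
The plan is to show that the right-hand side of (\ref{eq:3.10}), which I denote $F_\lambda$, is a nonzero integer multiple of $R^+\ch^+_{L(\lambda)}$, by first fixing its symmetry and extreme term and then identifying $F_\lambda/R^+$ with a genuine irreducible character. Since $\fg$ is finite-dimensional and $L(\lambda)$ is finite-dimensional, every even root is $\lambda$-integrable, so Proposition~\ref{prop:1.3} shows $\ch^+_{L(\lambda)}$ is $W$-invariant. Writing $1+e^{-\alpha}=e^{-\alpha/2}(e^{\alpha/2}+e^{-\alpha/2})$ exhibits $R^+$ as the ratio of the even Weyl denominator $e^{\rho_{\bar{0}}}\prod_{\alpha\in\Delta_{\bar{0}+}}(1-e^{-\alpha})$, which is $W$-anti-invariant, and the factor $\prod_{\alpha\in\Delta_{\bo +}}(e^{\alpha/2}+e^{-\alpha/2})$, which is $W$-invariant; hence $R^+\ch^+_{L(\lambda)}$ is $W$-anti-invariant, as is $F_\lambda$ by construction. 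Both sides have $e^{\lambda+\rho}$ as extreme term, which fixes the normalization and identifies $j_\lambda$ as the multiplicity of $e^{\lambda+\rho}$ in $F_\lambda$ after each $\tfrac{1}{1+e^{-\beta}}$ is expanded as $\sum_{k\ge 0}(-1)^k e^{-k\beta}$.

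I would then settle the typical case $T_{\lambda+\rho}=\emptyset$, where the claim is the super Weyl--Kac formula $R^+\ch^+_{L(\lambda)}=\sum_{w\in W}\epsilon_+(w)\,w(e^{\lambda+\rho})$ with $j_\lambda=1$. For typical $\lambda$ the module $L(\lambda)$ is the Kac module induced from the $\fg_{\bar{0}}$-module $L_{\bar{0}}(\lambda)$ along $\fn_+\cap\fg_{\bar{1}}$, so its character is $\ch^+_{L_{\bar{0}}(\lambda)}\prod_{\alpha\in\Delta_{\bo +}}(1+e^{-\alpha})$; multiplying by $R^+$ cancels the odd factors and reduces the assertion to the ordinary Weyl character formula for the reductive Lie algebra $\fg_{\bar{0}}$ applied to $L_{\bar{0}}(\lambda)$.

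The substance is the atypical case $T:=T_{\lambda+\rho}\neq\emptyset$. By tameness and Proposition~\ref{prop:1.7}(a)--(b) I may use odd reflections to reach a Borel with $T\subset\Pi$, so $T$ is a set of mutually orthogonal odd isotropic simple roots annihilating $\lambda+\rho$. Clearing denominators by $\prod_{\beta\in T}(1+e^{-\beta})$ removes exactly the odd factors indexed by $T$ from $R^+$, so that $R^+\prod_{\beta\in T}(1+e^{-\beta})$ matches a denominator of strictly smaller atypicality. The strategy is to realize $\ch^+_{L(\lambda)}$ as the Euler characteristic of a finite complex whose terms are such partially typical (Kac-type) modules -- built from the super BGG resolution and using the odd reflections $r_\beta$, $\beta\in T$ (each fixing $\lambda+\rho$) to cancel atypical Verma subquotients in pairs of opposite parity -- so that the alternating sum of their now-known characters telescopes into $F_\lambda$.

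The main obstacle is precisely the exactness and vanishing needed to guarantee that this alternating sum reproduces $F_\lambda$ with no spurious subquotients, that is, that the atypical multiplicities cancel to the diagonal; this is where the statement remains only conjectural in general. For the algebras relevant here the input is supplied by genuinely different machinery -- Serganova's Borel--Weil--Bott cohomology vanishing on the flag supervariety for $\fsl_{m|n}$ and $\gl_{n|n}$ \cite{S}, and the Shapovalov-determinant control of Verma embeddings of Gorelik--Kac \cite{GK} -- so I would use the reduction above to isolate the single vanishing (respectively non-degeneracy) statement that remains and invoke \cite{S}, \cite{GK} to close the argument for $\fsl_{2|1}$ and $\gl_{2|2}$.
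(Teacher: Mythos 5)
There is nothing in the paper to compare your argument against: the statement is labelled Conjecture~\ref{conj:3.6}, the paper gives no proof of it, and the authors only remark that it (together with Conjecture~\ref{conj:3.7}) ``has been verified in many cases'' with references to \cite{KW4}, \cite{S}, \cite{GK}. Your write-up should therefore be judged as a standalone attempt, and as such it has a genuine gap that you yourself flag. The preparatory steps are fine: $R^{+}\ch^{+}_{L(\lambda)}$ is indeed $W$-anti-invariant (the even Weyl denominator contributes $\det w$ and $\prod_{\alpha\in\Delta_{\bar{1}+}}(e^{\alpha/2}+e^{-\alpha/2})$ is $W$-invariant), the leading term $e^{\lambda+\rho}$ matches on both sides, and the typical case $T_{\lambda+\rho}=\emptyset$ does reduce, via the Kac module, to the classical Weyl character formula for $\fg_{\bar{0}}$ (though note this induction argument as stated needs $\fg$ of type I; for $osp$, $D(2|1;a)$, $F(4)$, $G(3)$ the typical formula requires Kac's separate argument). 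But anti-invariance plus a leading term does not determine an element of the relevant ring — the space of anti-invariants is huge — so all of the content sits in the atypical case.

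There your argument is circular: the ``finite complex of partially typical Kac-type modules whose Euler characteristic telescopes into $F_\lambda$'' is precisely the object whose existence and exactness one would need to prove, and the cancellation of atypical multiplicities ``to the diagonal'' is an equivalent reformulation of the character formula itself, not a lemma you can take for granted. Saying that the remaining vanishing statement is supplied by \cite{S} and \cite{GK} converts the proof into a citation of the already-known special cases ($\fsl_{2|1}$, $\gl_{2|2}$, and more generally the cases treated in those references), which is exactly the status the paper already assigns to the statement. So the proposal correctly isolates where the difficulty lies, but it does not close it, and no complete proof of the general statement exists in the paper for it to be measured against.
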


It is not difficult to show, using (\ref{eq:1.8}), that
(\ref{eq:3.10}) implies the following formula for the
supercharacter:
\begin{equation}
  \label{eq:3.11}
  j_\lambda R^- \ch^-_{L(\lambda)} = \sum_{w \in W}
  \epsilon_- (w) w \frac{e^{\lambda + \rho}}
{\prod_{\beta \in
      T_{\lambda +\rho}} (1-e^{-\beta})}.
\end{equation}
%
%where $\epsilon_- (w)$ is defined in \cite{KW3}, Remark~2.1.

\begin{remark}   
\label{rem:3.7}
(a) Due to Remark \ref{rem:1.5}, the 
$g\ell_{n|n}$-module 
$L(\lambda)$ 
remains irreducible when restricted to 
$s\ell_{n|n}$
. Furthermore, if
$\lambda (I_{2n})=0$, then $L(\lambda)$ is actually a module over 
$ps\ell_{n|n}:=s\ell_{n|n}/\CC I_{2n}$.

(b) Due to Remark \ref{rem:1.5}, the 
$\hat{g\ell}_{n|n}$-module
$L(\Lambda)$  remains irreducible when restricted to
$\hat{s\ell}_{n|n}
=s\ell_{n|n}[t, t^{-1}] +\CC K + \CC d$
. Furthermore, if $\Lambda (I_{2n})=0$, then $L(\Lambda)$ is actually a 
module over $\hat{ps\ell}_{n|n}=
ps\ell_{n|n}[t, t^{-1}] 
+\CC K + \CC d$.
\end{remark}   

\begin{conjecture}
  \label{conj:3.7} (cf. \cite{KW4}).  Let $L (\Lambda)$ be a
  partially integrable tame $\hat{\fg}$-module. Then
  \begin{equation} 
\label{eq:3.12} 
\hat{R}^+ \ch^+_ {L (\Lambda )} = \sum_{\alpha \in L^\#}t_\alpha  
(e^{\hat{\rho}+\Lambda-\rho-\bar{\Lambda}} R^+ \ch^+_ {L (\bar{\Lambda})})\,  
\end{equation}
(and, consequently, the same formula holds for $\ch^-_{ L (\Lambda)} $ if 
we replace
$\hat{R}^+$ by $\hat{R}^-$ and $R^+$ by $R^-$, and insert 
$\epsilon_- (t_\alpha)$ in front 
of $t_\alpha$).  Here in the LHS we have
the affine Weyl denominator $\hat{R}^+$ and superdenominator $\hat{R}^-$,
defined by:
\begin{equation}
  \label{eq:3.13}
  \hat{R}^\pm = e^{\hat{\rho}-\rho} R^\pm \prod^\infty_{n=1} 
  \left((1-e^{-n\delta })^{\ell} 
   \frac{\prod_{\alpha \in \Delta_{\bar0}} (1-e^{\alpha -n \delta})}
    {\prod_{\alpha \in \Delta_{\bo}} (1\pm e^{\alpha -n\delta})} \right)\, ,
\end{equation}
where $\ell$ is the multiplicity of the root $\delta$. 
\end{conjecture}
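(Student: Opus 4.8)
The plan is to deduce the affine formula (\ref{eq:3.12}) from the finite one (\ref{eq:3.10}), which we are entitled to use since it is the content of Conjecture~\ref{conj:3.6}, by averaging the finite character over the translation lattice $t_{L^\#}$ and recognizing the result as a sum over the group $\hat{W}^\# = W \ltimes t_{L^\#}$ of (\ref{eq:3.4}). First I would simplify the prefactor. Since we normalize $\Lambda(d)=0$, we may write $\Lambda = \bar\Lambda + K\Lambda_0$, and combining this with (\ref{eq:3.8}) gives $\hat\rho + \Lambda - \rho - \bar\Lambda = (K+h^\vee)\Lambda_0$. Thus the seed of the right-hand side is $e^{(K+h^\vee)\Lambda_0}R^+\ch^+_{L(\bar\Lambda)}$, a series all of whose weights have level $K+h^\vee$. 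Substituting (\ref{eq:3.10}) and using $\epsilon_+(t_\alpha)=1$ (Remark~\ref{rem:2.2}), so that $\epsilon_+(t_\alpha w)=\epsilon_+(w)$, the right-hand side becomes
$$ j_{\bar\Lambda}^{-1}\sum_{\hat w \in \hat{W}^\#}\epsilon_+(\hat w)\,\hat w\!\left(\frac{e^{\Lambda+\hat\rho}}{\prod_{\beta\in T_{\bar\Lambda+\rho}}(1+e^{-\beta})}\right), $$
where the translations act through (\ref{eq:2.10}) at level $K+h^\vee$; the level-dependent $\delta$-shift built into $t_\alpha$ is precisely what is needed for the finite numerators to reassemble into the affine theta-type series. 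One checks this sum converges on the domain of Proposition~\ref{prop:1.4}.

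Next I would show that both sides transform by the sign $\epsilon_+$ under $\hat{W}^\#$. For the right-hand side this is automatic, since any expression of the form $\sum_{\hat w}\epsilon_+(\hat w)\,\hat w(X)$ is $\epsilon_+$-covariant. For the left-hand side the symmetry comes from two sources. Condition (ii) of Definition~\ref{def:3.2} makes every $\alpha\in\Delta_{\bar0}$ $\Lambda$-integrable, so by Proposition~\ref{prop:1.3} the series $\ch^+_{L(\Lambda)}$ is $W$-invariant; condition (i) makes $L(\Lambda)$ integrable over $\hat\fg^\#_{\bar0}$, so $\ch^+_{L(\Lambda)}$ is invariant under the affine Weyl group of $\hat\fg^\#_{\bar0}$, which contains $t_{L^\#}$. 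Together these generate invariance under all of $\hat{W}^\# = W\ltimes t_{L^\#}$. Meanwhile $\hat R^+$ of (\ref{eq:3.13}) is $\epsilon_+$-covariant under $\hat{W}^\#$: even reflections contribute the sign, the odd and imaginary roots are permuted among themselves, and the translations $t_\alpha$, $\alpha\in L^\#$, act trivially because $\epsilon_+(t_\alpha)=1$. Hence $\hat R^+\ch^+_{L(\Lambda)}$ is $\epsilon_+$-covariant as well, and both sides share the extremal term $e^{\Lambda+\hat\rho}$: on the left it comes from $\dim L(\Lambda)_\Lambda=1$ together with the leading term of $\hat R^+$, and on the right from the $\hat w=1$, leading-$\beta$ contribution.

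Finally I would close with a uniqueness (linkage) argument. The difference of the two sides is an $\epsilon_+$-covariant series supported on $\Lambda+\hat\rho-\hat Q_+$, where $\hat Q_+=\sum_{i\in\hat I}\ZZ_{\geq0}\alpha_i$, with vanishing coefficient of $e^{\Lambda+\hat\rho}$. To see it vanishes one compares Casimir (Sugawara) eigenvalues: any weight $\mu+\hat\rho$ surviving in an $\epsilon_+$-covariant expression and lying strictly below $\Lambda+\hat\rho$ would have to satisfy $(\mu+\hat\rho|\mu+\hat\rho)=(\Lambda+\hat\rho|\Lambda+\hat\rho)$, and the tameness hypothesis (Definition~\ref{def:3.5}) guarantees that the only contributions of this type are exactly those already produced by the isotropic denominator $\prod_{\beta\in T_{\bar\Lambda+\rho}}(1+e^{-\beta})$. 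The supercharacter statement then follows by applying the parity automorphism $e^{\Lambda-\beta}\mapsto(-1)^{\operatorname{ht}_{\bar1}(\beta)}e^{\Lambda-\beta}$, which converts $\ch^+$ to $\ch^-$ and $R^+$ to $R^-$; tracking its effect through the translations turns $\epsilon_+(\hat w)$ into $\epsilon_-(\hat w)$ via (\ref{eq:3.11}), that is, it inserts $\epsilon_-(t_\alpha)$ in front of each $t_\alpha$, since $\epsilon_-(t_\alpha)$ records precisely the parity defect of $t_\alpha$ (Remark~\ref{rem:2.2}).

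The main obstacle is this last step. Because $L(\Lambda)$ is integrable only over $\hat\fg^\#_{\bar0}$ and not over all of $\hat\fg$, covariance together with the extremal term does \emph{not} by itself pin the character down: the isotropic (odd) directions are genuinely un-summed, and one must rule out any correction terms beyond those encoded by the maximal isotropic set $T_{\bar\Lambda+\rho}$. This is exactly where tameness is indispensable and where the real work lies; a uniform Kac--Kazhdan-type determinant or embedding analysis for the relevant isotropic weights would be the clean way to close it, and in the cases treated here it is carried out in \cite{KW4}, \cite{S}, \cite{GK}.
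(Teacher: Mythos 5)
The statement you set out to prove is presented in the paper as Conjecture~\ref{conj:3.7}: the authors give no proof of it, only the remark that it has been verified in many cases and that its equivalent form (\ref{eq:3.15a}) is established in \cite{GK} for the cases needed later. So there is no in-paper argument to compare yours against; the only question is whether your sketch closes the conjecture, and it does not. Your formal manipulations are sound and in fact reproduce the paper's own reformulation: substituting (\ref{eq:3.10}) into the right-hand side of (\ref{eq:3.12}), using $\hat{\rho}+\Lambda-\rho-\bar{\Lambda}=(K+h^\vee)\Lambda_0$ and $\epsilon_+(t_\alpha)=1$, is precisely how the paper passes from (\ref{eq:3.12}) to (\ref{eq:3.15a}), and the $\epsilon_+$-covariance of both sides under $\hat{W}^\#=W\ltimes t_{L^\#}$ (via Proposition~\ref{prop:1.3} and conditions (i), (ii) of Definition~\ref{def:3.2} on one side, and the form of the sum on the other) is correct.

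The genuine gap is the uniqueness step, and it cannot be dispatched by the Casimir comparison you invoke: for each isotropic $\beta$ with $(\beta\,|\,\Lambda+\hat{\rho})=0$, the entire string $\Lambda+\hat{\rho}-n\beta$, $n\geq 1$, has the same Casimir eigenvalue as $\Lambda+\hat{\rho}$, so covariance plus the extremal term leaves infinitely many coefficients undetermined in exactly the isotropic directions. Proving that these coefficients are precisely the geometric series produced by $\prod_{\beta\in T_{\bar{\Lambda}+\rho}}(1+e^{-\beta})^{-1}$ — equivalently, controlling the Jordan--H\"older multiplicities of the relevant Verma-type modules along isotropic strings — \emph{is} the content of the conjecture; it requires a Kac--Kazhdan/Shapovalov-determinant or resolution argument, which is exactly what \cite{KW4}, \cite{S}, \cite{GK} supply case by case. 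You identify this obstacle yourself and defer it to those references, so what you have is a correct reduction of Conjecture~\ref{conj:3.7} (assuming Conjecture~\ref{conj:3.6}) to the statement that no further covariant corrections occur, not a proof. Note also that your route makes (\ref{eq:3.12}) conditional on the unproved Conjecture~\ref{conj:3.6}, whereas (\ref{eq:3.12}) as stated relates the affine and finite characters without presupposing a closed formula for the latter.
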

Note that
$\ell=\dim\,\fh$
if 
$\fg\neq g\ell_{m|m}$,
$\ell=\dim\,\fh -1=2m-1$ (resp. $=2m-2$) if $\fg= g\ell_{m|m}$
and $\Lambda(I_{2m})\neq 0$ (resp. $=0$).

Note that, using (\ref{eq:3.10}), formula (\ref{eq:3.12}) can be rewritten 
as follows:
 \begin{equation} 
\label{eq:3.15a} 
j_{\bar{\Lambda}}\hat{R}^\pm \ch^\pm_ {L (\Lambda )} = 
\sum_{w \in \hat{W}^\#}\epsilon_\pm(w)   
w \frac{e^{\Lambda +\hat{\rho}}}
{\prod_{\beta \in T_{\bar{\Lambda} + \rho}}(1\pm e^{-\beta})}.
\end{equation}
%
%and similarly for $\sch_{L(\Lambda)}$.
Conjectures \ref{conj:3.6} and  \ref{conj:3.7} have been verified
in many cases (\cite{KW4}, \cite{S}, \cite{GK}).

%%%%%%***MOVED FROM page 14*** %%%%%%

Given $\Lambda \in \hat{\fh}^*$ of level $K \neq -h^\vee$, by
analogy with the affine Lie algebra case, we introduce the following
number (\cite{K2}, Chapter 12):
\begin{equation}
  \label{eq:4.1}
  m_\Lambda = \frac{(\Lambda + \hat{\rho} | \Lambda +  \hat{\rho})}
  {2 (K+h^\vee)} - \frac{\sdim \fg}{24} \, ,
\end{equation}
called the {\em modular anomaly} of $\Lambda$.  As in the affine
algebra case, using the ``strange'' formula (\ref{eq:3.0}), we
%(see \cite{KW3}, (1.2)), we
obtain another important expression for $m_\Lambda$:
\begin{eqnarray}
  \label{eq:4.2}
  m_\Lambda &=& h(\Lambda) - \frac{c(K)}{24},\\
\noalign{\hbox{where}}\nonumber\\
 \label{eq:4.3}
   h(\Lambda) &=& 
     \frac{(\Lambda + 2 \hat{\rho}|\Lambda)}{2(K+h^\vee)} \, , \\[1ex]
 \label{eq:4.4}  %%%%%  formerly (eq:4.4)
    c(K) &=& \frac{K \sdim \fg}{K+h^\vee}\, .
\end{eqnarray}

As in the affine Lie algebra case, $c(K)$ is the central charge
of the Sugawara's Virasoro field $L (z) = \sum_{n \in \ZZ} L_n
z^{-n-2}$, and $h_\Lambda$ is the minimal eigenvalue of $L_0$ in
$L (\Lambda)$ (cf. \cite{K2}, Chapter 12).

As in the affine Lie algebra case, in order to ``improve''
modular invariance properties of characters, one introduces the
{\em normalized character and supercharacter} of the $\hat{\fg}$-module 
$L(\Lambda)$ by the formula
\begin{equation} \label{eq:4.5}%formerly {eq:4.5}
  \ch^\pm_\Lambda = e^{-m_\Lambda \delta} \ch^\pm_{L(\Lambda)}\, .
\end{equation}
Note that $\ch^\pm_\Lambda$ depends only on $\Lambda \mod \CC \delta$.

%%%%% end moved section

\begin{definition}
  \label{def:3.8}

An injective homomorphism $\varphi : \hat{\fg} \to \hat{\fg}$ is called
{\em compatible} if it respects the
triangular decomposition and preserves the invariant bilinear
form.
\end{definition}

It is clear that for a compatible homomorphism $\varphi$, the map
$\varphi : \hat{\fh} \to \hat{\fh}$ is an isomorphism, that
$\hat{\Delta}_+\subset \varphi^*(\hat{\Delta}_+)$,  and
that $\varphi (K) = MK$, where $M$ is a positive integer, called
the {\em degree} of $\varphi$.
Furthermore, the subset $S=(\varphi^*)^{-1} (\hat{\Pi}) \subset \hat{\Delta}_+$
clearly satisfies the following two properties:
\begin{equation}
  \label{eq:3.14}
  \alpha - \beta \notin \hat{\Delta} \hbox{\,\, for \,\,}
  \alpha,\beta \in S\, ;
\end{equation}
\begin{equation}
  \label{eq:3.15}
\QQ S = \QQ \hat{\Pi}\, .
\end{equation}
Such an $S$ is called a {\it simple subset} of $\hat{\Delta}_+$.
As in \cite{KW2}, it is not difficult to classify all simple subsets $S$
in 
$\hat{\Delta}_+$.  (In fact,
for $\fg = s\ell_{m|n}$, $m>n$, and $g\ell_{n|n}$ the answer
obviously is the same as for the Lie algebra $s\ell_{m+n}$.)  We
give here the answer for $\fg = s\ell_{2|1}$ and $g\ell_{2|2}$.

\begin{example}
  \label{ex:3.9}
$\otimes_1 - \otimes_2$
from Example \ref{ex:2.1}.  There are two types of simple subsets $S \subset
\hat{\Delta}_+$:
%, satisfying (\ref{eq:3.14}) and (\ref{eq:3.15}):
%
\begin{eqnarray*}
S_1 &=& \{ \alpha_0+k_0\delta \,,\, \alpha_1 +k_1 \delta \,,\,
\alpha_2 + k_2 \delta \} \, ;\\
S_2 &=& \{ -\alpha_0 + k_0 \delta \,,\, -\alpha_1 + k_1\delta
\,,\, -\alpha_2+k_2\delta\}\, .
\end{eqnarray*}
Here $k_i \in \ZZ$ are such that $S_1$ and $S_2$ lie in
$\hat{\Delta}_+$ (i.e, $k_i \in \ZZ_{\geq 0}$ for $S_1$, and $k_i \in
\NN$ for $S_2$).

\end{example}

\begin{example}
  \label{ex:3.10}
$\otimes_1 - \bigcirc_2 -\otimes_3$
from Example \ref{ex:2.1}.  
There are four types of simple subsets $S
\subset \hat{\Delta}_+$:
%, satisfying (\ref{eq:3.14}) and(\ref{eq:3.15}):
%
\begin{eqnarray*}
  S_1 &=& \{ \alpha_0 +k_0 \delta \,,\, \alpha_1 + k_1 \delta
  \,,\,\alpha_2 +k_2 \delta \,,\, \alpha_3 + k_3 \delta \};\\
  S_2 &=& \{ -\alpha_0 +k_0 \delta \,,\, -\alpha_1 + k_1 \delta
  \,,\,-\alpha_2 +k_2 \delta \,,\,- \alpha_3 + k_3 \delta \};\\
  S_3 &=&  \{ \alpha_0 +k_0 \delta \,,\, \alpha_1+\alpha_2 + k_1 \delta
  \,,\,-\alpha_2 +k_2 \delta \,,\, \alpha_2 +\alpha_3 + k_3 \delta \};\\
  S_4 &=&  \{- \alpha_0 +k_0 \delta \,,\,- \alpha_1-\alpha_2 + k_1 \delta
  \,,\,\alpha_2+k_2\delta,\, -\alpha_2 - \alpha_3  + k_3 \delta \}\,.
\end{eqnarray*}
Here $k_i $ are all non-negative integers, such that 
$S \subset \hat{\Delta}_+$.

\end{example}

\begin{definition}
  \label{def:3.11}
Let $\varphi : \hat{\fg} \to \hat{\fg}$ be a compatible
homomorphism, and let $S=\varphi^{*-1} (\hat{\Pi})$.  Then $\Lambda
\in \hat{\fh}^*$ is called a (corresponding to $\varphi$) {\em principal
admissible} weight (with respect to $S$) if the following two
properties hold:

\begin{list}{}{}
\item  (i)~~$(\Lambda + \hat{\rho}\, |\, \alpha^\vee) \in \ZZ$ for $\alpha
  \in \hat{\Delta}_{\bar0}$ implies that $\alpha \in \ZZ S \cap \hat {\Delta}$;
\item (ii)~~ the $\hat{\fg}$-module $L (\Lambda^0)$ is partially
  integrable, where
  \begin{equation}
    \label{eq:3.16}
    \Lambda^0 = \varphi^* (\Lambda + \hat{\rho}) - \hat{\rho}\,.
  \end{equation}
\end{list}
\end{definition}
Note that the level of $\Lambda^0$ is expressed via the level $\Lambda(K)$
of $\Lambda$ by
\begin{equation}
  \label{eq:3.17}
  \Lambda^0 (K) = M ( \Lambda(K)+h^\vee) - h^\vee \, ,
\end{equation}
where $M$ is the degree of $\varphi$.

\begin{conjecture}
  \label{conj:3.12}
Let $\varphi : \hat{\fg} \to \hat{\fg}$ be a compatible
homomorphism and let $\Lambda \in \fh^*$ be a corresponding principal
admissible weight.  Then we have the following formula for normalized 
characters:
\begin{equation}
\label{eq:3.a}
  (\hat{R}^\pm \ch^\pm_{\Lambda})(h) = (\hat{R}^\pm \ch^\pm_{\Lambda^0})
  (\varphi^{-1} (h)), \, h \in \hat{\fh}\, .
\end{equation}

\end{conjecture}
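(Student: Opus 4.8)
The plan is to derive the identity by taking the character formula (\ref{eq:3.15a}) for the partially integrable module $L(\Lambda^0)$ and transporting it through $\varphi$, the whole argument resting on the single structural fact that a compatible homomorphism preserves $\bl$, so that its transpose $\varphi^*$ is an isometry of $\hat{\fh}^*$. First I would reduce both sides to numerators. Writing $N_\Lambda:=\hat{R}^\pm\ch^\pm_\Lambda$ and using $\ch^\pm_\Lambda=e^{-m_\Lambda\delta}\ch^\pm_{L(\Lambda)}$, the assertion $(\hat{R}^\pm\ch^\pm_\Lambda)(h)=(\hat{R}^\pm\ch^\pm_{\Lambda^0})(\varphi^{-1}(h))$ becomes $N_\Lambda(h)=N_{\Lambda^0}(\varphi^{-1}(h))$. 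Since a formal character $\sum_\mu c_\mu e^\mu$ satisfies $e^\mu(\varphi^{-1}(h))=e^{((\varphi^*)^{-1}\mu)(h)}$, this is equivalent to the identity of formal characters $N_\Lambda=(\varphi^*)^{-1}N_{\Lambda^0}$, where $(\varphi^*)^{-1}$ acts on exponents only.

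Next I would transport the formula (\ref{eq:3.15a}) applied to $\Lambda^0$, namely
\[
j_{\overline{\Lambda^0}}\,\hat{R}^\pm\ch^\pm_{L(\Lambda^0)}
=\sum_{w\in\hat{W}^\#}\epsilon_\pm(w)\,w\,
\frac{e^{\Lambda^0+\hat{\rho}}}{\prod_{\beta\in T_{\overline{\Lambda^0}+\rho}}(1\pm e^{-\beta})},
\]
through $(\varphi^*)^{-1}$, assembling the following items. From (\ref{eq:3.16}) one has $\Lambda^0+\hat{\rho}=\varphi^*(\Lambda+\hat{\rho})$, so $(\varphi^*)^{-1}(\Lambda^0+\hat{\rho})=\Lambda+\hat{\rho}$. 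Because $\varphi^*$ is an isometry it conjugates reflections as $\varphi^*r_\beta(\varphi^*)^{-1}=r_{\varphi^*\beta}$; combined with $\varphi^*S=\hat{\Pi}$ and condition (i) of Definition~\ref{def:3.11}, which identifies the $\Lambda$-integral even roots with $\hat{\Delta}\cap\ZZ S$, this shows $(\varphi^*)^{-1}$ conjugates $\hat{W}^\#$ onto the integral Weyl subgroup $\hat{W}^\#_\Lambda$ of $\Lambda$ while preserving the signs $\epsilon_\pm$. It likewise carries $T_{\overline{\Lambda^0}+\rho}$ to $T_{\overline{\Lambda}+\rho}$, and since $\varphi$ fixes the finite part (so $\overline{\Lambda^0}=\overline{\Lambda}$) the scalars satisfy $j_{\overline{\Lambda^0}}=j_{\overline{\Lambda}}$. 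Hence $(\varphi^*)^{-1}$ turns the displayed right-hand side into the corresponding sum for $\Lambda$, i.e. into $j_{\overline{\Lambda}}\,\hat{R}^\pm\ch^\pm_{L(\Lambda)}$.

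It then remains to reconcile the normalizing factors. A short computation from $\varphi(K)=MK$ and $(K|d)=1$ gives $\varphi^*\delta=\tfrac1M\delta$, hence $(\varphi^*)^{-1}\delta=M\delta$; and since $\varphi^*$ preserves $\bl$ while $\Lambda^0(K)+h^\vee=M(\Lambda(K)+h^\vee)$ by (\ref{eq:3.17}), formula (\ref{eq:4.1}) yields $m_\Lambda=M\,m_{\Lambda^0}+\tfrac{(M-1)\sdim\fg}{24}$. In the cases treated here, $\fg=\sl_{2|1}$ and $\fg=\gl_{2|2}$, one has $\sdim\fg=0$, so $m_\Lambda=M\,m_{\Lambda^0}$ and $(\varphi^*)^{-1}(e^{-m_{\Lambda^0}\delta})=e^{-M m_{\Lambda^0}\delta}=e^{-m_\Lambda\delta}$. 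Multiplying by the numerator identity of the previous step gives $(\varphi^*)^{-1}N_{\Lambda^0}=N_\Lambda$, which is the claim.

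The main obstacle is not this transport, which is formal once the character formulas are granted on both sides, but the input on the admissible side: one must know that the principal admissible module $L(\Lambda)$ actually has the Weyl--Kac-type character produced above, i.e. that $j_{\overline{\Lambda}}\hat{R}^\pm\ch^\pm_{L(\Lambda)}$ equals the sum over $\hat{W}^\#_\Lambda$ with denominator over a $\overline{\Lambda}+\rho$-maximal isotropic subset. This is the analog of Conjecture~\ref{conj:3.7} for $L(\Lambda)$ and requires control of singular vectors (a BGG-type resolution), which is exactly what is established only case by case in \cite{KW4}, \cite{S}, \cite{GK}. A secondary technical point is verifying that condition (i) of Definition~\ref{def:3.11} forces the integral Weyl group to be precisely the $\varphi^*$-conjugate of $\hat{W}^\#$ and not a larger group, which I would check by pulling the partial integrability of $\Lambda^0$ back through $\varphi^*$ to establish the reverse inclusion $\hat{\Delta}\cap\ZZ S\subseteq\{\Lambda\text{-integral roots}\}$.
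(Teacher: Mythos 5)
First, a point of order: the paper does not prove this statement. It is stated as Conjecture~\ref{conj:3.12}, and the text immediately following it says explicitly that the formula is proved only when $\hat{\fg}$ is an affine Lie algebra (in \cite{KW1}) and is \emph{conjectured} in the Lie superalgebra case; what is actually proved in the cases used later in the paper is Conjecture~\ref{conj:3.7} in the form (\ref{eq:3.15a}), via \cite{GK}. So there is no proof in the paper to compare yours against, and the relevant question is whether your argument closes the gap that makes this a conjecture. It does not. Your transport of (\ref{eq:3.15a}) for $L(\Lambda^0)$ through $(\varphi^*)^{-1}$ is correct and is indeed how the Lie algebra case goes: $(\varphi^*)^{-1}(\Lambda^0+\hat{\rho})=\Lambda+\hat{\rho}$, the isometry conjugates $\hat{W}^\#$ onto a reflection group attached to $\Lambda$, and the sum becomes a Weyl--Kac-type expression in the $\Lambda$-variables. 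But the step where you identify that transported sum with $j_{\bar{\Lambda}}\,\hat{R}^\pm\ch^\pm_{L(\Lambda)}$ is precisely the unproven content: it requires knowing that the principal admissible module $L(\Lambda)$ satisfies a character formula of type (\ref{eq:3.15a}) with summation over its integral Weyl group, which for superalgebras demands control of the structure of Verma-type modules at the admissible weight $\Lambda$ (not at $\Lambda^0$) and is exactly what is unavailable in general. You acknowledge this in your last paragraph, which means your argument is an honest reduction of one open statement to an equivalent one, not a proof.

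Two smaller points. First, your normalization bookkeeping gives $m_\Lambda=M\,m_{\Lambda^0}+\tfrac{(M-1)\sdim\fg}{24}$ and you dispose of the correction by asserting $\sdim\fg=0$ "in the cases treated here"; but the conjecture is stated for a general $\hat{\fg}$ as in Section~\ref{sec:3}, and the paper applies (\ref{eq:3.a})/(\ref{eq:3.d}) to $\hat{A}_{1|1}=\hat{p\sl}_{2|2}$, where $\sdim\fg=-2$ (the computations there are routed through $\hat{g\ell}_{2|2}$, where $\sdim\fg=0$, which is why this does not surface, but your writeup should say so rather than leave a $q^{(M-1)\sdim\fg/24}$ discrepancy unaddressed in the general statement). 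Second, the claim that $(\varphi^*)^{-1}$ "preserves the signs $\epsilon_\pm$" needs an argument for $\epsilon_-$, since $\epsilon_-(w)$ depends on whether the half of each reflecting root is a root, and conjugation shifts real roots by multiples of $\delta$; this is harmless for $\sl_{m|n}$ and $g\ell_{n|n}$, where $\epsilon_-=\epsilon_+$ by the remark in Section~\ref{sec:1}, but it is not automatic in general.
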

This formula is proved, in the case when $\hat{\fg}$ is an affine
Lie algebra, for more general, admissible modules, in
\cite{KW1}. We conjecture that a similar result holds also in the
Lie superalgebra case.  (However, all admissible modules are
principal admissible for Lie superalgebras, considered in this
paper.) Note that Conjecture \ref{conj:3.7}, in the form, given by
equation (\ref{eq:3.15a}) is proved in \cite{GK} for all cases,
considered in the present paper.

%%%%%%***INSERT 12a,12b***%%%%%

As shown in \cite{KW2}, formula (\ref{eq:3.a}) can be written in
a more explicit form as follows. Let $M$ be  a positive integer
and let $S_{(M)} =\{ (M-1)\delta + \alpha_0$, $\alpha_1, \ldots ,
\alpha_\ell \}$ be a simple set of degree $M$.
Let $\beta \in \fh^*$ and $y \in W$ be such that $S:=t_{\beta}y
(S_{(M)}) \subset \hat{\Delta}_+$.  Then $S$ is a simple set.  All
principal admissible weights with respect to $S$ and of level $K$
are of the form (up to adding a multiple of $\delta$):
\begin{equation}
  \label{eq:3.b}
  \Lambda = (t_\beta y) (\Lambda^0-(M-1)(K+h^\vee)\Lambda_0+\hat{\rho})-\hat{\rho}\,,
\end{equation}
where $\Lambda^0$ is a partially integrable weight, and, by
(\ref{eq:3.17}), we have:
\begin{equation}
  \label{eq:3.c}
  K=\frac{m+h^\vee}{M} - h^\vee\,,\, \hbox{\,\, where\,\,} m
  \hbox{\,\, is the level of \,\,}\Lambda^0\,.
\end{equation}

It is convenient to write formula (\ref{eq:3.a}) in the following
  coordinates on $\hat{\fh}$, which we will be using throughout the paper:
  \begin{equation}
    \label{eq:4.8}
    h =2 \pi i (-\tau d + z + tK)\, , \, \hbox{\,\, where \,\,}
       z \in \fh\,,\, t,\tau \in \CC\, .
  \end{equation}
We will always assume that $\Im \tau >0$ in order to have all our
series convergent.  Note that
$  e^{-\delta} (h) =q:=e^{2\pi i \tau}$, so that
  $|q|<1$.

In these coordinates formula (\ref{eq:3.a}) becomes (cf. \cite{KW2}):
\begin{equation*}
  (\hat{R}^\pm ch^\pm_\Lambda) (\tau,z,t) 
     = (\hat{R}^\pm ch^\pm_{\Lambda^0})
     \left(M \tau \,,\, y^{-1}(z+\tau \beta)\,,\, \frac1M \left(t+(z|\beta)
           +\frac{\tau (\beta |\beta)}{2}\right)\right)\,,
\end{equation*}
or, a little more explicitly:
\begin{equation}
\label{eq:3.d}
  (\hat{R}^\pm \ch^\pm_\Lambda) (\tau ,z,t) 
     = q^{\frac{m+h^\vee}{M} (\beta |\beta)}
        e^{\frac{2\pi i (m+h^\vee)}{M} (z|\beta)}
        (\hat{R}^\pm \ch^\pm_{\Lambda^0})
      \left(M\tau \,,\, y^{-1} (z+\tau\beta)\,,\, \frac{t}{M}\right)\,.
\end{equation}

%%%%%%   END insert 12,12b %%%%%%%%%

Next, we describe the principal admissible weights in the cases considered 
in this paper.

\begin{proposition}
  \label{prop:3.13}
Let $\fg = s\ell_{2|1}$, let $\varphi : 
\hat{\fg} \to \hat{\fg}$ be a compatible homomorphism of degree $M$, and
let $\Lambda \in \hat{\fh}^*$ be a principal admissible weight with
respect to a simple subset $S \subset \hat{\Delta}_+$ such that $\Lambda^0 = m
\Lambda_0$, $m \in \ZZ_{\geq 0}$ (see (\ref{eq:3.16})), and $gcd
(M,m+1)=1$.  Then we have for the level $K$ of $\Lambda$:
\begin{equation}
  \label{eq:3.18}
  K=\frac{m+1}{M} -1 \, .
\end{equation}
Furthermore, the value of $M$ and the description of all such
principal admissible weights $\Lambda$ with respect to $S=S_i$, $i=1,2$,
from Example \ref{ex:3.9}, is as follows ($k_0, k_1, k_2 \in \ZZ_{\geq 0}$): 

\begin{eqnarray*}
  S&=& S_1 :\,\, M = k_0 +k_1 +k_2+1,\,\, k_1, k_2\geq 0,\, k_1+k_2\leq M-1,\\
  &&\Lambda^{(1)}_{k_1,k_2}= (m-k_0 (K+1)) \Lambda_0 - k_1 (K+1)
  \Lambda_1-k_2 (K+1) \Lambda_2 \, ;\\
  S &=& S_2 : \,\, M = k_0 +k_1 +k_2-1,\,\, 1\leq k_1, k_2 \leq M-1,\, k_1+k_2
\leq M, \\
 &&\Lambda^{(2)}_{k_1,k_2}= (k_0 (K+1)-m-2) \Lambda_0 +k_1 (K+1)
 \Lambda_1 + k_2 (K+1)\Lambda_2\, .
\end{eqnarray*}

\end{proposition}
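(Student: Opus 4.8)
The plan is to derive everything from the general description of principal admissible weights recorded in (\ref{eq:3.b}), (\ref{eq:3.c}), (\ref{eq:3.17}) together with the classification of simple subsets in Example~\ref{ex:3.9}, specialized to $\fg=s\ell_{2|1}$, where $h^\vee=1$ and $\Lambda^0=m\Lambda_0$ has level $m$. First I would establish the level formula (\ref{eq:3.18}): since $\Lambda^0$ has level $m$, equation (\ref{eq:3.17}) reads $m=M(K+h^\vee)-h^\vee=M(K+1)-1$, whence $K+1=(m+1)/M$ and $K=(m+1)/M-1$. The hypothesis $\gcd(M,m+1)=1$ guarantees that this fraction is in lowest terms, which is precisely what condition (i) of Definition~\ref{def:3.11} requires (integrality of a coroot forces $\alpha\in\ZZ S\cap\hat\Delta$), so that the weights produced below are genuinely principal admissible.

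Next, for each of the two simple subsets $S=S_i$ I would determine the pair $(\beta,y)$ with $\beta\in\fh^*$, $y\in W$ for which $t_\beta y(S_{(M)})=S_i$; here $W=\{1,r_\theta\}$ with $\theta=\alpha_1+\alpha_2$, $(\theta|\theta)=2$, and $S_{(M)}=\{(M-1)\delta+\alpha_0,\alpha_1,\alpha_2\}$. The computation rests on two facts: for a finite root $\alpha$ one has $t_\beta(\alpha)=\alpha-(\alpha|\beta)\delta$ by (\ref{eq:2.10}), and $r_\theta$ interchanges $\alpha_1\leftrightarrow-\alpha_2$ and $\alpha_2\leftrightarrow-\alpha_1$ while fixing $\delta$. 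For $S_1$ I expect $y=\mathrm{id}$ and $\beta$ determined by $(\alpha_i|\beta)=-k_i$; matching the $\alpha_0$-component and using $\theta=\alpha_1+\alpha_2$ then forces $M=k_0+k_1+k_2+1$. For $S_2$ I expect $y=r_\theta$ and $(\alpha_i|\beta)=k_i$, which after matching yields $M=k_0+k_1+k_2-1$. Writing $\bar\Lambda_1,\bar\Lambda_2$ for the $\fh^*$-projections of the fundamental weights, characterized by $(\bar\Lambda_i|\alpha_j)=\delta_{ij}$, this gives $\beta=-k_1\bar\Lambda_1-k_2\bar\Lambda_2$ in the first case and $\beta=k_1\bar\Lambda_1+k_2\bar\Lambda_2$ in the second. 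The stated ranges of the $k_i$ then translate into the inequalities listed in the proposition, and conversely every admissible choice of $(M;k_1,k_2)$ arises this way.

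With $(\beta,y)$ in hand I would substitute into (\ref{eq:3.b}). Three simplifications make this clean: for this Dynkin diagram $\rho=\rho_{\bar0}-\rho_{\bar1}=\tfrac12\theta-\tfrac12\theta=0$, so $\hat\rho=\rho+h^\vee\Lambda_0=\Lambda_0$ by (\ref{eq:3.8}); the identity $(M-1)(K+1)=(M-1)(m+1)/M=m-K$ collapses $\Lambda^0-(M-1)(K+h^\vee)\Lambda_0+\hat\rho$ to $(K+1)\Lambda_0$; and $y$ fixes $\Lambda_0$, so by linearity of $t_\beta$ on $\hat\fh^*$ and (\ref{eq:2.10}) one has $t_\beta y\big((K+1)\Lambda_0\big)=(K+1)\big(\Lambda_0+\beta-\tfrac12(\beta|\beta)\delta\big)$. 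Hence, modulo $\CC\delta$ (on which the normalized characters do not depend), $\Lambda\equiv K\Lambda_0+(K+1)\beta$. Converting $\beta$ back to fundamental-weight coordinates via $\bar\Lambda_i\equiv\Lambda_i-\Lambda_0\pmod{\CC\delta}$ — valid because each $\Lambda_i$ has level $\Lambda_i(K)=1$ (Example~\ref{ex:3.3}) — and using $m=M(K+1)-1$ together with the $M$-relations above, I expect the $\Lambda_0$-coefficients to simplify to $m-k_0(K+1)$ and $k_0(K+1)-m-2$ respectively, reproducing $\Lambda^{(1)}_{k_1,k_2}$ and $\Lambda^{(2)}_{k_1,k_2}$ exactly.

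The main obstacle I anticipate is the second step: correctly pinning down the Weyl-group element $y$ and the sign conventions when matching $t_\beta y(S_{(M)})$ against the explicit lists $S_1$, $S_2$, since the reflection $r_\theta$ mixes the two isotropic simple roots and reverses signs. Once the dictionary $(\beta,y)\leftrightarrow S_i$ is secured, the remaining work is the bookkeeping of the third step, where the only real care needed is the passage between $\bar\Lambda_i$ and $\Lambda_i$ through the level and the tracking of $\delta$-coefficients, which are harmless modulo $\CC\delta$.
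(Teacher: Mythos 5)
Your argument is correct, and the computations check out (I verified the $(\beta,y)$ dictionary: $y=\mathrm{id}$, $(\alpha_i|\beta)=-k_i$ for $S_1$ and $y=r_\theta$, $(\alpha_i|\beta)=k_i$ for $S_2$, the resulting $M$-relations, and the collapse of the $\Lambda_0$-coefficient to $m-k_0(K+1)$, resp. $k_0(K+1)-m-2$, via $K+(K+1)(k_0+k_1+k_2)=K+(K+1)(M\mp1)$). However, you take a genuinely different route from the paper. The paper's proof never determines $(\beta,y)$: it simply uses that $\varphi$ preserves the bilinear form, so $(\Lambda^0+\hat\rho\,|\,\alpha_i)=(\Lambda+\hat\rho\,|\,\tilde\alpha_i)$ for each $\tilde\alpha_i=(\varphi^*)^{-1}(\alpha_i)\in S_i$, and reads off $(\Lambda|\alpha_i)$ directly from the three resulting scalar equations ($m+1=k_0(K+1)+(\Lambda|\alpha_0)+1$, $0=k_i(K+1)+(\Lambda|\alpha_i)$ for $i=1,2$), while $M$ comes from $M\delta$ being the sum of the elements of $S_i$. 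That is shorter and avoids the sign bookkeeping around $r_\theta$ that you flag as the delicate point. Your detour through (\ref{eq:3.b}) costs more care but is not wasted effort: the pair $(\beta,y)$ is exactly what is needed later to write the character formula (\ref{eq:3.d}), and the paper itself adopts your strategy in the proof of Proposition~\ref{prop:8.4} for $\hat{A}_{1|1}$. One small point of rigor common to both arguments: the equivalence of condition (i) of Definition~\ref{def:3.11} with $\gcd(M,m+1)=1$ deserves a line (pairing $\Lambda+\hat\rho$ with $(\theta+n\delta)^\vee$ gives $(n-k_1-k_2)(m+1)/M$, which lies in $\ZZ$ exactly when $M\mid n-k_1-k_2$, i.e.\ when $\theta+n\delta\in\ZZ S$); you assert it as the paper does, without spelling it out.
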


\begin{proof}
  Formula (\ref{eq:3.18}) follows from (\ref{eq:3.17}) since
  $h^\vee =1$. In the case of $\Lambda$, admissible with respect
  to $S_1$, we have:
  \begin{eqnarray*}
    (\Lambda^0 +\rho |\alpha_0)= m+1 &=& (\Lambda +\rho | k_0 \delta +
    \alpha_0) = k_0 (K+1) + (\Lambda |\alpha_0) +1 \,;\\
   (\Lambda^0 +\rho |\alpha_1) =0 &=&  (\Lambda +\rho | k_1 \delta +
    \alpha_1) = k_1 (K+1) + (\Lambda |\alpha_1)\,;\\
  (\Lambda^0 +\rho |\alpha_2) =0 &=&  (\Lambda +\rho | k_2 \delta +
    \alpha_2) = k_2 (K+1) + (\Lambda |\alpha_2),
  \end{eqnarray*}
hence $(\Lambda |\alpha_0) =m-k_0 (K+1)\, , \, (\Lambda |
\alpha_1) = -k_1 (K+1), (\Lambda |\alpha_2) =-k_2 (K+1)$, and
similarly for $S_2$.

The computation of $M$ immediately follows from the fact that $M\delta$
equals to the sum of the elements of $S_i$.  Condition (i) on
principal admissible weight is equivalent to (\ref{eq:3.18}).

\end{proof}

\begin{proposition}
  \label{prop:3.14}
Let $\fg = g\ell_{2|2}$, let $\varphi :\hat{\fg}\to\hat{\fg}$ be a
compatible homomorphism at degree $M$, and let $\Lambda \in
\hat{\fh}^*$ be a principal admissible weight with respect to a simple subset 
$S \subset \hat{\Delta}_+$, such that $\Lambda^0 =m\Lambda_0$, where $m$
%\in \NN$, 
is a non-zero integer and $gcd (m,M)=1$.  Then we have for the level $K$ of
$\Lambda$:
\begin{equation}
  \label{eq:3.19}
  K=\frac{m}{M}\, .
\end{equation}
%
%where $M$ is the degree of $\varphi$.  
Furthermore, the value of $M$ and the description 
of all such principal admissible weights $\Lambda$
with respect to $S=S_i$, $i=1,2,3,4$, from Example \ref{ex:3.10},
is as follows 
($k_0,...,k_3 \in \ZZ_{\geq 0}$ are such that $S \subset \hat{\Delta}_+$):
\begin{eqnarray*}
  S=S_1 &: & M = \sum^3_{i=0} k_i +1\, , \\
  \Lambda^{(1)} &=& (m-k_0K) \Lambda_0 -k_1K \Lambda_1
  +k_2K\Lambda_2-k_3 K \Lambda_3;\\
S=S_2 &:&  M = \sum^3_{i=0} k_i -1\, , \\
  \Lambda^{(2)} &=& (k_0K-m-2) \Lambda_0 + k_1 K\Lambda_1 -(k_2 K+2) 
     \Lambda_2 + k_3K\Lambda_3;\\
S=S_3 &:&  M = \sum^3_{i=0} k_i +1\, , \\
 \Lambda^{(3)} &=& (m-k_0K) \Lambda_0  - (1+ (k_1 + k_2)K)
 \Lambda_1-(2+ k_2 K) \Lambda_2 - (1+ (k_2+k_3)K)\Lambda_3;\\
S=S_4 &:&  M = \sum^3_{i=0} k_i -1\, , \\
\Lambda^{(4)} &=& (k_0 K-m-2) \Lambda_0 + ((k_1+k_2) K+1) \Lambda_1 +
k_2K \Lambda_2 + ((k_2 + k_3) K+1)\Lambda_3\, .
\end{eqnarray*}

\end{proposition}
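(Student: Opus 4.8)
The plan is to run, \emph{mutatis mutandis}, the argument proving Proposition~\ref{prop:3.13}. The two structural differences are that for $\fg=\gl_{2|2}$ one has $h^\vee=0$ (Example~\ref{ex:3.1}), and that the relevant simple subsets are the four families $S_1,\dots,S_4$ of Example~\ref{ex:3.10}. Normalizing $\Lambda$ by $\Lambda(d)=\Lambda(F)=0$, formula~(\ref{eq:3.17}) with $h^\vee=0$ and $\Lambda^0(K)=m$ reads $m=MK$, which is precisely~(\ref{eq:3.19}). The degree $M$ is read off from the telescoping identity $\alpha_0+\alpha_1+\alpha_2+\alpha_3=\delta$ (valid since $\alpha_0=\delta-\theta$ and $\theta=\alpha_1+\alpha_2+\alpha_3$): summing the four elements of $S_i$ collapses the finite parts to $+\delta$ for $S_1,S_3$ and to $-\delta$ for $S_2,S_4$, so the total is $(\pm1+\sum_i k_i)\delta=M\delta$, giving $M=\sum_i k_i+1$ in the first case and $M=\sum_i k_i-1$ in the second.

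For the labels $m_i=(\Lambda\,|\,\alpha_i^\vee)$ I would use the defining relation $\varphi^*(\Lambda+\hat\rho)=\Lambda^0+\hat\rho$ of~(\ref{eq:3.16}). Since $\varphi$ preserves the bilinear form and restricts to an isomorphism of $\hat\fh$, the dual map $\varphi^*$ is an isometry of $\hat\fh^*$; hence, writing $s_i\in S_i$ for the element with $\varphi^*(s_i)=\alpha_i$, one gets $(\Lambda+\hat\rho\,|\,s_i)=(\Lambda^0+\hat\rho\,|\,\alpha_i)$. I would record the universal ingredients: $\hat\rho=\rho$ (as $h^\vee=0$), $(\Lambda+\hat\rho\,|\,\delta)=K$, $(\Lambda^0\,|\,\alpha_0)=m$ and $(\Lambda^0\,|\,\alpha_j)=0$ for $j\ne0$, and $(\hat\rho\,|\,\alpha_i)=\tfrac12 a_{ii}$, equal to $1,0,-1,0$ for $i=0,1,2,3$ (since $a_{00}=(\theta|\theta)=2$, $a_{22}=-2$, $a_{11}=a_{33}=0$). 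The one point demanding care is that the middle white node has $(\alpha_2|\alpha_2)=-2$, so $\alpha_2^\vee=-\alpha_2$, whereas $\alpha_0^\vee=\alpha_0$, $\alpha_1^\vee=\alpha_1$, $\alpha_3^\vee=\alpha_3$; this sign flip in passing from $(\Lambda\,|\,\alpha_2)$ to the label $m_2$ is exactly what produces the asymmetric entries $+k_2K\,\Lambda_2$ for $S_1$ and $-(k_2K+2)\Lambda_2$ for $S_2$.

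For $S_1$ and $S_2$ each $s_i=\pm\alpha_i+k_i\delta$, so the relation above determines $(\Lambda\,|\,\alpha_i)$ in one step. Expanding $(\Lambda+\hat\rho\,|\,\pm\alpha_i+k_i\delta)$ with $(\Lambda+\hat\rho\,|\,\delta)=K$, the $(\hat\rho\,|\,\alpha_i)$ contribution cancels for $S_1$ but yields an extra $-2(\hat\rho\,|\,\alpha_i)=-a_{ii}$ for $S_2$. Converting $(\Lambda\,|\,\alpha_i)$ to $m_i=(\Lambda\,|\,\alpha_i^\vee)$, and flipping the sign at $i=2$, returns $\Lambda^{(1)}$ and $\Lambda^{(2)}$; the additive constants in $\Lambda^{(2)}$ are precisely these $-a_{ii}$ terms. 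For $S_3$ and $S_4$ the $s_i$ are the non-simple combinations of Example~\ref{ex:3.10}, and the relation now yields directly $(\Lambda\,|\,\alpha_2)$ (from $s_2=\mp\alpha_2+k_2\delta$) together with the two sums $(\Lambda\,|\,\alpha_1+\alpha_2)$ and $(\Lambda\,|\,\alpha_2+\alpha_3)$. I would first solve for $(\Lambda\,|\,\alpha_2)$, subtract it to isolate $(\Lambda\,|\,\alpha_1)$ and $(\Lambda\,|\,\alpha_3)$, and then convert to labels, recovering $\Lambda^{(3)}$ and $\Lambda^{(4)}$.

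The main obstacle is thus organizational rather than conceptual: keeping the coroot normalization $\alpha_2^\vee=-\alpha_2$ consistent throughout, matching each $s_i$ to the simple root $\alpha_i$ that $\varphi^*$ sends it to, and extracting the individual labels from the combined pairings in the $S_3,S_4$ cases. Finally, exactly as in Proposition~\ref{prop:3.13}, condition~(i) of Definition~\ref{def:3.11} is equivalent to the level equalling $K=m/M$, which justifies~(\ref{eq:3.19}) and, together with $\gcd(m,M)=1$, confirms that the weights listed exhaust the principal admissible weights attached to each $S_i$.
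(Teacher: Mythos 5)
Your proposal is correct and follows exactly the route the paper intends: the paper's own proof of Proposition~\ref{prop:3.14} consists of the single sentence that it is the same as that of Proposition~\ref{prop:3.13}, i.e.\ read off $M$ from the identity $\sum_{s\in S}s=M\delta$, obtain $K$ from (\ref{eq:3.17}) with $h^\vee=0$, and solve $(\Lambda^0+\hat\rho\,|\,\alpha_i)=(\Lambda+\hat\rho\,|\,s_i)$ for the labels. You supply the details the paper omits — in particular the sign flip from $\alpha_2^\vee=-\alpha_2$ and the extraction of individual labels from the combined pairings in the $S_3,S_4$ cases — and these check out against the stated formulae.
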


\begin{proof}
It is the same as that of Proposition \ref{prop:3.13}
\end{proof}

\section{Characters of partially integrable modules and  mock theta functions}
\label{sec:4}

In this section we rewrite the characters of partially integrable
modules $L (\Lambda)$ over an affine superalgebra $\hat{\fg}$ in
terms of ``mock'' theta  functions.  Throughout the section, $\fg$
is a finite-dimensional Lie superalgebra as in Section \ref{sec:3} and 
$\ell$ is its rank (= rank $\fg_{\bar0}$).

Let $m$ be a positive integer and let $j \in \ZZ /2m\ZZ$.  In the
course of the paper we shall 
often use the well-known {\em theta
  functions} of {\em degree}  $m$ (rather Jacobi forms)
$\Theta_{j,m}(\tau,z)=
\Theta_{j,m}(\tau,z,0)$, where $\Theta_{j,m}(\tau,z,t)$ are defined by
formula (\ref{eq:A1}) in the Appendix.
Especially important are the four Jacobi theta
functions of degree two \cite{M}:
\begin{eqnarray*}
  \vartheta_{00} = \Theta_{2,2} + \Theta_{0,2} \, , \, 
     \vartheta_{01} =-\Theta_{2,2} + \Theta_{0,2}\, , \, 
  \vartheta_{1 0} = \Theta_{1,2} + \Theta_{-1,2} \, , \, 
      \vartheta_{11} = i \Theta_{1,2} -i\Theta_{-1,2}\, ,
\end{eqnarray*}
discussed in detail in the Appendix.

Since $\hat{\rho} (K) = h^\vee$
by (\ref{eq:3.8}), we obtain the following formula for the affine
Weyl denominator and and superdenominator (\ref{eq:3.13}) in the coordinates
(\ref{eq:4.8}):
\begin{equation*}
  \hat{R}^\pm (h) = e^{2\pi i h^\vee t}
\frac{\prod^\infty_{n=1} ((1-q^n)^\ell \prod_{\alpha \in  \Delta_{\bar0,+}}   
 (1-e^{-2\pi i \alpha(z)}q^{n-1}) (1- e^{2\pi i \alpha (z)}q^n))}
{\prod^\infty_{n=1} \prod_{\alpha \in \Delta_{\bo,+}} 
(1\pm e^{-2 \pi i \alpha (z)} q^{n-1})(1\pm e^{2 \pi i \alpha (z)} q^n)} \, .
\end{equation*}
Using formulae (\ref{eq:4.7}) from the Appendix, we can rewrite these 
expressions (or
rather their normalization by a power of $q$) in terms of the four
Jacobi functions of degree two:
\begin{equation}
  \label{eq:4.9}
  q^{\frac{1}{24} \sdim \fg} \hat{R}^+ (h)= e^{2\pi i h^\vee t}
   i^{d_{\bar0}}\eta (\tau)^
{\ell -d_{\bar0}+d_{\bar1}}
    \frac{\prod_{\alpha \in \Delta_{\bar0,+}} \vartheta_{11} (\tau ,\alpha(z))}
      {\prod_{\alpha \in \Delta_{\bo,+}} \vartheta_{10} (\tau, \alpha (z))}\,;
\end{equation}
\begin{equation}  
\label{eq:4.10}
  q^{\frac{1}{24}\sdim \fg} \hat{R}^- (h) = e^{2\pi i h^\vee t}
     i^{d_{\bar0} - d_{\bar1}} \eta (\tau)^
{\ell -d_{\bar0}+ d_{\bar1} }
 \frac{\prod_{\alpha \in \Delta_{\bar0,+}} \vartheta_{11} (\tau ,\alpha (z))}
   {\prod_{\alpha \in \Delta_{\bar1,+}} \vartheta_{11} (\tau, \alpha (z))}\, .
\end{equation}
Here and further, $d_\alpha = |\Delta_{\alpha, +}|$, $\alpha \in \ZZ / 2\ZZ$.

In order to obtain a modular invariant family, we need to consider
also twisted affine Weyl denominators and superdenominators.  For
this purpose, fix an element $\xi \in \fh^*$, satisfying
\begin{equation}
\label{eq:4.3a}
  (\xi |\alpha) \in \frac12 p (\alpha)+\ZZ\, , \quad \alpha \in
  \Delta\, ,
\end{equation}
and let
\begin{equation*}  
  \hat{R}^{\tw,\pm} = t_\xi (\hat{R}^\pm)\, .
\end{equation*}
Recalling (\ref{eq:2.10}), we have:
\begin{equation*}
  t_\xi (\delta ) = \delta ,\,t_\xi (\alpha) = \alpha-(\xi|\alpha)\delta  
\hbox{\,\, if   \,\,} \alpha \in \Delta\, , \, 
\, \hat{\rho}^{\tw}: = t_\xi 
(\hat{\rho})=\hat{\rho}
+h^\vee \xi - (\frac12 h^\vee (\xi |\xi)+ (\rho |\xi))\delta\, .
\end{equation*}
Hence we obtain from (\ref{eq:3.13}):
\begin{equation*}
  \hat{R}^{\tw,\pm} = e^{\hat{\rho}^{\tw }} \prod^\infty_{n=1}
  \frac{(1-q^n)^\ell \prod_{\alpha \in \Delta_{\bar0,+}} 
    (1-e^{-\alpha} q^{n-1-(\xi |\alpha)}) (1-e^{\alpha} q^{n+(\xi |\alpha)})}
  {\prod_{\alpha \in
      \Delta_{\bo,+}} (1\pm e^{-\alpha}q^{n-1-(\xi |\alpha)})
(1\pm e^{\alpha}q^{n+(\xi |\alpha)})}\, .
\end{equation*}
This can be rewritten in terms of the four Jacobi functions, as in the
non-twisted case (cf. (\ref{eq:4.9}), (\ref{eq:4.10})).  
For this we need to use
Proposition \ref{prop:A6} from the Appendix.
%following lemma, which is straightforward \cite{M}.
%\begin{lemma}
%  \label{lem:4.1}
%For $a,b =0$ or $1$ and  $n \in \ZZ$ one has:
%
%\begin{equation*}
%  \vartheta_{ab} (\tau ,z+n\tau) = (-1)^{bn} q^{-\frac{n^2}{2}} 
%  e^{-2 \pi i n z} \vartheta_{ab} (\tau ,z)\, .
%\end{equation*}
%\end{lemma}
By a straightforward  (but a bit lengthy calculation) we
derive formulae, similar to (\ref{eq:4.9}) and
(\ref{eq:4.10}):
\begin{eqnarray}
  \label{eq:4.11}
 q^{\frac{1}{24}\sdim \fg} \hat{R}^{\tw,+} (h) &=& e^{2 \pi i h^\vee t}
     (-1)^{2(\rho_{\bar0}|\xi)} i^{d_{\bar0}} \eta (\tau)^
 {\ell -d_{\bar0}+d_{\bar1}} 
\frac{\prod_{\alpha \in \Delta_{\bar{0},+}}\vartheta_{11} (\tau ,\alpha (z))}
  {\prod_{\alpha \in \Delta_{\bo,+}} \vartheta_{00} (\tau ,\alpha
    (z))}\, , \\[2ex]
  \label{eq:4.12}
 q^{\frac{1}{24}\sdim \fg} \hat{R}^{\tw,-}(h) &=& e^{2\pi i h^\vee t}
  (-1)^{2(\rho|\xi)-\frac12 d_{\bo}} i^ {d_{\bar0}}
   \eta (\tau)^ {\ell -d_{\bar0}+d_{\bar1}} 
\frac{\prod_{\alpha \in \Delta_{\bar0,+}} \vartheta_{11} (\tau, \alpha (z))}
{\prod_{\alpha \in \Delta_{\bo,+}} \vartheta_{01}(\tau ,\alpha (z)) } \,.
\end{eqnarray}

It turns out that modular transformation formulae can be written in a 
beautiful unified form if we use the following notations for
(\ref{eq:4.9}), (\ref{eq:4.10}), (\ref{eq:4.11}),
(\ref{eq:4.12}):

\begin{center}
 (\ref{eq:4.9})  $ = \hat{R}^{(\frac12)}_0       (\tau , z, t)$,\quad
 (\ref{eq:4.10}) $ = \hat{R}^{(0)}_0         (\tau, z, t)$,\quad
 (\ref{eq:4.11}) $ = 
\hat{R}^{(\frac{1}{2})}_{\frac{1}{2}}   (\tau, z ,t)$,\quad
 (\ref{eq:4.12}) $ = \hat{R}^{(0)}_{\frac{1}{2}}     (\tau , z, t)$,
\end{center}
i.e., the superscripts $(\frac{1}{2})$ and $(0)$ refer to the
denominator and superdenominator respectively, and the
subscripts $0$ and $\frac{1}{2}$ refer to the non-twisted and twisted
cases respectively.

\begin{theorem}
  \label{th:4.2}
Let $\epsilon,\epsilon' =0$ or 
$\frac{1}{2}$.  Then 
\begin{list}{}{}
\item (a)~~$\hat{R}^{(\epsilon)}_\epsilon \left(- \frac1\tau ,
    \frac{z}{\tau}, t-\frac{(z|z)}{2\tau} \right) =c_\epsilon
  (-i\tau)^{\frac12 \ell}  R^{(\epsilon)}_\epsilon (\tau,z,t)$,\hfill\break
  where $c_0 = (-i)^{d_{\bar0} -d_{\bar1}}$ and $c_{\frac{1}{2}} =$
 $ (-i)^{d_{\bar0}}$;\hfill\break

$\hat{R}^{(\epsilon')}_\epsilon
  \left( -\frac{1}{\tau} , \frac{z}{\tau} , t-
    \frac{(z|z)}{2\tau} \right) = c_{\epsilon, \epsilon'}
  (-i\tau)^{\frac12 \ell} R^{(\epsilon)}_{\epsilon'} (\tau,z ,t)$
  if $\epsilon \neq \epsilon'$,\hfill\break
where $c_{0,\frac12} = 
c_{\frac12,0} =
  (-1)^{2 (\rho |\xi)-\frac12 d_{\bar1} + 
d_{\bar0}}i^{d_{\bar0}}$.

\item (b)~~$\hat{R}^{(\epsilon)}_0 (\tau +1, z,t) = e^{\frac{\pi
      i}{12} \sdim \fg} \hat{R}^{(\epsilon)}_0 (\tau ,z,t)$;\hfill\break
  $\hat{R}^{(\epsilon)}_{\frac12} (\tau + 1,z,t) = (-1)^{2
    (\rho_{\bo}|\xi) - \frac12 d_{\bo }}e^{\frac{\pi i}{12} 
(2d_{\bar0} +d_{\bar1}+\ell)}\hat{R}^
{(\frac12-\epsilon)}_{\frac12} (\tau ,z,t)$.
\end{list}
\end{theorem}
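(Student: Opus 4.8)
The plan is to reduce everything to the classical $S$- and $T$-transformation laws for the Dedekind $\eta$-function and the four Jacobi theta functions $\vartheta_{00},\vartheta_{01},\vartheta_{10},\vartheta_{11}$ (recalled in the Appendix), applied termwise to the explicit product expressions (\ref{eq:4.9})--(\ref{eq:4.12}). The organizing observation is that under $S\colon\tau\mapsto-1/\tau$ the functions $\vartheta_{00}$ and $\vartheta_{11}$ are fixed (up to a universal factor) while $\vartheta_{01}\leftrightarrow\vartheta_{10}$ are interchanged, whereas under $T\colon\tau\mapsto\tau+1$ the functions $\vartheta_{10},\vartheta_{11}$ acquire a phase $e^{\pi i/4}$ while $\vartheta_{00}\leftrightarrow\vartheta_{01}$ are interchanged. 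Since (\ref{eq:4.9})--(\ref{eq:4.12}) differ only in which of these four functions sits in the denominator, these interchanges account exactly for the index shifts in the theorem: $\vartheta_{11},\vartheta_{00}$ being $S$-fixed gives the diagonal case $\epsilon=\epsilon'$ of (a), $\vartheta_{10}\leftrightarrow\vartheta_{01}$ gives the off-diagonal case, and $\vartheta_{00}\leftrightarrow\vartheta_{01}$ under $T$ produces the flip $\epsilon\mapsto\frac12-\epsilon$ in the twisted line of (b), while $\vartheta_{10},\vartheta_{11}$ merely rescale in the untwisted line.

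For part (a), I would substitute the $S$-laws $\eta(-1/\tau)=(-i\tau)^{\frac12}\eta(\tau)$ and $\vartheta_{ab}(-1/\tau,z/\tau)=c_{ab}\,(-i\tau)^{\frac12}e^{\pi i z^2/\tau}\vartheta_{a'b'}(\tau,z)$, with $c_{ab}$ a fourth root of unity and $(a',b')$ the $S$-image of $(a,b)$, into each of (\ref{eq:4.9})--(\ref{eq:4.12}) and collect three kinds of contributions. First, the powers of $(-i\tau)^{\frac12}$: the numerator contributes $\frac12 d_{\bar 0}$, the denominator $-\frac12 d_{\bar 1}$, and the $\eta$-factor $\frac12(\ell-d_{\bar 0}+d_{\bar 1})$, summing to exactly $\frac{\ell}{2}$. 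Second, the finite phases $c_{ab}$ together with the prefactors $i^{\bullet},(-1)^{\bullet}$ in (\ref{eq:4.9})--(\ref{eq:4.12}); in the diagonal cases these prefactors cancel and $c_\epsilon$ is the pure theta phase, while in the off-diagonal cases one matches the different prefactors of (\ref{eq:4.9}) and (\ref{eq:4.12}) using $i^{-d}=(-1)^d i^d$ to recover $c_{\epsilon,\epsilon'}$. Third, and most importantly, the Gaussian factors $e^{\pi i\alpha(z)^2/\tau}$: the positive even roots enter with $+$ and the positive odd roots with $-$ (the latter being in the denominator), so their product is $\exp\!\big(\tfrac{\pi i}{\tau}\sum_{\alpha\in\Delta_+}(-1)^{p(\alpha)}\alpha(z)^2\big)$, which by the Casimir identity (\ref{eq:3.1}) equals $e^{\pi i h^\vee(z|z)/\tau}$; this cancels precisely against the factor $e^{-\pi i h^\vee(z|z)/\tau}$ coming from the shift $t\mapsto t-(z|z)/2\tau$ in $e^{2\pi i h^\vee t}$.

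For part (b) the computation is lighter, since $q=e^{2\pi i\tau}$ is $T$-invariant (so the normalization $q^{\frac1{24}\sdim\fg}$ is unaffected) and there is no Gaussian or $z$-shift to manage. Here I would collect the phase $e^{\pi i/12}$ from each of the $\ell-d_{\bar 0}+d_{\bar 1}$ copies of $\eta$ and $e^{\pi i/4}$ from each $\vartheta_{11}$ (and, in the untwisted case, each $\vartheta_{10}$), checking that the total exponent is $\frac{\pi i}{12}(\ell+2d_{\bar 0}-2d_{\bar 1})=\frac{\pi i}{12}\sdim\fg$ in the untwisted line and $\frac{\pi i}{12}(\ell+2d_{\bar 0}+d_{\bar 1})$ in the twisted line; the residual sign in the twisted line is extracted from the prefactors of (\ref{eq:4.11}) and (\ref{eq:4.12}) via $2(\rho_{\bar 0}|\xi)-2(\rho|\xi)=2(\rho_{\bar 1}|\xi)$, using $\rho=\rho_{\bar 0}-\rho_{\bar 1}$.

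I expect the only real obstacle to be the phase bookkeeping rather than anything conceptual: tracking the fourth roots of unity $c_{ab}$ together with the $\xi$-dependent signs of the twisted formulas, and confirming that half-integer exponents such as $\tfrac12 d_{\bar 1}$ are consistent (so that $(-1)^{\pm\frac12 d_{\bar 1}}$ agree). The genuinely structural step---and the reason the clean factor $(-i\tau)^{\frac{\ell}{2}}$ with a bare $t$-shift survives---is the cancellation of the Gaussian term via the strange/Casimir formula (\ref{eq:3.1}); everything else is routine once the $S$- and $T$-actions on $\{\vartheta_{ab}\}$ are in hand.
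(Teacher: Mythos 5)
Your proposal is correct and is essentially identical to the paper's (one-line) proof, which simply invokes Proposition \ref{prop:A7} and the $\eta$-function transformation laws applied to the product expressions (\ref{eq:4.9})--(\ref{eq:4.12}); your unpacking of the $(-i\tau)^{\ell/2}$ power count, the cancellation of the Gaussian factors against the $t$-shift via the Casimir identity (\ref{eq:3.1}), and the $\vartheta_{01}\leftrightarrow\vartheta_{10}$ and $\vartheta_{00}\leftrightarrow\vartheta_{01}$ interchanges is exactly the bookkeeping the authors leave to the reader. (Only your parenthetical that $q^{\frac{1}{24}\sdim\fg}$ is $T$-invariant is imprecise for non-integral exponent, but it is harmless since the transformation of $\hat{R}^{(\epsilon)}_{\epsilon'}(\tau,z,t)$ is computed directly from the theta-product side.)
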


\begin{proof}
  It is immediate by the modular transformation formulae for the four Jacobi
theta  functions of degree two, given by Proposition \ref{prop:A7} in the
Appendix
and the modular transformation formulae for the $\eta$-function:
\begin{equation*}
  \eta \left(-\frac{1}{\tau}\right) = (-i\tau)^\frac12 \eta (\tau) \, ,
  \, \,\,
  \eta (\tau +1) = e^{\frac{\pi i}{12}} \eta (\tau)\, .
\end{equation*}
\end{proof}

The representation theoretical meaning of twisted denominators and 
superdenominators is as follows. Let
 \begin{equation*}
    \hat{\fg}^{\tw} = \fg_{\bar0} [t,t^{-1}] \oplus \fg_{\bo}  
  [t,t^{-1}] t^{\frac12} \oplus \CC K \oplus \CC d  
\end{equation*}
with the same commutation relations and the same $\hat{\fh}$ as in Section~\ref{sec:2}.  The action of $t_\xi$ on $\hat{\fh}^*$ induces the action of $t_{-\xi}$ on $\hat{\fh}$,  
which extends to the following isomorphism 
$t_{-\xi} : \hat{\fg}^{\tw} \overset{\sim}{\to} \hat{\fg}$: 
\begin{eqnarray*}
%\label{eq:4.d}   
t_{-\xi} (h \, t^n)  &=& ht^n + \xi (h) K \delta_{n,0}\, ,\,  h \in   \fh \, , \,\,\,\, t_{-\xi} (K) =K,\\
 t_{-\xi} (d) &=& d - \xi - \frac12 (\xi | \xi) K \, , \,\,    t_{-\xi} (e_\alpha t^n) = e_\alpha t^{n+ (\xi |\alpha)}\, , \, e_\alpha \in \fg_\alpha\,.
\end{eqnarray*}
Via this isomorphism, the $\hat{\fg}$-module $L (\Lambda)$ becomes a 
$\hat{\fg}^{\tw}$
-module, which we denote by 
$L^{\tw} (\Lambda)$
. The $\hat{\fg}^{\tw}$-module $L^{\tw} (\Lambda)$ is a highest weight module
with respect to the triangular decomposition of $\hat{\fg}^{\tw}$ induced from
that of $\hat{\fg}$ via the isomorphism $t_{-\xi}$. Its highest weight is  
 \begin{equation}     
 \label{eq:4.a}
\Lambda^{\tw}=t_\xi (\Lambda),
\end{equation}      
and its character and supercharacter are:
 \begin{equation}
\label{eq:4.b}
\ch^\pm_{L^{\tw}(\Lambda)}=t_\xi (\ch^\pm_{L(\Lambda)}),
\end{equation}      
their denominators being $\hat{R}^{\tw,\pm}$. The corresponding normalized
twisted character and supercharacter are given by
 \begin{equation}
\label{eq:4.14a}
 \ch^{\tw,\pm}_\Lambda = e^{-m^{\tw}_\Lambda \delta} \ch^\pm_{L^{\tw}(\Lambda)}\,, 
\end{equation}      
where
 \begin{equation}
\label{eq:4.c}
m^{\tw}_\Lambda = \frac{(\Lambda^{\tw} +\hat{\rho}^{\tw}|\Lambda^{\tw} + \hat{\rho}^{\tw} ) }{2 (K+h^\vee)}- \frac{\sdim \fg}{24}. 
\end{equation}

\begin{remark}
  \label{rem:4.3}
  \begin{list}{}{}
  \item (a)~~As in the affine Lie algebra case (see \cite{K2}, Chapter 12) 
we have, in view of (\ref{eq:4.2}) - (\ref{eq:4.5}), the
    following formula for the normalized character $\ch_\Lambda$
    in coordinates (\ref{eq:4.8}):

    \begin{equation}     
 \label{eq:4.13}
      \ch^+_\Lambda (h) = \ch_\Lambda (\tau, z , t) = e^{2\pi i Kt}
      \tr_{L (\Lambda)} q^{L_0 - \frac{1}{24} c(K)} e^{2\pi i z},
    \end{equation}
where $K$ is the level of $L (\Lambda)$, $L_0$ is the $0$\st{th}
mode of the Sugawara's Virasoro field $L(z) = \sum_{n \in \ZZ} L_n z^{-n-2}$, 
and $c(K)$, given by 
(\ref{eq:4.4} ), is its central
charge. A similar formula holds for $\ch^-_\Lambda$ by replacing
$\tr$ by $\str$.  
%By Conjecture~\ref{conj:3.7}, the functions $\hat{R}^\pm$ are the denominators of $\ch_\Lambda$ and $\sch_\Lambda$.
\item (b)
%~~As we have seen, one has modular invariance for
%  denominators, if, along with the denominator and
%  superdenominator $\hat{R}^\pm$, we consider their twisted
%  analogues $\hat{R}^{\tw,\pm}$.  
%The representation theoretical meaning of the latter is as follows. Let%  \begin{equation*}    \hat{\fg}^{\tw} = \fg_{\bar0} [t,t^{-1}] \oplus \fg_{\bo}    [t,t^{-1}] t^{\frac12} \oplus \CC K \oplus \CC d  \end{equation*}%with the same commutation relations and the same $\hat{\fh}$ as in Section~\ref{sec:2}.  The action of $t_\xi$ on $\hat{\fh}^*$ induces the action of $t_{-\xi}$ on $\hat{\fh}$,  which extends to the following isomorphism $t_{-\xi} : \hat{\fg}^{\tw} \overset{\sim}{\to} \hat{\fg}$: %\begin{eqnarray*}   t_{-\xi} (h \, t^n)  &=& ht^n + \xi (h) K \delta_{n,0}\, ,\,  h \in   \fh \, , \,\, t_{-\xi} (K) =K,\\%   t_{-\xi} (d) &=& d - \xi - \frac12 (\xi | \xi) K \, , \,\,    t_{-\xi} (e_\alpha t^n) = e_\alpha t^{n+ (\xi |\alpha)}\, , \, e_\alpha \in \fg_\alpha\,.\end{eqnarray*}%Via this isomorphism, the $\hat{\fg}$-module $L (\Lambda)$ becomes a $\hat{\fg}^{\tw}$-module, which we denote by $L^{\tw} (\Lambda)$.   
We have the twisted Sugawara's Virasoro field
$L^{\tw}(z) = \sum_{n \in \ZZ} L^{\tw}_n z^{-n-2}$, 
for which
we take $s_\alpha =-(\xi|\alpha)$, $\alpha \in \Delta_+$ (see
\cite{KW6}, Section~1).  Then 
%$s^{\tw}_\Lambda = \frac{(\Lambda +\hat{\rho}^{\tw} | \Lambda + \hat{\rho}^{\tw} ) }{2 (K+h^\vee)}- \frac{\sdim \fg}{24}$,so that 
$m^{\tw}_\Lambda =
h^{\tw}(\Lambda)- \frac{c(K)}{24}$, where $h^{\tw}(\Lambda)
= \frac{(\Lambda^{\tw} +2 \hat{\rho}^{\tw}|\Lambda^{\tw})}{2(K+h^\vee)}$ is 
the minimal eigenvalue of
$L^{\tw}_0$ in $L^{\tw }(\Lambda)$, and we have: 
% We let
%
%\begin{equation}\label{eq:4.14a}  \ch^{\tw,\pm}_\Lambda = e^{-s^{\tw}_\Lambda \delta} \ch^\pm_{L^{\tw} (\Lambda)}\,. \begin{equation}\label{eq:4.b}\ch^\pm_{L^{\tw}(\Lambda)}=t_\xi \ch^\pm_{L(\Lambda)},\end{equation}      
%\end{equation}
%
%Then we have
%
\begin{equation}
  \label{eq:4.14}
  \ch^{\tw,+}_\Lambda (\tau ,z, t)=e^{2\pi i Kt}
  \tr_{L^{\tw}(\Lambda)} q^{L^{\tw}_0 -\frac{1}{24}c(K)} e^{2\pi i z},
\end{equation}
and similarly for $\ch^{\tw,-}_\Lambda$, and $\hat{R}^{\tw,\pm}$ are the
denominators of $\ch^{\tw,\pm}_\Lambda$.
% and $\sch^{\tw}_\Lambda$.
  \end{list}
\end{remark}

Next, we introduce mock theta functions.  Having in mind
applications to affine Lie superalgebras, we use notation similar
to that above.

Let $\fh_\RR$ be an $\ell$-dimensional vector space over $\RR$
with a non-degenerate symmetric bilinear form $\bl$ (not
necessarily positive definite).  We shall identify $\fh_\RR$ with
$\fh^*_\RR$ using this bilinear form.  Let $L^\#$ be a positive
definite integral sublattice of $\fh^*_\RR$.  Let $\hat{\fh}_\RR
= \fh_\RR \oplus (\RR K \oplus \RR d)$ be the
$\ell+2$-dimensional vector space over $\RR$ with a symmetric
bilinear form $\bl$, which coincides with that on $\fh_\RR$, and
such that $\fh_\RR \perp (\RR K \oplus \RR d)$ and $\RR K \oplus
\RR d$ is the $2$-dimensional hyperbolic space, i.e. $(K |K) =
(d|d) =0$, $(K|d)=1$.  We identify $\hat{\fh}_{\RR}$ with
$\hat{\fh}^*_\RR = \fh^*_\RR \oplus\RR \delta \oplus \RR
  \Lambda_0$, using this bilinear form, so that $\fh_{\RR}$ gets
  identified with $\fh^*_\RR$, and $K$ (resp. $d$) with $\delta$
  (resp. $\Lambda_0$).  Given $\alpha \in \fh^*_\RR$, define the
  automorphism $t_\alpha$ of the vector space $\hat{\fh}^*_\RR$
  by formula (\ref{eq:2.10}).

Let $\Lambda\in\hat{\fh}^*_\RR $ be such that $\Lambda(K)$ is a positive 
integer, which, as before, we denote by $K$, 
and $(\Lambda|L^\#)\subset \ZZ $. Let $T \subset \fh^*_\RR$ 
be a finite set of
vectors that spans an isotropic subspace of $\fh^*_\RR$, and
such that 
$(T|L^\#)\subset \ZZ$ 
and $\bar{\Lambda} \perp T$, where, as before,
$\bar{\Lambda} \in \fh^*_\RR$ denotes the restriction of
$\Lambda$ to $\fh_\RR$.

\begin{definition}
  \label{def:4.4}
A {\em mock theta function} of degree $K$ is defined by the
following series:
\begin{equation}
\label{eq:4.15}
  \Theta^\pm_{\Lambda ,T} = e^{-\frac{(\Lambda | \Lambda)}{2K}\delta} 
\sum_{\alpha \in L^\#}\epsilon_\pm (t_\alpha) t_\alpha
  \frac{e^\Lambda}{\prod_{\beta \in T} (1\pm e^{-\beta})} \, .
\end{equation}
\end{definition}
It is not difficult to deduce from (\ref{eq:2.10}) that the series 
$\Theta^+_{\Lambda, T}$ (resp. 
$\Theta^-_{\Lambda, T}$ ) converges in the domain 
$$
X:=\{h \in \hat{\fh}  |\,\,\Re \delta (h) >0\} 
$$
to a meromorphic function with poles at the hyperplanes 
\begin{equation*}
\{ h \in \hat{\fh} |\,\, \beta (h) -(\alpha |\beta) \delta  (h) = 
(2n+1)\pi i \,(\hbox {resp.}  = 2n\pi i)\},
\end{equation*}
where
$\alpha \in L^\# , \beta \in T , n \in \ZZ$.  It is also clear that 
$\Theta^\pm_{\Lambda, T}$ depends only on $\Lambda\mod \CC \delta$. 
We shall call also a mock theta function an arbitrary linear combination
of functions of the form (\ref{eq:4.15}).

By (\ref{eq:2.10}), in coordinates (\ref{eq:4.8}) 
the mock theta function (\ref{eq:4.15})  looks as follows:
%
% {\bf  Am I missing a couple pages?  the last
%    equation was 4.14, but this one is labeled 4.17}
\begin{equation}
  \label{eq:4.16} 
  \Theta^\pm_{\Lambda ,T} (\tau ,z ,t) = e^{2\pi i K t}
  \sum_{\gamma \in L^\# + K^{-1}\bar{\Lambda}}\epsilon_\pm (t_\gamma)
 \frac{q^{\frac{K (\gamma |\gamma)}{2}}
  e^{2\pi i K \gamma (z)}}
{\prod_{\beta \in T} (1 \pm q^{-(\gamma |\beta) } e^{-2\pi i \beta  (z)})}.
\end{equation}
Recall that 
$\epsilon_+(t_\gamma)=1$
in all cases, and $\epsilon_-(t_\gamma)=1$ in all cases, considered in this paper.
\begin{remark}  
\label{rem:4.5}
Let $D$ be the Laplace operator, associated with the bilinear
form $\bl$.  Using that $D (e^\lambda) = (\lambda | \lambda)e^\lambda$, we
immediately see that $D (\Theta^\pm_{\Lambda ,T}) =0$.
Also, obviously, these functions are $t_\alpha$-invariant (rather 
anti-invariant) for all
$\alpha \in L^\#$, 
invariant under the translations $z\mapsto z+2\pi i\alpha$ 
for $\alpha \in L^\#$, 
and satisfy the degree property: $\Theta (h+aK)=
e^{Ka}\Theta(h),\, a\in \CC$.
It is known (\cite{K2}, Chapter 13) that these properties characterize
classical theta functions of degree $K$. It is an interesting problem
to find out what 
are the properties which, along with the above, characterize mock theta 
functions.
\end{remark}

In the same way as the normalized characters of integrable
modules over affine Lie algebras are rewritten in terms of the
theta functions (see \cite{K2}, Chapter~13), by Conjecture \ref{conj:3.7}, 
the normalized
characters and supercharacters of partially integrable tame modules
over affine Lie superalgebras can be rewritten in terms of mock
theta functions:
\begin{equation}  
\label{eq:4.17} 
 j_{\bar{\Lambda}} \hat{R}^\pm \ch^\pm_\Lambda = \sum_{w \in W} \epsilon_\pm (w) \Theta^\pm_{w (\Lambda +\hat{\rho}), w(T_{\bar{\Lambda}+\rho})}\, ,
\end{equation}
where $T_{\bar{\Lambda} +\rho}$ is a maximal subset in
$\Delta_{\bo, +}$, consisting of linearly independent pairwise orthogonal 
isotropic roots, which are orthogonal to $\bar{\Lambda}+\rho$.

\begin{remark}  
\label{rem:4.6}
Since 
$\ch_\Lambda^{\tw,\pm}=t_\xi (\ch^\pm_\Lambda)$, 
we obtain in coordinates 
(\ref{eq:4.8}):
$$
\ch_\Lambda^{\tw,\pm}(\tau, z, t)=\ch^\pm_\Lambda (\tau, z+\tau\xi, t+\frac{(z+\tau\xi|z+\tau\xi)-(z|z)}{2\tau}). 
$$
\end{remark}  

The main objective of our discussion in the next Section will be modular 
transformation properties of the {\it numerator} (cf. the RHS of 
(\ref{eq:4.17})):

\begin{equation}  
\label{eq:4.18} 
\Phi^\pm_{\Lambda, T} =\sum_{w \in W} \epsilon_\pm (w) \Theta^\pm_{w (\Lambda), w(T)}\, ,
\end{equation}
where $\Lambda\in\hat{\fh}^*_\RR $ is such that $\Lambda(K)$ is a positive 
integer (denoted as before by $K$), and $T \subset \hat{\fh}^*_\RR$ consists 
of pairwise orthogonal linearly independent isotropic roots, which are 
orthogonal to $\Lambda$, and $(T|L^\#)\subset \ZZ$ .

Recall the action of the group $SL_2 (\RR)$ in the domain 
\begin{equation*}
     X = \{(\tau,z,t)|\Im \tau >0 \} \subset \hat{\fh}^*
\end{equation*}
in coordinates (\ref{eq:4.8}):
\begin{equation}
\label{eq:4.6}
  \binom{a\, b}{c \,d} \cdot (\tau ,z,t) = \left(\frac{a\tau +b}{c\tau +d} \, , \, \frac{z}{c\tau +d}\, , \,t-\frac{c(z|z)}{2(c\tau +d)}  \right)\, ,
\end{equation}
and the action of the corresponding metaplectic group $Mp_2
(\RR)$ on the space of meromorphic functions on $X$:
\begin{equation}
\label{eq:4.6a}
  F|_A (\tau ,z, t) = (c\tau +d)^{-\frac{\ell}{2}} F (A \cdot
  (\tau ,z,t))
\end{equation}
(see \cite{K2}, Chapter 13 for details).  
Everywhere in the paper the square root of a complex number $a=r e^{i\theta}$,
where $r\geq 0$ and $-\pi <\theta <\pi$ is, as usual, chosen to be
$a^{1/2}=r^{1/2}e^{i\theta/2}$.

Let's consider the mock theta functions of the form $\Theta^-_{\Lambda
,\beta}$, for which the set $T$ consists of one isotropic vector
$\beta$.  Choose a basis $\rmv_1,\rmv_2, \ldots$ of $\fh$, such
that $(\beta|\rmv_j)=\delta_{1j}$.  Then coordinates
(\ref{eq:4.8}) on $\hat{\fh}$ become $h=2\pi i (-\tau d + \sum_j
 z_j \rmv_j +tK)$, and  we have:
\begin{equation*}
  \Theta^-_{\Lambda ,\beta} (h)=e^{2\pi i Kt} \sum_{\alpha \in  L^\#}  
  \frac{e^{2\pi i \left(
\sum_{j\geq 1} z_j (\Lambda +K\alpha |\rmv_j)+\tau
      (\frac{K}{2} (\alpha |\alpha)+(\Lambda
      |\alpha))\right)}}{1-e^{-2\pi i (z_1 + (\alpha |\beta) \tau)}}\, .
\end{equation*}
Hence for each fixed $\tau$, $\Im \tau >0$, this function has
only simple poles and they occur at the hyperplanes $z_1 = n -
(\gamma |\beta) \tau$ $(n \in \ZZ\, ,\,  \gamma \in L^\#)$.  For
$\gamma \in L^\#$ let $L_\gamma = \{ \alpha \in L^\# |\, (\alpha
|\beta) = (\gamma |\beta) \}$.  It is immediate to compute the
residue:

\begin{eqnarray}
\label{eq:4.18a}
  \Res_{z_1=n-(\gamma |\beta )\tau} \Theta^-_{\Lambda ,\beta} (h)
  = \frac{e^{2\pi i Kt}}{2\pi i} \sum_{\alpha \in L_\gamma}
  e^{2\pi i  \left(n(\Lambda +K\alpha | \rmv_1) +\sum_{j \geq 2}
      z_j (\Lambda +   K\alpha |\rmv_j)\right)}\\
   \times e^{2\pi i \tau \left(\frac{K}{2}(\alpha |\alpha) + (\Lambda|\alpha) -(\gamma |\beta) (\Lambda + K\alpha|\rmv_1)\right)}\, .\nonumber
\end{eqnarray}
On the other hand, for each fixed $\tau,\, \Im \tau >0$, the function,
transformed by $S=\binom{0\, -1}{1\,\,\,\,\, 0}$, looks as follows:
\begin{eqnarray*}
  (\Theta^-_{\Lambda ,\beta}|_S)(h) = \tau^{-\frac{\ell}{2}}
  \Theta^-_{\Lambda ,\beta} (S \cdot h) &=& \tau^{-\frac{\ell}{2}}
  e^{2\pi i K \left(t-\frac{\| \sum_{j\geq 1} z_j\rmv_j \|^2
    }{2\tau}\right)}\\
 && \times \sum_{\alpha \in L^\#} 
\frac{e^{{\frac{2\pi i}\tau}\left(\sum_{j\geq 1}z_j(\Lambda +K\alpha |\rmv_j) 
- \frac{K}{2}(\alpha|\alpha)- (\Lambda |\alpha)\right)}}
{1-e^{-\frac{2\pi i}{\tau}(z_1-(\alpha |\beta))}} \, .
\end{eqnarray*}
For each fixed $\tau$, this function has only simple poles as
well, and they occur at the hyperplanes $z_1 = (\gamma |\beta)
+n\tau\,\, (n \in \ZZ, \gamma \in L^\#)$.  The corresponding residue
is
\begin{eqnarray}
\label{eq:4.19}
  \Res_{z_1= (\gamma |\beta) +n\tau} (\Theta^-_{\Lambda  ,j}|_S)(h)= \frac{\tau^{1-\frac{\ell}{2}} e^{2\pi iKt}}{2\pi i}   e^{-\frac{\pi K i }{\tau} \|
       ((\gamma |\beta) +n\tau) \rmv_1+ \sum_{j \geq 2} z_j\rmv_j\|^2} \\  
\times 
 \sum_{\alpha \in L_\gamma} e^{2\pi i n (\Lambda +K\alpha |\rmv_1)} 
    e^{\frac{2\pi i}{\tau}\left((\gamma |\beta) (\Lambda +K\alpha
      |\rmv_1)+\sum_{j \geq 2} z_j (\Lambda + K\alpha |\rmv_j)-
      (\frac{K}{2} (\alpha |\alpha) + (\Lambda|\alpha))\right) }\,.\nonumber
\end{eqnarray}
%Let $G=\Theta^-_{\Lambda ,\beta} - \Theta^-_{\Lambda ,\beta}|_S$. 
%The necessary and sufficient conditions for this are: $\ell = 2$, $L^\# = \ZZ
%\alpha$, where $K(\alpha|\alpha)$ is a positive integer, and, changing the 
%sign of $\beta$, 
%if necessary, $(\alpha |\beta)=1$.  We let
%$\rmv_1=\alpha -\beta$, $\rmv_2=\beta$.  Then the poles of 
%$\Theta^-_{\Lambda ,\beta}$ and $\Theta^-_{\Lambda, \beta} |_S$ are
%the same:  $n+s\tau$, where $n,s \in \ZZ$, with the same residue
%$\frac{e^{2\pi i Kt}}{2\pi i} e^{-2\pi i sKz_2}$.  In particular,
%we obtain the following.

\begin{proposition}
  \label{prop:4.7}
Let $\dim \fh =2$ and let $\Lambda \in \hat{\fh}^*$,   
$\alpha , \beta \in \fh^*$ be such that 
$\Lambda (K)$ is a positive integer, 
$(\Lambda |\alpha) \in \ZZ$, $(\Lambda |\beta)=0$,
$\Lambda (K)(\alpha | \alpha)$ is a positive integer, 
$(\alpha |\beta) =1$, $(\beta |\beta)
=0$.  Let $L^\#=\ZZ \alpha$. 
Then the function
$G=\Theta^-_{\Lambda ,\beta} - \Theta^-_{\Lambda, \beta} |_S$ is
holomorphic in the domain $X$.
\end{proposition}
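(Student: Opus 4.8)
The plan is to prove holomorphy by the residue method: both $\Theta^-_{\Lambda,\beta}$ and $\Theta^-_{\Lambda,\beta}|_S$ are meromorphic on $X$ with \emph{only simple} poles, and these poles occur along the same hyperplanes. So if I can show that at each such hyperplane the residues of the two functions coincide, then every pole of $G=\Theta^-_{\Lambda,\beta}-\Theta^-_{\Lambda,\beta}|_S$ is removable, and $G$ is holomorphic on all of $X$. The two residues are already computed in (\ref{eq:4.18a}) and (\ref{eq:4.19}), so the whole proof reduces to matching those two expressions.

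First I would pin down the poles. Since $L^\#=\ZZ\alpha$ and $(\alpha|\beta)=1$, a vector $\gamma=k\alpha$ satisfies $(\gamma|\beta)=k$. Thus for fixed $\tau$ the poles of $\Theta^-_{\Lambda,\beta}$ sit at $z_1=n-k\tau$ and those of $\Theta^-_{\Lambda,\beta}|_S$ at $z_1=k+n\tau$, $n,k\in\ZZ$; in both cases this is exactly the set $z_1\in\ZZ+\ZZ\tau$, so the pole sets agree and all poles are simple. A crucial simplification, again from $(\alpha|\beta)=1$ and $L^\#=\ZZ\alpha$, is that each set $L_\gamma=\{\alpha'\in L^\#:(\alpha'|\beta)=(\gamma|\beta)\}$ is a singleton, so the sums over $L_\gamma$ in (\ref{eq:4.18a}) and (\ref{eq:4.19}) collapse to a single term.

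Next I would fix a convenient basis. Let $\beta'\in\fh^*$ be the isotropic vector with $(\beta|\beta')=1$; then $\alpha=\tfrac12(\alpha|\alpha)\beta+\beta'$, and I would take $\rmv_1=\beta'$, $\rmv_2=\beta$, so that $(\beta|\rmv_j)=\delta_{1j}$ as required and the Gram matrix of $\rmv_1,\rmv_2$ is antidiagonal. The point of this choice is that it linearizes the Gaussian factor in (\ref{eq:4.19}), namely $\|(p+q\tau)\rmv_1+z_2\rmv_2\|^2=2(p+q\tau)z_2$, which is what makes the $1/\tau$ contributions cancel. I would also record $(\Lambda|\rmv_2)=(\Lambda|\beta)=0$, $(\Lambda|\rmv_1)=(\Lambda|\alpha)\in\ZZ$, $(\alpha|\rmv_1)=\tfrac12(\alpha|\alpha)$, and $(\alpha|\rmv_2)=1$. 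Note also that, because $\dim\fh=\ell=2$, the prefactor $\tau^{1-\ell/2}$ in (\ref{eq:4.19}) equals $\tau^0=1$, matching the purely $\tau$-free prefactor of (\ref{eq:4.18a}); this equality of prefactors is exactly where the hypothesis $\dim\fh=2$ enters.

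Finally I would compute and compare the residues at a pole $z_1=p+q\tau$: in (\ref{eq:4.18a}) take $n=p$, $\gamma=-q\alpha$, and in (\ref{eq:4.19}) take $\gamma=p\alpha$, $n=q$. With the inner products above, the $\tau$-linear exponent of the first residue cancels identically, and in the second residue the $1/\tau$ terms from the Gaussian factor cancel against those from the quadratic exponent. What survives is that the two residue exponents differ only by $2\pi i\,pq\,K(\alpha|\alpha)$, which lies in $2\pi i\,\ZZ$ because $K(\alpha|\alpha)\in\ZZ$ by hypothesis, so the two residues are equal; along the way the integrality $(\Lambda|\alpha)\in\ZZ$ is used to discard the factors $e^{2\pi i p(\Lambda|\alpha)}$ and $e^{2\pi i q(\Lambda|\alpha)}$. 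I expect the only real obstacle to be the bookkeeping in this final matching — in particular carrying the Gaussian factor $e^{-\pi Ki\|\cdot\|^2/\tau}$ of the $S$-transform through the computation and confirming that the two integrality conditions are precisely what is needed. Once the residues agree, holomorphy of $G$ follows immediately since all poles are simple.
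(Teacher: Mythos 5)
Your proposal is correct and follows essentially the same route as the paper's proof: both match the pole sets and residues of $\Theta^-_{\Lambda,\beta}$ and $\Theta^-_{\Lambda,\beta}|_S$ via formulas (\ref{eq:4.18a}) and (\ref{eq:4.19}), using that $L_\gamma$ is a singleton and that the integrality hypotheses on $(\Lambda|\alpha)$ and $K(\alpha|\alpha)$ kill the residual exponential discrepancy. The only (cosmetic) difference is your basis choice $\rmv_1=\beta'$, $\rmv_2=\beta$ versus the paper's $\rmv_1=\alpha$, $\rmv_2=\beta$; your computations check out either way.
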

\begin{proof}
The function  $G$ is holomorphic in the domain $X$ if and only if 
$\Theta^-_{\Lambda ,\beta}$ and $\Theta^-_{\Lambda, \beta} |_S$ have the 
same poles and equal residues at each pole. For
$\gamma\in L^\#$ we have: $L_\gamma = \{ \gamma\}$, hence formulae 
(\ref{eq:4.18a}) and 
(\ref{eq:4.19}) become:
$$
 \Res_{z_1=n-(\gamma |\beta )\tau} \Theta^-_{\Lambda ,\beta} (h) = 
\frac{e^{2\pi i Kt}}{2\pi i}  e^{2\pi i  \left(n(\Lambda +K\gamma | \rmv_1) +
z_2 (\Lambda +   K\gamma |\rmv_2)\right)} q^{\frac{K}{2}
(\gamma |\gamma)+ (\Lambda | \gamma) -(\gamma |\beta) 
(\Lambda + K\gamma|\rmv_1)}\, ,
$$
and
\begin{eqnarray*}
 \Res_{z_1= (\gamma |\beta) +n\tau} (\Theta^-_{\Lambda  ,j}|_S)(h)= 
\frac{e^{2\pi iKt}}{2\pi i}   
e^{-\frac{\pi K i }{\tau}|((\gamma|\beta)+n\tau)\rmv_1 +z_2\rmv_2|^2} 
e^{2\pi in(\Lambda+K\gamma|\rmv_1)}\\
\times e^{-\frac{\pi i}{\tau}((\gamma|\beta)(\Lambda+K\gamma|\rmv_1)+
z_2(\Lambda+K\gamma|\rmv_2)-\frac{K}{2}(\gamma|\gamma)
-(\Lambda|\gamma))}.
\end{eqnarray*} 
Let $\rmv_1=\alpha$, $\rmv_2=\beta$, 
$\gamma=a\alpha$, $a\in \ZZ$. Then the above formulae become:
$$
 \Res_{z_1=n-a\tau} \Theta^-_{\Lambda ,\beta} (h) = 
\frac{e^{2\pi i Kt}}{2\pi i} e^{2\pi iKa z_2}q^{-\frac{Ka^2}{2}(\alpha|\alpha)}
;\,\,
 \Res_{z_1=a+n\tau} \Theta^-_{\Lambda ,\beta}|_S (h) =
\frac{e^{2\pi i Kt}}{2\pi i} e^{-2\pi iKnz_2}q^{-\frac{Kn^2}{2}(\alpha|\alpha)}.
$$
Hence the residues at all poles of the function $G$ are zero. 
\end{proof}
%%%%%%%%%%copied from mocktheta-1115.tex  %%%%%%%%%%%%

\section {Transformation properties of the mock theta functions
  $\Phi^{[m]}$ and their modifications $\tilde{\Phi}^{[m]}$}
\label{sec:5}

Let $\fg = s\ell_{2|1}$ with a structure of a Kac--Moody algebra
as in Example \ref{ex:3.3}, and let $\hat{\fg}$ be the
corresponding affine superalgebra.  We have
\begin{eqnarray*} 
& &  \sdim \fg =0 \, , \, \ell =2\, , \, \Pi = \Delta_{\bo ,+}=
\{\alpha_1,\alpha_2\}, \,\Delta_{\bar{0},+}=\{\theta=\alpha_1+\alpha_2\}, \\
& & (\alpha_i|\alpha_i)=0, \,  (\alpha_1|\alpha_2)=1\, 
, \, (\theta |\theta)=2 \, , \,
  \rho_{\bar0}=\rho_{\bo} =\frac12 \theta \, , \, \rho =0 \,, h^{\vee}=1 . 
\end{eqnarray*}
We introduce the following coordinates in the Cartan subalgebra
$\hat{\fh}$ of $\hat{\fg}$ (cf. (\ref{eq:4.8})):
\begin{equation}
  \label{eq:5.1}
  h=2\pi i (-\tau \Lambda_0 -z_1 \alpha_2 -z_2 \alpha_1 + t
  \delta)= 2\pi i (-\tau \Lambda_0 + u (\alpha_1 +\alpha_2) +
  v (\alpha_1 - \alpha_2) + t \delta)\, .
\end{equation}

In this section we shall study transformation properties of the
numerator of the normalized supercharacter of the integrable
$\hat{\fg}$-module $L (m\Lambda_0)$, where $m$ is a non-negative
integer (see Example \ref{ex:3.3}), 
using formula (\ref{eq:3.15a}) for $\ch^-_{L
  (m\Lambda_0)}$.  We have: $\hat{\rho}=\Lambda_0$,
$m_{m\Lambda_0} =0$, hence $\ch^-_{L(m\Lambda_0)} =
\ch^-_{m\Lambda_0}$.  We choose in this formula $T_0 = \{
\alpha_1\}$. Then $j_0 = 1$ and (\ref{eq:3.15a}) gives:
\begin{equation}
  \label{eq:5.2}
  \hat{R}^- \ch^-_{m\Lambda_0} = \sum_{w \in \hat{W}^\#} \epsilon
  (w) w \frac{e^{(m+1)\Lambda_0}}{1-e^{-\alpha_1}}\, .
\end{equation}
Since $L^\# = \ZZ\theta$ and $W =\{ 1, r_{\alpha_1 + \alpha_2}\}$,
due to (\ref{eq:3.4}) and (\ref{eq:2.10}),
formula (\ref{eq:5.2}) in coordinates (\ref{eq:5.1}) looks as
follows:
\begin{equation}  
\label{eq:5.3}
  (\hat{R}^- \ch^-_{m \Lambda_0}) (\tau, z_1,z_2,t) = e^{2\pi i
    (m+1)t} \sum_{j \in \ZZ}
   \left( \frac{e^{2\pi i j (m+1) (z_1+z_2)} q^{j^2 (m+1)}}
    {1-e^{2\pi i z_1} q^j} - 
%\sum_{j \in \ZZ}
\frac{e^{-2\pi i j(m+1)(z_1+z_2)}q^{j^2 (m+1)}}{1-e^{-2\pi i
    z_2} q^j}\right).
\end{equation}
We denote the right-hand side of this formula by 
\begin{equation}
\label{eq:5.4}
\Phi^{[m]} (\tau ,z_1,z_2,t) = \varphi^{[m]}(\tau, u, v ,t)\,,
\end{equation}
where
$u=-\frac12 (z_1+z_2)$, $v =\frac12 (z_1-z_2)$.  This is the
{\em numerator} of $\ch^-_{m\Lambda_0}$.  
%Note that, by (\ref{eq:4.10}),  its denominator is
%expressed in terms of the Jacobi theta function $\vartheta_{11}$:
%
%\begin{equation}
%  \label{eq:5.4}
%  \hat{R}^- (\tau, z_1, z_2, t) = -ie^{2\pi i t} \eta (\tau)^3
%  \quad \frac{\vartheta_{11} (\tau, z_1 + z_2)}
%           {\vartheta_{11}(\tau,z_1) \vartheta_{11}(\tau,z_2)}\, .
%\end{equation}

The main properties of the functions $\Phi^{[m]}$ are described
by the following lemma.

\begin{lemma}
  \label{lem:5.1}
  \begin{list}{}{}
  \item (a)~~$\Phi^{[m]} (\tau, z_1 +a, z_2 + b,t) = \Phi^{[m]}
    (\tau, z_1,z_2,t)$ for all $a,b \in \ZZ$.

\item (b)~~$\Phi^{[m]} (\tau, -z_1,-z_2,t)= -\Phi^{[m]}
  (\tau,z_1,z_2,t)$.

\item (c)~~$\Phi^{[m]} (\tau, z_2,z_1,t) = \Phi^{[m]} (\tau, z_1,
  z_2, t)$.

\item  (d)~~ $\Phi^{[m]} (\tau, z_1 +\tau, z_2 + \tau, t) 
     = q^{-(m+1)} e^{-2\pi i (m+1)(z_1+z_2)} \Phi^{[m]} (\tau,
       z_1,z_2,t)$.

\item (e)~~$\Phi^{[m]} (\tau, z_1,z_2,t) -e^{2\pi i (m+1)z_1}
  \Phi^{[m]} (\tau,z_1,z_2+\tau ,t) $\\ 
$=\sum^{m-1}_{j=0} e^{\pi i (j+1) (z_1-z_2)} q^{-\frac{(j+1)^2}{4 (m+1)}}$
 $ \left(\Theta_{j+1,m+1} (\tau, z_1+z_2,t) -\Theta_{-(j+1),
    m+1} (\tau, z_1+z_2,t)\right)$, \\ 
where $\Theta_{j,m} (\tau,z,t) =e^{2\pi i mt} \Theta_{j,m} (\tau,z)$, and
$\Theta_{j,m} (\tau,z)$ is given by (\ref{eq:A2}).
  \end{list}
\end{lemma}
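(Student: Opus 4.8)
The plan is to reduce all five identities to a single Appell-type sum and then exploit its internal symmetry. Introduce the abbreviation
$$A(z_1,z_2)=\sum_{j\in\ZZ}\frac{e^{2\pi ij(m+1)(z_1+z_2)}q^{(m+1)j^2}}{1-e^{2\pi iz_1}q^j},$$
so that, matching terms in (\ref{eq:5.3}), the second summand there is exactly $A(-z_2,-z_1)$ and hence $\Phi^{[m]}(\tau,z_1,z_2,t)=e^{2\pi i(m+1)t}\bigl(A(z_1,z_2)-A(-z_2,-z_1)\bigr)$. All series converge absolutely for $\Im\tau>0$ away from their poles, so reindexing and term-by-term manipulation are legitimate. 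Statement (a) is then immediate, since each summand of (\ref{eq:5.3}) is visibly unchanged under $z_1\mapsto z_1+a$, $z_2\mapsto z_2+b$ with $a,b\in\ZZ$.

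For (b) and (c) the crux is the single algebraic identity
$$A(z_1,z_2)+A(-z_1,-z_2)=\sum_{j\in\ZZ}e^{2\pi ij(m+1)(z_1+z_2)}q^{(m+1)j^2},$$
which I would prove by sending $j\mapsto-j$ in $A(-z_1,-z_2)$ and using $\tfrac{1}{1-e^{-2\pi iz_1}q^{-j}}=\tfrac{-e^{2\pi iz_1}q^j}{1-e^{2\pi iz_1}q^j}$, so that the two denominators recombine and cancel. Call the right-hand side $B(z_1,z_2)$; it depends only on $z_1+z_2$ and is even in it, hence both symmetric in $z_1,z_2$ and invariant under $(z_1,z_2)\mapsto(-z_1,-z_2)$. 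Substituting into $\Phi^{[m]}=e^{2\pi i(m+1)t}(A(z_1,z_2)-A(-z_2,-z_1))$, the combination $\Phi^{[m]}(-z_1,-z_2)+\Phi^{[m]}(z_1,z_2)$ collapses to $B(z_1,z_2)-B(z_2,z_1)=0$, giving (b), and likewise $\Phi^{[m]}(z_2,z_1)-\Phi^{[m]}(z_1,z_2)=B(z_2,z_1)-B(z_1,z_2)=0$ gives (c).

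Part (d) is a reindexing: writing out $A(z_1+\tau,z_2+\tau)$ and shifting the summation index (using $e^{2\pi i(z_1+\tau)}q^j=e^{2\pi iz_1}q^{j+1}$) yields $A(z_1+\tau,z_2+\tau)=q^{-(m+1)}e^{-2\pi i(m+1)(z_1+z_2)}A(z_1,z_2)$. The same relation applied at $(-z_2,-z_1)$ gives $A(-z_2-\tau,-z_1-\tau)$ with the identical prefactor, so subtracting the two pieces of $\Phi^{[m]}$ produces the common factor $q^{-(m+1)}e^{-2\pi i(m+1)(z_1+z_2)}$ claimed in (d).

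Part (e) is the substantive step and I expect the index bookkeeping there to be the main obstacle. The idea is that forming $\Phi^{[m]}(\tau,z_1,z_2,t)-e^{2\pi i(m+1)z_1}\Phi^{[m]}(\tau,z_1,z_2+\tau,t)$ clears the denominators. In the $A(z_1,z_2)$-piece this combination produces $1-(e^{2\pi iz_1}q^j)^{m+1}$ over $1-e^{2\pi iz_1}q^j$, which telescopes through the geometric sum $\sum_{l=0}^m(e^{2\pi iz_1}q^j)^l$; a parallel computation, after the index shift $j\mapsto j+1$ in $A(-z_2-\tau,-z_1)$, handles the minus-piece and yields $\sum_{l=0}^m(e^{-2\pi iz_2}q^j)^l$. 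What remains is a pole-free double sum $\sum_{l=0}^m\sum_{j\in\ZZ}(\cdots)$. Completing the square via $(m+1)j^2+lj=(m+1)\bigl(j+\tfrac{l}{2(m+1)}\bigr)^2-\tfrac{l^2}{4(m+1)}$ identifies the inner $j$-sums with $\Theta_{l,m+1}(\tau,z_1+z_2)$ and $\Theta_{-l,m+1}(\tau,z_1+z_2)$ through (\ref{eq:A2}), the surviving prefactors collapsing to $q^{-l^2/(4(m+1))}e^{\pi il(z_1-z_2)}$. The $l=0$ term cancels since $\Theta_{0,m+1}-\Theta_{0,m+1}=0$, and reindexing $l=j+1$ over $l=1,\dots,m$ reproduces precisely the right-hand side of (e). The only genuine care required is tracking the half-integer shifts and signs in the square completion, and verifying that the index shift in the minus-piece aligns the two geometric sums.
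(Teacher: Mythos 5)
Your proof is correct, and for parts (b), (c), (d) it takes a genuinely different and more self-contained route than the paper. The paper proves (c) by invoking the double-series expansion of $\Phi^{[m]}$ from formula (8.3) of \cite{KW3} (whose symmetry under $(j,z_1)\leftrightarrow(k,z_2)$ is manifest), deduces (b) from (c), and obtains (d) only at the end by a chain of substitutions combining (c) with (e). You instead isolate the single reflection identity $A(z_1,z_2)+A(-z_1,-z_2)=\sum_{j}e^{2\pi i j(m+1)(z_1+z_2)}q^{(m+1)j^2}$, which follows from $\tfrac{1}{1-w}+\tfrac{1}{1-w^{-1}}=1$ with $w=e^{2\pi i z_1}q^j$ after the reindexing $j\mapsto -j$; since the right-hand side depends only on $z_1+z_2$, both (b) and (c) drop out in two lines, with no appeal to an external expansion. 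Your proof of (d) by the direct index shift $j\mapsto j\mp1$ in each Appell piece is also correct and considerably shorter than the paper's derivation --- note only that the second piece requires the shift by $-\tau$ (since $-(z_2+\tau)=-z_2-\tau$), and the two resulting sign changes cancel to give the identical prefactor $q^{-(m+1)}e^{-2\pi i(m+1)(z_1+z_2)}$, exactly as you assert. Part (e) follows the same telescoping-plus-completing-the-square computation as the paper, and your bookkeeping (the shift $j\mapsto j+1$ in the minus piece, the prefactor $e^{\pi i l(z_1-z_2)}q^{-l^2/(4(m+1))}$, the vanishing of the $l=0$ term) checks out. What your approach buys is independence from \cite{KW3}; what the paper's buys is the representation-theoretic interpretation of (c) and (e) recorded in Remark 5.2.
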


\begin{proof}
  Without loss of generality we may assume that $t=0$.  Property~(a) is
  obvious, while property (b) follows easily from (c). Property (c) follows 
from the expansion of $\Phi^{[m]}$ from formula (8.3) in \cite{KW3}:
 $$
\Phi^{[m]} (\tau ,z_1,z_2,0) = \left(\sum_{j,k\geq 0,\, \min(j,k)|m+1} - 
\sum_{j,k <0,\, \max(j,k)|
m+1}\right)
%  \leftarrow \min (j,k) \in (m+1)\ZZ\min (j,k) \in (m+1)\ZZ
 e^{2\pi i (jz_1+kz_2)}q^{\frac{jk}{m+1}}.
$$

Next, we prove (e).  We have:
\begin{eqnarray*}
  \Phi^{[m]} (\tau,z_1,z_2,0) &=& 
\sum_{j \in \ZZ} \frac{e^{2\pi i j (m+1) (z_1+z_2)} q^{j^2(m+1)}}
{1-e^{2\pi i z_1} q^j} - \sum_{j \in \ZZ} \frac{e^{-2\pi i j (m+1) (z_1+z_2)} q^{j^2(m+1)}}{1-e^{-2\pi i z_2} q^j}\, 
, \\
\noalign{\hbox{and}}\\
\Phi^{[m]} (\tau, z_1,z_2+\tau ,0) &=& e^{-2\pi i (m+1)z_1}
\Bigg(\sum_{j \in \ZZ} \frac{e^{2\pi i j (m+1) (z_1+z_2)} q^{j^2
      (m+1)}(e^{2\pi i z_1} q^j)^{m+1}}{1-e^{2\pi i z_1}q^j}  \\
 & &-\sum_{j \in \ZZ} \frac{e^{-2\pi i j (m+1) (z_1+z_2)} q^{j^2
     (m+1)} (e^{-2\pi i z_2} q^j)^{m+1}}
{1-e^{-2\pi i z_2} q^j}\, \Bigg) .
\end{eqnarray*}
Hence we have:
\begin{eqnarray*}
\lefteqn{\hspace{-2ex}\Phi^{[m]} (\tau ,z_1,z_2,0) -e^{2\pi i (m+1)  z_1}
  \Phi^{[m]}(\tau, z_1,z_2+\tau,0)}\\
&=& \sum_{j \in \ZZ} e^{2\pi i j (m+1) (z_1+z_2)} q^{j^2(m+1)} 
    \frac{1-(e^{2\pi i z_1} q^j)^{m+1}}{1-e^{2\pi i z_1} q^j}\\
&&- \sum_{j \in \ZZ} e^{-2\pi i j (m+1) (z_1+z_2)} q^{j^2(m+1)}
  \frac{1-(e^{-2\pi i z_2} q^j)^{m+1}}{1-e^{-2\pi i z_2} q^j}\\
% %
&=&\sum^m_{k=0}\sum_{j \in \ZZ} e^{2\pi i j (m+1)}
   q^{j^2 (m+1)} (e^{2\pi i z_1} q^j)^k
- \sum^m _{k=0}\sum_{j \in \ZZ}
    e^{-2\pi i j (m+1)(z_1+z_2)} q^{j^2 (m+1)} (e^{-2\pi i z_2} q^j)^k\\
% %
&=&  \sum^m_{k=0} e^{\pi i k (z_1-z_2)} 
    q^{-\frac{k^2}{4(m+1)}} \\
& &
\times \left( \sum_{j \in \ZZ} e^{2\pi i (m+1)(z_1+z_2)}
    q^{(m+1)\left(j+\frac{k}{2(m+1)}\right)^2} 
 - \sum_{j \in \ZZ} e^{2 \pi i (m+1) (z_1+z_2) (j-\frac{k}{2(m+1)})}
     q^{(m+1) \left(j-\frac{k}{2(m+1)}\right)^2} \right) \\
&=& \sum^m_{k=1} e^{\pi i k (z_1-z_2)} 
q^{-\frac{k^2}{4(m+1)}}\left(\Theta_{k,m+1}
(\tau,z_1+z_2)-\Theta_{-k,m+1} (\tau, z_1+z_2)\right)\, .
\end{eqnarray*}

Finally, we prove (d).  Exchanging $z_1$ and $z_2$ in (e) and
using (c), we obtain:
\begin{eqnarray*}
  \Phi^{[m]} (\tau,z_1,z_2,0) -e^{2\pi i (m+2)} \Phi^{[m]}
  (\tau,z_1+\tau, z_2,0)\\
= \sum^m_{j=1} e^{\pi i j (z_2-z_1)} q^{-\frac{j}{4(m+1)}}
\left( \Theta_{j,m+1}-\Theta_{-j,m+1}\right)
(\tau, z_1+z_2).
\end{eqnarray*}
Replacing $z_2$ by $z_2+\tau$, we deduce:
\begin{eqnarray*}
\lefteqn{\hspace{-2ex}  \Phi^{[m]} (\tau,z_1,z_2+\tau ,0)-e^{2\pi i (m+1)z_2}
  q^{m+1}\Phi^{[m]} (\tau, z_1+\tau, z_2 + \tau,0)}\\
&=& \sum^m_{j=1} e^{\pi i j (z_2-z_1)}q^{\frac{j}{2}-\frac{j^2}{4(m+1)}}
\left( \Theta_{j,m+1}-
\Theta_{-j,m+1}\right) (\tau, z_1+z_2+\tau)\\
&=& \sum^m_{j=1} e^{\pi ij (z_2-z_1)} e^{-\pi i (m+1) (z_1+z_2)} 
q^{-\frac{(m+1-j)^2}{4 (m+1)}}\left( \Theta_{-(m+1-j), m+1} -
  \Theta_{m+1-j,m+1}\right) (\tau, z_1+z_2)\, .
\end{eqnarray*}
Replacing in the summation $j$ by $m+1-j$, and multiplying both
sides by $e^{2\pi i (m+1) z_1}$, we obtain:
\begin{eqnarray*}
  e^{2\pi i (m+1)z_1} \Phi^{[m]} (\tau, z_1,z_2+\tau,0) -e^{2\pi
    i (m+1) (z_1+z_2)} q^{m+1} \Phi^{[m]} (\tau, z_1 + \tau,
  z_2+\tau,0)\\
=-\sum^m_{j=1} e^{\pi ij (z_1-z_2)} q^{-\frac{j^2}{4(m+1)}}\left(
  \Theta_{j,m+1} -\Theta_{-j,m+1} \right) (\tau, z_1 + z_2)\, .
\end{eqnarray*}
Adding this equality to (e), we obtain (d).

\end{proof}

\begin{remark}
  \label{rem:5.2}

Properties (c) and (e) of $\Phi^{[m]}$ have a simple
representation theoretical meaning (and proof).  Property (c)
means that $\ch^-_{L(m\Lambda_0)}$ is unchanged under the flip of
the Dynkin diagram of $\hat{\fg}$.  Property (e) follows from the
fact that the odd reflection with respect $\alpha_1$ maps
$m\Lambda_0$ to $m\Lambda_2 =m(\Lambda_0+\alpha_1)$ (and doesn't
change the supercharacter), hence we have:
\begin{eqnarray*}
  \sum_{w \in \hat{W}} \epsilon (w) w\frac{e^{m\Lambda_0}}{1-e^{-\alpha_1}}-
  \sum_{w \in \hat{W}} \epsilon (w)
 w \frac{e^{m(\Lambda_0+\alpha_1)}}{1-e^{-\alpha_1}}= -\sum_{w \in \hat{W}}
  \epsilon (w) w (e^{m (\Lambda_0+\alpha_1)} \sum^{m-1}_{j=0}
  e^{-j \alpha_1})\, .
\end{eqnarray*}

\end{remark}

Lemma \ref{lem:5.1} on properties of the functions $\Phi^{[m]}$
immediately implies the following lemma on properties of the
functions $\varphi^{[m]}$.

\begin{lemma}
  \label{lem:5.3}

  \begin{list}{}{}
  \item (a)  $\varphi^{[m]} (\tau, u+a, v +b, t) =
    \varphi^{[m]} (\tau ,u, v,t)$ if $a,b \in \frac12 \ZZ$,  $a+b \in \ZZ$.

\item (b)  $\varphi^{[m]} (\tau,-u,v,t) =- \varphi^{[m]} (\tau  ,u,v,t)$.

\item (c)  $\varphi^{[m]} (\tau,u,-v,t) = \varphi^{[m]}(\tau ,u,v,t)$.

\item (d) $\varphi^{[m]} (\tau ,u+\tau,v,t) =q^{-(m+1)} e^{-4\pi i
    (m+1)u} \varphi^{[m]} (\tau,u,v,t)$,\\ $\varphi^{[m]} (\tau
  ,u-\tau,v,t) = q^{-(m+1)} e^{4\pi i (m+1)u}  \varphi^{[m]} (\tau
  ,u,v,t)$.

\item (e) $\varphi^{[m]}(\tau , u,v,t) - e^{2\pi i (m+1) (v-u)} 
  \varphi^{[m]} (\tau ,u-\frac{\tau}{2}, v-\frac{\tau}{2}, t) $\\
$  =-e^{2\pi i (m+1)t} \sum^m_{j=1} e^{2\pi ijv}
q^{-\frac{j^2}{4m+4}} (\Theta_{j,m+1} -\Theta_{-j,m+1}) (\tau ,2u)$.

  \end{list}
\end{lemma}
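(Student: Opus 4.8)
The plan is to deduce every part from Lemma~\ref{lem:5.1} by the linear change of variables that defines $\varphi^{[m]}$ in (\ref{eq:5.4}), namely $u = -\tfrac12(z_1+z_2)$, $v = \tfrac12(z_1-z_2)$, whose inverse is $z_1 = v-u$, $z_2 = -u-v$. The first step is to record, for each transformation of the arguments $(u,v)$ occurring in (a)--(e), the induced transformation of $(z_1,z_2)$, and then to read off the conclusion from the matching part of Lemma~\ref{lem:5.1}. Since the statement already asserts that Lemma~\ref{lem:5.1} ``immediately implies'' the result, essentially all of the work is bookkeeping; the only genuinely substantive point is the theta reindexing in part~(e).

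For (a), under $u \mapsto u+a$, $v \mapsto v+b$ one gets $z_1 \mapsto z_1+(b-a)$ and $z_2 \mapsto z_2-(a+b)$; when $a,b \in \tfrac12\ZZ$ with $a+b \in \ZZ$, both $b-a$ and $a+b$ lie in $\ZZ$, so the invariance is Lemma~\ref{lem:5.1}(a). For (c), $v \mapsto -v$ sends $(z_1,z_2) \mapsto (z_2,z_1)$, so the claim is exactly the flip symmetry Lemma~\ref{lem:5.1}(c). For (b), $u \mapsto -u$ sends $(z_1,z_2) \mapsto (-z_2,-z_1)$, whence $\varphi^{[m]}(\tau,-u,v,t) = \Phi^{[m]}(\tau,-z_2,-z_1,t)$; applying first Lemma~\ref{lem:5.1}(c) and then (b) rewrites this as $-\Phi^{[m]}(\tau,z_1,z_2,t)$, giving the sign change.

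For (d), under $u \mapsto u \pm \tau$ one has $(z_1,z_2) \mapsto (z_1 \mp \tau, z_2 \mp \tau)$, so I would feed these shifts into Lemma~\ref{lem:5.1}(d) (inverting it in the $u \mapsto u+\tau$ case, where the shift is by $-\tau$). Using $e^{-4\pi i(m+1)\tau} = q^{-2(m+1)}$ together with $z_1+z_2 = -2u$, the prefactor $q^{-(m+1)}e^{-2\pi i(m+1)(z_1+z_2)}$ collapses to $q^{-(m+1)}e^{\mp 4\pi i(m+1)u}$, which is the asserted scalar.

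The crux is (e). The key observation is that $(u,v) \mapsto (u-\tfrac{\tau}{2}, v-\tfrac{\tau}{2})$ leaves $z_1 = v-u$ fixed while sending $z_2 = -u-v \mapsto z_2+\tau$; since moreover $(m+1)z_1 = (m+1)(v-u)$, the left-hand side of (e) is exactly $\Phi^{[m]}(\tau,z_1,z_2,t) - e^{2\pi i(m+1)z_1}\Phi^{[m]}(\tau,z_1,z_2+\tau,t)$, the left-hand side of Lemma~\ref{lem:5.1}(e). Substituting $z_1-z_2 = 2v$ and $z_1+z_2 = -2u$ into the right-hand side of that formula (and reindexing its sum by $k=j+1$, so $1 \le k \le m$) yields $e^{2\pi i(m+1)t}\sum_{k=1}^m e^{2\pi ikv}q^{-\frac{k^2}{4(m+1)}}\bigl(\Theta_{k,m+1}(\tau,-2u) - \Theta_{-k,m+1}(\tau,-2u)\bigr)$. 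The last step, and the only one needing care, is to turn the theta arguments $-2u$ into $2u$: from the parity $\Theta_{j,m}(\tau,-z) = \Theta_{-j,m}(\tau,z)$ (immediate from the series (\ref{eq:A2}) via $n \mapsto -n$), each difference $\Theta_{k,m+1}(\tau,-2u) - \Theta_{-k,m+1}(\tau,-2u)$ equals $-\bigl(\Theta_{k,m+1}(\tau,2u) - \Theta_{-k,m+1}(\tau,2u)\bigr)$, which supplies precisely the overall minus sign in the claim. I expect this sign bookkeeping -- the interplay of the parity relation with the index labels $\pm k$ -- to be the main (indeed essentially the only) obstacle.
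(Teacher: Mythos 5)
Your proposal is correct and is exactly the paper's argument: the paper's proof of Lemma~\ref{lem:5.3} simply states that the properties of $\Phi^{[m]}$ ``immediately translate'' under the change of variables $z_1=v-u$, $z_2=-(u+v)$, with the first formula of (d) obtained from the second by the shift $u\mapsto u+\tau$. You have merely carried out that translation explicitly, and all the bookkeeping — including the parity relation $\Theta_{j,m}(\tau,-z)=\Theta_{-j,m}(\tau,z)$ that produces the overall minus sign in (e) — checks out.
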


\begin{proof}
  
Properties (a), (b), (c), (d), and (e) of $\Phi^{[m]}$
immediately translate into properties (a), (b), (c), the second
formula in (d), and (e) of $\varphi^{[m]}$.  The first formula
in (d) is obtained from the second one by replacing $u$ by
$u+\tau$.

\end{proof}

Note that formulae (d) of Lemma~\ref{lem:5.3} imply the following
version of property~(e):
\begin{eqnarray}
  \label{eq:5.5}
  \varphi^{[m]} (\tau ,u,v,t)-e^{2\pi i
    (m+1)(2v-\tau)}\varphi^{[m]} (\tau,u,v-\tau,t)\\
\nonumber   = -e^{2\pi i (m+1)t}\sum^{2m+1}_{j=1}
e^{2\pi ijv}  q^{-\frac{j^2}{4m+4}}(\Theta_{j,m+1}-\Theta_{-j,m+1})(\tau, 2u)\, .
\end{eqnarray}

Recall that in coordinates $\tau,u,v,t$ we have:
\begin{equation*}
  \varphi^{[m]} = \hat{R}^- \ch^-_{m\Lambda_0} = \Theta^-_{m\Lambda_0,\alpha_1}
  -r_{\theta}\Theta^-_{m\Lambda_0,\alpha_1}\, , 
\end{equation*}
and  note that the function $\Theta^-_{m\Lambda_0,\alpha_1} $
satisfies all conditions of Proposition~\ref{prop:4.7} (with
$\alpha = \theta$).  Hence the
function $\Theta^-_{m\Lambda_0,\alpha_1} -\Theta^-_{m\Lambda_0,\alpha_1}|_S$ 
is holomorphic in the domain
$X$.  Since the action of the Weyl group on $X$ commutes with the
action of $SL_2(\RR)$, the same holds for $r_\theta
\Theta^-_{m\Lambda_0,\alpha_1}-r_\theta \Theta^-_{m\Lambda_0,\alpha_1} |_S $.
Consequently, the function
\begin{equation}
  \label{eq:5.6}
  G (\tau ,u,v,t) = \varphi^{[m]} (\tau
  ,u,v,t)-\tau^{-1}\varphi^{[m]} \left(-\frac{1}{\tau}, \frac{u}{\tau}, 
\frac{v}{\tau} ,t-\frac{u^2-v^2}{\tau}\right)
\end{equation}
is holomorphic in the domain $X$.

\begin{proposition}
  \label{prop:5.4}

  \begin{list}{}{}
  \item (a)  $G (\tau,u,v+1,t) -G (\tau,u,v,t)$\\
$ = e^{2\pi i    (m+1)t} \sqrt{\frac{2}{m+1} }(-i\tau)^{-\frac12}
    \sum^{2m+1}_{j=1}  \sum_{k \in \ZZ / (2m+2)\ZZ} e^{\frac{2\pi
      i (m+1)}{\tau} (v+\frac{j}{2m+2})^2} \sin \frac{\pi  jk}{m+1} \Theta_{k,m+1} (\tau,2u)$.

\item (b) $G (\tau,u,v,t) -e^{2\pi i (m+1) (2v-\tau)}
  G(\tau,u,v-\tau,t)$\\
   $= -e^{2\pi i (m+1)t} \sum^{2m+1}_{j=1} e^{2\pi ijv}
   e^{-\frac{\pi i j^2\tau}{2m+2}} [\Theta_{j,m+1} -
   \Theta_{-j,m+1}](\tau, 2u)$

\item (c)  The holomorphic function $G$ is determined uniquely by
  the above properties (a) and~(b).

  \end{list}

\end{proposition}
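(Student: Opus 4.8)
Write $G=\varphi^{[m]}-H$, where $H(\tau,u,v,t)=\tau^{-1}\varphi^{[m]}\big(-\tfrac1\tau,\tfrac u\tau,\tfrac v\tau,t-\tfrac{u^2-v^2}\tau\big)$ is the second summand in (\ref{eq:5.6}). The whole argument rests on three facts already available: the integer-shift invariance and quasi-periodicity of $\varphi^{[m]}$ from Lemma~\ref{lem:5.3}(a) and equation (\ref{eq:5.5}); the degree property $\varphi^{[m]}(\tau,u,v,t+s)=e^{2\pi i(m+1)s}\varphi^{[m]}(\tau,u,v,t)$, visible from the prefactor $e^{2\pi i(m+1)t}$ in (\ref{eq:5.3}); and the modular $S$-transformation of the degree $m+1$ theta functions $\Theta_{k,m+1}$ recalled in the Appendix. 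Parts (a) and (b) are then bookkeeping, while part (c) is a short theta-function rigidity argument.

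For (b), the $\varphi^{[m]}$-part of $G(\tau,u,v,t)-e^{2\pi i(m+1)(2v-\tau)}G(\tau,u,v-\tau,t)$ is exactly the left side of (\ref{eq:5.5}), producing the claimed right side, so it remains to see the $H$-part cancels. Substituting $v\mapsto v-\tau$ turns the arguments of $H$ into third slot $\tfrac v\tau-1$ and $t$-argument $t'-2v+\tau$, with $t'=t-\tfrac{u^2-v^2}\tau$. By Lemma~\ref{lem:5.3}(a) the integer shift of the third slot is harmless, and the degree property converts the $t$-shift into the scalar $e^{2\pi i(m+1)(\tau-2v)}$; hence $e^{2\pi i(m+1)(2v-\tau)}H(\tau,u,v-\tau,t)=H(\tau,u,v,t)$ and the $H$-contribution vanishes, giving (b).

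For (a), by Lemma~\ref{lem:5.3}(a) the $\varphi^{[m]}$-part is invariant under $v\mapsto v+1$, so $G(\tau,u,v+1,t)-G(\tau,u,v,t)=-(H(\tau,u,v+1,t)-H(\tau,u,v,t))$. Writing $\tau'=-\tfrac1\tau$, $u'=\tfrac u\tau$, $v'=\tfrac v\tau$, $t'=t-\tfrac{u^2-v^2}\tau$, the shift $v\mapsto v+1$ sends $v'\mapsto v'-\tau'$ and $t'\mapsto t'+\tfrac{2v+1}\tau$. Applying (\ref{eq:5.5}) at modular parameter $\tau'$ to $\varphi^{[m]}(\tau',u',v'-\tau',t')$, the leading term reproduces $H(\tau,u,v,t)$ (the prefactors $e^{\pm2\pi i(m+1)(2v+1)/\tau}$, with $2v'-\tau'=(2v+1)/\tau$, cancel), and the correction is $\tau^{-1}e^{2\pi i(m+1)t'}\sum_{j=1}^{2m+1}e^{2\pi ijv'}(q')^{-j^2/(4m+4)}(\Theta_{j,m+1}-\Theta_{-j,m+1})(\tau',2u')$, where $q'=e^{2\pi i\tau'}$. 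I would then feed in the $S$-transformation of $\Theta_{k,m+1}$: the Gaussian prefactor $e^{2\pi i(m+1)u^2/\tau}$ cancels the $u^2$-term from $e^{2\pi i(m+1)t'}$, the pair $\Theta_j-\Theta_{-j}$ collapses the dual sum into $-2i\sum_k\sin\tfrac{\pi jk}{m+1}\,\Theta_{k,m+1}(\tau,2u)$, and completing the square $(m+1)v^2+jv+\tfrac{j^2}{4(m+1)}=(m+1)\big(v+\tfrac j{2m+2}\big)^2$ produces the Gaussian $e^{2\pi i(m+1)(v+j/(2m+2))^2/\tau}$ in (a). Collecting the scalars $-\tau^{-1}$, $(-i\tau)^{1/2}$, $(2m+2)^{-1/2}$ and $-2i$ yields precisely $\sqrt{\tfrac2{m+1}}\,(-i\tau)^{-1/2}$; this collection is the hardest part to get right, since the $\tau$-powers, the chosen branch of the square root, and all phases must be tracked simultaneously.

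For (c), if $G_1,G_2$ are holomorphic on $X$ and both satisfy (a) and (b), then $D=G_1-G_2$ is holomorphic and satisfies the homogeneous relations $D(\tau,u,v+1,t)=D(\tau,u,v,t)$ and $D(\tau,u,v,t)=e^{2\pi i(m+1)(2v-\tau)}D(\tau,u,v-\tau,t)$. Fixing $\tau$ with $\Im\tau>0$, together with $u$ and $t$, and expanding the entire, $1$-periodic function $v\mapsto D(\tau,u,v,t)=\sum_{n}c_ne^{2\pi inv}$, the second relation forces the recursion $c_n=e^{2\pi i(m+1-n)\tau}c_{n-2(m+1)}$, so that $|c_n|=|c_{n-2(m+1)}|\,e^{2\pi(n-m-1)\Im\tau}$. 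Iterating within a residue class modulo $2(m+1)$ makes $|c_n|$ grow like a Gaussian as $n\to\pm\infty$, which is incompatible with convergence of $\sum_nc_ne^{2\pi inv}$ at any $v$ unless every $c_n$ vanishes. Hence $D\equiv0$ and $G$ is unique. Equivalently, $D$ would be a holomorphic theta function of negative index, of which the only one is zero.
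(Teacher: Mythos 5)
Your argument for (a) and (b) is correct and essentially the paper's: both reduce everything to the elliptic identity (\ref{eq:5.5}), the periodicity in Lemma~\ref{lem:5.3}(a), the degree property in $t$, and the $S$-transformation (\ref{eq:5.7}) of $\Theta_{j,m+1}$; you merely organize the computation by isolating the transformed summand $H=\varphi^{[m]}|_S$ and applying (\ref{eq:5.5}) at the modular parameter $-1/\tau$, where the paper transforms the whole identity at once. Your scalar bookkeeping in (a), including the identity $(-i\tau)^{1/2}\tau^{-1}=-i(-i\tau)^{-1/2}$, checks out. The genuine divergence is in (c): the paper multiplies the difference $F$ of two solutions by $\vartheta_{11}((m+1)\tau,(m+1)v)^2$ to produce a holomorphic, doubly periodic (hence constant) function of $v$, and kills the constant using $\vartheta_{11}(\tau,0)=0$; you instead expand the $1$-periodic entire function $v\mapsto D$ in a Fourier series and show the quasi-periodicity forces Gaussian growth of the coefficients $c_n$ within each residue class mod $2(m+1)$, contradicting convergence of the Laurent series on all of $\CC^*$. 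Both are standard rigidity arguments for ``theta functions of negative index''; yours avoids introducing an auxiliary theta function and the (minor) issue of dividing by its zeros, while the paper's is shorter once Proposition~\ref{prop:A6} is in hand. Either is acceptable.
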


\begin{proof}
  Without loss of generality we may put $t=0$.  In order to prove
  (a), first recall the well-known transformation formula for
  theta functions (cf. formula (\ref{eq:A4}) in the Appendix): 
  \begin{equation}
    \label{eq:5.7}
    \Theta_{j,m} \left( -\frac{1}{\tau} \, , \, \frac{z}{\tau}\right)
      =e^{\frac{\pi i mz^2}{2\tau}} \left( -\frac{i\tau}{2m}
      \right)^\frac12  \sum_{n \in \ZZ\!\!\!\! \mod 2m\ZZ} e^{-\frac{\pi
        i nj}{m}} \Theta_{n,m} (\tau,z)\, .
  \end{equation}
Using (\ref{eq:5.5}) and (\ref{eq:5.7}) we obtain:
\begin{eqnarray*}
  \varphi^{[m]} \left( -\frac{1}{\tau} \, ,\, \frac{u}{\tau}
    \,,\, \frac{v}{\tau} \,  ,\, 0 \right)
 -e^{\frac{2\pi i}{\tau} (2v+1)} \varphi^{[m]} \left(
   -\frac{1}{\tau} \, ,\, \frac{u}{\tau}
    \,,\, \frac{v+1}{\tau} \,  ,\, 0 \right)\\
=i \left( \frac{-2i\tau}{m+1} \right)^\frac12    
   e^{\frac{2\pi  i (m+1)u^2}{\tau}} \sum^{2m+1}_{j=1}\sum_{k \in \ZZ
    /(2m+2)\ZZ} 
e^{\frac{2\pi ijv}{\tau}}
 e^{\frac{2\pi i j^2}{(4m+4)\tau}} \sin\frac{\pi jk}{m+1}
  \Theta_{k,m+1} (\tau,2u)\, .
\end{eqnarray*}
We deduce (a) from this by a straightforward calculation. 

 In order to prove (b), by a straightforward calculation we get:
\begin{equation*}
  G (\tau ,u,v,0) -e^{2\pi i (m+1)} G (\tau ,u, v-\tau,0) = 
   \varphi^{[m]} (\tau ,u, v,0)-e^{2\pi i (m+1) (2v-\tau)}
     \varphi^{[m]} (\tau,u,v-\tau ,0)\, .
\end{equation*}
By (\ref{eq:5.5}), the RHS of this equation is equal to the RHS
of (b).

In order to prove (c), note that the difference, say, $F
(\tau,u, v,t)$ of two holomorphic functions, satisfying (a) and
(b), satisfies the following two equations:
\begin{eqnarray*}
  F (\tau ,u,v+1,t) = F(\tau,u,v,t)\, ,\,\,\, 
  F (\tau,u,v-\tau,t) = e^{-4\pi i (m+1)} F (\tau,u, v,t)\,.
\end{eqnarray*}
Consider the function
\begin{equation*}
  P(\tau,u,v,t) = F (\tau,u, v,t) \vartheta_{11} ((m+1)\tau
  , (m+1)v)^2\, .
\end{equation*}
Since, by Proposition \ref{prop:A6} from the Appendix,
\begin{equation*}
  \vartheta_{11} ((m+1)\tau,(m+1)(v-\tau) )
   =-e^{2\pi i (m+1)v} q^{-\frac{m+1}{2}} 
   \vartheta_{11} ((m+1)\tau, (m+1)v)\, ,
\end{equation*}
we deduce that
\begin{equation*}
  P (\tau,u,v+1,t) = P (\tau,u, v, t)\, , \,\,\, 
  P (\tau, u,v-\tau,t) = P(\tau ,u,v,t)\, .
\end{equation*}
Since $P$ is a holomorphic function in $v$, which is doubly
periodic (for each fixed value of $\tau$, $u$ and $t$), we
conclude that $P$ is constant in $v$.  Since $\vartheta_{1,1}
(\tau,0)=0$ (see formula (\ref{eq:4.7})), we conclude that $P$ is
identically zero.

\end{proof}

Now we relate the function $G$ to the functions $h_\ell$,
introduced by Zwegers in \cite{Z}, page 51; we will denote
$h_\ell$ by $h_{m;\ell}$ to emphasize its dependence on $m$.
Replacing $x$ by $x+i$ in Zwegers' formula, it is straightforward
to obtain a slightly different formula:
\begin{equation}
  \label{eq:5.8}
  h_{m;j}(\tau,v) =ie^{-\frac{\pi i \tau}{2m}(2m-j)^2 + 2\pi i (2m-j)v}
   \int_{\RR +is}\frac{e^{2\pi i m\tau x^2 + 2\pi (2m-j)\tau x-4\pi
         m\sqrt{x}}}{1-e^{2\pi x}} \, dx \, , 
\end{equation}

where $s \in \RR$, $0<s<1$.
%, is arbitrary.

\begin{theorem}
  \label{th:5.5}
$G (\tau,u,v,t) =-e^{2\pi i (m+1)t} \sum^{2m+1}_{j=1}
h_{m+1;2m+2-j} (\tau,v) (\Theta_{j,m+1}- \Theta_{-j,m+1}) (\tau,2u)$.
\end{theorem}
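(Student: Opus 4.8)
The plan is to exploit the uniqueness statement Proposition~\ref{prop:5.4}(c). Since $G$ is holomorphic on $X$ (as established just before that proposition) and is the \emph{unique} holomorphic function satisfying properties (a) and (b), it suffices to show that the proposed right-hand side, which I denote
$$\Psi(\tau,u,v,t) = -e^{2\pi i(m+1)t}\sum_{j=1}^{2m+1}h_{m+1;2m+2-j}(\tau,v)\,(\Theta_{j,m+1}-\Theta_{-j,m+1})(\tau,2u),$$
is holomorphic on $X$ and obeys the same two functional equations. Once this is done, $\Psi-G$ is holomorphic and satisfies the homogeneous versions of (a) and (b), so the Liouville-type argument in the proof of Proposition~\ref{prop:5.4}(c) (multiplying by $\vartheta_{11}((m+1)\tau,(m+1)v)^2$ to obtain a doubly periodic holomorphic function of $v$, hence a constant, hence $0$) forces $\Psi=G$.

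First I would check holomorphy. In the integral representation (\ref{eq:5.8}) the contour $\RR+is$ with $0<s<1$ avoids the poles of $(1-e^{2\pi x})^{-1}$, which lie on $i\ZZ$, while for $\Im\tau>0$ the Gaussian factor $e^{2\pi i(m+1)\tau x^2}$ forces absolute convergence; hence each $h_{m+1;2m+2-j}(\tau,v)$ is holomorphic on $X$. Multiplying by the holomorphic Jacobi theta functions $\Theta_{\pm j,m+1}(\tau,2u)$ shows $\Psi$ is holomorphic on $X$, and in particular holomorphic in $v$, which is exactly what the uniqueness argument of Proposition~\ref{prop:5.4}(c) requires.

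Next I would verify property (b), the elliptic shift $v\mapsto v-\tau$. Writing $\ell=2m+2-j$, so that $2(m+1)-\ell=j$, the $v$-dependence of $h_{m+1;\ell}$ enters through the prefactor $e^{2\pi ijv}$ and the integrand; computing $h_{m+1;2m+2-j}(\tau,v-\tau)$ directly from (\ref{eq:5.8}) and manipulating the integral yields a relation of the shape $h_{m+1;2m+2-j}(\tau,v-\tau)=e^{2\pi i(m+1)(\tau-2v)}\big(h_{m+1;2m+2-j}(\tau,v)-c_j(\tau,v)\big)$ with an explicit elementary term $c_j=e^{2\pi ijv}e^{-\pi ij^2\tau/(2m+2)}$. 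Substituting into $\Psi(\tau,u,v,t)-e^{2\pi i(m+1)(2v-\tau)}\Psi(\tau,u,v-\tau,t)$, the $h$-terms cancel and the bracket collapses to $c_j$, so the difference becomes $-e^{2\pi i(m+1)t}\sum_j c_j(\Theta_{j,m+1}-\Theta_{-j,m+1})(\tau,2u)$, which is precisely the right-hand side of Proposition~\ref{prop:5.4}(b). This step is essentially bookkeeping with the exponential prefactor in (\ref{eq:5.8}).

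The main work, and the expected obstacle, is property (a), the shift $v\mapsto v+1$. Here I would evaluate $h_{m+1;2m+2-j}(\tau,v+1)-h_{m+1;2m+2-j}(\tau,v)$ by shifting the contour from $\RR+is$ to $\RR+is'$ across the pole at $x=0$ and collecting the residue; completing the square in the resulting Gaussian integral produces the modular factor $(-i\tau)^{-1/2}$ together with the Gaussians $e^{\frac{2\pi i(m+1)}{\tau}(v+\frac{j}{2m+2})^2}$, the signature of the Mordell integral's behaviour under $\tau\mapsto-1/\tau$. The delicate point is then the reorganization: to match the double-sum form in Proposition~\ref{prop:5.4}(a) one must pass between the odd theta combinations $\Theta_{j,m+1}-\Theta_{-j,m+1}$ and the functions $\Theta_{k,m+1}$, $k\in\ZZ/(2m+2)\ZZ$, via the discrete sine transform $\sin\frac{\pi jk}{m+1}=\frac{1}{2i}(e^{\pi ijk/(m+1)}-e^{-\pi ijk/(m+1)})$, so that the single sum over $j$ is rewritten as $\sum_{j,k}\sin\frac{\pi jk}{m+1}\,\Theta_{k,m+1}$. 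Getting both the normalization constant $\sqrt{2/(m+1)}$ (the residue contributes an extra factor of $2$ beyond the naive Gaussian value $\frac{1}{\sqrt{2(m+1)}}(-i\tau)^{-1/2}$) and the index bookkeeping correct is where the care is needed; once property (a) is matched, the uniqueness of Proposition~\ref{prop:5.4}(c) completes the proof.
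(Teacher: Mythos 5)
Your overall architecture is exactly the paper's: show that the proposed right-hand side is holomorphic on $X$ and satisfies properties (a) and (b) of Proposition~\ref{prop:5.4}, then invoke the uniqueness statement (c). Your treatment of property (b) is also correct and matches the paper: the shift $v\mapsto v-\tau$ corresponds to shifting the integration variable by $i$ in the representation (\ref{eq:5.8}), so moving the contour from $\RR+is$ to $\RR+i(s-1)$ crosses the pole of $(1-e^{2\pi x})^{-1}$ at $x=0$, and the residue produces precisely the elementary term $c_j=e^{2\pi i jv}e^{-\pi i j^2\tau/(2m+2)}$ appearing on the right of Proposition~\ref{prop:5.4}(b).

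The gap is in your mechanism for property (a). The shift $v\mapsto v+1$ is \emph{not} implemented by a contour shift in $x$: in the normalized representation (\ref{eq:5.9}) it multiplies the integrand by $e^{-4\pi(m+1)x}$, whereas shifting $x$ by $i$ multiplies it by $e^{-4\pi(m+1)\tau x}$ (up to constants), which is the $v\mapsto v+\tau$ shift you already used for (b). Moreover, there is no residue to collect here: writing $y=x+\tfrac{ij}{2m+2}$, the difference of integrands involves $\frac{e^{-4\pi(m+1)y}-1}{1-e^{2\pi y}}$, whose numerator vanishes at every pole of the denominator, so the ratio is entire --- it equals the finite geometric sum $\sum_{k=1}^{2m+2}e^{-2\pi kx}e^{-\pi ijk/(m+1)}$. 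This algebraic factorization (not a residue computation) is what reduces $a_j(\tau,v+1)-a_j(\tau,v)$ to $2m+2$ Gaussian integrals, each evaluated by completing the square to give the factors $(-i\tau)^{-1/2}e^{\frac{2\pi i(m+1)}{\tau}(v+\frac{k}{2m+2})^2}$ of (\ref{eq:5.10}). Relatedly, the extra factor of $2$ that turns $\frac{1}{\sqrt{2m+2}}$ into $\sqrt{2/(m+1)}$ in Proposition~\ref{prop:5.4}(a) does not come from any residue; it comes from combining the two exponentials $e^{\mp\pi ijk/(m+1)}$ attached to $\Theta_{j,m+1}$ and $\Theta_{-j,m+1}$ into $\sin\frac{\pi jk}{m+1}=\frac{1}{2i}(e^{\pi ijk/(m+1)}-e^{-\pi ijk/(m+1)})$. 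With the geometric-series step substituted for the contour-crossing step, your argument closes as the paper's does.
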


\begin{proof}
Due to uniqueness of a holomorphic function on $X$,
  satisfying properties (a) and (b) of
  Proposition~\ref{prop:5.4}, it suffices to show that the RHS
  satisfies these two properties.

Let $a_j(\tau,v)=h_{m+1;2m+2-j} (\tau, v)$ to simplify
notation, $1 \leq j \leq 2m+1$.
Replacing $x$ by $x+i\frac{j}{2m+2}$ in (\ref{eq:5.8}) and taking
$s=\frac{j}{2m+2}$, we obtain a simpler expression:
\begin{equation}
  \label{eq:5.9}
  a_j (t,v) =i \int_{\RR} \frac{e^{2\pi i (m+1)x^2\tau - 4\pi
      (m+1)xv}}{1-e^{2\pi (x+\frac{ij}{2m+2})}} \, dx \, .
\end{equation}
Property (a) of Proposition~\ref{prop:5.4} of the function
\begin{equation*}
  G (\tau,u,v ,t)=e^{2\pi it} \sum^{2m-1}_{j=1}
  a_j(\tau, v) (\Theta_{j,m+1}- \Theta_{-j,m+1})(\tau ,2u)
\end{equation*}
is equivalent to the following property of the functions $a_j$:
\begin{equation}
  \label{eq:5.10}
  a_j (\tau,v+1)-a_j (\tau,v)=\frac{i}{\sqrt{2m+2}}  (-i\tau)^{-\frac12}
    \sum^{2m-1}_{k=1} e^{-\frac{\pi ijk}{m+1}} e^{\frac{2\pi
        i (m+1)}{\tau} (v+\frac{k}{2m+2})^2}\, .
\end{equation}
Equation (\ref{eq:5.10}) is established as follows.  Using the
  expression (\ref{eq:5.9}) for $a_j (\tau ,v)$, we obtain:
  \begin{eqnarray*}
    a_j (\tau,v+1) - a_j (\tau ,v) = i \int_{\RR} e^{2\pi i
      (m+1)x^2 \tau - 4\pi (m+1)xv} R_j(x)dx \, , 
  \end{eqnarray*}
where
\begin{eqnarray*}
  R_j(x) &=& \frac{e^{-4\pi (m+1)x}-1}{1-e^{2\pi i
      (x+\frac{ij}{2m+2})}} =e^{-4\pi (m+1)x} \frac{1-e^{4\pi
      (m+1)(x+\frac{ij}{2m+2})}}{1-e^{2\pi
      (x+\frac{ij}{2m+2})}}\\
&=& e^{-4\pi (m+1)x} \sum^{2m+1}_{k=0} e^{2\pi k (x+
  \frac{ij}{2m+2})}\, ,
\end{eqnarray*}
which, replacing $k$ by $2m+2-k$, is equal to $  \sum^{2m+2}_{k=1} e^{-2\pi k x} e^{-\frac{\pi ijk}{m+1}}$.
Thus,
\begin{eqnarray*}
 a_j (\tau, v+1)- a_j (\tau ,v)
= i \sum^{2m+2}_{k=1} e^{-\frac{\pi ijk}{m+1}} \int_{\RR}
     e^{2\pi i (m+1) x^2 \tau - 4\pi (m+1) xv-2\pi kx}\, dx\\
  = i \sum^{2m+2}_{k=1}  e^{-\frac{\pi ijk}{m+1}}e^{\frac{2\pi i
      (m+1)}{\tau}(v+\frac{k}{2m+2})} \int_\RR e^{2\pi i (m+1)\tau \left( x+i
         \frac{(2m+2)v+k}{(2m+2)\tau}\right)^2}\, dx   \, . 
\end{eqnarray*}
We compute the integral in the above expression using the
formula
\begin{equation*}
  \int_{\RR +ib} e^{-ax^2}\, dx = \left(\frac{\pi}{a} \right)^{1/2}
    \hbox{\,\, if \,\,} \Re a>0\, , \quad b \in \RR\, , 
\end{equation*}
to obtain (\ref{eq:5.10}).

In order to establish property (b) of Proposition \ref{prop:5.4},
we need to prove
\begin{equation}
  \label{eq:5.11}
  a_j (\tau ,v)-e^{2\pi ijv (m+1)(2v-\tau)} a_j (\tau ,v) =
  -e^{2\pi i jv} e^{-\frac{\pi i j^2}{2m+2}}\, .
\end{equation}
By (\ref{eq:5.8}) we have:
\begin{equation}
  \label{eq:5.12}
  a_j (\tau,v) =i \int_{\RR +is} \frac{P (\tau ,v,x)}{1-e^{2\pi
      x}}\, dx \quad (0<s<1)\, , 
\end{equation}
where
\begin{equation*}
  P(\tau,v,x)=e^{\frac{-\pi ij^2}{2m+2} \tau + 2\pi i jv} e^{2\pi i
    (m+1) \tau x^2 - 4\pi (m+1) vx +2\pi j \tau x}\, .
\end{equation*}
The function $P$ satisfies the identity
\begin{equation*}
  e^{2\pi i (m+1) (2\tau -v)} P (\tau,v-\tau ,x)= P (\tau ,v,x-i)\,.
\end{equation*}
Hence
\begin{eqnarray*}
  e^{2\pi i (m+1) (2v-\tau)} a_j (\tau ,v-\tau)
 =i \int_{\RR+is} \frac{P(\tau,v,x-i)}{1-e^{2\pi x}}=i\int_{\RR+i(s-1)}
     \frac{P (\tau ,v,x)}{1-e^{2\pi x}}\, dx\, .
\end{eqnarray*}
Using this and (\ref{eq:5.12}), we obtain:
\begin{eqnarray*}
  a_j (\tau ,v) -e^{2\pi i (m+1)(2v-\tau)} a_j (\tau ,v-\tau)
  = i \left( \int_{\RR +is} - \int_{\RR +i (s-1)} \right)
      \frac{P (\tau,v,x)}{1-e^{2\pi x}} \, dx\\
  =  2\pi \Res_{x=0} \frac{P (\tau ,v,x)\, dx}{1-e^{2\pi x}}
       = P (\tau ,v,0)\, , 
\end{eqnarray*}
proving (b), and the theorem.
\end{proof}

\begin{remark}  
\label{rem:5.6}

Let $\phi^{[m]}  =\varphi^{[m]} -\tfrac12 G = \tfrac12 (\varphi^{[m]}
+ \varphi^{[m]} |_S) $.  This function has a good modular
transformation property:  $\phi^{[m]} |_S = \phi^{[m]}$, but not
so good elliptic transformation property:
\begin{eqnarray*}
  \phi^{[m]} (\tau,u,v-\tau ,t) = e^{2\pi i (m+1) (2v-\tau)}
  \phi^{[m]} (\tau ,u,v,t)\\
   -\tfrac12 \sum^{2m+1}_{j=1} e^{2\pi ij v} q^{- \frac{j^2}{4m+4}} 
      (\Theta_{j,m+1} -\Theta_{-j,m+1}) (\tau ,2u)\, .
\end{eqnarray*}
\end{remark}

Following the idea of Zwegers \cite{Z}, we introduce below a
non-holomorphic modification $\tilde{\varphi}^{[m]}$ of
$\varphi^{[m]}$, which has both good modular and elliptic
transformation properties. For that
we shall be making use of the functions $R_{m;j} (\tau ,v)$,
where $m$ is a positive integer and $j\in \ZZ$, 
introduced in \cite{Z}, p.~51 $(\tau ,v \in \CC\, , \, \Im \tau
>0)$:
\begin{equation*}
  R_{m;j} (\tau ,v) 
    = \displaystyle{\sum_{\substack{n \in \ZZ \\ n\equiv j\!\!\!\!\!\mod 2m}}}
\left( {\rm sign} \left(n+\tfrac12 \right)
  -E \left(\left( n+2m  \frac{\Im  v}{\Im \tau}\right) 
\left(\frac{\Im \tau}{m}\right)^{\tfrac12}\right)\right)
e^{-\frac{\pi i n^2}{2m} \tau - 2\pi i n v}\, , 
\end{equation*}
where $E (x)$  is the odd entire function
$2 \int^x_0 e^{-\pi u^2}\, du$. The explicit expression of the
holomorphic function $E(x)$ is used only in the proof of
formula (\ref{eq:5.13}) below, given in \cite{Z}.
The key property of these functions relates them to the functions $h_{m;j}$
as follows (see \cite{Z}, Remark~3.6):
\begin{equation}
  \label{eq:5.13}
  R_{m;j} (\tau ,v) + \frac{i}{(-2mi\tau)^\frac12}
    e^{\frac{2\pi i mv^2}{\tau}} \sum_{k \in \ZZ /2m\ZZ}
    e^{-\frac{\pi ijk}{m}} R_{m;k} 
      \left(- \frac{1}{\tau} \, , \, \frac{v}{\tau}   \right)
       = 2h_{m;j} (\tau ,v)\, ,
\end{equation}
provided that $0\leq j\leq 2m-1$.

\begin{remark}  
\label{rem:5.6a}
Note that the functions $R_{m;j}$ depend on $j$ only $\mod 2m$, while this is
not the case for the functions $h_{m;j}$, namely 
$h_{m;j}(\tau,u)- h_{m;j+2m}(\tau,u)=q^{-\frac{j^2}{4m}} e^{-2\pi i ju}$. 
\end{remark}  

The functions $R_{m;j}$ have also the following elliptic
transformation properties, which is straightforward to check.

\begin{lemma}
  \label{lem:5.7}
  \begin{list}{}{}
  \item (a) $R_{m;j}(\tau,v+\tfrac12) = (-1)^j R_{m;j} (\tau, v)$.

\item (b)  For $0 \leq j \leq 2m-1$ one has:
  \begin{equation*}
    R_{m;j} (\tau ,v) -e^{2\pi i m (2v-\tau)} R_{m;j} (\tau,v-\tau)
      = 2e^{-\frac{\pi i}{2m} (2m-j)^2\tau + 2\pi i (2m-j)v}\, .
  \end{equation*}
  \end{list}
\end{lemma}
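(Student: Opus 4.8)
The plan is to read both identities directly off the defining series of $R_{m;j}(\tau,v)$, tracking how each factor of a summand responds to the shift in $v$. Part~(a) is immediate: since $\Im(v+\tfrac12)=\Im v$, the argument of the error function $E$ in every summand is unchanged, so replacing $v$ by $v+\tfrac12$ alters only the oscillating factor $e^{-2\pi i n v}$, which acquires $e^{-\pi i n}=(-1)^n$. Because every summation index satisfies $n\equiv j\pmod{2m}$, one has $(-1)^n=(-1)^j$ uniformly in $n$, so the entire series is scaled by $(-1)^j$, giving $R_{m;j}(\tau,v+\tfrac12)=(-1)^jR_{m;j}(\tau,v)$.

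For part~(b) I would expand both $R_{m;j}(\tau,v)$ and $e^{2\pi i m(2v-\tau)}R_{m;j}(\tau,v-\tau)$ as series over $n\equiv j\pmod{2m}$ and arrange for them to cancel term by term outside a finite set. Here the substitution $v\mapsto v-\tau$ matters in two places: using $\Im(v-\tau)=\Im v-\Im\tau$, it shifts the argument of $E$ to $\bigl((n-2m)+2m\tfrac{\Im v}{\Im\tau}\bigr)(\tfrac{\Im\tau}{m})^{1/2}$, and, together with the prefactor $e^{2\pi i m(2v-\tau)}$, it converts the exponential $e^{-\frac{\pi i n^2}{2m}\tau-2\pi i n(v-\tau)}$ into $e^{-\frac{\pi i\tau}{2m}(n-2m)^2-2\pi i(n-2m)v}$ after completing the square. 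The decisive step is then to reindex this second series by $n\mapsto n-2m$, which preserves the residue class $j\bmod 2m$ and simultaneously aligns both the Gaussian exponential and the $E$-argument with those of the first series. As a result the non-holomorphic $E$-terms cancel term by term in the difference.

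What remains is the holomorphic series $\sum_{n\equiv j}\bigl(\mathrm{sign}(n+\tfrac12)-\mathrm{sign}(n+2m+\tfrac12)\bigr)e^{-\frac{\pi i n^2}{2m}\tau-2\pi i n v}$, and I would finish by evaluating the sign difference. This difference vanishes unless $-2m\le n\le -1$, where it is a constant of magnitude $2$; in the class $n\equiv j\pmod{2m}$ with $0\le j\le 2m-1$ the unique index in this window is $n=j-2m$, and substituting it produces exactly the single term $2\,e^{-\frac{\pi i}{2m}(2m-j)^2\tau+2\pi i(2m-j)v}$ on the right-hand side of~(b). The only delicate point is the bookkeeping in this reindexing: the cancellation of the error-function terms—which is precisely what makes the right-hand side holomorphic—hinges on the two $E$-arguments matching perfectly, so one must carry the shift by $2m$ consistently through both the $E$-argument and the quadratic exponent. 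All rearrangements are legitimate because the factor $e^{-\frac{\pi i n^2}{2m}\tau}$ with $\Im\tau>0$ makes each series absolutely convergent.
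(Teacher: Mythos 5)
The paper offers no proof of this lemma beyond the phrase ``straightforward to check,'' so your direct term-by-term verification is exactly the intended argument, and its architecture is right: part (a) is complete and correct, and in part (b) the completion of the square, the reindexing $n\mapsto n-2m$, the resulting exact cancellation of the $E$-terms, and the localization of the surviving sign-difference to the single index $n=j-2m$ are all carried out correctly.

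The one genuine gap is the final sign, precisely where you hedge with ``a constant of magnitude $2$.'' On the window $-2m\le n\le -1$ one has $n+\tfrac12<0<n+2m+\tfrac12$, so
$\mathrm{sign}(n+\tfrac12)-\mathrm{sign}(n+2m+\tfrac12)=-1-(+1)=-2$,
and your computation, done from the definition of $R_{m;j}$ as printed in Section~5, therefore yields
$-2\,e^{-\frac{\pi i}{2m}(2m-j)^2\tau+2\pi i(2m-j)v}$,
not the $+2\,e^{\cdots}$ asserted in the statement. This is not a defect of your method but a real $\pm$ discrepancy in the source: the $-2$ version is in fact the one required downstream, since feeding it into the definition of $\varphi^{[m]}_{\mathrm{add}}$ (with level $m+1$ and the substitution $j\mapsto 2m+2-j$, which flips the sign of $\Theta_{j,m+1}-\Theta_{-j,m+1}$) produces Lemma~\ref{lem:5.8}(d) with the correct overall sign, which is what Theorem~\ref{th:5.9}(e) needs. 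So either the displayed definition of $R_{m;j}$ or the right-hand side of Lemma~\ref{lem:5.7}(b) carries a sign typo. A careful write-up must commit to the sign rather than quoting a magnitude, state that the literal computation gives $-2e^{\cdots}$, and flag the inconsistency with the printed statement; as written, your proof asserts a conclusion that your own (otherwise correct) calculation does not deliver.
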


%\end{proof} 

Now let
\begin{eqnarray*}
  \varphi^{[m]}_{\add} (\tau ,u,v,t) &=& \tfrac12 e^{2\pi i (m+1) t}
   \sum_{j \in \ZZ/(2m+2)\ZZ} R_{m+1;j} (\tau ,v)
   (\Theta_{j,m+1} -\Theta_{-j,m+1} )(\tau, 2u)\, ,\\
    && \tilde{\varphi}^{[m]} (\tau ,u,v,t)= \varphi^{[m]}
       (\tau,u,v,t) + \varphi^{[m]}_{\add} (\tau ,u, v ,t)\,.
\end{eqnarray*}

The correction function $\varphi^{[m]}_{\add}$ has the following
properties.

\begin{lemma}
  \label{lem:5.8}
  \begin{list}{}{}
  \item (a) $\varphi^{[m]}_{\add} (\tau,u,v,t) -
    \varphi^{[m]}_{\add}  |_S (\tau,u,v,t)  = - G (\tau,u,v,t)  $.

\item (b) $\varphi^{[m]}_{\add} (\tau, u+a, v+b, t)
    = \varphi^{[m]}_{\add} (\tau ,u,v,t)    $ if $a,b \in \tfrac12
    \ZZ$ are such that $a+b \in \ZZ$.

\item (c) $\varphi^{[m]}_{\add} (\tau,u+\tau,v,t) =q^{-(m+1)} 
    e^{-4\pi i (m+1)u }  \varphi^{[m]}_{\add} (\tau,u,v,t) $.

\item (d) $\varphi^{[m]}_{\add} (\tau,u,v,t)   -e^{2\pi i
    (2v-\tau)} \varphi^{[m]}_{\add} (\tau, u,v-\tau ,t)$\\
   \hspace*{4ex} 
$=e^{2\pi i (m+1)t} \sum^{2m+1}_{j=1} e^{2\pi ijv}
    q^{-\frac{j^2}{4m+4}} (\Theta_{j,m+1} -\Theta_{-j,m+1})(\tau,2u)  $.
  \end{list}

\end{lemma}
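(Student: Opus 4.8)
My plan is to treat all four parts by the same template: substitute the definition of $\varphi^{[m]}_{\add}$ and transport the transformation onto the two tensor factors $R_{m+1;j}(\tau,v)$ and $(\Theta_{j,m+1}-\Theta_{-j,m+1})(\tau,2u)$ separately, exploiting that the first depends only on $v$ and the second only on $u$. The substance is in (a); parts (b)--(d) are comparatively mechanical once the building blocks are in place.

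For (a) I would write $\varphi^{[m]}_{\add}|_S(\tau,u,v,t)=\tau^{-1}\varphi^{[m]}_{\add}(-1/\tau,u/\tau,v/\tau,t-(u^2-v^2)/\tau)$, where $(z|z)/2=u^2-v^2$ in these coordinates because $(\theta|\theta)=2$ and $(\alpha_1-\alpha_2|\alpha_1-\alpha_2)=-2$, cf. (\ref{eq:5.6}). On the theta factor I apply the modular formula (\ref{eq:5.7}) with $m\rightsquigarrow m+1$, $z=2u$, producing the Gaussian $e^{2\pi i(m+1)u^2/\tau}$ and a finite Fourier sum over a new index $n$; on the $R$-factor I invert the Zwegers relation (\ref{eq:5.13}) (again $m\rightsquigarrow m+1$) to express $\sum_j e^{\mp\pi inj/(m+1)}R_{m+1;j}(-1/\tau,v/\tau)$ through $2h_{m+1;n}-R_{m+1;n}$ and $2h_{m+1;2m+2-n}-R_{m+1;2m+2-n}$, producing $e^{-2\pi i(m+1)v^2/\tau}$. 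After interchanging the two summations, three checks remain: the three exponential prefactors (from the $t$-shift, from (\ref{eq:5.7}), and from inverting (\ref{eq:5.13})) multiply to $1$; the three square-root factors together with the metaplectic $\tau^{-1}$ collapse to a single scalar under the branch convention $a^{1/2}=r^{1/2}e^{i\theta/2}$; and the resulting double sum splits into an $R$-part and an $h$-part. Reindexing $n\mapsto 2m+2-n$ and using $\Theta_{2m+2-n,m+1}=\Theta_{-n,m+1}$ reassembles the $R$-part (up to sign) into $\varphi^{[m]}_{\add}$, while by Theorem \ref{th:5.5} the $h$-part is a multiple of $G$; collecting the two yields the identity in (a).

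The genuinely delicate point---and the step I expect to be the main obstacle---is the index bookkeeping at the boundary of the range $0\le\text{index}\le 2m+1$ in which (\ref{eq:5.13}) is valid. Because $R_{m+1;j}$ depends on $j$ only modulo $2m+2$ whereas $h_{m+1;j}$ does not (Remark \ref{rem:5.6a}), the correct representative for the second Fourier sum is $2m+2-n$ rather than $-n$, and at $n=0$ it is $0$ rather than $2m+2$. Handling this correctly is exactly what makes the would-be $\Theta_{0,m+1}$-term vanish (its coefficient $h_{m+1;0}-h_{m+1;2m+2}$ would otherwise survive by Remark \ref{rem:5.6a}) and what pins down the overall sign, forcing $\varphi^{[m]}_{\add}-\varphi^{[m]}_{\add}|_S=-G$ rather than another multiple of $G$. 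Tracking the square-root branches in the product of the metaplectic factor with those of (\ref{eq:5.7}) and (\ref{eq:5.13}) is the other error-prone place, and I would verify the resulting scalar explicitly.

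For the remaining parts the $u$- and $v$-dependences decouple. In (b) it suffices to check the generators $(a,b)=(1,0),(0,1),(\tfrac12,\tfrac12)$ of the shift lattice: under $v\mapsto v+\tfrac12$ the factor $R_{m+1;j}$ acquires $(-1)^j$ by Lemma \ref{lem:5.7}(a), and under $u\mapsto u+\tfrac12$ the factors $\Theta_{\pm j,m+1}(\tau,2u+1)$ acquire the matching $(-1)^j$ from the elliptic quasi-periodicity of $\Theta_{j,m+1}$ (Appendix), so the two signs cancel termwise, while the purely integral shifts leave both factors invariant. In (c) only $u$ moves, so $R_{m+1;j}(\tau,v)$ is untouched and I apply the $\tau$-direction quasi-periodicity $\Theta_{j,m+1}(\tau,2u+2\tau)=q^{-(m+1)}e^{-4\pi i(m+1)u}\Theta_{j,m+1}(\tau,2u)$; this prefactor is independent of $j$ and factors out of the sum, giving (c) at once. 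In (d) only $v$ moves, $\Theta_{\pm j,m+1}(\tau,2u)$ is untouched, and I insert the quasi-periodicity of $R_{m+1;j}$ under $v\mapsto v-\tau$ from Lemma \ref{lem:5.7}(b) (with $m\rightsquigarrow m+1$): the $j=0$ term drops since $\Theta_{0,m+1}-\Theta_{0,m+1}=0$, and reindexing $j\mapsto 2m+2-j$ together with $\Theta_{2m+2-j,m+1}-\Theta_{-(2m+2-j),m+1}=-(\Theta_{j,m+1}-\Theta_{-j,m+1})$ transforms the right-hand side of Lemma \ref{lem:5.7}(b) into the claimed sum $\sum_{j=1}^{2m+1}e^{2\pi ijv}q^{-j^2/(4m+4)}(\Theta_{j,m+1}-\Theta_{-j,m+1})(\tau,2u)$.
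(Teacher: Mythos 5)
Your proposal follows the paper's proof essentially verbatim: for (a) you expand $\varphi^{[m]}_{\add}|_S$, apply the modular transformation (\ref{eq:5.7}) to the theta factor, interchange the two summations, and invoke Zwegers' relation (\ref{eq:5.13}) to recognize the combination as $\sum_j h_{m+1;j}(\Theta_{j,m+1}-\Theta_{-j,m+1})$, which Theorem~\ref{th:5.5} identifies with $-G$; for (b)--(d) you supply exactly the computations the paper compresses into ``follow from Lemma~\ref{lem:5.7}'' (half-period signs cancelling between the $R$- and $\Theta$-factors, the $\tau$-direction quasi-periodicity of $\Theta_{j,m+1}$, and Lemma~\ref{lem:5.7}(b) with the reindexing $j\mapsto 2m+2-j$). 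The approach and all key ingredients coincide with the paper's, so there is nothing further to add.
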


 \begin{proof}
   We have by definition of $\varphi^{[m]}_{\add}$:
  \begin{eqnarray*}
 \lefteqn{\hspace{-6.35in}\varphi^{[m]}_{\add} |_S (\tau ,u,v,t) = \tau^{-1}   
   \varphi^{[m]}_{\add}
  \left( - \frac{1}{\tau}\, , \, \frac{u}{\tau} \, , \,
    \frac{v}{\tau}\, ,\,  t-\frac{u^2-v^2}{\tau}\right) }\\
 = \frac{1}{2\tau} e^{2\pi i (m+1)t} e^{-\frac{2\pi i  (m+1)}{\tau} 
(u^2-v^2)} \sum_{j \in \ZZ /(2m+2)\ZZ} R_{m+1;j}
      \left( -\frac{1}{\tau}\, , \, \frac{v}{\tau} \right) 
        ( \Theta_{j,m+1}-\Theta_{-j,m+1}) \left( - \frac{1}{\tau} \,
          , \, \frac{2u}{\tau}\right)\, .
   \end{eqnarray*}
Now we apply to the RHS the following modular transformation
formula, which is immediate by (\ref{eq:5.7}):
\begin{eqnarray*}
 \lefteqn{\hspace{-5in}(\Theta_{j,m+1} - \Theta_{-j,m+1}) (-\frac{1}{\tau} \, ,
\,\frac{2u}{\tau})  }\\
   = e^{\frac{2\pi i (m+1)}{\tau}} u^2 \left(
     -\frac{i\tau}{2m+2}\right)^\frac12 \sum_{k \in \ZZ /  (2m+2)\ZZ}
     e^{-\frac{\pi i jk}{m+1}} (\Theta_{k,m+1}-\Theta_{-k,m+1})
       (\tau ,2u) \, .
\end{eqnarray*}
Substituting this in the previous formula, we obtain (after
exchanging $j$ and $k$):
\begin{eqnarray*}
 \lefteqn{\hspace{-5in}  \varphi^{[m]}_{\add} |_S (\tau , u, v, t) 
   = \frac{-i}{2\sqrt{2m+2}}  (-i\tau)^{-\frac12} e^{2\pi i (m+1)t}  }\\
 \times \sum_{j,k \in \ZZ /(2m+2)\ZZ} e^{\frac{2\pi i(m+1)}{\tau}v^2}
   e^{-\frac{\pi ijk}{m+1}} R_{m+1;k} \left( -\frac{1}{\tau}\, ,
     \, \frac{v}{\tau}  \right) (\Theta_{j,m+1}-\Theta_{-j,m+1})
      (\tau ,2u)\, .
\end{eqnarray*}
Using this and the key formula (\ref{eq:5.13}), we have:
\begin{eqnarray*}
 \lefteqn{\hspace{-4.15in}\varphi^{[m]}_{\add} (\tau ,u,v,t) 
      - \varphi^{[m]}_{\add} |_S (\tau ,u,v,t)  }\\
=  e^{2\pi i (m+1)t} \sum_{j=1}^{2m+1} h_{m+1;j}
   (\tau ,v) (\Theta_{j,m+1}-\Theta_{-j,m+1}) (\tau, 2u)\, .
\end{eqnarray*}
The RHS of this equation equals to $-G (\tau,u,v,t)$ by Theorem
\ref{th:5.5}.  This proves claim~(a).  The remaining claims (b),
(c) and (d) follow from Lemma~\ref{lem:5.7}.

\end{proof}

%Lemmas \ref{lem:5.7} and  \ref{lem:5.8} imply 
Now we can prove the following important theorem.

\begin{theorem}
  \label{th:5.9}
  \begin{list}{}{}
  \item (a)  $\tilde{\varphi}^{[m]} (-\frac{1}{\tau},\frac{u}{\tau} ,
\frac{v}{\tau}, t -\frac{u^2-v^2}{\tau}) = \tau \tilde{\varphi}^{[m]}(\tau,u,v,t)$.

\item (b) $\tilde{\varphi}^{[m]} (\tau,u+a, v+b,t) =
  \tilde{\varphi}^{[m]} (\tau,u,v,t)$ 
if $a,b \in \tfrac12 \ZZ$  are such that $a+b \in \ZZ$.

\item (c)$\tilde{\varphi}^{[m]} (\tau,u,-v,t) 
      =\tilde{\varphi}^{[m]} (\tau,u,v,t)$,\,\,
      $\tilde{\varphi}^{[m]} (\tau,-u,v,t) = -
      \tilde{\varphi}^{[m]} (\tau,u,v,t)$.

\item (d) $\tilde{\varphi}^{[m]} (\tau +1,u,v,t) =
  \tilde{\varphi}^{[m]} (\tau ,u ,v,t,)$.

\item (e)  $\tilde{\varphi}^\bracm (\tau, u+a\tau, v+b\tau ,t)=q^{(m+1)(b^2-a^2)} e^{4\pi i (m+1)(-au+bv)}\tilde{\varphi}^\bracm (\tau ,u,v,t)$
if $a,b \in \tfrac12 \ZZ$  are such that $a+b \in \ZZ$.
\end{list}
\end{theorem}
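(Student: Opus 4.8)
The plan is to obtain each part by pairing a property of $\varphi^{[m]}$ from Lemma~\ref{lem:5.3} with the matching property of the correction term $\varphi^{[m]}_{\add}$ from Lemma~\ref{lem:5.8}, using that $\tilde\varphi^{[m]} = \varphi^{[m]} + \varphi^{[m]}_{\add}$. Part~(a) is the conceptual heart. By the very definition (\ref{eq:5.6}) of $G$ one has $\varphi^{[m]} - \varphi^{[m]}|_S = G$, while Lemma~\ref{lem:5.8}(a) gives $\varphi^{[m]}_{\add} - \varphi^{[m]}_{\add}|_S = -G$. Adding these two identities, the corrections cancel,
\[
\tilde\varphi^{[m]} - \tilde\varphi^{[m]}|_S = 0,
\]
which is exactly the asserted $S$-invariance $\tilde\varphi^{[m]}(-\tfrac1\tau,\tfrac u\tau,\tfrac v\tau,t-\tfrac{u^2-v^2}\tau) = \tau\,\tilde\varphi^{[m]}(\tau,u,v,t)$ after unwinding the metaplectic action (\ref{eq:4.6a}) with $\ell=2$ (recall $(z\,|\,z)/2 = u^2-v^2$ in these coordinates).

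Parts (b), (c), (d) are then checked summand by summand. For (b) I add the periodicity of $\varphi^{[m]}$ (Lemma~\ref{lem:5.3}(a)) to that of $\varphi^{[m]}_{\add}$ (Lemma~\ref{lem:5.8}(b)). For (d) I first note that $\varphi^{[m]}$ is manifestly invariant under $\tau\mapsto\tau+1$, since by (\ref{eq:5.3}) it is a function of $q=e^{2\pi i\tau}$ with integral exponents and $z_1,z_2,t$ do not involve $\tau$; for $\varphi^{[m]}_{\add}$ I use that under $\tau\mapsto\tau+1$ the factor $R_{m+1;j}(\tau,v)$ acquires $e^{-\pi ij^2/(2m+2)}$ while $\Theta_{j,m+1}(\tau,2u)$ acquires $e^{+\pi ij^2/(2m+2)}$, so the two cancel in each term and $\varphi^{[m]}_{\add}$ is $T$-invariant as well. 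For (c) I combine Lemma~\ref{lem:5.3}(b),(c) with the corresponding reflection symmetries of $\varphi^{[m]}_{\add}$: the identity $\Theta_{j,m+1}(\tau,-z)=\Theta_{-j,m+1}(\tau,z)$ forces $\varphi^{[m]}_{\add}(\tau,-u,v,t) = -\varphi^{[m]}_{\add}(\tau,u,v,t)$, and the symmetry $R_{m+1;j}(\tau,-v) = -R_{m+1;-j}(\tau,v)$, read off from the series defining $R_{m+1;j}$ as in Lemma~\ref{lem:5.7}, yields $\varphi^{[m]}_{\add}(\tau,u,-v,t) = \varphi^{[m]}_{\add}(\tau,u,v,t)$ after relabelling $j\mapsto -j$ (the classes $j\equiv 0,\,m+1$ are harmless since there $\Theta_{j,m+1}-\Theta_{-j,m+1}=0$).

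The cleanest route to (e) is to deduce it from (a) and (b) simultaneously, rather than to treat $\tau$-shifts directly. Writing $\sigma=-1/\tau$ and applying the periodicity (b) in the $S$-transformed coordinates gives $\tilde\varphi^{[m]}(\sigma,\tfrac u\tau+a,\tfrac v\tau+b,s) = \tilde\varphi^{[m]}(\sigma,\tfrac u\tau,\tfrac v\tau,s)$ for all admissible $(a,b)$. Rewriting both sides through the $S$-invariance (a), in the form $\tilde\varphi^{[m]}(\sigma,\tfrac{u'}\tau,\tfrac{v'}\tau,t'-\tfrac{u'^2-v'^2}\tau) = \tau\,\tilde\varphi^{[m]}(\tau,u',v',t')$, and matching the $t$-slots (a short computation gives $t'=t+2(au-bv)+(a^2-b^2)\tau$), I obtain $\tilde\varphi^{[m]}(\tau,u+a\tau,v+b\tau,t') = \tilde\varphi^{[m]}(\tau,u,v,t)$. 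Finally, since $\tilde\varphi^{[m]}(\tau,u,v,t) = e^{2\pi i(m+1)t}\tilde\varphi^{[m]}(\tau,u,v,0)$ by (\ref{eq:5.3}) and the definition of $\varphi^{[m]}_{\add}$, the passage from $t'$ back to $t$ produces precisely the factor $q^{(m+1)(b^2-a^2)}e^{4\pi i(m+1)(-au+bv)}$, establishing (e) for all $(a,b)$ with $a,b\in\tfrac12\ZZ$, $a+b\in\ZZ$ at once. As a consistency check, the generators $(a,b)=(1,0)$ and $(0,1)$ also follow directly, by adding Lemma~\ref{lem:5.3}(d) to Lemma~\ref{lem:5.8}(c) and (\ref{eq:5.5}) to Lemma~\ref{lem:5.8}(d), respectively.

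The genuinely delicate point is the $t$-bookkeeping in (e): one must track the change in the $t$-variable induced by the $S$-conjugation together with the lattice shift, and use the simple exponential dependence $e^{2\pi i(m+1)t}$ of $\tilde\varphi^{[m]}$ to convert the residual $t$-shift into the correct elliptic automorphy factor, where a single sign slip would corrupt $q^{(m+1)(b^2-a^2)}e^{4\pi i(m+1)(-au+bv)}$. The only other place requiring attention is the reflection symmetry of $R_{m+1;j}$ in (c), where the behaviour of $\mathrm{sign}(n+\tfrac12)$ under $n\mapsto -n$ must be handled, though this is routine once the degenerate residue classes are discarded.
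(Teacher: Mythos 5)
Your proof is correct and follows essentially the same route as the paper: (a) by cancelling $G$ between the definition (\ref{eq:5.6}) and Lemma~\ref{lem:5.8}(a), (b)--(d) termwise from Lemmas~\ref{lem:5.3} and \ref{lem:5.8}, and (e) by combining (a) and (b) with the $e^{2\pi i(m+1)t}$ dependence, which reproduces the paper's automorphy factor exactly. The only cosmetic difference is in (c), where the paper uses the exact identity (\ref{eq:5.14}) $R_{m;j}(\tau,-v)+R_{m;-j}(\tau,v)=2\delta_{0,j}$ rather than your sign-antisymmetry statement, but your explicit exclusion of the classes $j\equiv 0, m+1$ (where $\Theta_{j,m+1}-\Theta_{-j,m+1}=0$) makes this harmless.
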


\begin{proof}
  Adding to the equation of Lemma~\ref{lem:5.8}(a)
  formula (\ref{eq:5.5}), we get $\tilde{\varphi}^\bracm
  -\tilde{\varphi}^\bracm |_S =0$, proving claim (a).
Adding equations of Lemmas~\ref{lem:5.3}(a) and \ref{lem:5.8}(b),
we get claim (b). 

Claim (c) is derived as follows. It is straightforward to check that
\begin{equation}
  \label{eq:5.14}
R_{m;j} (\tau, -v) + R_{m;-j} (\tau, v) = 2\delta_{0,j},\,\,\,j\in \ZZ/2m\ZZ.
\end{equation}
It follows easily that 
$\varphi^{[m]}_{\add}(\tau,u,v,t)$ 
is unchanged if we change the sign of $v$. It is also immediate to see
that $\varphi^{[m]}_{\add}(\tau,u,v,t)$ changes sign if we change the sign of
$u$. This proves (c). Claim (d) is obvious.

Finally, we derive claim (e) from claims (a) and (b) as follows.
Replacing $u$ by $u+a\tau$ and $v$ by $v+b\tau$ in (a), we obtain
\begin{eqnarray*}
  &&\tilde{\varphi}^\bracm \left(-\frac{1}{\tau}\,,\, \frac{u}{\tau} +a \,,\,
    \frac{v}{\tau} +b \,,\, t\right)\\[1ex]
   && =e^{\frac{2\pi i (m+1)}{\tau}(u^2-v^2)}
       e^{4\pi i (m+1)(au-bv)} q^{(m+1)(a^2-b^2)}
         \tilde{\varphi}^\bracm (\tau, u+a\tau, v+b\tau, t)\, .
\end{eqnarray*}
But an equivalent form of (a) is
\begin{equation*}
  \tilde{\varphi}^\bracm \left( - \frac{1}{\tau} \,,\, \frac{u}{\tau}\,,\,
    \frac{v}{\tau}\,,\, t \right) 
     = \tau e^{\frac{2\pi i (m+1)}{\tau}(u^2-v^2)}
        \tilde{\varphi}^\bracm (\tau ,u,v,t)\, .
\end{equation*}
Since, by (b), the LHS of the last two formulae are equal, we
conclude that the RHS are equal as well, which gives (e).

\end{proof}

Translating the discussion on $\varphi^{[m]}_{\add}$ and
$\tilde{\varphi}^{[m]}$ into the language of $\Phi$'s, we obtain
the following modification of the function
$\Phi^{[m]}(\tau,z_1,z_2,t)$:
\begin{equation*}
    \tilde{\Phi}^{[m]} (\tau, z_1,z_2,t )=
    \Phi^{[m]}(\tau,z_1,z_2,t) + \Phi^{[m]}_{\add}
    (\tau,z_1,z_2,t),
\end{equation*}
where
\begin{equation*}
 \tilde{\Phi}^{[m]}_{\add}  (\tau,z_1,z_2,t)  
     = -\tfrac12 e^{2\pi i (m+1)t} \sum_{j \in \ZZ/(2m+2)\ZZ} 
        R_{m+1;j} (\tau, \frac{z_1-z_2}{2}) 
          (\Theta_{j,m+1}-\Theta_{-j,m+1}) (\tau,z_1+z_2)\,.
\end{equation*}
Then we obtain the following corollary of Theorem \ref{th:5.9}.

\begin{corollary}
  \label{cor:5.10}

  \begin{list}{}{}
  \item (a)  
$\tpm (-\frac{1}{\tau}\,,\, \frac{z_1}{\tau}\,    ,\,\frac{z_2}{\tau}\,,\, t-\frac{z_1z_2}{\tau})      = \tau \tpm (\tau,z_1,z_2,t)$.

\item (b)  $\tpm (\tau, z_1+a , z_2+b,t) = \tpm (\tau,z_1,z_2,t)$
  if $a,b \in \ZZ$.

\item (c) $\tpm (\tau,-z_1,-z_2,t) = - \tpm (\tau,z_1,z_2,t)$;\,\, 
     $\tpm (\tau,z_2,z_1,t) = \tpm (\tau,z_1, z_2,t)$.

\item (d)  $\tpm (\tau +1,z_1,z_2,t)  = \tpm (\tau,z_1,z_2,t)$.

\item (e)  $ \tilde{\Phi}^{\bracm} (\tau ,z_1 + j\tau , z_2 +k\tau, t) = q^{-(m+1)jk} e^{-2\pi i (m+1) (kz_1 + jz_2)} \tilde{\Phi}^\bracm (\tau ,z_1,z_2,t)$
if $j,k \in \ZZ$.
%\item (f) $\tpm (\tau, z_1,z_2-\tau ,t) = e^{2\pi i (m+1)} \tpm (\tau,z_1,z_2,t)$.
  \end{list}
\end{corollary}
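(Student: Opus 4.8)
The plan is to reduce everything to Theorem~\ref{th:5.9} via the linear change of variables $u=-\tfrac12(z_1+z_2)$, $v=\tfrac12(z_1-z_2)$ (equivalently $z_1=v-u$, $z_2=-u-v$) already used to pass between $\Phi^{[m]}$ and $\varphi^{[m]}$ in (\ref{eq:5.4}). First I would verify that this same substitution carries $\tilde{\varphi}^{[m]}$ to $\tpm$, i.e.\ that $\tpm(\tau,z_1,z_2,t)=\tilde{\varphi}^{[m]}(\tau,u,v,t)$. For the holomorphic part this is exactly (\ref{eq:5.4}); for the correction terms one substitutes $2u=-(z_1+z_2)$ and $v=\tfrac12(z_1-z_2)$ into $\varphi^{[m]}_{\add}$ and uses the standard identity $\Theta_{j,m+1}(\tau,-z)=\Theta_{-j,m+1}(\tau,z)$, which sends $(\Theta_{j,m+1}-\Theta_{-j,m+1})(\tau,-(z_1+z_2))$ to $-(\Theta_{j,m+1}-\Theta_{-j,m+1})(\tau,z_1+z_2)$. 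This sign flip turns the $+\tfrac12$ in $\varphi^{[m]}_{\add}$ into the $-\tfrac12$ appearing in $\tilde{\Phi}^{[m]}_{\add}$, so the two corrections agree and hence $\tpm=\tilde{\varphi}^{[m]}$ in the new coordinates.

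Once this identification is in place, each clause of the corollary is the corresponding clause of Theorem~\ref{th:5.9} read through the substitution, so the remaining work is purely bookkeeping with the change of variables. The one algebraic identity to record at the outset is $u^2-v^2=z_1z_2$, which follows from $(z_1+z_2)^2-(z_1-z_2)^2=4z_1z_2$. Next I would run through the cases. For (a), the map $(z_1,z_2)\mapsto(z_1/\tau,z_2/\tau)$ becomes $(u,v)\mapsto(u/\tau,v/\tau)$ and $\tfrac{u^2-v^2}{\tau}=\tfrac{z_1z_2}{\tau}$, so Theorem~\ref{th:5.9}(a) gives (a) verbatim. For (d) the variables $u,v$ are unchanged under $\tau\mapsto\tau+1$, so (d) is immediate from Theorem~\ref{th:5.9}(d).

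For (b), an integer shift $z_1\mapsto z_1+a$, $z_2\mapsto z_2+b$ ($a,b\in\ZZ$) induces $u\mapsto u-\tfrac12(a+b)$, $v\mapsto v+\tfrac12(a-b)$; the two shifts lie in $\tfrac12\ZZ$ and sum to $-b\in\ZZ$, so Theorem~\ref{th:5.9}(b) applies and yields (b). For (c), the substitution $(z_1,z_2)\mapsto(-z_1,-z_2)$ becomes $(u,v)\mapsto(-u,-v)$, and composing the two symmetries of Theorem~\ref{th:5.9}(c) (invariance under $v\mapsto-v$ followed by a sign change under $u\mapsto-u$) gives the first assertion; the swap $(z_1,z_2)\mapsto(z_2,z_1)$ becomes $(u,v)\mapsto(u,-v)$, and invariance under $v\mapsto-v$ gives the second. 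Finally, for (e), the shift $z_1\mapsto z_1+j\tau$, $z_2\mapsto z_2+k\tau$ ($j,k\in\ZZ$) corresponds to $u\mapsto u+a\tau$, $v\mapsto v+b\tau$ with $a=-\tfrac12(j+k)$, $b=\tfrac12(j-k)$, again in $\tfrac12\ZZ$ with $a+b=-k\in\ZZ$. A short computation gives $b^2-a^2=-jk$ and $-au+bv=-\tfrac12(kz_1+jz_2)$, so the prefactor $q^{(m+1)(b^2-a^2)}e^{4\pi i(m+1)(-au+bv)}$ of Theorem~\ref{th:5.9}(e) becomes exactly $q^{-(m+1)jk}e^{-2\pi i(m+1)(kz_1+jz_2)}$, proving (e).

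There is no serious obstacle here: the entire content of the corollary is already carried by Theorem~\ref{th:5.9}. The only step demanding genuine care is the initial identification $\tpm=\tilde{\varphi}^{[m]}$ in the new coordinates, where one must track the reflection $\Theta_{j,m+1}(\tau,-z)=\Theta_{-j,m+1}(\tau,z)$ and confirm that it reconciles both the opposite signs ($+\tfrac12$ versus $-\tfrac12$) in the definitions of $\varphi^{[m]}_{\add}$ and $\tilde{\Phi}^{[m]}_{\add}$ and the mismatch between the arguments $2u=-(z_1+z_2)$ and $z_1+z_2$.
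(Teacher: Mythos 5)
Your proposal is correct and follows exactly the paper's route: the paper obtains Corollary~\ref{cor:5.10} from Theorem~\ref{th:5.9} by the same change of variables $u=-\tfrac12(z_1+z_2)$, $v=\tfrac12(z_1-z_2)$ (made explicit later, in the proof of Theorem~\ref{th:6.5}, as $\tilde{\Phi}^{[m]}(\tau,z_1,z_2,t)=\tilde{\varphi}^{[m]}(\tau,u,v,t)$). Your bookkeeping — including the identity $u^2-v^2=z_1z_2$, the reconciliation of the $\pm\tfrac12$ signs in the two correction terms via $\Theta_{j,m+1}(\tau,-z)=\Theta_{-j,m+1}(\tau,z)$, and the computations $b^2-a^2=-jk$ and $-au+bv=-\tfrac12(kz_1+jz_2)$ in part (e) — is accurate and in fact more detailed than what the paper records.
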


\begin{remark}
 \label{rem:5.12}
The following Appell--Lerch sum plays a key role in Zwegers'
paper \cite{Z}:
\begin{equation*}
  \mu (\tau,z_1,z_2) = \frac{e^{\pi      iz_1}}{\vartheta_{11}(\tau,z_2)}
    \sum_{n \in \ZZ} \frac{(-1)^n q^{\tfrac12 (n^2+n)}e^{2\pi i nz_2}}
{1- e^{2\pi iz_1}q^n}\, ,
\end{equation*}
along with its non-meromorphic modification 
$$
{\tilde{\mu}
  (\tau,z_1,z_2) = \mu (\tau,z_1,z_2) +\tfrac{i}{2}R(\tau,z_1-z_2)},
$$ 
where $R$ is defined in \cite{Z},
p.~11. Let us compare these functions with the functions
$\Phi^\bracm (\tau,z_1,z_2):= \Phi^{\bracm} (\tau,z_1,z_2,0)$
and $R_{m;j} (\tau, z)$, which play a key role in the present
paper.  First it is easy to see that
\begin{equation}
  \label{eq:5.15}
  R(\tau,z) = R_{2;1} (\tau,\tfrac{z}{2})-R_{2;-1} (\tau,\tfrac{z}{2})\,.
\end{equation}
Furthermore, in \cite{KW4} (formula (3.22) for $s=0,m=2$) we
proved the following character formula:
\begin{equation*}
  \ch^-_{\Lambda_0} (\tau,z_1,z_2,0) =\vartheta_{11}(\tau,z_1)
     \vartheta_{11}(\tau,z_2)\mu (\tau,z_1,z_2)/\eta (\tau)^3\, .
\end{equation*}
 Comparing this formula with (\ref{eq:5.3}) for $m=1$, we
 obtain the following identity:
 \begin{equation}
   \label{eq:5.16}
   \Phi^{[1]} (\tau,z_1,z_2,0) = \vartheta_{11}(\tau,z_1+z_2)
      \mu (\tau,z_1,z_2)\, .
 \end{equation}
On the other  hand, using (\ref{eq:5.15}), it is easy to see
that
\begin{equation}
  \label{eq:5.17}
  \Phi^{[1]}_{\add} (\tau,z_1,z_2,0) =  \tfrac{i}{2}R(\tau,z_1-z_2)
     \vartheta_{11} (\tau,z_1+z_2)\,.
\end{equation}
Comparing (\ref{eq:5.16}) and (\ref{eq:5.17}), we get the ``modified''
identity
\begin{equation}
  \label{eq:5.18}
 \tilde{\Phi}^{[1]} (\tau,z_1,z_2,0) = \vartheta_{11}(\tau,z_1+z_2)
    \tilde{\mu} (\tau,z_1,z_2) \, .
\end{equation}

\end{remark}

\section{Modular transformation formula for the function
  $\tilde{\Phi}^{[m]} \left( \frac{\tau}{M} \, , \, \frac{u}{M} \, , \,
    \frac{v}{M} \, , t  \right)$}
\label{sec:6}

Given an additive subgroup $A$ of $\QQ$, consider the following
abelian group:
\begin{equation*}
  \tilde{\Omega}_A = \left\{(a,b ) \in \frac12 A \times \frac12 A |\,\, a+b
      \in A \right\}\, .
\end{equation*}
For a positive integer $M$ consider the abelian group
 \begin{equation*}
   \Omega_M = \tilde{\Omega}_\ZZ  / \tilde{\Omega}_{M\ZZ}\, .
 \end{equation*}

The first result of this section is the following theorem.

\begin{theorem}
  \label{th:6.1}
Let $M$ be a positive integer and let $m$ be a non-negative
integer, such that $gcd (M,2m+2)=1$ if $m>0$.  Then

\begin{list}{}{}
\item (a)  $\tilde{\varphi}^{\bracm} \left( \frac{\tau}{M} \, , \,
    \frac{u}{M}\, , \, \frac{v}{M} \, , \, t \right) = \frac{M}{\tau}
  \tilde{\varphi}^{\bracm} \left( -\frac{M}{\tau} \, , \,
    \frac{u}{\tau} \, , \, \frac{v}{\tau} \, , \,
    t-\frac{u^2-v^2}{\tau M}  \right)$.

\item  (b)  $\tilde{\varphi}^{\bracm} \left( \frac{\tau}{M} \, , \,
    \frac{u}{M}\, , \, \frac{v}{M} \, , \, t \right) = \displaystyle{\sum_{a,b
    \in \Omega_M} q^{\frac{m+1}{M} (a^2-b^2)} e^{\frac{4\pi i
      (m+1)}{M} (au-bv)} \tilde{\varphi}^\bracm (M\tau \, , \,
  u+a\tau \, , \, v+b\tau,t)}$.

\end{list}

\end{theorem}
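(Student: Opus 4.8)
The plan is to deduce both parts from the transformation laws of $\tilde{\varphi}^{[m]}$ collected in Theorem~\ref{th:5.9}, together with the obvious $t$-homogeneity $\tilde{\varphi}^{[m]}(\tau,u,v,t+s)=e^{2\pi i(m+1)s}\,\tilde{\varphi}^{[m]}(\tau,u,v,t)$, which holds because every term in the definitions of $\varphi^{[m]}$ and $\varphi^{[m]}_{\add}$ carries the overall factor $e^{2\pi i(m+1)t}$.

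For part (a) I would simply rescale the $S$-transformation of Theorem~\ref{th:5.9}(a). Written in the equivalent form $\tilde{\varphi}^{[m]}(-1/\sigma,x/\sigma,y/\sigma,s)=\sigma\, e^{\frac{2\pi i(m+1)}{\sigma}(x^2-y^2)}\,\tilde{\varphi}^{[m]}(\sigma,x,y,s)$, substituting $\sigma=\tau/M$, $x=u/M$, $y=v/M$ turns the left side into $\tilde{\varphi}^{[m]}(-M/\tau,u/\tau,v/\tau,s)$ and collapses the exponent to $\frac{2\pi i(m+1)}{\tau M}(u^2-v^2)$. Solving for $\tilde{\varphi}^{[m]}(\tau/M,u/M,v/M,s)$ and absorbing the resulting exponential into a shift of the last argument by the $t$-homogeneity gives exactly identity (a); no hypothesis on $M$ or $m$ is needed here.

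For part (b) the mechanism is different: it is a lattice–averaging ("Hecke–type") identity, and I would prove the two sides, call them $f$ and $g$, equal by showing they are the same real–analytic Jacobi form. First I would check that $g$ has the same elliptic and periodicity behaviour as $f$. The $u,v$–quasi–periods of $f=\tilde{\varphi}^{[m]}(\tau/M,\cdot/M,\cdot/M,t)$ are read off from Theorem~\ref{th:5.9}(b),(e) with the arguments divided by $M$. For $g$ one replaces the index $(a,b)\in\Omega_M$ by $(a+c,b+d)$ and applies Theorem~\ref{th:5.9}(b),(e) at the argument $M\tau$ (the shift $Mc$ being a legitimate half–integer shift since $(c,d)\in\tilde{\Omega}_{\ZZ}$). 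A short bookkeeping of the quadratic exponents, hinging on the cancellation $(a-c)(a+c)-(b-d)(b+d)=a^2-c^2-b^2+d^2$, shows that every term of $g$ transforms by the same factor $q^{\frac{(m+1)(d^2-c^2)}{M}}e^{\frac{4\pi i(m+1)}{M}(-cu+dv)}$, so $g$ and $f$ obey identical elliptic laws; the defining constraint $a+b\in\ZZ$ of $\Omega_M$ guarantees that the residual phases in the pure–translation direction are trivial. Hence $D:=f-g$ is, up to a common theta multiplier, invariant under the full translation lattice.

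To conclude $D\equiv 0$ I would reduce to the uniqueness argument already used in Proposition~\ref{prop:5.4}(c): after multiplying by a suitable ratio of theta functions in $u$ and $v$, the quasi–periodicity is killed, the result is doubly periodic, hence constant and then zero — \emph{provided} it is holomorphic. The real obstacle is precisely this holomorphy, since both $f$ and $g$ are non–holomorphic through the functions $R_{m+1;j}(\tau,v)$ entering $\varphi^{[m]}_{\add}$. Thus, before the Liouville step, I must match the non–holomorphic parts of $f$ and $g$; this amounts to a rescaling identity for Zwegers' functions $R_{m+1;j}$ under $\tau\mapsto\tau/M$ versus $M\tau$ with the coset shifts, reorganizing the indices $j\in\ZZ/(2m+2)\ZZ$. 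It is here that $\gcd(M,2m+2)=1$ is used: it makes multiplication by $M$ a bijection of $\ZZ/(2m+2)\ZZ$ and so permutes the theta/$R$ components correctly (for $m=0$ the correction degenerates and the identity is classical, which is why no gcd condition is imposed there). Matching the poles of the meromorphic parts of $f$ and $g$, governed by residue computations of the type in Proposition~\ref{prop:4.7}, then makes the theta quotient of $D$ a genuinely elliptic function with no poles, completing the argument.
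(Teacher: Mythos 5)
Part (a) of your proposal is correct and coincides with the paper's one-line derivation: one simply substitutes $(\tau/M,u/M,v/M,t)$ into Theorem~\ref{th:5.9}(a); no hypothesis on $\gcd(M,2m+2)$ is needed there, as you note.

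For part (b) you take a genuinely different route from the paper, but the route as written has two substantive gaps, and they sit exactly where the content of the theorem lives. The paper proves (b) by brute force on the $q$-series: Lemma~\ref{lem:6.2} expands $\Phi^{[m]}(\tau/M,z_1/M,z_2/M,t)$ by splitting the summation indices via the Euclidean decomposition $n=n'M+(2m+2)b_n$ (this is where $\gcd(M,2m+2)=1$ enters), and Lemma~\ref{lem:6.3} does the same for $R_{m+1;j}$ and $\Theta_{j,m+1}$, yielding the decomposition of $\varphi^{[m]}_{\add}$ directly; the elliptic properties are then used only to re-index the sum over $\Omega_M$. Your plan instead is: match elliptic transformation laws of the two sides, match the non-holomorphic parts, match the poles of the meromorphic parts, and finish by a Liouville argument as in Proposition~\ref{prop:5.4}(c). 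The structural skeleton is sound, but the second step --- the ``rescaling identity for Zwegers' functions $R_{m+1;j}$ under $\tau\mapsto\tau/M$ versus $M\tau$ with coset shifts'' --- is precisely Lemma~\ref{lem:6.3}(a) of the paper, i.e.\ the central computation of the whole section, and you assert it rather than prove it. Without it your difference $D=f-g$ is not known to be meromorphic, so the Liouville step cannot even begin. Likewise the third step (cancellation of all poles of $\varphi^{[m]}(\tau/M,\cdot)$ against those of the averaged sum, in both the $u$- and $v$-directions) is a nontrivial residue computation in the spirit of Proposition~\ref{prop:4.7} that you name but do not perform. Until those two identities are actually established, the argument is a reduction of the theorem to two statements of essentially the same difficulty, not a proof. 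If you do carry them out, you will find that the index bookkeeping forces exactly the decomposition $n=n'M+(2m+2)b_n$, at which point the direct expansion of the paper is the shorter path.
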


%\begin{proof}
  Replacing $(\tau,u,v,t)$ by $\left( \frac{\tau}{M}\, , \,
    \frac{u}{M}\, , \, \frac{v}{M} \, , \, t  \right)$ in
  Theorem~\ref{th:5.9}(a), we obtain (a).  The proof of (b) is
  based on several lemmas, proven below.

%\end{proof}

Given coprime positive integers $p$ and $q$, for each integer $n
\in [0,q-1]$ there exist unique integers $n' \in [0,p-1]$ and
$b_n$, such that
\begin{equation}
  \label{eq:6.1}
  n=n'q+b_np\, .
\end{equation}
Furthermore, the set 
$$
I_{q,p}:= \{ b_n |\,\,n=0,1,\ldots ,q-1 \}
$$
consists of $q$ distinct integers.  Any $n \in \ZZ$ can be
uniquely represented in the form (\ref{eq:6.1}), where $n'\in \ZZ$
and $b_n \in I_{q,p}$ and this decomposition has the following
properties:

\begin{list}{}{}
\item (i)  $n \geq 0$ iff $n'\geq 0 $\, ;

\item (ii) if $j,j_0 \in \ZZ/p\ZZ$ are such that $j \equiv qj_0
  \mod p$, then $n \equiv j \mod p$ iff $n'\equiv j_0 \mod p$.
\end{list}

We shall apply this setup to $p=2m+2$, $q=M$, and let
$I=I_{M,2m-2}$ for short.

\begin{lemma}
  \label{lem:6.2}

  \begin{list}{}{}
  \item (a) $\Phi^\bracm \left( \frac{\tau}{M}\, , \,
      \frac{z_1}{M}\, , \, \frac{z_2}{M }\, , \, t \right) $\\[1ex] 
     $\displaystyle{=  \sum_{\substack{0 \leq a <M\\b \in I}} e^{\frac{2\pi i
            (m+1)}{M} ((a+2b) z_1 + az_2)} q^{\frac{m+1}{M}(a^2+ 2ab)} 
        \Phi^\bracm (M\tau \, , \, z_1 + a\tau \, , \, z_2+(a+2b) \tau ,\,t)}$.

\item (b)  $\varphi^\bracm \left(  \frac{\tau}{M}\, , \,\frac{u}{M}\, , \, 
\frac{v}{M }\, , \, t \right)$\\
  $   \displaystyle{ =  \sum_{\substack{0 \leq a <M\\b \in I}} 
     e^{-\frac{4\pi i (m+1)}{M}
          ((a+b)u+bv)} q^{\frac{m+1}{M} (a^2+2ab)} }  
\varphi^\bracm \left(  M\tau,\, u-(a+b)\tau,\, v+b\tau,\, t \right)$.
  \end{list}

\end{lemma}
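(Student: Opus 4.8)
The plan is to prove part (a) directly from the defining series (\ref{eq:5.3}) of $\Phi^{[m]}$ and then to obtain part (b) from (a) by the linear change of variables $u=-\tfrac12(z_1+z_2)$, $v=\tfrac12(z_1-z_2)$. First I would substitute the arguments $(\tfrac{\tau}{M},\tfrac{z_1}{M},\tfrac{z_2}{M},t)$ into (\ref{eq:5.3}). Writing $q=e^{2\pi i\tau}$, the first sum then carries denominators $1-e^{2\pi iz_1/M}q^{j/M}$, built from the base $q^{1/M}$, whereas the target summands $\Phi^{[m]}(M\tau,z_1+a\tau,z_2+(a+2b)\tau,t)$ carry denominators $1-e^{2\pi iz_1}q^{a+Mn}$ built from the base $q$. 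The key preliminary step is therefore to rationalize each denominator by
\[
\frac{1}{1-e^{2\pi iz_1/M}q^{j/M}}=\frac{\sum_{l=0}^{M-1}e^{2\pi ilz_1/M}q^{jl/M}}{1-e^{2\pi iz_1}q^{j}},
\]
and symmetrically in $z_2$ for the second sum, putting both sides over the common denominators $1-e^{2\pi iz_1}q^{j}$ and $1-e^{-2\pi iz_2}q^{j}$.

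Next I would apply the arithmetic decomposition (\ref{eq:6.1}) with $p=2m+2$, $q=M$ to reorganize the combined summation data. The point is that $\gcd(M,2m+2)=1$ forces $\{b\bmod M:b\in I\}$ to be a complete residue system modulo $M$, so that the auxiliary index $l\in\{0,\dots,M-1\}$ introduced by rationalization is matched bijectively with the residue label $b\in I$ via $l\equiv(2m+2)b$, while the denominator index $a+Mn$ plays the role of the internal summation variable $n$ (with $a\in[0,M)$, $n\in\ZZ$). Completing the square in the Gaussian yields the algebraic identity
\[
\tfrac{m+1}{M}(a^2+2ab)+(m+1)n(2a+2b)+M(m+1)n^2=\tfrac{m+1}{M}\bigl((a+Mn)^2+2b(a+Mn)\bigr),
\]
which is exactly what produces the prefactor $q^{\frac{m+1}{M}(a^2+2ab)}$ and aligns the $q$\nobreakdash-exponents; the exponential prefactor $e^{\frac{2\pi i(m+1)}{M}((a+2b)z_1+az_2)}$ then comes out of matching the $z_1$- and $z_2$-frequencies, using $a+Mn=j$.

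The hard part will be the \emph{carry bookkeeping} between the numerator index $l$ and the denominator index $j$: the correspondence is not a naive term-by-term match at fixed $j$, since replacing $l$ by $(2m+2)b$ differs by a multiple $Mc$ of $M$, which feeds a factor $(e^{2\pi iz_1}q^{j})^{c}$ into the denominator and thereby shifts the internal sum and contributes polynomial corrections. Properties (i) and (ii) of the decomposition (\ref{eq:6.1}) are precisely designed to control this carry and to respect the split of the $j$\nobreakdash-sum into its two ranges, which is the source of the mock (non-meromorphic-modular) behavior. Conceptually, this is also where coprimality is indispensable: the right-hand side a priori acquires $z_1$\nobreakdash-poles on $-\ZZ\tau+\ZZ$, whereas the left-hand side only has poles on $-\ZZ\tau+M\ZZ$, and at an excess pole $z_1=\ell$ with $\ell\in\ZZ\setminus M\ZZ$ the residue is proportional to the character sum $\sum_{b\in I}e^{\frac{2\pi i(2m+2)b\ell}{M}}$, which vanishes exactly because $\gcd(M,2m+2)=1$ forces $M\nmid(2m+2)\ell$. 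This residue computation furnishes an independent safety check: combined with a uniqueness argument in the style of Proposition~\ref{prop:5.4}(c) (the difference is holomorphic and doubly periodic in $z_1$, hence constant, hence zero after multiplying by a suitable $\vartheta_{11}$), it confirms the identity even where the reindexing is delicate. Finally, part (b) follows from (a) by substituting $z_1=-u+v$, $z_2=-u-v$: the shifts $z_1\mapsto z_1+a\tau$, $z_2\mapsto z_2+(a+2b)\tau$ become $u\mapsto u-(a+b)\tau$ and $v\mapsto v-b\tau$, the prefactor turns into $e^{-\frac{4\pi i(m+1)}{M}((a+b)u-bv)}q^{\frac{m+1}{M}(a^2+2ab)}$, and the discrepancy in the sign of $b$ in the $v$-variable is absorbed by the reindexing $b\mapsto-b$, justified by the evenness of $\varphi^{[m]}$ in $v$ (Lemma~\ref{lem:5.3}(c)), giving the stated form.
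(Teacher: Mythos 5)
Your derivation of (b) from (a) is fine: replacing $v$ by $-v$ in the identity and using the evenness of $\varphi^{[m]}$ in $v$ (Lemma~\ref{lem:5.3}(c)) on both the outer and the inner functions does flip the sign of $b$ and produces the stated form. The problem is with (a), exactly at the place you label ``the hard part''. After rationalizing, the numerator of the $(j,l)$-term on the left and of the $(j,b)$-term on the right (with $j=a+Mn$) differ by the factor $(e^{2\pi i z_1}q^{j})^{c}$, where $(2m+2)b=l+cM$; from the definition of $I$ one has $(2m+2)b_n=n-n'M$, so $c=-n'$ is nonzero for most $n$. Each such pair therefore leaves behind a Laurent polynomial $N_{j,l}\,x^{c}(1+x+\cdots+x^{-c-1})$ in $x=e^{2\pi iz_1}q^{j}$, and the lemma is equivalent to the cancellation of the sum of all these leftovers against the analogous ones coming from the $z_2$-half of $\Phi^{[m]}$. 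You do not prove this cancellation, and it is not a formality --- it is the actual content of the statement. The paper sidesteps it entirely: it first expands $\Phi^{[m]}_1$ into the bilateral double sum $\bigl(\sum_{j,k\ge 0}-\sum_{j,k<0}\bigr)$ of formula (\ref{eq:6.2}); in that form the substitution $j=j'M+a$, $k=k'M+(2m+2)b_k$ is an exact, sign-preserving bijection (property (i) of (\ref{eq:6.1}) gives $k\ge 0$ precisely when $k'\ge 0$), and resumming the geometric series in $k'$ reproduces the right-hand side with no carries at all. I recommend adopting that route rather than the partial-fraction matching.

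Your proposed safety net does not rescue the argument either. The residue computation is correct as far as it goes: at an excess pole $z_1=\ell-j\tau$ with $\ell\in\ZZ\setminus M\ZZ$ the $b$-dependent $q$-powers cancel (since $j=a+Mn$) and one is left with $\sum_{b\in I}e^{2\pi i(2m+2)b\ell/M}$, which equals $M$ if $M$ divides $\ell$ and $0$ otherwise; matching residues at the remaining poles would make the difference of the two sides holomorphic in $z_1$. But at that point the difference is only known to be periodic under $z_1\mapsto z_1+M$. It is emphatically not ``doubly periodic in $z_1$'': $\Phi^{[m]}$ fails to be elliptic in $z_1$ alone, which is the whole point of the mock phenomenon --- a shift of $z_1$ by $\tau$ produces the theta-function corrections of Lemma~\ref{lem:5.1}(e), and one would have to verify separately that these corrections agree on the two sides before any Liouville-type argument in the style of Proposition~\ref{prop:5.4}(c) can even begin. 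A holomorphic function with a single period need not vanish, so the uniqueness step is missing its second (quasi-)period, and supplying it is essentially as much work as the identity itself.
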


\begin{proof}
  We prove (a); (b) follows immediately from (a).
Recall that
\begin{eqnarray*}
  \Phi^\bracm (\tau,z_1,z_2,t) &=& e^{2\pi i (m+1)t}
    (\Phi^\bracm_1 (\tau,z_1,z_2) - \Phi^\bracm_1 (\tau,-z_2,-z_1)),\\
\noalign{\hbox{where}}\\
\Phi^\bracm_1 (\tau,z_1,z_2)&=& \sum_{j \in \ZZ}
  \frac{e^{2\pi ij (m+1)(z_1+z_2)}q^{j^2(m+1)}}{1-e^{2\pi i z_1}q^j}\, .
\end{eqnarray*}
We have, by expanding each term in this series in a geometric
series, and replacing $(\tau,z_1,z_2)$ by $\left(
  \frac{\tau}{M}\, , \, \frac{z_1}{M}\, , \, \frac{z_2}{M}\right)$:
\begin{eqnarray}
  \label{eq:6.2}
  \Phi^\bracm_1 \left(\frac{\tau}{M} \, , \, \frac{z_1}{M}\, , \,  \frac{z_2}{M}\right)
   = \left( \sum_{j,k \geq 0} - \sum_{j,k<0}  \right)
     e^{\frac{2\pi i (m+1)}{M} j (z_1+z_2)} e^{\frac{2\pi ik z_1}{M}}
        q^{\frac1M (j^2 (m+1)+jk) }\, .
\end{eqnarray}
Now we divide $j$ by $M$ with the remainder $a$, $0 \leq a <M :
j=j'M+a$, and decompose $k$ according to (\ref{eq:6.1}): $k=k'M +
(2m+2)b_k$.  Since
\begin{equation*}
  \frac1M (j^2 (m+1)+jk) = j' (j'(m+1)+k')M +j' (a+2b_k)(m+1)
      + a (j' (m+1)+k') + \frac{a (a+2b_k)(m+1)}{M}\, , 
\end{equation*}
we obtain from (\ref{eq:6.2}):
\begin{eqnarray*}
  \lefteqn{\hspace{-5.95in} \Phi_1 \left( \frac{\tau}{M} \, , \, \frac{z_1}{M}\, , \,
    \frac{z_2}{M}   \right) }\\
    = \sum_{\substack{0 \leq a <M\\b \in I}} \left(
      \sum_{j',k'\geq 0}-\sum_{j',k' < 0}   \right)
    A_{j',k'} e^{\frac{2\pi i (m+1)a}{M}(z_1+z_2)} e^{\frac{4\pi i(m+1)}{M} bz}
       q^{Mj' (j'(m+1)+k')} B_{j',k'} q^{\frac{(m+1)a(a+2b)}{M}}\, , 
\end{eqnarray*}
where
\begin{equation*}
  A_{j',k'} = e^{2\pi i (m+1) j' (z_1+z_2)} e^{2\pi i k'z_1}\, ,\,\,\, 
    B_{j',k'} = e^{2\pi i \tau (2j' (m+1)(a+b)+ak')}\, .
\end{equation*}
Therefore, using again (\ref{eq:6.2}), we obtain:
\begin{eqnarray*}  \lefteqn{\hspace{-5.35in}  
\Phi^\bracm_1 \left( \frac{\tau}{M}\,,\,\frac{z_1}{M}\,,\, \frac{z_2}{M}\right)}\\
    =\sum_{\substack{0 \leq a <M\\b \in I}} e^{\frac{2\pi i(m+1)}{M} 
      ((a+2b) z_1+az_2)} q^{\frac{m+1}{M} (a^2+2ab)}\Phi^\bracm_1
       (M\tau, z_1 + a\tau, z_2+(a+2b)\tau)\,, 
\end{eqnarray*}
which proves (a).

\end{proof}

\begin{lemma}
  \label{lem:6.3}

Given $j \in \ZZ/(2m+2)\ZZ$, let $j_0 \in \ZZ/(2m+2)\ZZ$ be the element, such that $j-Mj_0 \in (2m+2)\ZZ$.
  Then

\begin{list}{}{}

\item (a) $R_{m+1;j} \left( \frac{\tau}{M} \, , \, \frac{v}{M}  \right)
    =\displaystyle{\sum_{b\in I} q^{-\frac{m+1}{M} b^2}  e^{-\frac{4\pi i (m+1)}{M}bv}
       R_{m+1;j_0} (M\tau,\,v+b\tau)}$.

\item (b)  $\Theta_{j,m+1} \left( \frac{\tau}{M} \,,\,    \frac{2u}{M}\right)
   = \displaystyle{\sum_{a\in I} q^{\frac{m+1}{M} a^2} e^{\frac{4\pi i (m+1)}{M} au}
     \Theta_{j_0,m+1} (M \tau,\, 2u+2a\tau)}$.

\end{list}
\end{lemma}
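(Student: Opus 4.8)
The plan is to prove both (a) and (b) by one and the same mechanism: write each function as a lattice sum over an index $n$ subject to $n\equiv j \pmod{2m+2}$, perform the substitution $\tau\mapsto \tau/M$ together with the rescaling of the elliptic variable, and then reindex using the decomposition (\ref{eq:6.1}). With $p=2m+2$ and $q=M$, every $n\in\ZZ$ has a unique expression $n=n'M+(2m+2)a$ with $a\in I$ and $n'\in\ZZ$, and by property (ii) the constraint $n\equiv j \pmod{2m+2}$ is equivalent to $n'\equiv j_0 \pmod{2m+2}$ for every fixed $a\in I$ (here $j_0$ is exactly the residue singled out in the statement, $j\equiv Mj_0 \pmod{2m+2}$). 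Hence the single sum over $\{n\equiv j\}$ factors into a double sum over $a\in I$ and $n'\equiv j_0 \pmod{2m+2}$, exactly as in the proof of Lemma~\ref{lem:6.2}. The algebraic heart in both parts is the expansion $\frac{n^2}{2(m+1)M}=\frac{Mn'^2}{2(m+1)}+2an'+\frac{2(m+1)a^2}{M}$, which cleanly splits the Gaussian exponent into an $a$-only part and an $n'$-only part.

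For part (b) I start from the defining series (\ref{eq:A2}), $\Theta_{j,m+1}(\tau,z)=\sum_{n\equiv j\,(2m+2)}q^{n^2/(4(m+1))}e^{\pi i n z}$, evaluate at $(\tau/M,2u/M)$, substitute $n=n'M+(2m+2)a$, and apply the identity above. The exponent separates into a factor depending only on $a$, namely $q^{\frac{m+1}{M}a^2}e^{\frac{4\pi i(m+1)}{M}au}$, which is precisely the prefactor in (b), and a factor depending only on $n'$, namely $e^{2\pi i M\tau\,n'^2/(4(m+1))}e^{\pi i n'(2u+2a\tau)}$. Summing the latter over $n'\equiv j_0 \pmod{2m+2}$ reconstitutes $\Theta_{j_0,m+1}(M\tau,2u+2a\tau)$, and summing over $a\in I$ gives the claim; this is routine rearrangement, so I would only record the one exponent identity.

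For part (a) I run the identical argument on the series defining $R_{m+1;j}$, whose summand is $\left(\mathrm{sign}(n+\tfrac12)-E\big((n+(2m+2)\tfrac{\Im v}{\Im\tau})(\tfrac{\Im\tau}{m+1})^{1/2}\big)\right)e^{-\frac{\pi i n^2\tau}{2(m+1)}-2\pi i n v}$. The exponential rearranges exactly as in (b) but with the opposite Gaussian sign, producing the prefactor $q^{-\frac{m+1}{M}a^2}e^{-\frac{4\pi i(m+1)}{M}av}$ and the $n'$-summand of $R_{m+1;j_0}(M\tau,v+a\tau)$. The one genuinely new point, which I expect to be the main obstacle, is that the non-holomorphic coefficient must be invariant under the reindexing, i.e. $c_n(\tau/M,v/M)=c_{n'}(M\tau,v+a\tau)$. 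For the error-function term I use that $M$ is a positive real, so $\frac{\Im(v/M)}{\Im(\tau/M)}=\frac{\Im v}{\Im\tau}$, and check directly that both $E$-arguments equal $\left(n'M^{1/2}+\tfrac{2(m+1)a}{M^{1/2}}+\tfrac{2(m+1)\Im v}{M^{1/2}\Im\tau}\right)(\tfrac{\Im\tau}{m+1})^{1/2}$. For the $\mathrm{sign}$ term I invoke property (i): since $n\geq 0\iff n'\geq 0$ and both are integers, $\mathrm{sign}(n+\tfrac12)=\mathrm{sign}(n'+\tfrac12)$. With $c_n$ shown invariant, the double sum reassembles as $\sum_{a\in I}$ of the prefactor times $R_{m+1;j_0}(M\tau,v+a\tau)$, which is (a). Note also that this reindexing is consistent with the fact (Remark~\ref{rem:5.6a}) that $R_{m+1;j_0}$ depends on $j_0$ only modulo $2m+2$.
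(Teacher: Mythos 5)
Your proposal is correct and follows essentially the same route as the paper: the decomposition $n=n'M+(2m+2)b$ from (\ref{eq:6.1}), properties (i) and (ii) to convert the congruence and the sign term, the splitting of the Gaussian exponent, and the direct check that the $E$-function arguments agree (using $\Im(v/M)/\Im(\tau/M)=\Im v/\Im\tau$) are exactly the steps in the paper's proof. Nothing further is needed.
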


\begin{proof}
  By definition of $R_{m;j}$ we have:
\begin{eqnarray*}  
%\lefteqn{\hspace{-5.35in}  
   && R_{m+1;j} \left( \frac{\tau}{M} \, , \, \frac{v}{M}  \right)\\
   &=& \sum_{n\equiv j\!\!\!\!\mod 2m+2} \left( \sign (n+\tfrac12) -
E \left(\left(n+(2m+2)\frac{{\Im} {\it v} }{\Im \tau}\right) \, \frac{1}{M} 
\left( \frac{\Im(M\tau)}{m+1}\right)^{\tfrac12}\right)\right)
e^{-\frac{\pi in^2}{(2m+2)M}\tau -\frac{2\pi in}{M}v}\, .
  \end{eqnarray*}
We have the decomposition (\ref{eq:6.1}):
\begin{equation*}
  n=n'M +b_n (2m+2)\, , 
\end{equation*}
and, by its property (ii),
\begin{equation*}
  n \equiv j\!\!\!\! \mod 2m+2 \hbox{\,\, iff\,\,} n'\equiv j_0\!\!\!\! \mod 2m+2\, .
\end{equation*}
Using property (i) of the decomposition (\ref{eq:6.1}), we
obtain:
\begin{eqnarray*}%%%%%%%%%%%%  this one needs fixing
&& R_{m+1;j} \left( \frac{\tau}{M} \,,\, \frac{v}{M} \right)\\
  &=&\sum_{b\in I}\,\, \sum_{n'\equiv j_0\!\!\!\!\! \mod 2m+2}
     \left( \sign (n'+\tfrac12) - E \left(\left(n'+ (2m+2)
         \frac{\Im  ({\it v+b}\tau)}{\Im (M\tau)}\right) \left(
           \frac{\Im (M \tau)}{m+1}\right)^{\tfrac12}\right)\right)\\[1ex]
  &&\times e^{-\frac{\pi i M\tau}{2m+2} n^{\prime^2} -2\pi i n'(v+b\tau) }
q^{-\frac{m+1}{M}  b^2}
     e^{-\frac{4\pi i (m+1)}{M}bv}\\[1ex]
 &=& \sum_{b \in I} q^{-\frac{m+1}{M}b^2}e^{-\frac{4\pi i(m+1)}{M}bv}
     R_{m+1;j_0} (M\tau , v+b\tau)\, , 
\end{eqnarray*}
proving (a).

In order to prove (b), note that we have from (\ref{eq:A2}):
%the theta function
%$\Theta_{j,m}$, defined by (\ref{eq:4.6}), can be written as
%follows:
%
\begin{equation}
  \label{eq:6.3}
%  \Theta_{j,m} (\tau ,z) = \sum_{k\equiv j\!\!\!\!\! \mod\! 2m}
%      q^{\frac{k^2}{4m}} e^{\pi i k z}\, .
 \Theta_{j,m+1} \left( \frac{\tau}{M} \,,\, \frac{z}{M} \right)     
= \sum_{n \equiv j\!\!\!\!\! \mod\! 2m+2}e^{\frac{2\pi i \tau}{(4m+4)M}n^2}
        e^{\frac{\pi i n z}{M}}\, .
\end{equation}
Using the decomposition (\ref{eq:6.1}): $n=n'M + (2m+2)a$, where
$a \in I$, and its property (ii), we deduce:
\begin{eqnarray*}
  \Theta_{j,m+1} \left( \frac{\tau}{M}\, , \, \frac{z}{M} \right)
    &=& \sum_{a\in I}\,\, \sum_{n'\equiv j_0 \!\!\!\!\!\mod 2m+2}
        e^{\frac{2\pi i \tau}{ (4m+4)M} (n'M+(2m+2)a)^2}
          e^{\frac{\pi i}{M} (n'M+(2m+2)a)z}\\
    &=& \sum_{a\in I}q^{\frac{m+1}{M}a^2} e^{\frac{2\pi i (m+1)}{M}az}
    \Theta_{j_0,m+1} (M \tau , z+2a\tau)\, , 
\end{eqnarray*}
and (b) follows by replacing $z$ by $2u$.

\end{proof}

Lemma \ref{lem:6.3} implies that for $\varphi^\bracm_\add
(\tau,u,v,t)$, defined in Section~\ref{sec:5}, we have:
\begin{equation}
  \label{eq:6.4}
  \varphi^\bracm_{\add}
 \left( \frac{\tau}{M} \, ,\, \frac{u}{M}\,,\, \frac{v}{M}\, , \,t\right)
   = \sum_{a,b \in I} q^{\frac{m+1}{M} (a^2-b^2)}
      e^{\frac{4\pi i (m+1)}{M}(au-bv)}  \varphi^\bracm_{\add}(M\tau, u+a\tau, v+b\tau, t)\, .
\end{equation}

\vspace{2ex}
\noindent{\em End of the proof of Theorem \ref{th:6.1}(b).}
First, by induction on $|j|$ we obtain from Lemma
\ref{lem:5.8}(c) for $j \in \ZZ$:
\begin{equation}
  \label{eq:6.5}
  \varphi^\bracm (\tau,u+j \tau , v,t) = q^{-j^2 (m+1)}
     e^{-4\pi i j (m+1)u} \varphi^\bracm (\tau ,u,v,t)\, .
\end{equation}
This equation implies that for $j \in \CC$, the expression
\begin{equation}
  \label{eq:6.6}
  q^{\frac{m+1}{M}j^2} e^{\frac{4\pi i (m+1)}{M}ju}\varphi^\bracm
    (M\tau , u+j\tau ,v,t)
\end{equation}
remains unchanged if we replace $j$ by $j'=j+Mn$, $n \in \ZZ$ .
Indeed, we have: $\varphi^\bracm (M\tau, u+j'\tau,v,t) =
\varphi^\bracm (M\tau , u+j\tau + \frac{j'-j}{M}M\tau,v,t)$, and
we apply (\ref{eq:6.5}) with $u$ replaced by $u+j\tau$, $\tau$
replaced by $M\tau$ and $j$ replaced by $\frac{j'-j}{M}$.

Replacing $v$ in (\ref{eq:6.6}) by $v+b\tau\, (b \in \CC)$ and
multiplying it by $q^{-\frac{m+1}{M}b^2} e^{-\frac{4\pi i
    (m+1)}{M}bu}$, we deduce that the expression
\begin{equation}
  \label{eq:6.7}
  q^{\frac{m+1}{M}(a^2-b^2)} e^{\frac{4\pi i (m+1)}{M}(au-bv)}
  \varphi^\bracm (M\tau, u+a\tau,v+b\tau,t)
\end{equation}
remains unchanged if we replace $a \in \CC$ by $a+Mn$, $n \in
\ZZ$.

It follows that Lemma \ref{lem:6.2}(b) can be rewritten as
follows:
\begin{equation}
  \label{eq:6.8}
  \varphi^\bracm \left( \frac{\tau}{M}\,,\, \frac{u}{M}\,,\,
    \frac{v}{M} \, , \, t \right)
= \sum_{a,b \in I} q^{\frac{m+1}{M}  (a^2-b^2)}
   e^{\frac{4\pi i (m+1)}{M} (au-bv)}
\varphi^\bracm (M\tau \, , \, u+a\tau \, , \, v+b\tau \, , \, t)\,,
\end{equation}
by making use of the following lemma.

\begin{lemma}
  \label{lem:6.4}
Given $b \in \ZZ$, for each $a \in \ZZ_{\geq 0}$, such that $a<M$ there
exists a unique $a'\in I$, such that $-(a+b) \equiv a' \mod M$.
Moreover $\{ a' |\,\, 0 \leq a < M \} =I$.
\end{lemma}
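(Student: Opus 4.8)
The plan is to reduce both assertions of the lemma to a single structural fact about the set $I=I_{M,2m+2}$: that reduction modulo $M$ carries $I$ bijectively onto $\ZZ/M\ZZ$, i.e. that $I$ is a complete residue system mod $M$. Granting this, the lemma is pure bookkeeping.

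First I would establish the key fact. By the defining decomposition \eqref{eq:6.1} with $p=2m+2$ and $q=M$, each element of $I$ is an integer $b_n$ determined by $n=n'M+b_n(2m+2)$ for a unique $n\in\{0,\dots,M-1\}$ and $n'\in[0,2m+1]$. Reducing this relation modulo $M$ gives $b_n(2m+2)\equiv n \pmod M$. Since $\gcd(2m+2,M)=1$, the integer $2m+2$ is a unit modulo $M$, so $b_n\equiv n\,(2m+2)^{-1}\pmod M$. As $n$ runs through the complete residue system $\{0,\dots,M-1\}$ and multiplication by the unit $(2m+2)^{-1}$ permutes $\ZZ/M\ZZ$, the residues $b_n\bmod M$ exhaust $\ZZ/M\ZZ$ without repetition; since $|I|=M$, the reduction map $I\to\ZZ/M\ZZ$ is therefore a bijection. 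This is really just property (ii) of the decomposition, repackaged.

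Next I would deduce the two claims. Fixing $b\in\ZZ$ and $a\in\{0,\dots,M-1\}$, I feed the target class $-(a+b)\bmod M$ into this bijection: there is exactly one $a'\in I$ lying in that class, which gives simultaneously the existence and the uniqueness of $a'$ with $-(a+b)\equiv a'\pmod M$. For the final assertion, observe that $a\mapsto -(a+b)\bmod M$ is an affine bijection of $\ZZ/M\ZZ$; composing it with the inverse of the reduction bijection produces a bijection $a\mapsto a'$ from $\{0,\dots,M-1\}$ onto $I$, whence $\{a'\mid 0\le a<M\}=I$.

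I do not expect a genuine obstacle: the only content is the coprimality-driven bijection, and everything else is formal. The one point to keep honest is that this argument, and indeed the very definition of $I$ through \eqref{eq:6.1}, relies on $\gcd(2m+2,M)=1$, which is precisely the standing hypothesis of Theorem~\ref{th:6.1} in the case $m>0$; for $m=0$ one should note separately that $2m+2=2$ forces $M$ odd for the construction to apply, so the statement remains consistent with the hypotheses under which it is actually invoked.
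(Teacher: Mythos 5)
Your proof is correct and rests on the same mechanism as the paper's: decomposition (\ref{eq:6.1}) with $p=2m+2$, $q=M$, together with $\gcd(2m+2,M)=1$, which lets one cancel the factor $2m+2$ modulo $M$. The only difference is one of packaging --- you first isolate the statement that reduction mod $M$ is a bijection $I\to\ZZ/M\ZZ$ and deduce all three claims from it, whereas the paper obtains $a'$ directly by decomposing the integer $-2(m+1)(a+b)$ and leaves uniqueness and the ``Moreover'' assertion implicit; your version makes those explicit but is the same argument.
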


\begin{proof}
We have, by decomposition (\ref{eq:6.1}) with $p=2m+2$, $q=M$:
\begin{equation*}
  -2 (m+1) (a+b) = n'M + (2m+2)a'\,\, (n'\in\ZZ \, , \, a'\in I)\, .
\end{equation*}
Hence $-(2m+2) (a+b) \equiv (2m+2) a'\mod M$, and $-(a+b) \equiv
a'\mod M$.
\end{proof}

We obtain from  (\ref{eq:6.8}) and (\ref{eq:6.4}):
\begin{equation}
  \label{eq:6.9}
  \tilde{\varphi}^\bracm \left( \frac{\tau}{M}\,,\, \frac{u}{M}\,,\,
    \frac{v}{M} \,,\, t\right)
     = \sum_{a,b \in I} q^{\frac{m+1}{M} (a^2-b^2)} 
        e^{\frac{4\pi i (m+1)}{M} (au-bv)}
        \tilde{\varphi}^\bracm (M\tau,u+a\tau,v+b\tau, t)\, .
\end{equation}
It follows from Theorem \ref{th:5.9}(e) that the expression
\begin{equation}
  \label{eq:6.11}
  q^{\frac{m+1}{M} (a^2-b^2)} e^{\frac{4\pi i (m+1)}{M}(au-bv)}
    \tilde{\varphi}^{[m]} (M \tau, u+a\tau, v+b\tau ,t)
\end{equation}
is independent of the choice of $(a,b) \in \tilde{\Omega}_\ZZ \mod
\tilde{\Omega}_{M\ZZ}$.  Along with (\ref{eq:6.9}), this
completes the proof of Theorem~\ref{th:6.1}(b).

Next, we translate the obtained results from
$\tilde{\varphi}^\bracm$ to $\tilde{\Phi}^\bracm$.  For that we
define the map
\begin{equation}
  \label{eq:6.12}
  \ZZ^2 \to \left(\tfrac12 \ZZ \right)^2 \, , \, 
  (j,k) \mapsto \left( a=-\frac{j+k}{2}\, , \, b=\frac{j-k}{2}
  \right)\, .
\end{equation}
This map induces a bijective map
\begin{equation}
  \label{eq:6.13}
  (\ZZ/M\ZZ)^2 \to \Omega_M \, .
\end{equation}

It follows from Corollary \ref{cor:5.10}(e)  that the function
\begin{equation}
  \label{eq:6.15}
  q^{\frac{m+1}{M}jk} e^{\frac{2\pi i (m+1)}{M} (kz_1 + jz_2)}
     \tilde{\Phi}^{[m]} (M\tau, z_1 + j\tau, z_2+k\tau ,t )
\end{equation}
is independent of the choice of $j,k \!\!\! \mod M \ZZ$.

\begin{theorem}
  \label{th:6.5}
Let $M$ be a positive integer and let $m$ be a non-negative integer, such 
that $gcd (M,2m+2)=1$.  Then

\begin{list}{}{}
\item (a)  $\tilde{\Phi}^{\bracm} \left(- \frac{M}{\tau} , \,
    \frac{z_1}{\tau} , \, \frac{z_2}{\tau}  , \, t
    -\frac{z_1z_2}{\tau M}\right)$\\ 
\hspace*{4ex}$= \frac{\tau}{M} \displaystyle{\sum_{j,k \in
    \ZZ /M\ZZ}} q^{\frac{m+1}{M} jk} e^{\frac{2\pi i (m+1)}{M} (kz_1+jz_2)}
       \tilde{\Phi}^\bracm (M\tau , \, z_1+j\tau ,\, z_2+k\tau ,\, t)$.             
\item  (b)  $\tilde{\Phi}^{\bracm} \left( \frac{\tau}{M} \, , \,
         \frac{z_1}{M}\, , \, \frac{z_2}{M} \, , \, t \right) 
           = \displaystyle{\sum_{j,k \in \ZZ/M\ZZ}}
              q^{\frac{m+1}{M}jk} 
                 e^{\frac{2\pi i(m+1)}{M} (kz_1+jz_2)} 
    \tilde{\Phi}^\bracm (M\tau , \, z_1+j\tau ,\, z_2+k\tau ,\, t)$.
\end{list}

\end{theorem}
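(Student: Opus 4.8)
The plan is to deduce both parts directly from Theorem~\ref{th:6.1} by the linear change of variables \eqref{eq:5.4} relating $\tilde{\Phi}^{[m]}$ to $\tilde{\varphi}^{[m]}$. Recall that $\tilde{\Phi}^{[m]}(\tau,z_1,z_2,t)=\tilde{\varphi}^{[m]}(\tau,u,v,t)$ with $u=-\tfrac12(z_1+z_2)$ and $v=\tfrac12(z_1-z_2)$; inverting gives $z_1=v-u$, $z_2=-(u+v)$, and in particular $z_1z_2=u^2-v^2$. The bijection \eqref{eq:6.13} sends $(j,k)\in(\ZZ/M\ZZ)^2$ to $(a,b)=\bigl(-\tfrac{j+k}{2},\tfrac{j-k}{2}\bigr)\in\Omega_M$ via \eqref{eq:6.12}, and this is the dictionary I would use to translate the sum over $\Omega_M$ appearing in Theorem~\ref{th:6.1}(b) into the sum over $(\ZZ/M\ZZ)^2$ appearing here.

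First I would record the three elementary identities that make the translation work. Under $(a,b)=\bigl(-\tfrac{j+k}{2},\tfrac{j-k}{2}\bigr)$ one has $a-b=-j$ and $a+b=-k$, whence $a^2-b^2=(a-b)(a+b)=jk$; substituting $u,v$ one computes $au-bv=\tfrac12(kz_1+jz_2)$, so that $\tfrac{4\pi i(m+1)}{M}(au-bv)=\tfrac{2\pi i(m+1)}{M}(kz_1+jz_2)$; and the shifted arguments match, since $(v+b\tau)-(u+a\tau)=z_1+j\tau$ and $-\bigl((u+a\tau)+(v+b\tau)\bigr)=z_2+k\tau$. These identities convert the weight $q^{\frac{m+1}{M}(a^2-b^2)}e^{\frac{4\pi i(m+1)}{M}(au-bv)}$ together with the summand $\tilde{\varphi}^{[m]}(M\tau,u+a\tau,v+b\tau,t)$ of Theorem~\ref{th:6.1}(b) exactly into $q^{\frac{m+1}{M}jk}e^{\frac{2\pi i(m+1)}{M}(kz_1+jz_2)}\tilde{\Phi}^{[m]}(M\tau,z_1+j\tau,z_2+k\tau,t)$.

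For part (b) I would then observe that $\tilde{\Phi}^{[m]}(\tfrac{\tau}{M},\tfrac{z_1}{M},\tfrac{z_2}{M},t)=\tilde{\varphi}^{[m]}(\tfrac{\tau}{M},\tfrac{u}{M},\tfrac{v}{M},t)$ (scaling $z_1,z_2$ by $1/M$ scales $u,v$ by $1/M$), apply Theorem~\ref{th:6.1}(b), and rewrite each summand by the three identities above, producing the claimed sum over $(\ZZ/M\ZZ)^2$. For part (a) I would translate the left-hand side $\tilde{\Phi}^{[m]}\bigl(-\tfrac{M}{\tau},\tfrac{z_1}{\tau},\tfrac{z_2}{\tau},t-\tfrac{z_1z_2}{\tau M}\bigr)$ into $\tilde{\varphi}^{[m]}\bigl(-\tfrac{M}{\tau},\tfrac{u}{\tau},\tfrac{v}{\tau},t-\tfrac{u^2-v^2}{\tau M}\bigr)$, using $z_1z_2=u^2-v^2$ for the last coordinate; by Theorem~\ref{th:6.1}(a) this equals $\tfrac{\tau}{M}\tilde{\varphi}^{[m]}(\tfrac{\tau}{M},\tfrac{u}{M},\tfrac{v}{M},t)$, and Theorem~\ref{th:6.1}(b) then expands it into the desired sum carrying the overall factor $\tfrac{\tau}{M}$.

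The computation is entirely mechanical, so I expect no serious obstacle; the only points requiring care are the factor of $\tfrac12$ in the coordinate change (exactly what converts $\tfrac{4\pi i}{M}$ into $\tfrac{2\pi i}{M}$ and the unordered pair $(a,b)$ into $(j,k)$) and the well-definedness of the summand. For the latter I would invoke the remark following \eqref{eq:6.15}, a consequence of Corollary~\ref{cor:5.10}(e), that the expression \eqref{eq:6.15} depends only on $j,k\bmod M\ZZ$, together with the fact that \eqref{eq:6.13} is a genuine bijection; this guarantees that summing over $(a,b)\in\Omega_M$ and over $(j,k)\in(\ZZ/M\ZZ)^2$ yield literally the same finite collection of terms.
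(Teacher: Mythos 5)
Your proposal is correct and follows essentially the same route as the paper: the paper's proof of Theorem~\ref{th:6.5} likewise just translates Theorem~\ref{th:6.1} through the substitution $z_1=v-u$, $z_2=-(u+v)$ and the bijection (\ref{eq:6.12})--(\ref{eq:6.13}), with $j=b-a$, $k=-(a+b)$. Your explicit verification of the identities $a^2-b^2=jk$, $au-bv=\tfrac12(kz_1+jz_2)$, $z_1z_2=u^2-v^2$, and of the well-definedness of the summands mod $M$ via Corollary~\ref{cor:5.10}(e) is exactly the content the paper leaves implicit.
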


\begin{proof}
  Recall that $\tilde{\Phi}^\bracm (\tau,z_1,z_2,t)
  =\tilde{\varphi}^{\bracm} (\tau,u,v,t)$, where $z_1=v-u$,
  $z_2=-(v+u)$.  Hence $\tilde{\varphi}^\bracm (M\tau\,,\,
u+a\tau\,,\, v+b\tau\,,\, t) = \tilde{\Phi}^\bracm (\tau \,,\,
z_1+(b-a)\tau\,,\, z_2-(b+a)\tau\,,\, t) =
\tilde{\Phi}^{\bracm}(\tau \,,\, z_1+j\tau \,,\, z_2+k\tau \,,\,
t)$, where $j=b-a$, $k=-(b+a)$.  Hence under the map
(\ref{eq:6.12}), (\ref{eq:6.13}) formulae from
Theorem~\ref{th:6.5} correspond to those of Theorem~\ref{th:6.1}.
\end{proof}

\begin{remark}
  \label{rem:6.6}
If $m=0$, then Theorem~\ref{th:6.5} holds for an arbitrary
positive integer $M$.  Indeed, in this case $gcd (M,m+1)=1$,
Lemma~\ref{lem:6.2} still holds with $I$ replaced by
$I'=I_{M,m+1}$ and $b$ replaced by $b'=\tfrac12 b$ (proof is the
same).  Also $\Theta_{j,1} - \Theta_{-j,1} =0$ ($j \in \ZZ/2\ZZ$),
hence $\varphi^{[0]}_\add =0$, and $\tilde{\varphi}^{[0]} =
\varphi^{[0]}$, $\tilde{\Phi}^{[0]}= \Phi^{[0]}$.  Therefore we
have for any $M \geq 1$:
\begin{eqnarray*}
 && \Phi^{[0]} \left(  -\frac{M}{\tau}\,,\, \frac{z_1}{\tau}\,,\,
    \frac{z_2}{\tau}\,,\, t-\frac{z_1z_2}{\tau M}\right)\\
   &&  = \frac{\tau}{M} \sum_{j,k\in \ZZ /M\ZZ} q^{\frac{jk}{M}}
       e^{\frac{2\pi i}{M} (kz_1+jz_2)} \Phi^{[0]}
         (M\tau\,,\, z_1+j\tau\,,\, z_2+k\tau\,,\, t)\, .
\end{eqnarray*}
\end{remark}

\section{Modular transformation formulae for modified 
  normalized characters of admissible
  $\hat{s\ell}_{2|1}$-modules}
\label{sec:7}
Recall (cf. Section~\ref{sec:4}) that in order to have $SL_2
(\ZZ)$-invariance, we need to take, along with characters and
supercharacters, also twisted characters and supercharacters.
For the twisted 
$\hat{s\ell}_{2|1}$
-modules we choose 
\begin{equation}
\label{eq:7.0}
\xi =  -\tfrac12 (\alpha_1 + \alpha_2).  
\end{equation}
Throughout this section we shall work, as in Section \ref{sec:5}, 
in the following
coordinates of the Cartan subalgebra $\hat{\fh}$ of $\hat{s\ell}_{2|1}$:  
\begin{equation}
 \label{eq:7.a}
h=2\pi i (-\tau \Lambda_0 - z_1\alpha_2-z_2\alpha_1 +t\delta)
:=(\tau,z_1,z_2,t).
\end{equation} 
In particular, we have:
\begin{equation}
  \label{eq:7.1}
  t_{-\xi} (\tau, z_1, z_2, t) = \left( \tau, z_1 + \tfrac{\tau}{2}, z_2 +
    \tfrac{\tau}{2}, t+\tfrac{z_1+z_2}{2}+\tfrac{\tau}{4} \right)\,.
\end{equation}

As in Section~\ref{sec:4}, throughout this section the
superscripts $(1/2)$ and $(0)$ will refer to characters and
supercharacters, while the subscripts $0$ and $1/2$ will refer to
non-twisted and twisted sectors respectively.

By (\ref{eq:4.9})--(\ref{eq:4.12}) 
%and (\ref{eq:7.1})
we have the following formula for the normalized affine
denominators $(\epsilon,\epsilon' = 0 \hbox{\,\,or\,\,} \tfrac12)$:
\begin{equation}
  \label{eq:7.2}
  \hat{R}^{(\epsilon)}_{\epsilon'} (\tau,z_1,z_2,t) =(-1)^{2\epsilon(1 -2\epsilon')}
    ie^{2\pi i t} \frac{\eta (\tau)^3 \vartheta_{11} (\tau,z_1+z_2)} 
       {\vartheta_{1-2\epsilon',1-2\epsilon} (\tau,z_1) 
           \vartheta_{1-2\epsilon',1-2\epsilon}(\tau, z_2)} \,.
\end{equation}
By Theorem~\ref{th:4.2} we have the following modular
transformation formulae:
\begin{equation}  
\label{eq:7.3}
    \hat{R}^{(\epsilon)}_{\epsilon'} \left( -\tfrac{1}{\tau}\,,\,
      \tfrac{z_1}{\tau}\,,\, \tfrac{z_2}{\tau},\,t  \right) 
     = (-1)^{4\epsilon\epsilon'} i\tau e^{\frac{2\pi i z_1z_2}{\tau}}
         \hat{R}^{(\epsilon')}_\epsilon (\tau,z_1,z_2,t)\,;
\end{equation}  
\begin{equation}  
    \label{eq:7.4}
%  \hat{R}^{(\epsilon)}_0 (\tau+1, z_1, z_2, t) 
%     = \hat{R}^{(\epsilon)}_0   (\tau,z_1,z_2,t) \, , \,
  \hat{R}^{(\epsilon)}_{\epsilon'} (\tau+1,z_1,z_2, t)  = 
e^{\pi  i \epsilon'}
 \hat{R}^{(|\epsilon-\epsilon'|)}_{\epsilon'} (\tau,z_1,z_2,t)\, .
\end{equation}
%where by $\epsilon +\epsilon'$ we mean $0$ if $\epsilon=\epsilon'=1/2$.

Fix a positive integer $M$ and a 
non-negative integer $m$, such  that $gcd (M,2m+2)=1$ if $m>0$.  
In connection with the study of the numerators of the normalized
characters of admissible $\hat{s\ell}_{2|1}$-modules introduce
the following functions ($\epsilon, \epsilon' =0 $ or $\tfrac12$,
$j,k \in \epsilon'+\ZZ $):

\begin{equation}
\label{eq:7.6}
  \Psi^{[M,m;\epsilon]}_{j,k;\epsilon'} (\tau,z_1,z_2,t) 
    = q^{\frac{(m+1)jk}{M}}
       e^{\frac{2\pi i (m+1)}{M} (kz_1+jz_2)}
\Phi^{[m]} (M\tau,z_1+j\tau+\epsilon, z_2+k\tau+\epsilon ,\frac{t}{M})\, ,
\end{equation}
and denote by  
$\tilde{\Psi}^{[M,m;\epsilon]}_{j,k;\epsilon'} (\tau,z_1,z_2,t)$ 
the function given by the same formula, except that $\Phi$ is replaced by
$\tilde{\Phi}$.
Since the functions (\ref{eq:6.15}) are independent of the choice
of $j,k \mod M\ZZ$, the same holds for the functions 
$$
e^{\frac{2\pi i (m+1)\epsilon}{M}(j+k)}\tilde{\Psi}^{[M,m;\epsilon]}_{j,k;\epsilon'} (\tau,z_1,z_2,t). 
$$
The following theorem is immediate by Theorem~\ref{th:6.5}(a) 
and Remark~\ref{rem:6.6}.

\begin{theorem} 
\label{th:7.1}Let $\epsilon, \epsilon'=0$ or  $1/2 $, and let 
$j,k \in \epsilon'+ \ZZ/M\ZZ$.  Then 
\begin{eqnarray*}
\tilde{\Psi}^{[M,m;\epsilon]}_{j,k;\epsilon'} \left( -\tfrac{1}{\tau}\,,\,  \tfrac{z_1}{\tau}\,,\, \tfrac{z_2}{\tau}\,,\, t \right)  = 
\frac{\tau}{M} e^{\tfrac{2\pi i (m+1)}{M} z_1z_2} 
   \sum_{a,b \in \epsilon + \ZZ/M\ZZ}    e^{-\frac{2\pi i
       (m+1)}{M} (ak+bj)}
   \tilde{\Psi}^{[M,m;\epsilon']}_{a,b;\epsilon}
   (\tau,z_1,z_2,t)\, ;\\
\lefteqn{\hspace{-6in}\tilde{\Psi}^{[M,m;\epsilon]}_{j,k;\epsilon'} \left(\tau+1,\,z_1,\,z_2,\,t \right)  = 
e^{2\pi i \frac{(m+1)jk}{M}}\tilde{\Psi}^{[M,m;\epsilon+\epsilon']}_{j,k;\epsilon'} \left(\tau,\,z_1,\,z_2,\,t \right). }
\end{eqnarray*}
\end{theorem}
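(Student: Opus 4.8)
The plan is to derive both identities by substituting directly into the definition \eqref{eq:7.6} of $\tilde{\Psi}$ (with $\Phi$ replaced by $\tilde{\Phi}$) and then invoking the already-proven transformation laws of the single block $\tilde{\Phi}^{[m]}$: Theorem~\ref{th:6.5}(a) (and Remark~\ref{rem:6.6} when $m=0$, which removes the coprimality hypothesis), the $T$-invariance Corollary~\ref{cor:5.10}(d), the integer-periodicity Corollary~\ref{cor:5.10}(b), the elliptic law Corollary~\ref{cor:5.10}(e), and the homogeneity $\tilde{\Phi}^{[m]}(\tau,z_1,z_2,t)\propto e^{2\pi i(m+1)t}$ in the last slot. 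No new analytic input is needed; the content is entirely the bookkeeping of argument-shifts and scalar phases.

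For the $S$-formula I would substitute $\tau\mapsto -1/\tau$, $z_i\mapsto z_i/\tau$ into \eqref{eq:7.6}. The inner block then reads $\tilde{\Phi}^{[m]}\big(-M/\tau,\,(z_1-j)/\tau+\epsilon,\,(z_2-k)/\tau+\epsilon,\,t/M\big)$, and after writing the middle slots as $w_1/\tau,\,w_2/\tau$ with $w_1=z_1-j+\epsilon\tau$, $w_2=z_2-k+\epsilon\tau$ (and reconciling the last slot via the $e^{2\pi i(m+1)(\cdot)}$ homogeneity), this is exactly the left-hand side of Theorem~\ref{th:6.5}(a). Applying that theorem expands it as a sum over $a',b'\in\ZZ/M\ZZ$ of $\tilde{\Phi}^{[m]}(M\tau,w_1+a'\tau,w_2+b'\tau,\cdot)$. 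The reorganization of this output is where the structure of the statement emerges: the constant shift $\epsilon\tau$ hidden in $w_i$ fuses with $a'\tau$, so setting $a=a'+\epsilon$, $b=b'+\epsilon$ turns the summation range into $\epsilon+\ZZ/M\ZZ$ (the subscript range of the target); the integer part of the shifts $-j,-k$ is erased by Corollary~\ref{cor:5.10}(b), while — when $\epsilon'=\tfrac12$ — their half-integer part $-\tfrac12\equiv+\tfrac12\pmod 1$ becomes precisely the new constant $z$-shift $+\epsilon'$, i.e. the superscript of the output $\tilde{\Psi}^{[M,m;\epsilon']}_{a,b;\epsilon}$. This is the mechanism by which $S$ interchanges the two ``spin'' parameters $\epsilon\leftrightarrow\epsilon'$.

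It then remains to collect the scalar factors. Expanding $w_1w_2=(z_1-j+\epsilon\tau)(z_2-k+\epsilon\tau)$, the $\tau^{-1}$-exponentials coming from the prefactor $q^{(m+1)jk/M}e^{\frac{2\pi i(m+1)}{M}(kz_1+jz_2)}$, from the linear terms $b'w_1+a'w_2$ produced by Theorem~\ref{th:6.5}(a), and from the homogeneity correction in the last slot telescope, leaving in front exactly the global factor $\tfrac{\tau}{M}$ together with the metaplectic exponential displayed on the right-hand side, while the surviving finite terms assemble into the phase $e^{-\frac{2\pi i(m+1)}{M}(ak+bj)}$ and reconstitute $\tilde{\Psi}^{[M,m;\epsilon']}_{a,b;\epsilon}(\tau,z_1,z_2,t)$. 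For the $T$-formula the computation is shorter: substituting $\tau\mapsto\tau+1$ produces the phase $e^{2\pi i(m+1)jk/M}$ from the prefactor, turns the first slot of the block into $M\tau+M$ (harmless by $M$-fold use of Corollary~\ref{cor:5.10}(d)), and shifts the $z$-slots by the integers $j,k$ together with their fractional part $\epsilon'$; Corollary~\ref{cor:5.10}(b) removes the integers and the fractional part adds to $\epsilon$, converting the superscript to $\epsilon+\epsilon'\pmod 1$ while leaving $j,k$ and $\epsilon'$ unchanged, which is the claimed output.

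The routine but delicate part is the scalar bookkeeping in the $S$-formula, where one must verify that the $\tau^{-1}$-terms cancel exactly as above; I expect no surprise there. The one genuinely conceptual point, which I would settle first, is the well-definedness of the right-hand sum: its summands are labelled by residues $a,b\bmod M$, so I need that $e^{-\frac{2\pi i(m+1)}{M}(ak+bj)}\tilde{\Psi}^{[M,m;\epsilon']}_{a,b;\epsilon}$ depends only on $a,b\bmod M\ZZ$. This follows from the independence of $e^{\frac{2\pi i(m+1)\epsilon}{M}(j+k)}\tilde{\Psi}^{[M,m;\epsilon]}_{j,k;\epsilon'}$ modulo $M\ZZ$ recorded just before the theorem (itself a consequence of Corollary~\ref{cor:5.10}(e), exactly as for the functions \eqref{eq:6.15}), once one checks that the change of the phase $e^{-\frac{2\pi i(m+1)}{M}(ak+bj)}$ under $a\mapsto a+M$ matches the corresponding change of the correction factor — a sign computation using $k\in\epsilon'+\ZZ$ that closes precisely because of the parity built into $\epsilon,\epsilon'$.
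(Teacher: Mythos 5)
Your proposal is correct and follows exactly the route the paper takes: the paper's own proof consists of the single remark that the theorem ``is immediate by Theorem~\ref{th:6.5}(a) and Remark~\ref{rem:6.6}'', and your write-up is precisely the expansion of that claim --- substitution into (\ref{eq:7.6}), application of Theorem~\ref{th:6.5}(a) after absorbing the $t$-slot via the $e^{2\pi i(m+1)t}$ homogeneity, and use of Corollary~\ref{cor:5.10}(b),(d),(e) for the shift bookkeeping, the $\epsilon\leftrightarrow\epsilon'$ exchange, and the mod-$M$ well-definedness of the sum. Your identification of the well-definedness of the right-hand sum as the one point needing explicit verification (via the invariance statement recorded just before the theorem) is also exactly the ingredient the paper supplies for this purpose.
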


Now we link the functions $\Psi^{[M,m;\epsilon]}_{j,k;\epsilon'}$
to the normalized characters of admissible
$\hat{s\ell}_{2|1}$-modules of level $K=\frac{m+1}{M}-1$.  Recall
(cf. Proposition~\ref{prop:3.13}) that we have admissible weights
of this level $K$ of two types $(j,k \in \ZZ_{\geq 0})$:
\begin{eqnarray*}
  \Lambda^{(1)}_{j,k} &=& \left((j+k+1) \frac{m+1}{M}-1  \right)
     \Lambda_0 -j \frac{m+1}{M} \Lambda_1 -k\frac{m+1}{M}\Lambda_2
        ,\,\, 0 \leq j,k \,,\, j+k \leq M-1\, ;\\[1ex]
   \Lambda^{(2)}_{j,k} &=& -\left((j+k-1) \frac{m+1}{M}+1  \right)      
      \Lambda_0 +j \frac{m+1}{M} \Lambda_1 +k\frac{m+1}{M}\Lambda_2
      , \,\,   1 \leq j,k\,,\, j+k \leq M \,.
\end{eqnarray*}
Also in coordinates (\ref{eq:7.a}) we have for the corresponding
simple subsets $S_1$ (resp. $S_2$) $=\{\tilde{\alpha}_i|\,\,i=0,1,2\}$:
\begin{eqnarray}
\label{eq:7.b}
\tilde{\alpha}_1(h) &=&-2\pi i (z_1+k_1\tau) ,\,\,\, 
\,\tilde{\alpha}_2(h)=-2\pi i (z_2+k_2\tau) ,     \,\\
(\hbox{resp.\,\,} \tilde{\alpha}_1(h)&=&-2\pi i (-z_1+k_1\tau) ,\,\, 
\,\tilde{\alpha}_2(h)= -2\pi i (-z_2+k_2\tau) )\, . \nonumber
\end{eqnarray}

As before, it is convenient to introduce the following notation
for normalized untwisted and twisted characters and
supercharacters:
\begin{equation}
\label{eq:7.c}
  \ch^+_\Lambda = \ch^{(1/2)}_{\Lambda;0}\,\,\, ,\, \, \,
      \ch^-_{\Lambda} =\ch^{(0)}_{\Lambda ;0}\, \,\,,\,\,\,
         \ch^{\tw,+}_{\Lambda} = \ch^{(1/2)}_{\Lambda ;1/2}\,\,\,,\,\,\,
            \ch^{\tw,-}_\Lambda = \ch^{(0)}_{\Lambda;1/2}\,.
\end{equation}

\begin{proposition}
  \label{prop:7.2}

  \begin{list}{}{}
  \item (a) If $\Lambda = \Lambda^{(1)}_{j,k}$, where $j,k \in
    \ZZ$,\, $0 \leq j,k, j+k \leq M-1$, then
    \begin{equation*}
      (\hat{R}^{(\epsilon)}_{\epsilon'}
      \ch^{(\epsilon)}_{\Lambda ; \epsilon'}) (\tau, z_1, z_2, t) 
         = \Psi^{[M,m;\epsilon]}_{j+\epsilon', k+\epsilon';\epsilon'}
        (\tau,\,z_1,\,z_2,\,t).       
\end{equation*}

\item (b)  If $\Lambda = \Lambda^{(2)}_{j,k}$, where $j,k \in
  \ZZ$,\, $1 \leq j,k, j+k \leq M$, then
  \begin{equation*}
    (\hat{R}^{(\epsilon)}_{\epsilon'}
      \ch^{(\epsilon)}_{\Lambda ; \epsilon'}) (\tau, z_1, z_2, t) 
    = \Psi^{[M,m;\epsilon]}_{M+\epsilon'-j, M+\epsilon'-k;\epsilon'}
     (\tau,\,z_1,\,z_2,\,t).        
%(M\tau \,,\, z_1 + (M+\epsilon'-j) \tau + \epsilon \, ,\, 
 %          z_2+ (M+\epsilon'-k) \tau + \epsilon\,,\, t)\, .
  \end{equation*}

  \end{list}
\end{proposition}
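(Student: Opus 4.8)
The plan is to derive Proposition~\ref{prop:7.2} as a direct specialization of the admissible--character formula \eqref{eq:3.a} to $\fg = \sl_{2|1}$, for which $h^\vee = 1$ and $\Lambda^0 = m\Lambda_0$, combined with the identification $\hat{R}^- \ch^-_{m\Lambda_0} = \Phi^{[m]}$ from \eqref{eq:5.3}. I would begin with the untwisted supercharacter, i.e.\ the case $\epsilon = \epsilon' = 0$ of part~(a). Here the level is $K = \frac{m+1}{M} - 1$ by \eqref{eq:3.18}, and one reads off from Proposition~\ref{prop:3.13} the data $(\beta, y)$ realizing the simple subset $S_1 = t_\beta y (S_{(M)})$: since the description of $S_1$ in \eqref{eq:7.b} involves only the roots $+\alpha_i$, one has $y = 1$, while $\beta \in \fh^*$ is fixed by $(\alpha_1|\beta) = -j$, $(\alpha_2|\beta) = -k$, that is $\beta = -k\alpha_1 - j\alpha_2$. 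Using $(\alpha_i|\alpha_i) = 0$ and $(\alpha_1|\alpha_2) = 1$ I would compute $(\beta|\beta) = 2jk$, and in the coordinates \eqref{eq:7.a}, where $z = -z_1\alpha_2 - z_2\alpha_1$, the pairings $(z|\beta) = kz_1 + jz_2$ and $z + \tau\beta = -(z_1 + j\tau)\alpha_2 - (z_2 + k\tau)\alpha_1$.

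The heart of the argument is then a substitution: plugging $\hat{R}^- \ch^-_{m\Lambda_0} = \Phi^{[m]}$ into the coordinate form of \eqref{eq:3.a} displayed just before \eqref{eq:3.d} turns the right-hand side into $\Phi^{[m]}$ evaluated at $(M\tau,\, z_1 + j\tau,\, z_2 + k\tau,\, t/M)$, while the scalar prefactor is produced by letting the degree property of $\Phi^{[m]}$ (degree $m+1$ in the $t$-slot, visible from the factor $e^{2\pi i(m+1)t}$) absorb the shift $\frac{1}{M}\big((z|\beta) + \frac{\tau}{2}(\beta|\beta)\big)$ of the third argument; this yields exactly $q^{\frac{(m+1)jk}{M}} e^{\frac{2\pi i(m+1)}{M}(kz_1 + jz_2)}$, since $\frac{(m+1)}{M}\cdot\frac{(\beta|\beta)}{2} = \frac{(m+1)jk}{M}$. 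Comparing with the definition \eqref{eq:7.6} of $\Psi^{[M,m;0]}_{j,k;0}$ gives part~(a) in this case. Part~(b) for $S_2$ is handled the same way, except that the presence of the roots $-\alpha_i$ in $S_2$ forces a nontrivial Weyl element $y = r_\theta$ in addition to the translation; the resulting sign flip $(z_1,z_2) \mapsto (-z_2,-z_1)$ is converted, via the symmetry and antisymmetry of $\Phi^{[m]}$ in Lemma~\ref{lem:5.1}(b),(c) together with the quasi-periodicity in \eqref{eq:6.5} and Lemma~\ref{lem:5.1}(d), into the index replacement $j \mapsto M - j$, $k \mapsto M - k$, matching $\Psi^{[M,m;0]}_{M-j,M-k;0}$.

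It remains to treat the character ($\epsilon = 1/2$) and twisted ($\epsilon' = 1/2$) sectors, which I would obtain from the supercharacter case by tracking two half-period shifts through both numerator and denominator. The twist is governed by $\xi = -\tfrac12(\alpha_1 + \alpha_2)$ as in \eqref{eq:7.0}: since $\ch^{\tw,\pm}_\Lambda = t_\xi(\ch^\pm_\Lambda)$ and $t_{-\xi}$ acts on coordinates by \eqref{eq:7.1}, the twist shifts $z_1, z_2$ by $\tfrac{\tau}{2}$, which is precisely the effect of taking $j,k \in \tfrac12 + \ZZ$ in \eqref{eq:7.6}; on the denominator side this is the passage $\vartheta_{1,\,\cdot} \to \vartheta_{0,\,\cdot}$ recorded in \eqref{eq:7.2} and governed by the transformation \eqref{eq:7.4}. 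Likewise, the character/supercharacter distinction is a shift of $z_1, z_2$ by $\tfrac12$, which implements $\vartheta_{1,1} \to \vartheta_{1,0}$ in \eqref{eq:7.2} and is encoded by the $+\epsilon$ in the arguments of $\Phi^{[m]}$ in \eqref{eq:7.6}. Assembling these, each of the four pairs $(\epsilon, \epsilon')$ reproduces the corresponding $\Psi^{[M,m;\epsilon]}_{\,\cdot,\,\cdot;\epsilon'}$.

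I expect the main obstacle to be this last step: verifying that the two independent half-period shifts interact correctly, so that the $t$-coordinate shift in \eqref{eq:7.1}, the quasi-periodicity factors of the Jacobi theta functions in the denominator \eqref{eq:7.2}, and the degree-$(m+1)$ transformation of $\Phi^{[m]}$ in its third argument all cancel to leave exactly the normalization in \eqref{eq:7.6}, with no stray sign or $q$-power. Keeping the bookkeeping of $\epsilon$ (a rational $\tfrac12$-shift) cleanly separated from $\epsilon'$ (an elliptic $\tfrac{\tau}{2}$-shift), and matching them against the index conventions $j,k \in \epsilon' + \ZZ$ and the $+\epsilon$ argument shift, is where the care is required; the underlying substitution into \eqref{eq:3.a} is otherwise routine.
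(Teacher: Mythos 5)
Your proposal follows essentially the same route as the paper's proof: both specialize the admissible-character substitution \eqref{eq:3.a}/\eqref{eq:3.d} with the data $(\beta,y)$ attached to the simple subsets $S_1,S_2$, identify the numerator of $\ch^-_{m\Lambda_0}$ with $\Phi^{[m]}$ via \eqref{eq:5.3}, and then pass to the character and twisted sectors by tracking the $\tfrac12$- and $\tfrac{\tau}{2}$-shifts of $z_1,z_2$ exactly as in the paper. The one point your sketch leaves implicit is the overall sign in part~(b) produced by $y=r_{\theta}$ (equivalently, the antisymmetry in Lemma~\ref{lem:5.1}(b) contributes a factor $-1$ that does not visibly cancel against the index shift $j\mapsto M-j$, $k\mapsto M-k$); be aware that the paper itself is not consistent on this sign, since Proposition~\ref{prop:7.2}(b) and Proposition~\ref{prop:9.1}(b) disagree in the untwisted case $\epsilon'=0$.
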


\begin{proof}
We explain how to derive (a), the proof of (b) being similar.  
%We use formula 

First, the (super)character of an admissible module
is obtained from the (super)character of the corresponding partially integrable
module $L(\Lambda^0)$, as described by formula  (\ref{eq:3.a}).
This amounts to replacing the simple roots $\alpha_i\in \Pi$ by the
$\tilde{\alpha}_i\in S_1,\, i\in I$, described by  (\ref{eq:7.b}), in the 
numerator of the (super)character of $L(\Lambda^0)$.
Thus, the supercharacter of an admissible $\hat{\sl}_{2|1}$-module 
$L(\Lambda^{(1)}_{j,k})$ 
is obtained from that of $L(m\Lambda_0)$ 
by the following substitution in the RHS of
(\ref{eq:5.3}): the $z_i$ are replaced according to (\ref{eq:7.b}),
and $t$ is 
replaced by 
$\Lambda^{(1)}_{j,k}(h)=\frac{t+(m+1)(z_1k+z_2j)}{M}$.

In order to deduce the formula for the normalized
supercharacter (\ref{eq:4.5}), we find for $\Lambda = \Lambda^{(1)}_{j,k}$:
$$
m_\Lambda= \frac{m+1}{M}jk.
$$
Then (a) for 
$\epsilon=\epsilon'=0$ 
follows. 

To deduce (a) for $\epsilon'=0,\,\epsilon=\frac{1}{2}$, note that the 
character is obtained from the supercharacter by replacing $z_i,\,i=1,2,$
by $z_i + \frac{1}{2}$. Finally, to deduce (a)
for $\epsilon'=\frac{1}{2}$, $\epsilon=0$ or
 $\frac{1}{2}$, we should replace $\Lambda$ by
 $\Lambda^{\tw}$, which, according to (\ref{eq:4.a}), (\ref{eq:4.b}), 
(\ref{eq:4.14a}) and (\ref{eq:4.c}), amounts to replacing 
$j$ and $k$ by  $j+\frac{1}{2}$ and $k+\frac{1}{2}$, and replacing
$\ch^\pm$ by $t_\xi(\ch^\pm)$. 
\end{proof}

In order to state the modular transformation formula for the
modified normalized admissible characters it is convenient to
change notation as follows:
\begin{equation*}
  \ch^{[M,m;\epsilon]}_{j+\epsilon',k+\epsilon';\epsilon'} :=
    \ch^{(\epsilon)}_{\Lambda^{(1)}_{j,k;\epsilon'}}\,,\,\,\,
     \ch^{[M,m;\epsilon]}_{M+\epsilon'-j,M+\epsilon'-k;\epsilon'}
       := \ch^{(\epsilon)}_{\Lambda^{(2)}_{j,k;\epsilon'}}\,.
\end{equation*}
Then
\begin{equation*}
  \{ \ch^{[M,m;\epsilon]}_{j+\epsilon',k+\epsilon';\epsilon'} |\,\,
    j,k \in \ZZ \,,\, 0 \leq j,k \leq M-1 ,\,\epsilon,\epsilon'=0, 1/2\}
\end{equation*}
is precisely the set of all admissible characters
(resp. supercharacters) if $\epsilon'=0$ and $\epsilon =1/2$
(resp. $\epsilon =0$), and it is the set of all twisted
admissible characters (resp. supercharacters) if $\epsilon'=1/2$
and $\epsilon =1/2$ (resp. $\epsilon =0$).  In view of these
observations, introduce the {\it modified} normalized characters
$(\epsilon =1/2\,,\, \epsilon'=0)$, supercharacters $(\epsilon
=0 \,,\, \epsilon'=0)$, twisted characters $(\epsilon =1/2 \,,\,
\epsilon'=1/2)$, and twisted supercharacters $(\epsilon =0\,,\,
\epsilon'=1/2)$, letting 
\begin{equation}
\label{eq:7.5}
  \tilde{\ch}^{[M,m;\epsilon]}_{j,k;\epsilon'} (\tau,z_1,z_2,t)    =\frac{\tilde{\Psi}^{[M,m;\epsilon]}_{j,k;\epsilon'} (\tau,z_1,z_2,t)}      {\hat{R}^{(\epsilon)}_{\epsilon'}(\tau,z_1,z_2,t)}, \,\,         j,k \in \epsilon'+\ZZ \,,\, 
0 \leq j,k <M \, .
\end{equation}
Then from (\ref{eq:7.3}), (\ref{eq:7.4}) and Theorem~\ref{th:7.1}
we obtain the following theorem.

 \begin{theorem}
  \label{th:7.3}
Let $M$ be a positive integer and let $m$ be a non-negative integer, 
such that $gcd (M,2m+2)=1$ if $m>0$.    
One has the following modular transformation
formulae ($\epsilon, \epsilon'=0$
or $\frac12; \,j,k \in
 \epsilon' + \ZZ \,,\, 0 \leq j,k <M)$:
% %
 \begin{eqnarray*}
   &&  \tilde{\ch}^{[M,m;\epsilon]}_{j,k;\epsilon'}
    \left(-\frac{1}{\tau}\,,\, \frac{z_1}{\tau}\,,\,
       \frac{z_2}{\tau}\,,\, t-\frac{z_1z_2}{\tau} \right)\\[1ex]
   &&  =-(-1)^{4\epsilon\epsilon'} \frac{1}{M}
       \sum_{\substack{a,b \in \epsilon + \ZZ \\ 0 \leq a,b < M}}
       e^{-\frac{2\pi i (m+1)}{M} (ak+bj)}
      \tilde{\ch}^{[M,m;\epsilon']}_{a,b;\epsilon} (\tau,z_1,z_2,t)\,;\\[1ex]
% %
      &&  \tilde{\ch}^{[M,m;\epsilon]}_{j,k;\epsilon'}
         (\tau+1,z_1,z_2,t) =e^{2\pi i \frac{(m+1)jk}{M}-\pi i \epsilon'}
    \tilde{\ch}^{[M,m;|\epsilon-\epsilon'|]}_{j,k;\epsilon'}
            (\tau,z_1,z_2,t)\, .
 \end{eqnarray*}
% %
%where by $\epsilon +\epsilon'$ we mean $0$ if $\epsilon=\epsilon'=1/2$.
 \end{theorem}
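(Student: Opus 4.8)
The plan is to divide the numerator transformation laws (Theorem~\ref{th:7.1}) by the denominator transformation laws ((\ref{eq:7.3}), (\ref{eq:7.4})), since by definition (\ref{eq:7.5}) one has $\tilde{\ch}^{[M,m;\epsilon]}_{j,k;\epsilon'} = \tilde{\Psi}^{[M,m;\epsilon]}_{j,k;\epsilon'}/\hat{R}^{(\epsilon)}_{\epsilon'}$. Everything will hinge on the fact that each of $\tilde{\Psi}^{[M,m;\epsilon]}_{j,k;\epsilon'}$, $\hat{R}^{(\epsilon)}_{\epsilon'}$, and hence $\tilde{\ch}^{[M,m;\epsilon]}_{j,k;\epsilon'}$, is an eigenfunction of the shift $t \mapsto t+a$ $(a\in\CC)$: the numerator has weight $e^{2\pi i\frac{m+1}{M}t}$ (from the factor $e^{2\pi i(m+1)t/M}$ carried by $\Phi^{[m]}(M\tau,\dots,t/M)$ in (\ref{eq:7.6})), the denominator has weight $e^{2\pi i t}$ (from $h^\vee=1$ in (\ref{eq:7.2})), and so the quotient has weight $e^{2\pi i K t}$ with $K=\frac{m+1}{M}-1$. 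This is exactly the weight needed to convert between the fourth argument $t-\frac{z_1z_2}{\tau}$ appearing on the left of the $S$-formula and the argument $t$ appearing on the right.

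For the $S$-transformation I would evaluate both numerator and denominator at the point $\left(-\frac1\tau,\frac{z_1}{\tau},\frac{z_2}{\tau},t-\frac{z_1z_2}{\tau}\right)$, which is the $S$-image of $(\tau,z_1,z_2,t)$ under (\ref{eq:4.6}) since $\tfrac12(z|z)=z_1z_2$ in the coordinates (\ref{eq:7.a}). Applying the first formula of Theorem~\ref{th:7.1} to the numerator (with its generic fourth slot set equal to $t-\frac{z_1z_2}{\tau}$) and (\ref{eq:7.3}) to the denominator (likewise), the prefactors $\frac{\tau}{M}$ and $(-1)^{4\epsilon\epsilon'}i\tau$ produce a common $\tau$ that cancels in the quotient. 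The remaining $z_1z_2$-dependent exponentials coming from the two transformation laws, together with the weight factor $e^{-2\pi i K z_1z_2/\tau}$ that appears when one rewrites $\tilde{\ch}^{[M,m;\epsilon']}_{a,b;\epsilon}\!\left(\tau,z_1,z_2,t-\frac{z_1z_2}{\tau}\right)$ as $e^{-2\pi i K z_1z_2/\tau}\,\tilde{\ch}^{[M,m;\epsilon']}_{a,b;\epsilon}(\tau,z_1,z_2,t)$, are precisely those designed to cancel identically. What survives is the constant $-(-1)^{4\epsilon\epsilon'}\frac1M$ times the phase $e^{-\frac{2\pi i(m+1)}{M}(ak+bj)}$ summed over $a,b\in\epsilon+\ZZ$, $0\le a,b<M$, which is the asserted formula.

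For the $T$-transformation I would instead use the second formula of Theorem~\ref{th:7.1} for the numerator and (\ref{eq:7.4}) for the denominator. Dividing, the phase $e^{2\pi i(m+1)jk/M}$ from the numerator and the phase $e^{\pi i\epsilon'}$ from the denominator combine into $e^{2\pi i\frac{(m+1)jk}{M}-\pi i\epsilon'}$. The only genuinely new point is that the numerator transforms into the sector with superscript $\epsilon+\epsilon'$ whereas the denominator transforms into the sector with superscript $|\epsilon-\epsilon'|$; to reconcile these I would note that $\epsilon+\epsilon'\equiv|\epsilon-\epsilon'|\pmod 1$ for all $\epsilon,\epsilon'\in\{0,\tfrac12\}$ (the only nontrivial case being $\epsilon=\epsilon'=\tfrac12$, where $1\equiv 0$), and that the superscript enters $\tilde{\Psi}$ only through a shift of the elliptic variables by $\epsilon$, under which $\tilde{\Phi}^{[m]}$ (hence $\tilde{\Psi}$) is invariant modulo $\ZZ$ by Corollary~\ref{cor:5.10}(b). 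Thus $\tilde{\Psi}^{[M,m;\epsilon+\epsilon']}_{j,k;\epsilon'}=\tilde{\Psi}^{[M,m;|\epsilon-\epsilon'|]}_{j,k;\epsilon'}$, and dividing by $\hat{R}^{(|\epsilon-\epsilon'|)}_{\epsilon'}$ yields $\tilde{\ch}^{[M,m;|\epsilon-\epsilon'|]}_{j,k;\epsilon'}$, as required.

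The main obstacle is the constant- and phase-factor bookkeeping in the $S$-case: one must verify that the various $z_1z_2$- and $z_1z_2/\tau$-dependent exponentials arising from Theorem~\ref{th:7.1}, from (\ref{eq:7.3}), and from the weight $e^{2\pi i K t}$ used to absorb the $-\frac{z_1z_2}{\tau}$ shift all cancel, leaving a genuine constant, and that the accumulated roots of unity and powers of $i$ (from the normalizations $i$, $(-1)^{2\epsilon(1-2\epsilon')}$ in (\ref{eq:7.2}) and $(-1)^{4\epsilon\epsilon'}$ in (\ref{eq:7.3})) collapse to the single sign $-(-1)^{4\epsilon\epsilon'}$. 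This is routine but error-prone, and it is the only step where the precise normalizations of (\ref{eq:7.2})--(\ref{eq:7.4}) and of the $t$-weight are all used simultaneously; the $T$-part, by contrast, is essentially immediate once the sector identity $\epsilon+\epsilon'\equiv|\epsilon-\epsilon'|\pmod 1$ is observed.
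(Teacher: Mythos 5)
Your proposal is correct and is exactly the paper's argument: the paper proves Theorem~\ref{th:7.3} in one line by dividing the numerator transformations of Theorem~\ref{th:7.1} by the denominator transformations (\ref{eq:7.3}), (\ref{eq:7.4}), using definition (\ref{eq:7.5}). Your two supplementary observations — that the $t$-weights $e^{2\pi i(m+1)t/M}$ and $e^{2\pi it}$ of numerator and denominator absorb the $-z_1z_2/\tau$ shift, and that $\tilde{\Psi}^{[M,m;\epsilon+\epsilon']}_{j,k;\epsilon'}=\tilde{\Psi}^{[M,m;|\epsilon-\epsilon'|]}_{j,k;\epsilon'}$ by the $\ZZ$-periodicity of Corollary~\ref{cor:5.10}(b) — are precisely the points the paper leaves implicit.
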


\begin{remark}
\label{rem:7.4}
If $m=0$, we have admissible $\hat{sl}_{2|1}$-modules of  
{\it boundary level} 
$K=\frac{1}{M}-1$ 
\cite{KW4}. In this case, by Remark \ref{rem:6.6},
$\tilde{\Phi}^{[0]}=\Phi^{[0]}$, 
and, since $\ch^-_0=1$, we have, by
(\ref{eq:5.3}), 
(\ref{eq:5.4}):
\begin{equation*}
\Phi^{[0]}=\hat{R}_0^{(0)}.
\end{equation*}
(Note that this is the famous Ramanujan summation formula for the
bilateral basic hypergeometric function $_1\Psi_1$, cf. \cite{KW3}.)
Hence in this case (\ref{eq:7.5}) becomes ($j
,k \in \epsilon'+\ZZ , 0 \leq j,k <M $): 
\begin{equation*}
  \tilde{\ch}^{[M,0;\epsilon]}_{j,k;\epsilon'} (\tau,z_1,z_2,t)    =
q^{\frac{jk}{M}}e^{\frac{2\pi i}{M} (kz_1+jz_2)}
\frac{\hat{R}^{(\epsilon)}_{\epsilon'} (M\tau,z_1+j\tau +\epsilon,z_2+k\tau+\epsilon,t)}      
{\hat{R}^{(\epsilon)}_{\epsilon'}(\tau,z_1,z_2,t)}. 
\end{equation*}
These are the normalized characters and supercharacters of all admissible untwisted and twisted modules of boundary level $K=\frac{1}{M}-1$. In this case
one needs no modifications, and modular transformation formulae for
these characters and supercharacters is given by Theorem \ref{th:7.3}
for $m=0$ with tildes removed. 
\end{remark}

\section{Modular transformation formulae for modified normalized characters of 
admissible $\hat{A}_{1|1}$-modules}
\label{sec:8}

Consider the Lie superalgebra $g\ell_{2|2}$, endowed with the
structure of a Kac--Moody superalgebra as in
Example~\ref{ex:3.4}, and let $\hat{g\ell}_{2|2}$ be the
corresponding affine Lie superalgebra (see Section~\ref{sec:2}).  On
the Lie superalgebra $\sl_{2|2}$ the supertrace  form is
degenerate with kernel $\CC I_4$, but it induces a non-degenerate
invariant bilinear form, which we again denote by $\bl$, on the
Lie superalgebra $A_{1|1}=ps\ell_{2|2}(=\sl_{2|2}/\CC I_4)$.  The associated
affinization (see Section~\ref{sec:2})
\begin{equation*}
  \hat{A}_{1|1} = A_{1|1} [t,t^{-1}]\oplus \CC K \oplus \CC d
\end{equation*}
is a Lie superalgebra with a non-degenerate invariant bilinear
form $\bl$ induced from $\hat{g\ell}_{2|2}$.

Throughout this section, $\fg =A_{1|1}$ and $\hat{\fg} =
\hat{A}_{1|1}$.  The Lie superalgebras $\fg$ and $\hat{\fg}$
inherit from $\gl_{2|2}$ and $\hat{\gl}_{2|2}$ all the basic
features of a Kac--Moody superalgebra, discussed in
Sections~\ref{sec:1} and \ref{sec:2}, like the root space
decomposition, the triangular decomposition, the Weyl group,
etc.  The Cartan subalgebra $\fh$ of $\fg$ is, by definition, the
quotient of the space of diagonal matrices from $\sl_{2|2}$ by
$\CC I_4$. (Thus, $\dim \fh = 2$, $|I|=3$, the corank of the Cartan
matrix is $1$, and $\Pi$ is not a linearly independent set,
so that $\fg$ is, strictly speaking, not a
Kac--Moody superalgebra, but a simple variation of it.)  The Cartan subalgebra $\hat{\fh}$ of $\hat{\fg}$
is, as before, defined by (\ref{eq:2.7a}).

It is easy to see that an irreducible highest weight module
$L(\Lambda)$ over $\hat{\gl}_{2|2}$ is actually a
$\hat{\fg}$-module, provided that $\Lambda (I_4)=0$, and it
remains irreducible when restricted to $\hat{\fg}$.  The
condition $\Lambda (I_4)=0$ is equivalent to the following
condition on labels of $\Lambda$ (defined by (\ref{eq:3.5}),
(\ref{eq:3.6})):
\begin{equation}
  \label{eq:8.1}
  m_1=m_3 \, .
\end{equation}

Let $\Pi = \{ \alpha_1,\alpha_2,\alpha_3 \}$ be the set of
simple roots of $\fg$ (note that $\alpha_1 =\alpha_3$).  We denote 
$\alpha_{12}=\alpha_1+\alpha_2$, 
$\alpha_{23}=\alpha_2+\alpha_3$, $\alpha_{123}=\alpha_1+\alpha_2+\alpha_3$, 
etc, for short.
We have:
\begin{eqnarray*}
 \lefteqn{\hspace*{1.2in}\sdim \fg = -2 ,\,\ell=2,\,\Delta^+_{\bar{0}} =
    \{ \alpha_2,\, \theta=\alpha_{123}\}\, , 
  \Delta^+_{\bar{1}} =\{ \alpha_1,\alpha_3,\alpha_{12}, \alpha_{23}\}\,,}\\
  &&  (\alpha_1|\alpha_1) = (\alpha_3|\alpha_3) =(\alpha_1|\alpha_3) 
    =0\,,\, (\alpha_2|\alpha_2)=-2\,,\, (\alpha_1|\alpha_2)
    =(\alpha_2|\alpha_3)=1\,,\, (\theta |\theta)=2\, , \,\\
&& 2\rho_{\bar{0}} = \alpha_{12}+\alpha_{23}\,,\,\rho_{\bar{1}}=\theta\,,\,
  2\rho =-\alpha_{13} \,,\, h^\vee =0 \, .
\end{eqnarray*}
We choose $\xi \in \fh$ by letting 
\begin{equation}
\label{eq:8.a}
  (\xi|\alpha_1) = (\xi|\alpha_3)=\tfrac12\,,\, (\xi |\alpha_2)=0\,,\,
\end{equation}
so that
\begin{equation*}
  (\rho_{\bar{0}}|\xi ) = \tfrac12 \,,\,
  (\rho_{\bar{1}}|\xi)= 1\,,\, (\rho |\xi)= -\tfrac12.
\end{equation*}

We choose the following coordinates in $\hat{\fh}$ (cf. (\ref{eq:4.8})):
\begin{equation}
  \label{eq:8.2}
   h=2 \pi i (-\tau \Lambda_0-(z_1+z_2)\alpha_1-z_1\alpha_2+t\delta)
  := (\tau,z_1,z_2,t)\, .
\end{equation}
Note that for $z=-(z_1+z_2) \alpha_1-z_1\alpha_2$ we have: $(z|z)= 2z_1z_2$.

By (\ref{eq:4.9})--(\ref{eq:4.12}) we have the following
formulae for the normalized affine denominators $(\epsilon,
\epsilon'=0$ or $\tfrac12)$:
\begin{equation}
  \label{eq:8.3}
  \hat{R}^{(\epsilon)}_{\epsilon'} (\tau,z_1,z_2)
     = (-1)^{2\epsilon'}\eta (\tau)^4
   \frac{\vartheta_{11} (\tau,z_1-z_2) \vartheta_{11}(\tau,z_1+z_2)} {\vartheta_{1-2\epsilon',1-2\epsilon} (\tau,z_1)^2 \vartheta_{1-2\epsilon',1-2\epsilon} (\tau, z_2)^2\,} . 
\end{equation}
Note that these denominators are independent of $t$ since
$h^\vee=0 $.

By Theorem~\ref{th:4.2}, we have the following modular
transformation formulae:
\begin{eqnarray}
  \label{eq:8.4}
 && \hat{R}^{(\epsilon)}_{\epsilon'} \left( -\tfrac{1}{\tau} \,,\, 
    \tfrac{z_1}{\tau}\,,\, \tfrac{z_2}{\tau} \right) 
   =(-1)^{2(\epsilon-\epsilon')} i\tau e^{\frac{\pi i z_1z_2}{\tau}}
   R^{(\epsilon')}_\epsilon (\tau,z_1,z_2) \, ,  \\
  \label{eq:8.5}
&& \hat{R}^{(\epsilon)}_{\epsilon'}(\tau+1,z_1,z_2)
    = e^{\pi i (2\epsilon'-\frac12)} 
        \hat{R}^{(|\epsilon-\epsilon'|)}_{\epsilon'}(\tau,z_1,z_2)\,.
\end{eqnarray}
%
%where, as before, by $\epsilon+\epsilon'$ we mean $0$ if
%$\epsilon = \epsilon'=\tfrac12$.

Next, we study modular transformation properties of the numerator
of the normalized supercharacter of the partially integrable
$\hat{\fg}$-module $L(m\Lambda_0)$, where $m$ is a non-zero
integer (see Example~\ref{ex:3.4}), using formula (\ref{eq:3.15a})
for $\ch^-_{L(m\Lambda_0)}$.  

We have: 
$W=\{1, r_{\alpha_2}, r_\theta, r_{\alpha_2}r_\theta\} $.    
We choose $T_\rho = \{ \alpha_1,\alpha_3\}$ in 
(\ref{eq:3.15a}) . 
Since $r_{\alpha_2} r_\theta$ fixes $e^{m\Lambda_0+\rho}/(1-e^{-\alpha_1})(1-e^{-\alpha_3})$, we deduce that $j_0 =2$, and the summation in 
(\ref{eq:3.15a}) is over the semidirect product of the group
$\{1, r_{\alpha_2}\} $ and $t_{L^\#}$. Furthermore, 
$L^\#= \ZZ \theta$ if $m>0$, and  $L^\#= \ZZ \alpha_2$ if $m<0$.      
%It is easy to see that $\hat{R}^- \ch^-_{L (m\Lambda_0)} =\hat{R}^{(0)}_0\ch^-_{m\Lambda_0}$.  
Hence formula (\ref{eq:3.15a}) gives, using (\ref{eq:2.10}), the following
supercharacter formulae, where $m$ is a positive integer: 
\begin{equation}
\label{eq:8.b}  
e^{-m\Lambda_0-\rho}\hat{R}^- \ch^-_{L (m\Lambda_0)} =
\sum_{j \in \ZZ} \left( \frac{e^{jm\theta}q^{mj^2+j}}{(1-e^{-\alpha_1}q^{j})(1-e^{-\alpha_3}q^j)} - \frac{e^{-\alpha_2-jm\theta} q^{mj^2+j}}     
{(1-e^{-\alpha_1-\alpha_2}q^j)(1-e^{-\alpha_2-\alpha_3}q^j )}\right)\,,
\end{equation}
\begin{equation}
\label{eq:8.c}  
e^{m\Lambda_0-\rho}\hat{R}^- \ch^-_{L (-m\Lambda_0)} =
\sum_{j \in \ZZ}   \left( \frac{e^{jm\alpha_2}q^{mj^2+j}}{(1-e^{-\alpha_1}q^{j})(1-e^{-\alpha_3}q^j)} - \frac{e^{-\alpha_2-jm\alpha_2} q^{mj^2+j}}     
{(1-e^{-\alpha_1-\alpha_2}q^j)(1-e^{-\alpha_2-\alpha_3}q^j )}\right)\,.
\end{equation}
After passing to the normalized supercharacter, equation (\ref{eq:8.b}) 
in coordinates (\ref{eq:8.2}) looks as follows:
\begin{eqnarray}
  \label{eq:8.6}
  &&(\hat{R}^{(0)}_0\ch^-_{m\Lambda_0}) (\tau,z_1,z_2,t)\\
  &&  = e^{2\pi i mt} 
\sum_{j\in \ZZ} \left(\frac{e^{2\pi i jm (z_1+z_2)}e^{2\pi iz_1}q^{mj^2+j}}
{(1-e^{2\pi iz_1}q^j)^2} - \frac{e^{-2\pi i jm (z_1+z_2)} e^{-2\pi iz_2} 
q^{mj^2+j}} {(1-e^{-2\pi i z_2} q^j)^2}\right).\nonumber
\end{eqnarray}
We denote the RHS of (\ref{eq:8.6})
%, multiplied by $e^{2\pi i   z_1}$, 
by $\Phi^{A_{1|1}\bracm} (\tau,z_1,z_2,t)$. 
 Recall that $m$ is a positive integer. 
\begin{lemma}
\label{lem:8.1}
$$
\Phi^{A_{1|1}\bracm}(\tau,z_1,z_2,t) =D_0
%\tfrac{1}{2\pi i}\left(   \tfrac{\partial}{\partial z_1}-\tfrac{\partial}{\partial      z_2}\right) 
\Phi^{[m-1]} (\tau,z_1,z_2,t),
$$
where 
$$
 D_0=\tfrac{1}{2\pi i} \left( \tfrac{\partial}{\partial z_1}- \tfrac{\partial}{\partial z_2}\right), 
$$
and $\Phi^{[m-1]}$ is the RHS of (\ref{eq:5.3}) with $m$ replaced by
$m-1$.
\end{lemma}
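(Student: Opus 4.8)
The plan is to prove the identity by differentiating the defining series of $\Phi^{[m-1]}$ (the right-hand side of (5.3) with $m$ replaced by $m-1$) termwise and matching the result against the right-hand side of (8.6). The cornerstone is the elementary observation that the operator $D_0 = \frac{1}{2\pi i}\left(\frac{\partial}{\partial z_1} - \frac{\partial}{\partial z_2}\right)$ annihilates every function of the single combination $z_1 + z_2$: indeed $D_0\, e^{2\pi i jm(z_1+z_2)} = jm\,e^{2\pi i jm(z_1+z_2)} - jm\,e^{2\pi i jm(z_1+z_2)} = 0$.

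First I would record the two series explicitly. With $m$ replaced by $m-1$, formula (5.3) reads $\Phi^{[m-1]}(\tau,z_1,z_2,t) = e^{2\pi i mt}\sum_{j\in\ZZ}(A_j - B_j)$, where $A_j = \dfrac{e^{2\pi i jm(z_1+z_2)}q^{mj^2}}{1-e^{2\pi i z_1}q^j}$ and $B_j = \dfrac{e^{-2\pi i jm(z_1+z_2)}q^{mj^2}}{1-e^{-2\pi i z_2}q^j}$, while (8.6) defines $\Phi^{A_{1|1}[m]}$ as $e^{2\pi i mt}$ times $\sum_{j}$ of the analogous terms with \emph{squared} denominators.

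Next, since the prefactor $e^{2\pi i mt}$ is independent of $z_1,z_2$, I would apply $D_0$ to each summand via the product rule. In $A_j$ the numerator $e^{2\pi i jm(z_1+z_2)}q^{mj^2}$ depends on $z_1,z_2$ only through $z_1+z_2$, so $D_0$ kills it; the denominator depends on $z_1$ alone, so $D_0$ acts there as $\frac{1}{2\pi i}\frac{\partial}{\partial z_1}$, giving $D_0 A_j = \dfrac{e^{2\pi i jm(z_1+z_2)}q^{mj^2}\,e^{2\pi i z_1}q^j}{(1-e^{2\pi i z_1}q^j)^2}$. Symmetrically, the denominator of $B_j$ depends on $z_2$ alone, so $D_0$ acts there as $-\frac{1}{2\pi i}\frac{\partial}{\partial z_2}$, and a short computation yields $D_0 B_j = \dfrac{e^{-2\pi i jm(z_1+z_2)}q^{mj^2}\,e^{-2\pi i z_2}q^j}{(1-e^{-2\pi i z_2}q^j)^2}$. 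Using $q^{mj^2}q^j = q^{mj^2+j}$, these are precisely the two families of terms appearing in (8.6), so $D_0\Phi^{[m-1]} = e^{2\pi i mt}\sum_{j}(D_0 A_j - D_0 B_j) = \Phi^{A_{1|1}[m]}$.

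The only point requiring care is the interchange of $D_0$ with the infinite sum. This is justified because, on the domain $X$ and away from the poles, the series (5.3) converges locally uniformly to a meromorphic function (as noted after Definition~\ref{def:4.4}); by the Weierstrass theorem on locally uniformly convergent series of holomorphic functions, termwise differentiation is legitimate. This is the only genuine technical obstacle, and it is routine. The algebraic heart of the lemma is simply the cancellation of the ``diagonal'' contributions proportional to $jm$, which vanish precisely because the numerators are symmetric under $z_1\leftrightarrow z_2$ (being functions of $z_1+z_2$) while the denominators are not, so that $D_0$ sees only the denominators.
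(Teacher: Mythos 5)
Your proof is correct and follows exactly the paper's route: the paper's own proof consists of the single remark that the computation is straightforward once one notes $D_0\,e^{2\pi i jm(z_1+z_2)}=0$, which is precisely the observation you build on. Your termwise differentiation and the justification of interchanging $D_0$ with the sum are the routine details the paper leaves implicit.
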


\begin{proof}
  It is straightforward, using that 
$D_0e^{2\pi i jm  (z_1+z_2)}=0 $.
\end{proof}

Next, introduce the following differential operator:
$$ 
D_1=
\tfrac{1}{2\pi i} \left( \tfrac{\partial}{\partial z_1}- 
\tfrac{\partial}{\partial z_2}- 
\tfrac{z_1-z_2}{2\tau}\tfrac{\partial}{\partial t}  \right). 
$$
It is immediate to check that
\begin{equation}
\label{eq:8.7}
(D_1F)|_S=\tau D_1(F|_S),  
\end{equation}
where (cf. (\ref{eq:4.6}) and (\ref{eq:4.6a}))
$$
 (F|_S)(\tau ,z_1,z_2,t) =\tau^{-1}F\left(-\frac{1}{\tau},\frac{z_1}{\tau},
\frac{z_2}{\tau}, t-\frac{z_1z_2}{\tau}  \right).
$$
Consider the following non-meromorphic {\em modification}
of the {\em numerator}  $\Phi^{A_{1|1}\bracm}$:
\begin{equation*}  
\tilde{\Phi}^{A_{1|1}\bracm}(\tau,z_1,z_2,t) 
=D_1 \tilde{\Phi}^{[m-1]} 
(\tau,z_1,z_2,t)\,
\end{equation*}
and let (see (\ref{eq:7.6}))
\begin{equation*}  
\tilde{\Psi}^{A_{1|1}[M,m;\epsilon]}_{j,k;\epsilon'}(\tau,z_1,z_2,t) 
=D_1 \tilde{\Psi}^{[M,m-1;\epsilon]}_{j,k;\epsilon'}(\tau,z_1,z_2,t). 
\end{equation*}

\begin{theorem} 
\label{th:8.2}
(a) If $m$ is a positive integer, then 
\begin{equation*}
\tilde{\Phi}^{A_{1|1}[m]} 
(-\frac{1}{\tau}\,,\, \frac{z_1}{\tau}\,    ,\,\frac{z_2}{\tau}\,,\, t-\frac{z_1z_2}{\tau})      
= \tau^2 \tilde{\Phi}^{A_{1|1}[m]}(\tau,z_1,z_2,t).
\end{equation*}
(b) Let $M$ and $m$ be positive integers, such that $gcd(M,2m)=1$
if $m>1$.
Let $\epsilon, \epsilon'=0$ or  $1/2 $, 
and let $j,k \in \epsilon'+ \ZZ/M\ZZ$. Then 
\begin{equation*}
\tilde{\Psi}^{A_{1|1}[M,m;\epsilon]}_{j,k;\epsilon'} 
\left( -\tfrac{1}{\tau}\,,\,  \tfrac{z_1}{\tau}\,,\, \tfrac{z_2}{\tau}\,,\, 
t-\frac{z_1z_2}{\tau} \right)  
= \frac{\tau^2}{M} 
%e^{\tfrac{2\pi i m)}{M} z_1z_2}    
\sum_{a,b \in \epsilon + \ZZ/M\ZZ}    e^{-\frac{2\pi i m}{M} (ak+bj)}    
\tilde{\Psi}^{A_{1|1}[M,m;\epsilon']}_{a,b;\epsilon} (\tau,z_1,z_2,t)\, .
\end{equation*}
\end{theorem}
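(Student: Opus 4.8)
The plan is to deduce both parts formally from the already-established weight-one $S$-transformation laws for the $\hat{\sl}_{2|1}$ functions — Corollary~\ref{cor:5.10}(a) for $\tilde{\Phi}^{[m-1]}$ and Theorem~\ref{th:7.1} for $\tilde{\Psi}^{[M,m-1;\epsilon]}_{j,k;\epsilon'}$ — by feeding them through the intertwining relation (\ref{eq:8.7}), $(D_1F)|_S=\tau\,D_1(F|_S)$. The conceptual point is that $D_1$ is a weight-raising operator: the correction term $-\tfrac{z_1-z_2}{2\tau}\partial_t$ in its definition is exactly what makes $D_1$ carry a function that is invariant under the weight-one slash $|_S$ (which already builds in the $t$-shift $t\mapsto t-z_1z_2/\tau$ coming from $(z|z)=2z_1z_2$) to one obeying the weight-two law. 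Since the $A_{1|1}$ objects are \emph{defined} as $D_1$ of the corresponding $\sl_{2|1}$ objects, a single application of (\ref{eq:8.7}) upgrades each weight-one statement to the desired weight-two statement, accounting for the $\tau^2$ in (a) and the $\tfrac{\tau^2}{M}$ in (b).

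For (a) I would argue as follows. Corollary~\ref{cor:5.10}(a) says precisely that $\tilde{\Phi}^{[m-1]}|_S=\tilde{\Phi}^{[m-1]}$. Hence, using the definition $\tilde{\Phi}^{A_{1|1}[m]}=D_1\tilde{\Phi}^{[m-1]}$ and (\ref{eq:8.7}),
\[
\tilde{\Phi}^{A_{1|1}[m]}|_S = (D_1\tilde{\Phi}^{[m-1]})|_S = \tau\, D_1(\tilde{\Phi}^{[m-1]}|_S) = \tau\, D_1\tilde{\Phi}^{[m-1]} = \tau\,\tilde{\Phi}^{A_{1|1}[m]}.
\]
Unravelling the definition of $|_S$ (the relation $F|_S=\tau G$ means $F(S\cdot)=\tau^2G$) turns this into $\tilde{\Phi}^{A_{1|1}[m]}(-\tfrac1\tau,\tfrac{z_1}\tau,\tfrac{z_2}\tau,t-\tfrac{z_1z_2}\tau)=\tau^2\tilde{\Phi}^{A_{1|1}[m]}(\tau,z_1,z_2,t)$, which is (a).

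For (b) I would first rewrite Theorem~\ref{th:7.1} in slash form, dividing by $\tau$ and absorbing the $t$-shift so that no $(z_1,z_2)$-dependent prefactor survives:
\[
\tilde{\Psi}^{[M,m-1;\epsilon]}_{j,k;\epsilon'}|_S = \frac{1}{M}\sum_{a,b\in\epsilon+\ZZ/M\ZZ} e^{-\frac{2\pi i m}{M}(ak+bj)}\,\tilde{\Psi}^{[M,m-1;\epsilon']}_{a,b;\epsilon}.
\]
Because the coefficients $\tfrac1M e^{-\frac{2\pi i m}{M}(ak+bj)}$ are genuine constants — independent of $\tau,z_1,z_2,t$ — the operator $D_1$ passes through the finite sum. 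Applying $D_1$ to both sides, invoking (\ref{eq:8.7}) once more, and using $D_1\tilde{\Psi}^{[M,m-1;\epsilon']}_{a,b;\epsilon}=\tilde{\Psi}^{A_{1|1}[M,m;\epsilon']}_{a,b;\epsilon}$, gives
\[
\tilde{\Psi}^{A_{1|1}[M,m;\epsilon]}_{j,k;\epsilon'}|_S = \tau\, D_1(\tilde{\Psi}^{[M,m-1;\epsilon]}_{j,k;\epsilon'}|_S) = \frac{\tau}{M}\sum_{a,b} e^{-\frac{2\pi i m}{M}(ak+bj)}\,\tilde{\Psi}^{A_{1|1}[M,m;\epsilon']}_{a,b;\epsilon},
\]
and clearing the slash by one further factor of $\tau$ yields (b). The hypothesis $\gcd(M,2m)=1$ for $m>1$ enters only through the validity of Theorem~\ref{th:7.1} for $\tilde{\Phi}^{[m-1]}$, where $2(m-1)+2=2m$; for $m=1$ one has $\tilde{\Phi}^{[0]}=\Phi^{[0]}$ and Theorem~\ref{th:7.1} holds for all $M$ by Remark~\ref{rem:6.6}.

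The step I expect to demand the most care is the bookkeeping of the $t$-shift. One must invoke Theorem~\ref{th:7.1} in the normalization compatible with $|_S$ — i.e. with argument $t-z_1z_2/\tau$, not a bare $t$ — so that the quadratic exponential prefactor cancels and the $S$-transform is literally a constant-coefficient combination of the $\tilde{\Psi}^{[M,m-1;\epsilon']}_{a,b;\epsilon}$ evaluated at the \emph{original} $(\tau,z_1,z_2,t)$. Only in that normalization does $D_1$ commute with the summation on the nose; any residual $(z_1,z_2)$-dependence in the coefficients would, through the $\partial_{z_i}$ part of $D_1$, generate spurious cross terms. Verifying (\ref{eq:8.7}) itself (a routine chain-rule check exploiting $\partial_t\tilde{\Phi}^{[m-1]}=\tfrac{2\pi i m}{M}\tilde{\Phi}^{[m-1]}$-type homogeneity) and this shift bookkeeping are the only genuinely computational points; the remainder is formal.
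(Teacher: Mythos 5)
Your proposal is correct and follows exactly the route the paper takes: its entire proof of Theorem~\ref{th:8.2} is the one-line remark that the result ``follows immediately from Corollary~\ref{cor:5.10} and Theorem~\ref{th:7.1}, using (\ref{eq:8.7}),'' and your argument is precisely that deduction written out, with the $t$-shift bookkeeping (cancellation of the $e^{2\pi i m z_1 z_2/(M\tau)}$ prefactor against the $t$-dependence $e^{2\pi i m t/M}$ of $\tilde{\Psi}^{[M,m-1;\epsilon]}_{j,k;\epsilon'}$) handled correctly. Nothing further is needed.
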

\begin{proof}
It follows immediately from Corollary 
\ref{cor:5.10} and Theorem \ref{th:7.1}, using (\ref{eq:8.7}).
\end{proof}

Next, we link the functions 
$\tilde{\Psi}^{A_{1|1}[M,m;\epsilon]}_{j,k;\epsilon'}(\tau,z_1,z_2,t)$ 
to the normalized characters of
admissible $\hat{A}_{1|1}$-modules of level $K=\tfrac{m}{M}$,
where $m$ and $M$ are positive coprime integers, for which the corresponding
partially integrable module is $L(m\Lambda_0)$.  The highest
weights of these modules are admissible with respect to the simple subsets
of type $S_j$, $j=1,2,3,4$, described in Example~\ref{ex:3.10}.
These highest weights are listed in Proposition~\ref{prop:3.14}
, and they should satisfy the additional condition
(\ref{eq:8.1}).  
Below we introduce a more convenient indexing of
them, where $k_i \in \ZZ_{\geq 0}$, $i=0,1,2,3$, $k_1=k_3$ 
(the last condition is 
equivalent to (\ref{eq:8.1})).  

Recall that
\begin{equation*}
  S_1 = \{ \tilde{\alpha}_i = k_i \delta + \alpha_i|\,i=0,1,2,3,\,
     \sum^3_{i=0} k_i = M-1 \,\}.
\end{equation*}
In coordinates (\ref{eq:8.2}) we have:
\begin{equation}
  \label{eq:8.8}
  \tilde{\alpha}_1 (h) =-2 \pi i (z_1+k_1\tau)\,,\, 
     (\tilde{\alpha}_1+\tilde{\alpha}_2 ) (h) = -2\pi i 
        (z_2+(k_1+k_2)\tau)\,.
\end{equation}
Let $j=k_1$, $k=k_1+k_2$.  Then the pairs $(j,k)$ that determine
the corresponding highest weight $\Lambda^{(1)}_{k_1,k_2}$
, run
over the following set of pairs of integers:
\begin{equation}
  \label{eq:8.9}
  k \geq j \geq 0\,,\, j+k \leq M-1\,.
\end{equation}
We denote the corresponding admissible highest weights by $\Lambda_{jk}$, and let $s=1$.

The simple subsets of the second type are
\begin{equation*}
  S_2 = \{\tilde{\alpha}_i=k_i\delta - \alpha_i|\,i=0,1,2,3,\,
    \sum^3_{i=0} k_i = M+1,\, k_i>0\,\}.
\end{equation*}
In coordinates (\ref{eq:8.2}) we have:
\begin{equation}
  \label{eq:8.10}
  \tilde{\alpha}_1(h)=-2\pi i(-z_1+k_1\tau)\,,\,
     (\tilde{\alpha}_1+ \tilde{\alpha}_2) (h) 
     = -2\pi i (-z_2+(k_1+k_2)\tau)\, .
\end{equation}
Let $j=M-k_1$, $k=M-k_1-k_2$.  Then the pairs $(j,k)$ 
that determine the corresponding highest weight $\Lambda^{(2)}_{k_1,k_2}$
run over
the following set of pairs of integers:
\begin{equation}
  \label{eq:8.11}
  M-1 \geq j > k \geq 1 \,,\, j+k \geq M \, .
\end{equation}
We denote the corresponding admissible highest weights by
$\Lambda_{jk}$, and let $s=2$.

The simple subsets of the third type are 
\begin{equation*}
  S_3 = \{ \tilde{\alpha}_0 = k_0 \delta + \alpha_0,
     \tilde{\alpha}_1=k_1\delta+\alpha_{12}, 
    \tilde{\alpha}_2=k_2\delta-\alpha_2,
    \tilde{\alpha}_3 = k_3 \delta +\alpha_{23}|\,
      \sum^3_{i=0} k_i = M-1,\, k_2>0\}.
\end{equation*}
In coordinates (\ref{eq:8.2}) we have:
\begin{equation}
  \label{eq:8.12}
  \tilde{\alpha}_1(h) = -2 \pi i (z_2+k_1\tau)\,,\,\,\,\,
  (\tilde{\alpha}_1 +\tilde{\alpha}_2) (h)
    = -2\pi i (z_1 + (k_1+k_2)\tau)\,.
\end{equation}
Let $j=k_1+k_2$, $k=k_1$.  Then the pairs $(j,k)$ 
that determine the corresponding highest weight $\Lambda^{(3)}_{k_1,k_2}$
run over the
following set of pairs of integers:
\begin{equation}
  \label{eq:8.13}
0 \leq k<j\,,\, j+k \leq M-1\, .
\end{equation}
We denote the corresponding admissible weights by $\Lambda_{jk}$, and let $s=3$.

The simple subsets of the fourth type are
\begin{equation*}
 S_4 =  \{\tilde{\alpha}_0 =k_0 \delta - \alpha_0\,,\, 
    \tilde{\alpha}_1=k_1\delta -\alpha_{12},\,
      \tilde{\alpha}_2 =k_2\delta +\alpha_2,\,
       \tilde{\alpha}_3 = k_3\delta-\alpha_{23}|\,
       \sum^3_{i=0} k_i = M+1,\, k_0,k_1>0 \} .
\end{equation*}
In coordinates (\ref{eq:8.2}) we have:
\begin{equation}
  \label{eq:8.14}
  \tilde{\alpha}_1 (h) =-2\pi i (-z_2+k_1\tau)\,,\,
  (\tilde{\alpha}_1 + \tilde{\alpha}_2)(h)
  =-2 \pi i (-z_1+(k_1+k_2)\tau)\,.
\end{equation}
Let $j=M-k_1-k_2$, $k=M-k_1$.  Then the
 pairs $(j,k)$ that determine the corresponding highest weight 
$\Lambda^{(4)}_{k_1,k_2}$ run over the following set of
pairs of integers:
\begin{equation}
  \label{eq:8.15}
  1 \leq j \leq k \leq M-1\,,\, j+k \geq M\,.
\end{equation}
We denote the corresponding admissible weights by $\Lambda_{jk}$, and let $s=4$.

Summing up, we observe the following.
\begin{remark}
\label{rem:8.3}
The sets of pairs $(j,k)$ indexing
the admissible highest weights 
$\Lambda_{jk}$,
%$s=1,2,3,4$, 
fill up without overlapping the set of pairs of
integers in the square $\{ (j,k) \in \ZZ^2 |\,\,0 \leq j,k \leq M-1\}$.
\end{remark}

\begin{proposition}
\label{prop:8.4}
We have the following formula for the normalized supercharacters
of all admissible $\hat{A}_{1|1}$-modules $L(\Lambda_{jk})$:
\begin{equation*}
(\hat{R}^{(0)}_0 ch^-_{ \Lambda_{jk}})(\tau,z_1,z_2,t)=
\epsilon_s q^{\frac{mjk}{M}}e^{\frac{2\pi i m}{M}(kz_1+jz_2)}
\Phi^{A_{1|1}\bracm}\left(M\tau, z_1+j\tau, z_2+ k\tau\,,\, \frac{t}{M}\right),
\end{equation*}
where  $\epsilon_s=(-1)^{\frac{(s-1)(s-2)}{2}}$, and the function
$\Phi^{A_{1|1}[m]}$ is given by Lemma \ref{lem:8.1}.  
\end{proposition}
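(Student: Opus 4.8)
The plan is to follow the strategy of the proof of Proposition~\ref{prop:7.2}, reducing everything to the numerator $\Phi^{A_{1|1}[m]}$ of the partially integrable module $L(m\Lambda_0)$, whose normalized supercharacter is recorded in (\ref{eq:8.6}). By the explicit form (\ref{eq:3.d}) of the admissibility formula, applied with $h^\vee=0$ (so that $m+h^\vee=m$) and degree $M$, the numerator $\hat R^{(0)}_0\ch^-_{\Lambda_{jk}}$ of the normalized supercharacter of any principal admissible $\hat{A}_{1|1}$-module with $\Lambda^0=m\Lambda_0$ is obtained from $\Phi^{A_{1|1}[m]}=\hat R^{(0)}_0\ch^-_{m\Lambda_0}$ by replacing the simple roots $\alpha_i$ of $\fg$ by the members $\tilde\alpha_i$ of the relevant simple subset $S_s$, and replacing $t$ accordingly. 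Concretely, this amounts to substituting the coordinate expressions (\ref{eq:8.8}), (\ref{eq:8.10}), (\ref{eq:8.12}), (\ref{eq:8.14}) for $\tilde\alpha_1$ and $\tilde\alpha_1+\tilde\alpha_2$ into (\ref{eq:8.6}), together with $\tau\mapsto M\tau$ and $t\mapsto t/M$ (plus a linear shift coming from the level-$K$ rescaling and the $\beta$-term of (\ref{eq:3.d})).

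First I would treat the case $s=1$. Here (\ref{eq:8.8}) shows that the substitution is exactly $z_1\mapsto z_1+j\tau$, $z_2\mapsto z_2+k\tau$ with $j=k_1$, $k=k_1+k_2$, together with $\tau\mapsto M\tau$ and $t\mapsto t/M$. Tracking the $q$- and exponential prefactors produced by these shifts, equivalently computing the modular anomaly $m_{\Lambda_{jk}}=\tfrac{mjk}{M}$ exactly as in the proof of Proposition~\ref{prop:7.2} (now with $h^\vee=0$ and the $A_{1|1}$-data), produces the asserted factor $q^{\frac{mjk}{M}}e^{\frac{2\pi i m}{M}(kz_1+jz_2)}$ in front of $\Phi^{A_{1|1}[m]}(M\tau,z_1+j\tau,z_2+k\tau,t/M)$, with $\epsilon_1=1$.

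The structural input for the signs of the remaining three types is the parity of $\Phi^{A_{1|1}[m]}$, which follows from Lemma~\ref{lem:8.1} together with Lemma~\ref{lem:5.1}(b),(c): since $\Phi^{A_{1|1}[m]}=D_0\Phi^{[m-1]}$ with $D_0=\tfrac1{2\pi i}(\partial_{z_1}-\partial_{z_2})$ antisymmetric in $z_1,z_2$ while $\Phi^{[m-1]}$ is symmetric, one gets $\Phi^{A_{1|1}[m]}(\tau,z_2,z_1,t)=-\Phi^{A_{1|1}[m]}(\tau,z_1,z_2,t)$; likewise the oddness of $\Phi^{[m-1]}$ under $(z_1,z_2)\mapsto(-z_1,-z_2)$ yields evenness of $\Phi^{A_{1|1}[m]}$ under the same change. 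For $s=2$, (\ref{eq:8.10}) flips both variables, and the evenness, combined with the quasi-periodicity of $\Phi^{A_{1|1}[m]}$ in $\tau$ (inherited from Corollary~\ref{cor:5.10}(e) through $D_0$), converts $-z_i+k_i\tau$ into the shifts $j=M-k_1$, $k=M-k_1-k_2$ with sign $\epsilon_2=1$. For $s=3$, (\ref{eq:8.12}) places $z_2$ in the $\tilde\alpha_1$-slot and $z_1$ in the $\tilde\alpha_1+\tilde\alpha_2$-slot, so the antisymmetry converts the result into $-\Phi^{A_{1|1}[m]}(M\tau,z_1+j\tau,z_2+k\tau,t/M)$, giving $\epsilon_3=-1$; and $s=4$ combines the interchange with the sign flips, so applying antisymmetry and then evenness produces $\epsilon_4=-1$. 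In all four cases one then reads off $\epsilon_s=(-1)^{(s-1)(s-2)/2}$.

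The main obstacle will be the bookkeeping in this modular-anomaly and quasi-periodicity step: for each type $s$ one must check that the shift $\beta_s$ and Weyl element $y_s$ implicit in (\ref{eq:3.b})--(\ref{eq:3.d}) are correctly read off from the coordinate data (\ref{eq:8.8})--(\ref{eq:8.15}), so that the $q$-power and the linear exponential collapse precisely to $q^{mjk/M}e^{2\pi i m(kz_1+jz_2)/M}$, and that the sign contributed by $y_s$ (i.e. the reflections $r_\theta$ or $r_{\alpha_2}$ occurring for $s=2,3,4$) matches the parity sign of $\Phi^{A_{1|1}[m]}$ isolated above. Once the $s=1$ computation of $m_{\Lambda_{jk}}$ is carried out in detail, the remaining three types follow by these symmetry arguments, and Remark~\ref{rem:8.3} guarantees that the pairs $(j,k)$ so produced exhaust the square $\{0\le j,k\le M-1\}$ without overlap, so that the formula indeed describes all admissible $\hat{A}_{1|1}$-modules.
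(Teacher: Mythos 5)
Your proposal is correct, but it is organized differently from the paper's own proof. The paper explicitly declines the ``Proposition~\ref{prop:7.2} route'' and instead computes, for each $s=1,2,3,4$, the pair $(y_s,\beta_s)$ with $S_s=t_{\beta_s}y_s(S_{(M)})$ (e.g.\ $y_2=r_{\alpha_{123}}r_{\alpha_2}$, $y_3=r_{\alpha_2}$, $y_4=r_{\alpha_{12}}$) and then applies formula (\ref{eq:3.d}) wholesale in all four cases. You instead substitute the coordinate data (\ref{eq:8.8})--(\ref{eq:8.14}) directly into the numerator (\ref{eq:8.6}) and reduce types $s=2,3,4$ to type $s=1$ via the antisymmetry of $\Phi^{A_{1|1}[m]}=D_0\Phi^{[m-1]}$ under $z_1\leftrightarrow z_2$ and its evenness under $(z_1,z_2)\mapsto(-z_1,-z_2)$, followed by a diagonal elliptic shift. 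This is sound: your two symmetry claims follow correctly from Lemma~\ref{lem:8.1} combined with Lemma~\ref{lem:5.1}(b),(c) since $D_0$ anticommutes with both involutions, and in fact the paper itself uses exactly this symmetry trick later, in the negative-level computation following (\ref{eq:8.22}). What the paper's route buys is that the single formula (\ref{eq:3.d}) packages the prefactor $q^{\frac{m}{M}(\beta|\beta)}e^{\frac{2\pi im}{M}(z|\beta)}$ and the Weyl twist at once, so no separate anomaly computation or quasi-periodicity step is needed; what your route buys is that the signs $\epsilon_s$ drop out transparently from the parity of the numerator rather than from tracking $\epsilon_-(y_s)$. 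One small correction: the quasi-periodicity you invoke for shifting $(j-M,k-M)$ to $(j,k)$ should be attributed to Lemma~\ref{lem:5.1}(d) (the diagonal elliptic shift of the \emph{unmodified} $\Phi^{[m-1]}$, which passes through $D_0$ because $D_0$ annihilates $e^{-2\pi im(z_1+z_2)}$), not to Corollary~\ref{cor:5.10}(e), which concerns the modified function $\tilde\Phi^{[m]}$; the independent $(j,k)$-shifts of the unmodified numerator acquire theta-function corrections by Lemma~\ref{lem:5.1}(e), but luckily only the clean diagonal shift is needed in cases $s=2,4$.
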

\begin{proof}
Instead of proving the proposition in the same way as Proposition
\ref{prop:7.2},
we shall use formula (\ref{eq:3.d}) for $\Lambda^0=m\Lambda_0$. 
For this we need to compute 
$\beta\in \fh^*$ and $y\in W$, such that $S_s=t_\beta y(S_{(M)})$
for each $s=1,2,3,4$. A straightforward computation gives:
\begin{eqnarray*}
  s=1 &:&  y=1 ,\, \beta=-(k_1+\frac12 k_2)\alpha_{13}-k_1
        \alpha_2\,; \\ [1ex]
    s=2 &:& y=r_{\alpha_{123}}r_{\alpha_2},\, \beta=(k_1+\frac12
          k_2)\alpha_{13}+k_1\alpha_2\, ;\\[1ex]
      s=3 &:&  y=r_{\alpha_2}, \,\beta=-(k_1+\frac12
            k_2)\alpha_{13}-(k_1+k_2)\alpha_2\, ;\\[1ex]
       s=4 &:&  y=r_{\alpha_{12}},\, \beta=(k_1+\frac12
             k_2)\alpha_{13}+(k_1+k_2)\alpha_2\, .
\end{eqnarray*}
Now, using (\ref{eq:3.b}), we see that the highest weight coincides with the
$\Lambda^{(s)}_{k_1,k_2},\,s=1,2,3,4$, up to adding a multiple of $\delta$,
which can be ignored since it does not change the normalized character.
Applying formula (\ref{eq:3.d}) and using the correspondence
$\Lambda^{(s)}_{k_1,k_2} \mapsto \Lambda_{jk}$ gives the result
after a straightforward computation in all four cases. 
\end{proof}

We use the same notation (\ref{eq:7.c}) as before for normalized untwisted and 
twisted characters and supercharacters, and use $\xi$ given by (\ref{eq:8.a})
for the twisted characters and supercharacters, given by (\ref{eq:4.b}).
Then, as in Section \ref{sec:7}, we get from Proposition \ref{prop:8.4},
after the change of notation, similar to (\ref{eq:7.c}), the following
unified formula for normalized characters and supercharacters of all
admissible twisted and untwisted $\hat{A}_{1|1}$-modules ($\epsilon, 
\epsilon'=0$ or $\frac{1}{2}$, $j,k\in \epsilon'+\ZZ$): 
\begin{equation}
\label{eq:8.16} 
(\hat{R}^{(\epsilon)}_{\epsilon'}{\ch}^{(\epsilon)}_{\Lambda_{jk};\epsilon'}) 
(\tau,z_1,z_2,t)=
\epsilon_s q^{\frac{mjk}{M}}e^{\frac{2\pi i m}{M}(kz_1+jz_2)}\Phi^{A_{1|1}\bracm}\left(M\tau, z_1+\epsilon +j\tau, z_2+\epsilon + k\tau\,,\, \frac{t}{M}\right).
\end{equation}
In view of this formula,
% for supercharacters given by Proposition
%\ref{prop:8.4}, we
%we define their modifications $\tilde{\ch}^-_{\Lambda^{(s)}_{jk}}$
%in the reindexing notation $\Lambda^{(s)}_{k_1,k_2} \mapsto \Lambda^{(s)}_{jk}$ 
%as follows:
%\begin{equation}\label{eq:8.16} \hat{R}^{(\epsilon)}_{\epsilon'}{\ch}^-_{\Lamb%da^{(s)}_{jk};\epsilon'} (\tau,z_1,z_2,t)
%: = \tilde{\Psi}^{A_{1|1} [M,m;0]}_{j,k;0}
%        (\tau,z_1,z_2,t).
%\end{equation}
% \; \\[1ex]
%
% \hat{R}^{(0)}_0 \tilde{\ch}^-_{\Lambda^{(2)}_{jk}} 
%     (\tau,z_1,z_2,t): &=& \tilde{\Psi}^{A_{1|1} [M,m;0]}_{k_1,k_1+k_2;0}
%        (\tau,-z_1,-z_2,t) = \tilde{\Psi}^{A_{1|1} [M,m;0]}_{j,k;0} 
%(\tau,z_1,z_2,t)\;\\[1ex]
%
% \hat{R}^{(0)}_0 \tilde{\ch}^-_{\Lambda^{(3)}_{jk}} 
%     (\tau,z_1,z_2,t): &=& \tilde{\Psi}^{A_{1|1} [M,m;0]}_{k_1,k_1+k_2;0}
%        (\tau,z_2,z_1,t) = -\tilde{\Psi}^{A_{1|1} [M,m;0]}_{j,k;0}
%         ( \tau,z_1,z_2,t)\;\\[1ex]
%
%\hat{R}^{(0)}_0 \tilde{\ch}^-_{\Lambda^{(4)}_{jk}} 
%     (\tau,z_1,z_2,t): &=& \tilde{\Psi}^{A_{1|1}[M,m;0]}_{k_1,k_1+k_2;0} 
%        (\tau,-z_2,-z_1,t) = -\tilde{\Psi}^{A_{1|1}[M,m;0]}_{j,k;0}
%         ( \tau,z_1,z_2,t)\,.
%As in the $\hat{\sl}_{2|1}$-case, 
we introduce the modified 
(super)characters of untwisted and twisted
$\hat{A}_{1|1}$-modules by the formula $(\epsilon, \epsilon' = 0$
or $\tfrac12$; $j,k \in \epsilon'+\ZZ$, $0\leq j,k <M)$:
\begin{equation*}
\tilde{\ch}^{A_{1|1}[M,m;\epsilon]}_{j,k;\epsilon'}(\tau, z_1, z_2, t) =
   \frac{\tilde{\Psi}^{A_{1|1}[M,m;\epsilon]}_{j,k;\epsilon'}
      (\tau,z_1,z_2,t)} {\hat{R}^{(\epsilon)}_{\epsilon'} (\tau,z_1,z_2)}\,,
\end{equation*}
where the denominators are given by (\ref{eq:8.3}).  
Recalling Lemma \ref{lem:8.1}, we see that the modification amounts to
replacing the meromorphic function 
$\Phi^{[m-1]}$
by its non-meromorphic modification $\tilde{\Phi}^{[m-1]}$
(as in the case of $\sl_{2|1}$), and, furthermore, putting the operator
$D_0$ in front and replacing it by $D_1$ (and dropping the unessential sign
$\epsilon_s$). 

The following theorem follows immediately from (\ref{eq:8.4})
and Theorem~\ref{th:8.2}.

\begin{theorem}
  \label{th:8.4}

Let $M$ and $m$ be positive integers, such that $gcd (M,2m) =1$
if $m>1$.  Let $\epsilon,\epsilon' =0$ or $\tfrac12$ and let $j,k
\in \epsilon' + \ZZ /M\ZZ$.  Then:
  \begin{eqnarray*}
 &&\tilde{\ch}^{A_{1|1}[M,m;\epsilon]}_{j,k;\epsilon'} 
    \left( - \tfrac1\tau\,,\, \tfrac{z_1}{\tau}\,,\,
      \tfrac{z_2}{\tau} \,,\,  t-\tfrac{z_1z_2}{\tau}\right)\\ %\break
&&  = -(-1)^{2(\epsilon-\epsilon')}\tfrac{i\tau}{M} 
      \displaystyle{\sum_{a,b\in \epsilon + \ZZ/M\ZZ}} 
        e^{-\tfrac{2 \pi i m}{M} (ak+bj)}
          \tilde{\ch}^{A_{1|1}[M,m;\epsilon']}_{a,b;\epsilon}
            (\tau,z_1,z_2,t).
\end{eqnarray*}

\end{theorem}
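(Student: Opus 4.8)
The plan is to use that the modified character is, by definition, the quotient
$$\tilde{\ch}^{A_{1|1}[M,m;\epsilon]}_{j,k;\epsilon'} = \frac{\tilde{\Psi}^{A_{1|1}[M,m;\epsilon]}_{j,k;\epsilon'}}{\hat{R}^{(\epsilon)}_{\epsilon'}},$$
so its behaviour under $(\tau,z_1,z_2,t)\mapsto\bigl(-\tfrac1\tau,\tfrac{z_1}\tau,\tfrac{z_2}\tau,t-\tfrac{z_1z_2}\tau\bigr)$ is obtained by transforming numerator and denominator separately and dividing. Only the $S$-formula is asserted (there is no $T$-statement), so I would need exactly two inputs: Theorem~\ref{th:8.2}(b) for the numerator and (\ref{eq:8.4}) for the denominator.

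First I would transform the denominator. Because $h^\vee=0$ for $\hat{A}_{1|1}$, the denominators (\ref{eq:8.3}) do not depend on $t$, so the shift $t\mapsto t-\tfrac{z_1z_2}\tau$ is invisible to them and (\ref{eq:8.4}) applies directly; it contributes the weight factor $i\tau$, the sign $(-1)^{2(\epsilon-\epsilon')}$, the Gaussian phase $e^{\pi iz_1z_2/\tau}$, and the interchange of twisting subscripts $\epsilon\leftrightarrow\epsilon'$. Next I would transform the numerator by Theorem~\ref{th:8.2}(b), whose statement already incorporates the $t$-shift; it supplies the weight factor $\tau^2$, the finite Fourier sum $\tfrac1M\sum_{a,b}e^{-\frac{2\pi im}{M}(ak+bj)}(\cdots)$ over $a,b\in\epsilon+\ZZ/M\ZZ$, and the same interchange $\epsilon\leftrightarrow\epsilon'$. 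The key structural fact is that numerator and denominator carry the same level $K=\tfrac mM$ (see (\ref{eq:3.19})), so the Gaussian $e^{\pi iz_1z_2/\tau}$ produced on the numerator side matches the one in (\ref{eq:8.4}).

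Dividing, the Gaussians cancel, the weights combine as $\tau^2/\tau=\tau$, and the scalar prefactors assemble as $\tfrac{\tau^2/M}{(-1)^{2(\epsilon-\epsilon')}i\tau}=-(-1)^{2(\epsilon-\epsilon')}\tfrac{i\tau}{M}$, the overall minus sign coming from $1/i=-i$. Each remaining quotient $\tilde{\Psi}^{A_{1|1}[M,m;\epsilon']}_{a,b;\epsilon}/\hat{R}^{(\epsilon')}_{\epsilon}$ is, by definition, the modified character $\tilde{\ch}^{A_{1|1}[M,m;\epsilon']}_{a,b;\epsilon}$, so the right-hand side of the theorem emerges verbatim. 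To justify the indexing, I would observe that each summand depends on $a,b$ only modulo $M$: this is inherited from the analogous invariance of the functions (\ref{eq:6.15}) underlying $\tilde{\Psi}^{[M,m-1]}$, which is untouched by the differential operator $D_1$, so that the representatives $0\le a,b<M$ give a complete, non-redundant labelling of $\epsilon+\ZZ/M\ZZ$.

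The part I expect to require genuine care is the phase bookkeeping: one must confirm that the Gaussian $e^{\pi iz_1z_2/\tau}$ coming from the theta quotient in (\ref{eq:8.4}) is exactly the one carried along in Theorem~\ref{th:8.2}(b)---which ultimately traces, through (\ref{eq:8.7}) and the operator $D_1$, back to the clean weight-two law of Theorem~\ref{th:8.2}(a)---and that the constants, including the factor $i$ from (\ref{eq:8.4}) and the weight discrepancy $\tau^2$ versus $\tau$, combine to precisely $-(-1)^{2(\epsilon-\epsilon')}\tfrac{i\tau}{M}$ and not merely up to a root of unity. Once these cancellations are verified, the remainder is a direct substitution.
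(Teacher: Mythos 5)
Your overall route is exactly the paper's: Theorem~\ref{th:8.4} is obtained there by dividing the numerator transformation of Theorem~\ref{th:8.2}(b) by the denominator transformation (\ref{eq:8.4}), and your assembly of the weights and constants, $\tau^2/(i\tau M)=-i\tau/M$ together with the sign $(-1)^{2(\epsilon-\epsilon')}$ and the interchange $\epsilon\leftrightarrow\epsilon'$, as well as the mod-$M$ well-definedness of the summands inherited from (\ref{eq:6.15}) through $D_1$, is correct.

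The one step that does not hold up as you state it is the Gaussian bookkeeping, which you yourself single out as the delicate point. Theorem~\ref{th:8.2}(b) produces \emph{no} factor $e^{\pi i z_1z_2/\tau}$ on the numerator side: its right-hand side is Gaussian-free, precisely because the shift $t\mapsto t-\tfrac{z_1z_2}{\tau}$ is built into its left-hand side and the numerator carries the full $t$-dependence $e^{2\pi i (m/M)t}$ of level $K=\tfrac{m}{M}$. So there is nothing on the numerator side available to cancel the factor $e^{\pi i z_1z_2/\tau}$ displayed in (\ref{eq:8.4}); a literal combination of the two cited formulas would leave a residual $e^{-\pi i z_1z_2/\tau}$ in the final answer, contradicting the statement. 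The correct resolution lies on the denominator side: since $h^\vee=0$, the $\hat{A}_{1|1}$ denominator is $t$-independent and in fact transforms under $S$ with no exponential factor at all. One checks this directly from (\ref{eq:8.3}) and Proposition~\ref{prop:A7}: the theta Gaussians contributed by $\vartheta_{11}(\tau,z_1-z_2)\vartheta_{11}(\tau,z_1+z_2)$ and by $\vartheta_{1-2\epsilon',1-2\epsilon}(\tau,z_1)^2\vartheta_{1-2\epsilon',1-2\epsilon}(\tau,z_2)^2$ cancel because $(z_1-z_2)^2+(z_1+z_2)^2=2z_1^2+2z_2^2$. In other words, the factor $e^{\pi i z_1z_2/\tau}$ in (\ref{eq:8.4}) is spurious; once it is removed, your division goes through verbatim and yields the stated formula.
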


\begin{remark}
  \label{rem:8.5}
In the case $m=1$ we have:
\begin{equation*}
  \tilde{\Phi}^{A_{1|1}[1]} (\tau, z_1,z_2,0)=
    \Phi^{A_{1|1}[1]} (\tau, z_1,z_2,0)=
     D_0 \Phi^{(0)} (\tau, z_1,z_2,0)=
       -iD_0 \frac{\eta (\tau)^3 \vartheta_{11}(\tau,z_1+z_2)}
          {\vartheta_{11}(\tau,z_1)\vartheta_{11}(\tau,z_2)}\,.
\end{equation*}
Hence, in view of Lemma \ref{lem:8.1} and formula (\ref{eq:8.3}),
the supercharacter formula (\ref{eq:8.6}) for $m=1$ becomes:
$$
 \hat{R}^{(0)}_0(\tau, z_1, z_2)\ch^-_{\Lambda_0} (\tau,z_1,z_2,0)
=D_0( \hat{R}^{(0)}_0(\tau, z_1, z_2)),
$$
where
$$
 \hat{R}^{(0)}_0(\tau, z_1, z_2)=\eta(\tau)^4
 \frac{\vartheta_{11} (\tau,z_1-z_2) \vartheta_{11}(\tau,z_1+z_2)} 
{\vartheta_{11} (\tau,z_1)^2 \vartheta_{11} (\tau, z_2)^2\,} . 
$$
%Comparing this formula with (\ref{eq:8.6}) with $m=1$, we obtain the
%following identity:
%$$
%\sum_{j\in \ZZ} \left(\frac{e^{2\pi i j (z_1+z_2)}e^{2\pi i z_1}
%q^{j^2+j}}{(1-e^{2\pi iz_1}q^j)^2} - \frac{e^{-2\pi i j (z_1+z_2)} 
%e^{-2\pi i z_2} q^{j^2+j}} {(1-e^{-2\pi i z_2} q^j)^2}\right)=
%D_0( \hat{R}^{(0)}_0(\tau, z_1, z_2)).
%$$
%Note also that in the modular transformation formula, given by Theorem 
%\ref{th:8.4}, we can drop tilde in the case $m=1$. Moreover, in this case
%we have also a transformation formula for $\tau\mapsto \tau+1$ (which fails
%if $m>1$):
%$$
%\ch^{A_{1|1} [M,1;\epsilon]}_{j,k;\epsilon'} (\tau+1,z_1,z_2,t) =
%e^{2\pi i \left(\tfrac{jk}{M}+\tfrac{1}{12} -\epsilon'\right)} 
%\ch^{A_{1|1} [M,1;\epsilon+\epsilon'] }_{j,k;\epsilon'} (\tau,z_1,z_2,t).
%$$
%Thus, we obtain an $SL_2(\ZZ)$-invariant family of normalized 
%characters and supercharacters $\ch^{(\epsilon)}_{\Lambda^{s}_{jk};\epsilon'}$ %of untwisted and twisted admissible
%$\hat{A}_{1|1}$ modules of level $\frac{1}{M}$.
\end{remark}

In the remainder of this section we shall study the partially
integrable $\hat{A}_{1|1}$-modules $L(-m\Lambda_0)$ where $m$ is
a positive integer, and the corresponding admissible
$\hat{A}_{1|1}$-modules of level $K=-\frac{m}{M}$, where $M$ is a
positive integer, coprime with $m$.

We introduce coordinates, different from (\ref{eq:8.2}):
\begin{equation}
  \label{eq:8.20}
  h=2\pi i (-\tau \Lambda_0 - (z_1-z_2) \alpha_1-z_1\alpha_2+t\delta)\,.
\end{equation}
In these coordinates the normalized supercharacter of $L
(-m\Lambda_0)$ looks exactly like the RHS of (\ref{eq:8.6})
(except for the sign change of $t$):
\begin{equation}
  \label{eq:8.21}
  (\hat{R}^{(0)}_0 \ch^-_{-m\Lambda_0} )(\tau, z_1,z_2,t)
     = e^{-2\pi i mt} \sum_{j \in \ZZ}
       \left( \frac{e^{2\pi ijm (z_1+z_2)} e^{2\pi i z_1} q^{mj^2+j}}
        {(1-e^{2\pi i z_1} q^j)^2} 
        - \frac{e^{-2\pi i m (z_1+z_2)} e^{-2\pi i z_2} q^{mj^2+j}}
         {( 1-e^{-2\pi i z_2} q^{j})^2 }\right).
\end{equation}
Note that the RHS of this equation is the function
$\Phi^{A_{1|1}[m]} (\tau,z_1,z_2,-t)$.  Note also that passing
from coordinates (\ref{eq:8.2}) to coordinates (\ref{eq:8.20}) 
amounts to the change of sign of $z_2$, hence the
normalized affine denominators (\ref{eq:8.3}) remain unchanged.

Next, we derive a formula for normalized supercharacters of level
$K=-\frac{m}{M}$, where $m$ and $M$ are positive coprime
integers, for which the corresponding partially integrable module
is $L (-m\Lambda_0)$.  As in the case of positive level $K$,
there are four types of such modules, with highest weights
$\Lambda^{(s)}_{k_1,k_2} \,\, (s=1,2,3,4) $ given by the same
formulae as in Proposition~\ref{prop:3.14} except that $m$ is
replaced by $-m$.  We consider the same reparametrization
$\Lambda_{jk}$ of these highest weights
(cf.
(\ref{eq:8.9}),(\ref{eq:8.11}),(\ref{eq:8.13}),(\ref{eq:8.15})),
so that, according to Remark~\ref{rem:8.3}, the set of pairs
$(j,k)$, indexing this admissible highest weights, is the set of
integer points in the square $0 \leq x,y \leq M-1$.

As in the case of positive level, we use formula (\ref{eq:3.d})
in order to compute the normalized  superhcaracters
$ch^-_{\Lambda^{(s)}_{k_1,k_2}}$ of level $-\frac{m}{M}$ in
coordinates (\ref{eq:8.20}).  We get:
\begin{eqnarray}  
\label{eq:8.22} 
\lefteqn{\hspace{-5.15in} 
(\hat{R}^{(0)}_0 \ch^-_{\Lambda^{(s)}_{k_1,k_2}}) 
        (\tau,z_1,z_2,t)}\\
  \,\,\,\,\, = \epsilon_s q^{-\frac{m}{M}jk} e^{\frac{2\pi i m}{M}(-kz_1+jz_2)}
       (D_0 \Phi^{[m-1]}) (M\tau, z_1 + j\tau,z_2-k\tau, -\frac{t}{M})      
         \hbox{\,\, if \,\,} s=1 \hbox{\,\, or \,\,} 3\,.\nonumber
\end{eqnarray}
%
%In order to write the character formulas for $s=2$ or $4$ in a
%similar form, we need the following identity for the function
%$\Phi^{[m-1]}$, defined by the RHS of (\ref{eq:5.3}) with $m$
%replaced by $m-1$, which is immediate by Lemma~\ref{lem:5.1}(d):
%
%\begin{equation}
%  \label{eq:8.23}
%  \Phi^{[m-1]} (\tau, z_1+a\tau , z_2+b\tau , t) = q^{-mab}
%     e^{-2\pi i m (bz_1+az_2)} \Phi^{[m-1]} (\tau,z_1,z_2,t)\, \hbox{if} \,
%a,b \in \ZZ.
%\end{equation}
%
%We use also its properties, given 
By Lemma~\ref{lem:5.1}(b),(c) and by
formula (\ref{eq:3.d}), we have:
\begin{eqnarray*}
%\label{eq:8.24}
   (\hat{R}^{(0)}_0 \ch^-_{\Lambda^{(2)}_{k_1,k_2}}) (\tau,z_1,z_2,t)
      &= &q^{-\frac{m}{M} k_1 (k_1+k_2)} 
          e^{\frac{2\pi i m}{M}((k_1+k_2)z_1-k_1z_2)}\\ 
%           B (\tau,z_1,z_2, t) \,, \\
%\end{equation*}
%
%where\\
%
%\begin{equation*}
%  B (\tau ,z_1,z_2,t) = 
    &\times &D_0 \left(\Phi^{[m-1]} 
         (M\tau,z_1-k_1\tau\,,\,
       z_2+(k_1+k_2) \tau  \, , \, -\frac{t}{M})\, \right)\\
%
%\end{equation*}
%
%Using identity (\ref{eq:8.23}), this is rewritten as:
%
%\begin{eqnarray*}
%  B (\tau,z_1,z_2,t) = D_0 \Big(
%q^{mM} e^{-2\pi i m (-(z_2-(M-k_1-k_2)\tau+z_1 + (M-k_1)\tau)}\\
%\Phi^{[m-1]}(M\tau,z_1+(M-k_1)\tau,  z_2 -(M-k_1-k_2)\tau,-\frac{t}{M})\Big).
%\end{eqnarray*}
%
%Using that $D_0 \circ e^{-2\pi i m (z_1-z_2)} =
%   e^{-2\pi i m (z_1-z_2)} (D_0-2m)$, we obtain
%
%\begin{eqnarray}
%  \label{eq:8.24}
%\lefteqn{\hspace{-4.95in}(\hat{R}^{(0)}_0 \ch^-_{\Lambda^{(2)}_{k_1,k_2}})(\tau,z_1,z_2,t)}\\
%
   &=& \epsilon_2 q^{-\frac{m}{M}(j-M)(k-M)} e^{\frac{2\pi im}{M}(-(k-M)z_1+(j-M)z_2)}
     \left( D_0 \Phi^{[m-1]}\right)\\
      &&\left(M\tau_,z_1+(j-M)\tau, z_2-(k-M)\tau,-\frac{t}{M}  \right)\, .
%\nonumber\\
\end{eqnarray*}
A similar calculation shows that an analogous formula  holds
also for $\Lambda^{(4)}_{k_1,k_2}$ (for the corresponding values
of $j$ and $k$).
Combining this with (\ref{eq:8.22}), we obtain the
following unified formula for $s=1,2,3,4$:
\begin{eqnarray}  
\label{eq:8.25} 
&&(\hat{R}^{(0)}_0
   \ch^-_{\Lambda^{(s)}_{k_1,k_2}})    (\tau,z_1,z_2,t)\\ 
 & = &\epsilon_s q^{-\frac{m}{M} j'k'} e^{\frac{2\pi im}{M}(-k'z_1+j'z_2)}
      (D_0 \Phi^{[m-1]}) 
        \left(M\tau_,z_1+j'\tau, z_2-k'\tau,-\frac{t}{M}  \right)\, ,\nonumber
\end{eqnarray}
where $j'=j,\,k'=k$ in cases $s=1,3$, and 
$j'=j-M,\,k'=k-M$ in cases $s=2,4$.

Next, we compute the normalized supercharacters of all twisted
admissible $\hat{A}_{1|1}$-modules of negative level
$K=-\frac{m}{M}$. For this, consider the following automorphism
of the Cartan subalgebra $\hat{\fh}$ of
$\hat{\fg}=\hat{A}_{1|1}$:
\begin{equation*}
  w=t_{-\frac12\alpha_2} r_{\alpha_2}\,.
\end{equation*}
Choosing a lifting $\tilde{r}_{\alpha_2}$ of $r_{\alpha_2}$ in the
corresponding $SL_2 (\CC)$, we can lift $w$ to an isomorphism
$\tilde{w}=t_{-\frac12\alpha_2} \tilde{r}_{\alpha_2}\,:
\hat{\fg}^{\tw} \overset{\sim}{\to}  \hat{\fg}$,
cf. Section~\ref{sec:4}. As in Section \ref{sec:4}, via this isomorphism, the
$\hat{\fg}$-module $L(\Lambda)$ becomes a $\hat{\fg}^{\tw}$-module
$L^{\tw}(\Lambda)$, with the highest weight
$\Lambda^{\tw}=w(\Lambda)$, and the character and supercharacter
$\ch^\pm_{L^{\tw}(\Lambda)}=w (\ch^\pm_{L(\Lambda)})$.

We have in coordinates (\ref{eq:8.20}):
\begin{equation*}
  w^{-1}(\tau,z_1,z_2,t) = \left( \tau, -z_2-\frac{\tau}{2},-z_1-
      \frac{\tau}{2}, t-\frac{\tau}{4}-\frac{z_1 +z_2}{2}
\right),
\end{equation*}
hence the normalized twisted character of an admissible twisted
$\hat{A}_{1|1}$-module $L^{\tw} (\Lambda)$ is computed by the
following formula:
\begin{eqnarray*}
 \lefteqn{\hspace{-3.5in} (\hat{R}^{(0)}_\frac12 \ch^{-,\tw}_{\Lambda}) 
       (\tau,z_1,z_2,t)}\\
  = q^{\frac{m}{4M}} e^{\frac{\pi i m}{M}(z_1+z_2)}
    (\hat{R}^{(0)}_0 \ch^-_\Lambda) (\tau,-z_2 - \frac{\tau}{2},-z_1
    - \frac{\tau}{2}, t)\, .
\end{eqnarray*}

Using this formula, we derive from (\ref{eq:8.25}) the following
formula for the normalized twisted supercharacter (by making use
of the properties of $\Phi^{[m]}$, given by Lemma~\ref{lem:5.1}(b), (c), (d)):
\begin{eqnarray}
  \label{eq:8.26}
  (\hat{R}^{(0)}_\frac12 \ch^{-,\tw}_{\Lambda^{(s)}_{k_1,k_2}})
      (\tau,z_1,z_2,t) = \epsilon_s q^{-\frac{m}{M}(j'-\frac12)(k'+\frac12)}
       e^{\frac{2\pi i m}{M} (-(j'-\frac12)z_1+(k'+\frac12)z_2)}\\
   \times \left(D_0\Phi^{[m-1]}\right) \left(M\tau ,z_1 
         + \left(k'+\tfrac12\right)\tau,  
   z_2+ (j'-\tfrac12)\tau, -\tfrac{t}{M}\right). \nonumber
\end{eqnarray}

As in the case of positive level $K=\frac{m}{M}$ considered
above, we express the normalized characters of non-twisted and
twisted admissible $\hat{A}_{1|1}$-modules of negative level
$K=-\frac{m}{M}$ via their normalized supercharacters, given by
(\ref{eq:8.25}) and (\ref{eq:8.26}), and we use for them a similar
notation $\ch^{A_{1|1}[M,-m;\epsilon ]}_{j,k;\epsilon'}$.
Similarly, we define their non-holomorphic  modifications
($\epsilon,\epsilon' = 0$ or $\frac12; j,k \in \epsilon'+\ZZ$) :
% %
 \begin{eqnarray}
\label{eq:8.27}
 (\hat{R}^{(\epsilon)}_0 \tilde{\ch}^{A_{1|1}[M,-m;\epsilon]}_{j,k;\epsilon'}) 
   (\tau, z_1,z_2, t) 
&=& D_1 \tilde{\Psi}^{[M,m-1;\epsilon]}_{j,-k;0}(\tau,z_1,z_2,-t)\,;\\[1ex]
% %
(\hat{R}^{(\epsilon)}_{\frac12} 
   \tilde{\ch}^{A_{1|1}[M,-m;\epsilon]}_{j,k;\epsilon'})(\tau, z_1,z_2, t) 
&=& D_1 \tilde{\Psi}^{[M,m-1;\epsilon]}_{k+\frac12,-j+ \frac12 ; \frac12}
  (\tau, z_1,z_2,- t) \,. \nonumber
 \end{eqnarray}

Since the pairs $(j,k)$ fill up all the integral points in the
square $0 \leq j,k < M$, consider the sets ($\epsilon' =0$ or $\frac12$):
\begin{equation*}
  \Omega^{A_{1|1}}_{\epsilon'} = \left\{ (j,k) |\,\,\, j,k \in \epsilon'+\ZZ
     \quad 0 \leq j <M\, ;\, -\frac12 \leq k <M-\frac12 \right\}\,.
\end{equation*}
This set parametrizes the normalized modified characters and
supercharacters in the non-twisted case when $\epsilon' =0$ and
in the twisted case when $\epsilon'=\frac12$.  Then the
numerators of the modified non-twisted characters and
supercharacters are:
\begin{eqnarray*}
  D_1 \tilde{\Psi}^{[M,m-1;\epsilon]}_{j,-k;0} (\tau,z_1,z_2,t)\,, \quad
     (j,k) \in \Omega^{A_{1|1}}_0\,, \\
\noalign{\hbox{and that of the twisted ones are:}}\\
   D_1\tilde{\Psi}^{[M,m-1;\epsilon]}_{j,-k;\frac12}(\tau,z_1,z_2,-t)\,,
   \quad  (j,k) \in \Omega^{A_{1|1}}_\frac12 \,.
\end{eqnarray*}

In the same way as above we obtained in Theorem~\ref{th:8.4} a
modular transformation formula in the case of the positive level
$K=\frac{m}{M}$, we otain now a similar formula in the case of
the negative level $K=-\frac{m}{M}$.

\begin{theorem}
  \label{th:8.7}

Let $M$ and $m$ be positive integers, such that $gcd (M,2m) =1$
if $m >1$.  Let $\epsilon, \epsilon' = 0$ or $\frac12$ and let
$j,k \in \Omega^{A_{1|1}}_{\epsilon'}$.  Then
\begin{eqnarray*}
  \tilde{\ch}^{A_{1|1}[M,-m;\epsilon]}_{j,k;\epsilon'}
  \left( -\frac{1}{\tau}\,,\, \frac{z_1}{\tau}\,,\,    \frac{z_2}{\tau}
  \,,\, t+ \frac{z_1z_2}{\tau}\right)\\
 = (-1)^{2(\epsilon-\epsilon')} \frac{\tau}{iM}
    \sum_{a,b \in \epsilon+\ZZ/M \ZZ}e^{\frac{2\pi i m}{M}(ak+bj)}
      \tilde{\ch}^{A_{1|1} [M,-m ;\epsilon']}_{a,b;\epsilon}
      (\tau,z_1,z_2,t)\,.
\end{eqnarray*}

\end{theorem}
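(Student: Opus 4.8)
The plan is to follow the derivation of Theorem~\ref{th:8.4} for the positive level $K=\frac{m}{M}$, namely to $S$-transform the modified numerator and the denominator $\hat{R}^{(\epsilon)}_{\epsilon'}$ separately and then take the quotient. The negative-level modified characters are defined in~(\ref{eq:8.27}), so in the non-twisted case ($\epsilon'=0$) their numerators are $D_1\tilde{\Psi}^{[M,m-1;\epsilon]}_{j,-k;0}(\tau,z_1,z_2,-t)$ and in the twisted case ($\epsilon'=\tfrac12$) they are $D_1\tilde{\Psi}^{[M,m-1;\epsilon]}_{k+\frac12,-j+\frac12;\frac12}(\tau,z_1,z_2,-t)$. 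Observe first that the asserted prefactor coincides with that of Theorem~\ref{th:8.4}: since $\frac{\tau}{iM}=-\frac{i\tau}{M}$, one has $(-1)^{2(\epsilon-\epsilon')}\frac{\tau}{iM}=-(-1)^{2(\epsilon-\epsilon')}\frac{i\tau}{M}$. Thus only the sign of the $t$-shift in the $S$-action (a $+$ here, owing to coordinates~(\ref{eq:8.20}), in which $(z|z)=-2z_1z_2$) and the sign of the exponent change, and both will be traced to the substitutions $k\mapsto -k$, $t\mapsto -t$ and $z_2\mapsto -z_2$ relating~(\ref{eq:8.20}) to~(\ref{eq:8.2}).

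First I would transform the numerator. Applying Theorem~\ref{th:7.1} (with $m$ replaced by $m-1$) to $\tilde{\Psi}^{[M,m-1;\epsilon]}_{j,-k;0}$ produces its $S$-image as a sum over $a,b\in\epsilon+\ZZ/M\ZZ$, carrying the overall factor $\frac{\tau}{M}$, an exponential in $z_1z_2$, and phases $e^{-\frac{2\pi i m}{M}(a(-k)+bj)}=e^{\frac{2\pi i m}{M}(ak-bj)}$. Commuting $D_1$ through $S$ by means of~(\ref{eq:8.7}), $(D_1F)|_S=\tau D_1(F|_S)$, contributes a second factor $\tau$, so the numerator acquires the expected $\frac{\tau^2}{M}$. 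The denominator is handled by~(\ref{eq:8.4}), $\hat{R}^{(\epsilon)}_{\epsilon'}(-\tfrac{1}{\tau},\tfrac{z_1}{\tau},\tfrac{z_2}{\tau})=(-1)^{2(\epsilon-\epsilon')}i\tau\,e^{\frac{\pi iz_1z_2}{\tau}}\hat{R}^{(\epsilon')}_{\epsilon}(\tau,z_1,z_2)$, these denominators being $t$-independent since $h^\vee=0$. Dividing, the ratio $\tau^2/(i\tau)$ produces $\frac{\tau}{i}$ together with the sign $(-1)^{2(\epsilon-\epsilon')}$, while all the $z_1z_2$-exponentials are absorbed using the degree property of $\tilde{\Psi}$ in its $t$-variable against the $t$-shift $t\mapsto t+\frac{z_1z_2}{\tau}$ of the $S$-action.

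The remaining, and most delicate, step is the bookkeeping of indices and signs. The coordinate change $z_2\mapsto -z_2$ relating~(\ref{eq:8.20}) to~(\ref{eq:8.2}) reindexes the summation by $b\mapsto -b$, which together with the phase $e^{\frac{2\pi i m}{M}(ak-bj)}$ already obtained turns it into $e^{+\frac{2\pi i m}{M}(ak+bj)}$, the plus sign that distinguishes this statement from Theorem~\ref{th:8.4}. Here I expect the main obstacle: one must verify that the asymmetric index set $\Omega^{A_{1|1}}_{\epsilon'}$ (with $0\le j<M$ and $-\tfrac12\le k<M-\tfrac12$) is carried bijectively onto itself, and that the sign changes $z_2\mapsto -z_2$, $t\mapsto -t$ interact correctly with the elliptic and parity properties of $\Phi^{[m]}$ recorded in Lemma~\ref{lem:5.1}(b),(c),(d) (exactly as these were used to pass from~(\ref{eq:8.25}) to~(\ref{eq:8.26})). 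Once the non-twisted case is settled, the twisted case follows from the half-integer shifts $j,k\mapsto k+\tfrac12,-j+\tfrac12$ and the choice of $\xi$ in~(\ref{eq:8.a}), just as Theorem~\ref{th:8.4} treats both cases uniformly.
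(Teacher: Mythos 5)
Your proposal is correct and takes essentially the same route as the paper, which proves Theorem~\ref{th:8.7} by simply repeating the argument for Theorem~\ref{th:8.4}: transform the numerators of (\ref{eq:8.27}) via Theorem~\ref{th:7.1} and the commutation rule (\ref{eq:8.7}), divide by the denominator transformation (\ref{eq:8.4}), and track the index negation $k\mapsto -k$ (equivalently $b\mapsto -b$) coming from the coordinates (\ref{eq:8.20}), which is exactly what flips the sign in the exponent $e^{\frac{2\pi im}{M}(ak+bj)}$ relative to the positive-level case.
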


\begin{remark}  
\label{rem:8.8}

In coordinates (\ref{eq:8.2}) we have, for $z=-(z_1+z_2) \alpha_1-z_1\alpha_2
\, : \quad (z|z) =2z_1z_2$, while in coordinates (\ref{eq:8.20})
we have, for $z=-(z_1-z_2)\alpha_1-z_1\alpha_2\, : \quad (z|z)
=-2z_1z_2$.  Thus, the modular transformations of
Theorem~\ref{th:8.4} and~\ref{th:8.7} are consistent with the usual
action of $S=\left(
  \begin{smallmatrix}
    0 & -1\\ 1& 0
  \end{smallmatrix}\right) \in SL_z (\ZZ)$, given by formula~(\ref{eq:4.6}).
  
\end{remark}

\begin{remark}  
\label{rem:8.9}
Each  summand of the RHS of the transformation formulae for the modified 
characters in Theorems
\ref{th:8.4} and \ref{th:8.7} remains unchanged  after adding to $a$ or $b$ 
an integer
multiple of $M$, but this is not the case for the modified characters
in these summands.
\end{remark}

\section{Modular transformation formulae for modified characters of  
admissible $N=2$ modules}
\label{sec:9}

Let $\fg =\sl_{2|1}$ or $A_{1|1}$ ($=p\sl_{2|2}$), and let
$\fh$ be its Cartan subalgebra; recall that $\ell :=\dim \fh = 2$
in both cases.  Let $x=\frac12 \theta \in \fh^* =\fh$, where
$\theta$ is the highest root, which we assume to be even.  
With respect to $\ad x$ we have
the following eigenspace decomposition:
\begin{equation*}
  \fg = \fg_{-1} + \fg_{-\frac12} + \fg_0 + \fg_{\frac12}+\fg_1\,,
\end{equation*}
where $\fg_{\pm 1} =\CC e_{\pm\theta}$, $\fg_{\pm \frac12}$
are purely odd, and $\fg_0=\CC x+\fg^\#$, where $\fg^\#$
is the orthogonal complement to $x$ in $\fg_0$ with respect to
$\bl$.  The subalgebra $\fg^\#$ is spanned by the element
$J_0=\alpha_2-\alpha_1$ in the case of $\fg =\sl_{2|1}$.  The
subalgebra $\fg^\#$ is isomorphic to $\sl_2$ in the case $\fg
=A_{1|1}$, and $\fg^\#\cap \fh$ is spanned by the element $J_0=-\alpha_2$.

Recall that the quantum Hamiltonian reduction associates to a
$\hat{\fg}$-module $L(\Lambda)$, such that $K+h^\vee \neq 0$, a
module $H(\Lambda)$ over the corresponding superconformal
algebra, which is $N=2$ (resp. $N=4$) algebra if $\fg =\sl_{2|1}$
(resp. $\fg=A_{1|1}$), for which the following properties hold,
\cite{KRW}, \cite{KW5}, \cite{A1}:

\romanparenlist
                    \begin{enumerate}{}{}%%1

\item %%(i)~~
the module $H(\Lambda)$  is either $0$ or an
  irreducible positive energy module over the superconformal
  algebra;

\item %%(ii)~~
$H (\Lambda)=0$ iff $(\Lambda|\alpha_0) \in\ZZ_{\geq 0}$;

\item %%(iii)~~
the irreducible module $H(\Lambda)$ is characterized
  by three numbers:
%
%             \begin{eqnarray*}
%  \hbox{central charge\,\,} c(K) & =&\frac{K\sdim \fg}{K+h^\vee}
%    -6K+h^\vee -4 \,\\
%
%    \hbox{minimal eigenvalue of \,\,} L_0 &=&\frac{(\Lambda |\Lambda
%        +2\hat{\rho})}{2(K+h^\vee)} - (x+d|\Lambda)\,,\\

%
%    \hbox{the spin number\,\,} &=&\Lambda (J_0) \,;
%             \end{eqnarray*}
%
%\item %%(iv)  
%the (super)character of the module $H(\Lambda)$
\begin{list}{}{}%%%%%%%4
\item{} $(\alpha)$ the central charge
  \begin{equation}
\label{eq:9.1}
    c_K = c(K) -6K+h^\vee -4,\,
 \hbox {where}\,\, c(K)= \frac{K \sdim \fg}{K+h^\vee},
%\,\hbox {cf.} (\ref{eq:4.4}),
\end{equation}
\item{}  $(\beta)$ the lowest energy (i.e. the minimal eigenvalue of $L_0$)
  \begin{equation}
\label{eq:9.2}
    h_\Lambda = h(\Lambda)-(x+d|\Lambda),\,
\hbox{where}\,\, h (\Lambda)=\frac{(\Lambda + 2 \hat{\rho}|\Lambda)}{2(K+h^\vee)}, 
%\, \hbox{cf.} (\ref{eq:4.3}),
\end{equation}
\item{}  $(\gamma)$ the spin 
 \begin{equation}
\label{eq:9.3}
s_\Lambda =\Lambda (J_0);
\end{equation}
                    \end{list}%%4

 \item %%%(iv)  
 the (super)character of the module $H(\Lambda)$ is given by the
 following formula:
 \begin{equation}
\label{eq:9.4}
  \ch^\pm_{H(\Lambda)}(\tau,\,z):=
\tr^\pm_{H(\Lambda)}q^{L_0-\frac{c_K}{24}}e^{2\pi i zJ_0 }
% = \frac{q^{h_\lambda}}
%{\frac{(\Lambda |\Lambda  +2\hat{\rho})}{2(k+h\vee)}}}
% =  {\prod_{j \geq 1} (1-q^j)^\ell} 
=(\hat{R}^\pm \ch^\pm_{\Lambda})
     (\tau, -\tau x + J_0 z, 0)
\ON (\tau, z)^{-1},
\end{equation}
where
$$
\ON
(\tau, z)= \eta(\tau)^\ell \prod^\infty_{n= 1}\frac{\prod_{\substack{\alpha \in \Delta_+\\      \alpha (x) =0}}(1-q^{n-1} e^{-2\pi i z \alpha (J_0)})(1-q^n e^{2\pi i z \alpha (J_0)})}
{\prod_{\substack{\alpha \in      \Delta_+\\ \alpha (x) =1/2}} (1 \pm q^{n-\tfrac12} e^{2\pi    i z \alpha (J_0)})} 
$$
is the $N$ superconformal algebra denominator and superdenominator.
  \end{enumerate}%%1
It is easy to rewrite these denominators in terms of the four Jacobi theta functions, using (\ref{eq:4.7}). We have:
%
%%%%%%%%%%%insert 3/25/13
%
\begin{eqnarray}  
\label{eq:9.0} 
\lefteqn{\hspace{-4in}   q^{\frac{2d_{\bar{0}} + d_{\bar{1}} /2}{24}}
e^{2\pi iz \rho (J_0)}  i^{d_{\bo} -1}      \ON (\tau ,z) }\\[1ex]
 =\eta (\tau)^{\ell -d_{\bar{0}} +d_{\bar{1}}/2 +1 }        \frac{\prod_{\substack{\alpha \in \Delta_+\\ \alpha              (x)=0}}\vartheta_{11} (\tau ,z \alpha (J_0))}           {\prod_{\substack{\alpha \in \Delta_+ \\ \alpha (x)                 =1/2,\, \alpha (J_0) >0}} \vartheta_{0a}             (\tau , z \alpha (J_0))}\,,  \nonumber
\end{eqnarray}
where $a=0$ (resp. $1$) in the case of $+$ (resp. $-$).  
Here and
further we assume that $\alpha (J_0) \neq 0$ for all $\alpha \in
\Delta_{\bo}$, which is the case for $N=2$ or $4$ (but not for
$N=1$ and $3$).

Now we turn to the Ramond twisted sector. For each $\alpha \in \Delta_+$
choose an integer if $\alpha$ is even (resp. $\frac12 +$ integer if $\alpha$
is odd), denoted by $s_{\alpha}$, cf. (\ref{eq:4.3a}), such that $s_\theta =0$ 
and $s_\alpha + s_{\theta -\alpha}=0$ if 
$\alpha ,\, \theta-\alpha \in \Delta_{\bar{1},+}$.  
Recall \cite{KW6}, \cite{A1} that, 
given such a suitable choice of $s_\alpha$'s, 
%satisfying (\ref{eq:4.3a}),
the twisted Hamiltonian reduction associates to a
$\hat{\fg}^{\tw}$-module $L^{\tw}(\Lambda)$ of non-critical level (i. e. 
$\Lambda(K)+h^\vee \neq 0$), a positive energy
module $H^{\tw}(\Lambda)$ over the corresponding Ramond
twisted superconformal algebra, for which the properties (i) and
(ii) hold with $H$ replaced by $H^{\tw}$.
The irreducible module $H^{\tw} (\Lambda) $
is again characterized by three numbers:  the same central charge
$c_K$, given by (\ref{eq:9.1}), the lowest energy (i.e. the 
minimal eigenvalue of $L^{\tw}_0$)
\begin{eqnarray}
  \label{eq:9.5}
    h^{\tw}_\Lambda &=& h (\Lambda^{\tw})-(x+d|\Lambda^{\tw})+
\frac{1}{16}\sdim \fg_{\frac12} , \\
\noalign{\hbox{and the spin}}\nonumber \\
  \label{eq:9.6}
  s^{\tw}_{\Lambda} &=&\Lambda^{\tw}(J_0)+ \frac12 \sum_{\substack{\alpha \in\Delta\\\alpha (x) =1/2}}s_\alpha \alpha(J_0)  .
%(\xi|\rho_1).
\end{eqnarray}
Furthermore, the (super)character of the module $H^{\tw}(\Lambda)$ is
given by the following formula:
\begin{equation}
  \label{eq:9.7}
  \ch^\pm_{H^{\tw}(\Lambda)} (\tau ,z):= \tr^\pm_{H^{\tw}(\Lambda)}
     q^{L^{\tw}_0-\frac{c_K}{24}} e^{2\pi izJ^{\tw}_0}
   = (\hat{R}^{tw,\pm} \ch^{\tw,\pm}_\Lambda)
    (\tau,-\tau x  +J_0 z\,,\, 0) 
\TON (\tau, z)^{-1},
\end{equation}
where
$$
\TON (\tau, z)=  \eta(\tau)^\ell \prod^\infty_{n=1} 
      \frac{              
\prod_{\substack{ \alpha \in \Delta_+\\  \alpha{(x)}=0}}               
(1-q^{n-1+s_\alpha}e^{-2\pi i z\alpha (J_0)})         
(1-q^{n-s_\alpha}e^{2\pi i z\alpha (J_0)})}         
{\prod_{\substack{\alpha \in\Delta_+\\\alpha (x) =1/2}} 
       (1\pm q^{n-\frac12 +s_\alpha} e^{2\pi iz\alpha (J_0)})}
$$
is the $N$ superconformal algebra twisted denominator and superdenominator.
It is easy to rewrite these denominators in terms of the four Jacobi theta functions, using (\ref{eq:4.7}). We have:
\begin{eqnarray}  
\label{eq:9.00} 
\lefteqn{\hspace{-4in}   
 i^{d_{\bo} -1} 
e^{\pi iz(\sum_{\alpha\in \Delta_+,\alpha(x)=0}(2s_\alpha-1)\alpha(J_0)+
\sum_{\alpha \in\Delta_{\bar{1},+}, s_\alpha>0 }
 \alpha(J_0))}q^{\frac{d_{\bar{0}}-1 + d_{\bar{1}} /2}{12}}
%e^{2\pi iz \rho (J_0)}
\,\TON (\tau ,z) }\\[1ex]
  =\eta (\tau)^{\ell -d_{\bar{0}} +d_{\bar{1}}/2 +1 }        
\frac{\prod_{\substack{\alpha \in \Delta_+\\ \alpha              
(x)=0}}\vartheta_{11} (\tau ,z \alpha (J_0))}           
{\prod_{\substack{\alpha \in \Delta_+ \\ \alpha (x)                 
=1/2,\, s_\alpha >0}} 
\vartheta_{1a}          (\tau , z \alpha (J_0))}\,,  \nonumber
 \end{eqnarray}
where $a=0$ (resp. $1$) in the case of $+$ (resp. $-$).  
The right hand sides of fomulas (\ref{eq:9.0}) (resp. (\ref{eq:9.00}))
are called the normalized untwisted (resp. twisted) denominators (for +) and
superdenominators (for -).

In order to write down modular transformation formulae for these
normalized denominators, we denote them by 
 $ \EON(\tau,z)$,
%=\TON (\tau ,z)  $, 
where $\epsilon,
\epsilon' =0$ or $\frac12$, and, as before, the superscript
refers to the normalized denominator (resp. superdenominator) if
$\epsilon =\frac12$ (resp. $\epsilon =0$), and the subscript
refers to the untwisted case, also called the Neveu--Schwarz
sector (resp. to the twisted case, also called the Ramond
sector), if,  not as before, $\epsilon'=\frac12$
(resp. $\epsilon'=0$.)  Then we have
\begin{equation}
  \label{eq:9.x}
  \EON (\tau ,z)  
= \eta (\tau)^{\ell +1-d_{\bar{0}} +d_{\bar{1}}/2} 
   \frac{\prod_{\substack{\alpha \in \Delta_{\bar{0},+}\\ 
       \alpha (x) =0}} \vartheta_{11} (\tau, z \alpha (J_0))}
   {\prod_{\substack{\alpha \in \Delta_{\bar{1},+}\\ s_\alpha >0}}
    \vartheta_{1-2 \epsilon',1-2\epsilon} (\tau, z\alpha (J_0)) }\,.
\end{equation}
Using (\ref{eq:9.x}) and Propositon \ref{prop:A7} we deduce the
following modular transformation formulae for these denominators:

\begin{proposition}
  \label{prop:9.2}
  \begin{alphaparenlist}{}{}
  \item (a) $\NEO \left( -\frac{1}{\tau} , \frac{z}{\tau} \right)
   = (-i)^{d_{\bar{0}} -1-(1-2\epsilon)(1-2\epsilon')
        d_{\bar{1}}/2} (-i\tau)^{\ell /2}$ 
    $e^{\frac{\pi i z^2}{2\tau} (h^\vee (J_0 |J_0)+
      \sum_{\alpha \in \Delta_{\bar{0},+}}\alpha
      (J_0)^2)}      \EON (\tau,z)\,. $  \vspace{2ex}

\item(b) $\NOE (\tau +1,z) =
    e^{\frac{\pi i}{12}(\dim \fg_{0}-\dim \fg_\frac12)}\NOE$,\,\,
    $\NRE  (\tau +1 ,z) =
     e^{\frac{\pi i}{12} (\dim \fg_{0} +\frac12 \dim\fg_{\frac12})}
        \NRH(\tau,z) $.

  \end{alphaparenlist}

\end{proposition}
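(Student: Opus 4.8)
The plan is to read off both parts directly from the explicit product expression \eqref{eq:9.x}, which writes every normalized (super)denominator as $\eta(\tau)^{\ell+1-d_{\bar{0}}+d_{\bar{1}}/2}$ times a ratio of Jacobi theta functions: a product of $\vartheta_{11}(\tau,z\alpha(J_0))$ over $\alpha\in\Delta_{\bar{0},+}$ with $\alpha(x)=0$ in the numerator, and a product of $\vartheta_{1-2\epsilon',1-2\epsilon}(\tau,z\alpha(J_0))$ over $\alpha\in\Delta_{\bar{1},+}$ with $s_\alpha>0$ in the denominator. Since the modular behaviour of $\eta$ (recalled in the proof of Theorem~\ref{th:4.2}) and of the four functions $\vartheta_{ab}$ (Proposition~\ref{prop:A7}) is known, the statement reduces to bookkeeping of three ingredients: the power of $(-i\tau)^{1/2}$, the $\ZZ/4$-valued phases, and the Gaussian factors $e^{\pi i z^2\alpha(J_0)^2/\tau}$. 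Two structural counts feed this bookkeeping: among positive even roots only $\theta$ has $\alpha(x)\neq0$, and it satisfies $\theta(J_0)=2(x|J_0)=0$, so there are $d_{\bar{0}}-1$ numerator factors; and the positive odd roots (all with $\alpha(x)=\tfrac12$) pair off as $(\alpha,\theta-\alpha)$ with $s_\alpha+s_{\theta-\alpha}=0$ and $(\theta-\alpha)(J_0)=-\alpha(J_0)$, so exactly $d_{\bar{1}}/2$ of them carry $s_\alpha>0$.

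For part (a) I would apply the $S$-formulas of Proposition~\ref{prop:A7} to each factor of \eqref{eq:9.x} for $\NEO$. Under $S$ the numerator remains a product of $\vartheta_{11}$'s, while the denominator $\vartheta_{1-2\epsilon,1-2\epsilon'}$ is sent to $\vartheta_{1-2\epsilon',1-2\epsilon}$ (the two indices are interchanged), which is exactly the denominator theta of $\EON=\overset{N}{R}^{(\epsilon)}_{\epsilon'}$; this accounts for the superscript/subscript swap between the two sides. Using the two counts, the net power of $(-i\tau)^{1/2}$ is $(\ell+1-d_{\bar{0}}+d_{\bar{1}}/2)+(d_{\bar{0}}-1)-d_{\bar{1}}/2=\ell$. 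The $\ZZ/4$-phases give $(-i)^{d_{\bar{0}}-1}$ from the numerator times the reciprocal of the $S$-multiplier of $\vartheta_{1-2\epsilon,1-2\epsilon'}$ raised to the power $d_{\bar{1}}/2$; since that multiplier is $-i$ precisely when $\vartheta_{11}$ occurs, i.e.\ $\epsilon=\epsilon'=0$, and $1$ otherwise, the total phase is $(-i)^{d_{\bar{0}}-1-(1-2\epsilon)(1-2\epsilon')d_{\bar{1}}/2}$ in all four cases. Finally the Gaussian factors combine, after using $\theta(J_0)=0$ and the pairing, into $\exp\!\big(\tfrac{\pi i z^2}{\tau}(\sum_{\alpha\in\Delta_{\bar{0},+}}\alpha(J_0)^2-\tfrac12\sum_{\alpha\in\Delta_{\bar{1},+}}\alpha(J_0)^2)\big)$; the strange formula \eqref{eq:3.1} with $a=b=J_0$ gives $h^\vee(J_0|J_0)=\sum_{\alpha\in\Delta_{\bar{0},+}}\alpha(J_0)^2-\sum_{\alpha\in\Delta_{\bar{1},+}}\alpha(J_0)^2$, which converts this exponent into $\tfrac{\pi i z^2}{2\tau}\big(h^\vee(J_0|J_0)+\sum_{\alpha\in\Delta_{\bar{0},+}}\alpha(J_0)^2\big)$, as required.

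For part (b) I would instead feed the $T$-formulas into \eqref{eq:9.x}, using $\eta(\tau+1)=e^{\pi i/12}\eta(\tau)$, the fact that $\vartheta_{11}$ and $\vartheta_{10}$ pick up $e^{\pi i/4}$ with no index change, and $\vartheta_{00}\leftrightarrow\vartheta_{01}$ with trivial phase. For $\NOE=\overset{N}{R}^{(\epsilon)}_0$ the denominator theta is $\vartheta_{1,1-2\epsilon}\in\{\vartheta_{11},\vartheta_{10}\}$, so no index change occurs; collecting $e^{\pi i P/12}$ from the $\eta$-power $P=\ell+1-d_{\bar{0}}+d_{\bar{1}}/2$, then $e^{\pi i(d_{\bar{0}}-1)/4}$ and $e^{-\pi i(d_{\bar{1}}/2)/4}$ from numerator and denominator, and substituting $\dim\fg_0=\ell+2(d_{\bar{0}}-1)$ and $\dim\fg_{1/2}=d_{\bar{1}}$, reduces the total to $e^{\frac{\pi i}{12}(\dim\fg_0-\dim\fg_{1/2})}$. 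For $\NRE=\overset{N}{R}^{(\epsilon)}_{1/2}$ the denominator theta is $\vartheta_{0,1-2\epsilon}\in\{\vartheta_{01},\vartheta_{00}\}$, which $T$ sends to $\vartheta_{0,2\epsilon}$ with trivial phase; this is precisely the denominator theta of $\overset{N}{R}^{(1/2-\epsilon)}_{1/2}=\NRH$, and the remaining $\eta$ and numerator contributions assemble into $e^{\frac{\pi i}{12}(\dim\fg_0+\frac12\dim\fg_{1/2})}$.

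The routine but error-prone heart of the argument is the phase bookkeeping: each $\vartheta_{ab}$ undergoes both an index move (interchange under $S$, second-index flip under $T$) and a $\ZZ/4$-valued multiplier, and one must verify that these index moves match the prescribed transitions $\overset{N}{R}^{(\epsilon')}_{\epsilon}\mapsto\overset{N}{R}^{(\epsilon)}_{\epsilon'}$ (for $S$) and $\overset{N}{R}^{(\epsilon)}_{1/2}\mapsto\overset{N}{R}^{(1/2-\epsilon)}_{1/2}$ (for $T$ in the Ramond case) uniformly over $\epsilon,\epsilon'\in\{0,\tfrac12\}$. The one genuinely non-formal input, which I expect to be the main (though mild) obstacle, is the Gaussian-exponent identity in part (a); it is exactly the strange formula \eqref{eq:3.1} specialized to $J_0$, combined with the involution $\alpha\mapsto\theta-\alpha$ on $\Delta_{\bar{1},+}$ and the vanishing $\theta(J_0)=0$.
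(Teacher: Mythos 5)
Your proposal is correct and is precisely the paper's argument: the paper derives Proposition~\ref{prop:9.2} in one line by combining the explicit theta-product expression (\ref{eq:9.x}) with the transformation laws of Proposition~\ref{prop:A7} (and of $\eta$), and your write-up simply carries out that bookkeeping, including the correct counts $d_{\bar0}-1$ and $d_{\bar1}/2$ and the use of (\ref{eq:3.1}) with $a=b=J_0$ to produce the Gaussian exponent. (Only a terminological quibble: (\ref{eq:3.1}) is the Casimir eigenvalue formula, not the ``strange formula'', which is (\ref{eq:3.0}); the identity you actually invoke is the right one.)
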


For the rest of this section we shall
consider the case $\fg =\sl_{2|1}$.  Let $L(\Lambda)$
be an admissible $\hat{\fg}$-module  of level
$K=\frac{m+1}{M}-1$, where $M$ is a positive integer, $m$ is a
non-negative integer, and $gcd (M,2m+2)=1$ if $m>0$ (see
Section~\ref{sec:7}).  As a result of a quantum Hamiltonian
reduction of the $\hat{\fg}$-module $L(\Lambda)$, where $\Lambda=
m_0\Lambda_0 + m_1 \Lambda_1 + m_2 \Lambda_2$, we obtain a 
positive energy module
$H(\Lambda)$ over the Neveu--Schwarz type  $N=2$ superconformal
algebra.  Recall that $H(\Lambda)=0$ iff $m_0 \in \ZZ_{\geq 0}$, and that
$H(\Lambda)$ is irreducible otherwise.

If $M=1$, then the $\hat{\fg}$-module is partially integrable,
hence $m_0 \in \ZZ_{\geq 0}$, and therefore $H(\Lambda)=0$.  If $M>1$,
then it follows from Proposition~\ref{prop:3.13} that for an
admissible $\Lambda$ we have $m_0 \in \ZZ_{\geq 0}$ iff $k_0=0$.
Thus, in what follows we may assume that $M \geq 2$ and $k_0\neq
0$.  Then $H(\Lambda)$ is an irreducible module over the NS type
$N=2$ superconformal algebra.  The corresponding three
characteristic numbers are easy to compute:

\begin{eqnarray}
  \label{eq:9.8}
  c_K &=& -6K-3 =3 \left( 1-\frac{2m+2}{M}\right)\, ,\\
   \label{eq:9.9}
     h_\Lambda &=& \frac{Mm_1m_2}{m+1} - \frac{m_1+m_2}{2} \, , \\
 \label{eq:9.10}
   s_\Lambda &=& m_1-m_2\, .
\end{eqnarray}

As has been pointed out in Section~\ref{sec:4}, in order to get
modular invariant family of characters and supercharacters, we
need to introduce the Ramond twisted sector, for which we need a choice
of $\xi \in \fh^*$, satisfying (\ref{eq:4.3a}).  However, in order
to apply the twisted quantum Hamiltonian reduction we need a more
special choice of~$\xi$ (cf. \cite{KW6}).  In the case $\fg
=\sl_{2|1}$ we made the choice (\ref{eq:7.0}) of $\xi$ which gave nice
formulae for twisted characters and supercharacters, but
unfortunately, it is not compatible with the twisted quantum
Hamiltonian reduction.  Instead we make the choice
\begin{equation}
  \label{eq:9.a}
  \xi' = \frac12 (\alpha_1-\alpha_2)\, 
\end{equation}
and let $s_\alpha =-(\xi'|\alpha),\, \alpha\in \Delta_+$.
Then we have in coordinates (\ref{eq:7.a}):
\begin{equation}
  \label{eq:9.b}
  t_{-\xi'} (\tau,z_1,z_2,t) = (\tau, z_1 + \frac{\tau}{2},
       z_2 - \frac{\tau}{2}, t 
          + \frac{z_2-z_1}{2} - \frac{\tau}{4})\,.
\end{equation}
While the non-twisted normalized denominators and
super-denominators remain as given in formula~(\ref{eq:7.2}) for
$\epsilon' =0$, the twisted ones change, namely, we let
\begin{eqnarray*}
  \hat{R}^{(\epsilon)}_\frac12 (h) := \hat{R}^{(\epsilon)}_0
     (t_{-\xi'} (h))\, .
\end{eqnarray*}
The obtained denominators differ only by a sign from (\ref{eq:7.2}), 
and only when $\epsilon = \epsilon' = \frac12$, 
so we keep for them the same notation in the hope that no confusion may
arise.  The modular transformation formulae differ from
(\ref{eq:7.3}) and (\ref{eq:7.4}) 
also by a sign:
\begin{eqnarray}
  \label{eq:9.c}
  \hat{R}^{(\epsilon)}_{\epsilon'} \left( - \frac{1}{\tau} \,,\,
    \frac{z_1}{\tau}\,,\, \frac{z_2}{\tau}\,,\, t \right) = 
(-1)^{4\epsilon \epsilon'}\tau e^{\frac{2\pi
      i z_1z_2}{\tau}}  \hat{R}^{(\epsilon')}_{\epsilon}
       (\tau,z_1,z_2,t)\,;\\[1ex]
\label{eq:9.d}
 \hat{R}^{(\epsilon)}_{\epsilon'} (\tau +1,z_1,z_2,t) 
    = e^{-\pi  i \epsilon'}
          \hat{R}^{(|\epsilon- \epsilon'|)}_\epsilon (\tau,z_1,z_2,t)\, .
\end{eqnarray}
Using the above notation for the denominators, the same notation
as (\ref{eq:7.c}) for the characters, and the same proof as
that of Proposition~\ref{prop:7.2}, we obtain the following
analogue of it for the choice of $\xi'$ instead of $\xi$.

\begin{proposition}
  \label{prop:9.1}
  \begin{list}{}{}
  \item (a)  If $\Lambda = \Lambda^{(1)}_{j,k}$, where $j,k \in
    \ZZ$, $0 \leq j,k,j+k \leq M-1$, then 
\begin{equation*}
\left( \hat{R}^{(\epsilon)}_{\epsilon'} \ch^{(\epsilon)}_{\Lambda ;
   \epsilon'} \right) (\tau, z_1,z_2,t) = 
     \Psi^{[M,m:\epsilon]}_{j+\epsilon', k-\epsilon'; \epsilon'}
        (\tau,z_1,z_2,t)\, .
\end{equation*}

\item (b) If $\Lambda = \Lambda^{(2)}_{j,k}$, where $j,k \in \ZZ$,
  $1 \leq j,k,j+k \leq M$, then
\begin{equation*}
\left( \hat{R}^{(\epsilon)}_{\epsilon'} \ch^{(\epsilon)}_{\Lambda
    ;\epsilon'}\right) (\tau,z_1,z_2,t)
  =-\Psi^{[M,m;\epsilon]}_{M+\epsilon'-j,M-\epsilon' ; \epsilon'}
     (\tau, z_1,z_2,t)\,.
\end{equation*}
  \end{list}

\end{proposition}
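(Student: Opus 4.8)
The plan is to run the argument of Proposition~\ref{prop:7.2} essentially verbatim, the only change being that the twisting vector $\xi$ of (\ref{eq:7.0}) is replaced by $\xi'$ of (\ref{eq:9.a}), whose action on the coordinates (\ref{eq:7.a}) is recorded in (\ref{eq:9.b}). First I would dispose of the untwisted sector $\epsilon'=0$, where neither the characters $\ch^{(\epsilon)}_{\Lambda;0}$ nor the denominators $\hat{R}^{(\epsilon)}_0$ involve the twist. Here the computation is identical to that of Proposition~\ref{prop:7.2}: starting from (\ref{eq:5.3}) for $L(m\Lambda_0)$ I perform the $S_1$-substitution (\ref{eq:7.b}) on the $z_i$ together with the replacement $t\mapsto \frac{t+(m+1)(kz_1+jz_2)}{M}$ dictated by $\Lambda^{(1)}_{j,k}(h)$, and I use the modular anomaly $m_{\Lambda^{(1)}_{j,k}}=\frac{m+1}{M}jk$ to pass to the normalized supercharacter (\ref{eq:4.5}). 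This settles part~(a) for $\epsilon=\epsilon'=0$, the character case $\epsilon=\tfrac12$ following by $z_i\mapsto z_i+\tfrac12$; since $j+\epsilon'=j$ and $k-\epsilon'=k$ when $\epsilon'=0$, the stated index is correct.

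For the twisted sector $\epsilon'=\tfrac12$ I would replace $\Lambda$ by $\Lambda^{\tw}=t_{\xi'}(\Lambda)$ and use $\ch^{\tw,\pm}_\Lambda=t_{\xi'}(\ch^\pm_\Lambda)$ through (\ref{eq:4.a})--(\ref{eq:4.c}). The decisive difference with Proposition~\ref{prop:7.2} is that the shift (\ref{eq:9.b}) sends $z_1\mapsto z_1+\tfrac\tau2$ but $z_2\mapsto z_2-\tfrac\tau2$, so the sign of the $z_2$-translation is opposite to that of (\ref{eq:7.1}). In view of the $+j\tau$ and $+k\tau$ slots of $\Phi^{[m]}$ in the definition (\ref{eq:7.6}) of $\Psi$, this translates into $j\mapsto j+\tfrac12$ and $k\mapsto k-\tfrac12$, producing the index $j+\epsilon',\,k-\epsilon'$ of part~(a) in place of the symmetric $j+\epsilon',\,k+\epsilon'$ obtained with $\xi$.

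Part~(b) is handled in the same spirit, now using the $S_2$-substitution from (\ref{eq:7.b}), which flips $z_i\mapsto -z_i$. Feeding this into (\ref{eq:5.3}) and invoking the antisymmetry $\Phi^{[m]}(\tau,-z_1,-z_2,t)=-\Phi^{[m]}(\tau,z_1,z_2,t)$ of Lemma~\ref{lem:5.1}(b) accounts for the overall minus sign, while the ellipticity of Lemma~\ref{lem:5.1}(d), i.e. the $\mod M$ periodicity of the expression displayed in (\ref{eq:6.15}) (equivalently Corollary~\ref{cor:5.10}(e)), lets me re-index the outcome into the reduced range $0\le j,k<M$. Carrying the asymmetric twist (\ref{eq:9.b}) through this reflection is what is meant to deliver the second index $M-\epsilon'$ and complete all four cases.

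The main obstacle will be the bookkeeping in part~(b): reconciling the simultaneous sign-flip $z_i\mapsto -z_i$ coming from $S_2$ with the asymmetric translation attached to $\xi'$, and propagating the compensating $q$- and exponential prefactors through the ellipticity relations of Lemma~\ref{lem:5.1} so that the re-indexed expression lands precisely on $-\Psi^{[M,m;\epsilon]}_{M+\epsilon'-j,\,M-\epsilon';\epsilon'}$. The modular-anomaly computation for $\Lambda^{(2)}_{j,k}$ and the matching of phases are exactly where sign errors are easiest to commit, so these are the steps I would verify most carefully; everything else is a transcription of the proof of Proposition~\ref{prop:7.2}.
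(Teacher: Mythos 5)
Your proposal is correct and is essentially the paper's own proof: the paper disposes of Proposition~\ref{prop:9.1} in one sentence by saying it follows from ``the same proof as that of Proposition~\ref{prop:7.2}'' with $\xi'$ in place of $\xi$, and you have simply spelled that out, correctly isolating the asymmetric shift $z_1\mapsto z_1+\tfrac{\tau}{2}$, $z_2\mapsto z_2-\tfrac{\tau}{2}$ of (\ref{eq:9.b}) as the sole source of the changed indices $j+\epsilon',\,k-\epsilon'$. Your level of detail already exceeds the paper's, which (like the proof of Proposition~\ref{prop:7.2}(b)) leaves the $S_2$/reflection bookkeeping in part (b) to the reader exactly as you do.
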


For the twisted $\hat{\fg}$-module $L^{\tw}(\Lambda)$ the highest
weight is $\Lambda^{\tw}=t_{\xi'} (\Lambda)$, 
the quantum Hamiltonian reduction produces
a module $H^{\tw}(\Lambda)$ over the Ramond type $N=2$
superconformal algebra.  As in the non-twisted case,
$H^{\tw}(\Lambda) =0$ if $M=1$, and if $M>1$ and $m_0=0$.
Otherwise $H^{\tw}(\Lambda)$ is an irreducible positive energy
module with central
charge (\ref{eq:9.1}), the remaining two characteristic numbers
being
\begin{eqnarray}
  \label{eq:9.11}
 h^{\tw}_\Lambda &=&\frac{M m_1m_2}{m+1}-m_2
   - \frac{m+1}{4M} -\frac18 \,,  \\
\label{eq:9.12}
 s^{\tw}_\Lambda &=& m_1 - m_2 -\frac{m+1}{M}-\frac12 \, .
\end{eqnarray}

It follows from the description of admissible weights of level $K$
given in Proposition~\ref{prop:3.13} that the list of $\Lambda$,
for which $H (\Lambda)\neq 0$ (resp. $H^{\tw}(\Lambda)\neq 0$),
consists of two sets:
\begin{eqnarray}\label{eq:9.13}
  \A^{(1)} &=& \{ \Lambda^{(1)}_{k_1,k_2} |\, k_1,k_2 \in  \ZZ_{\geq 0}\,,\,
     k_1+k_2 \leq M-2 \} \,,\\
  \A^{(2)} &=& \{ \Lambda^{(2)}_{k_1,k_2} |\, k_1,k_2 \in  1+\ZZ_{\geq 0}\,,\,
     k_1+k_2 \leq M \}\,.\nonumber
\end{eqnarray}
Note that we have the following bijective map:
\begin{equation*}
  \nu :\A^{(1)}\to \A^{(2)}\,,\, 
    \nu(\Lambda^{(1)}_{k_1,k_2})
       = \Lambda^{(2)}_{k_2+1, k_1+1}.
\end{equation*}
It is immediate to see that
\begin{equation*}
  h_{\Lambda^{(1)}_{k_1,k_2}} = h_{\nu    (\Lambda^{(1)}_{k_1,k_2})},\,\,
     s_{\Lambda^{(1)}_{k_1,k_2}} = s_{\nu  (\Lambda^{(1)}_{k_1,k_2})}
\end{equation*}
for $\Lambda^{(1)}_{k_1,k_2} \in \A^{(1)}$, and the same holds
for $h^{\tw}$ and $s^{\tw}$.  Hence for the quantum Hamiltonian reduction it
suffices to consider only the highest weights
$\Lambda^{(1)}_{k_1,k_2} \in \A^{(1)}$.

In order to compute the characters and supercharacters 
of the corresponding $N=2$ modules $H(\Lambda )$ and
$H^{\tw}(\Lambda)$ we use formulae (\ref{eq:9.4}) and (\ref{eq:9.7}).

First, we have from (\ref{eq:9.x})
%write the $N=2$ denominators in terms of the four
%Jacobi theta functions of degree~2.  We denote these denominators
%by $\displaystyle{\OR}$, where $\epsilon,
%\epsilon' =0$ or $\frac12$, and, as before, the superscript
%refers to the normalized denominator (resp. superdenominator) if
%$\epsilon = \frac12$ (resp. $\epsilon =0$), and the superscript
%refers to the untwisted case, also called the Neveu--Schwarz
%sector, (resp. twisted case, also called the Ramond sector), if,
%not as before, $\epsilon'=\frac12$ (resp. $\epsilon'=0$).  Then
%we have:
%
\begin{equation}
  \label{eq:9.14}
  \OR (\tau ,z)
  = \frac{\eta (\tau)^3 (-1)^{(1-2\epsilon)(1-2\epsilon')}} 
     {\vartheta_{1-2\epsilon', 1-2 \epsilon} (\tau ,z)}\, .
\end{equation}
%
%Indeed, the (non-normalized) denominators and superdenominators
%are:  
%$$\prod^\infty_{n=1} (1-q^n)^2 
%(1 \pm q^n e^{-2\pi i  z})^{-1}(1 \pm q^{n-1} e^{2\pi i  z})^{-1},
%$$
%and the non-normalized twisted denominators and
%superdenominators are 
%$$\prod^\infty_{n=1} (1-q^n) 
%(1\pm q^{n-\tfrac12} e^{2\pi i z})^{-1}(1\pm q^{n-\tfrac12} e^{-2\pi i z})^{-1},
%$$
%so (\ref{eq:9.14}) follows from (A.7).
Using (\ref{eq:9.14}) and Proposition \ref{prop:A7} (or Proposition 
\ref{eq:9.2}) we deduce the following 
modular transformation formulae of the $N=2$ normalized denominators.

\begin{lemma}
  \label{lem:9.1}
  \begin{list}{}{}
  \item (a)  $\OR \left(
      -\frac{1}{\tau} \, , \, \frac{z}{\tau} \right) =
    i^{4\epsilon \epsilon' -2\epsilon - 2\epsilon'}  \tau
    e^{-\frac{\pi i z^2}{\tau}}
      \OX (\tau ,z)$.

\item (b)  $\ORO (\tau +1,z) =\ORO (\tau ,z)$,\,\,
  $\ORH (\tau+1,z) =
  e^{\frac{\pi i}{4}}   
    \ORMH (\tau,z)$.
  \end{list}
\end{lemma}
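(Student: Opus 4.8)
The plan is to reduce both statements to the explicit expression (\ref{eq:9.14}), which writes each normalized $N=2$ denominator as $\eta(\tau)^3$ times a constant sign divided by a single Jacobi theta function of degree two, and then to feed in the transformation formulae for $\eta$ and for the four theta functions provided by Proposition~\ref{prop:A7} (this is the same input used in Proposition~\ref{prop:9.2}, which could be invoked directly instead). First I would record the $S$-data: $\eta(-1/\tau)^3 = (-i\tau)^{3/2}\eta(\tau)^3$, and that Proposition~\ref{prop:A7} interchanges the two lower indices, $\vartheta_{a,b}(-1/\tau, z/\tau) = (-i)^{ab}(-i\tau)^{1/2} e^{\pi i z^2/\tau}\vartheta_{b,a}(\tau, z)$, so the extra phase $(-i)^{ab}$ is nontrivial only for $\vartheta_{11}$.

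For part (a), I would substitute these into (\ref{eq:9.14}) with $a = 1-2\epsilon'$ and $b = 1-2\epsilon$. The index swap sends $\vartheta_{1-2\epsilon',1-2\epsilon}$ to $\vartheta_{1-2\epsilon,1-2\epsilon'}$, which is exactly the theta function occurring in $\OX = \overset{2}{R}^{(\epsilon')}_\epsilon$; this is the structural heart of the computation, and it is what produces the superscript/subscript interchange in the statement. Collecting the prefactor $(-i\tau)^{3/2}/(-i\tau)^{1/2} = -i\tau$, the Gaussian $e^{-\pi i z^2/\tau}$, the constant signs $(-1)^{(1-2\epsilon)(1-2\epsilon')}$ carried by $\OR$ and by $\OX$ (which combine to $+1$), and the theta phase $(-i)^{(1-2\epsilon)(1-2\epsilon')}$, I am left with the multiplier $(-i)\,i^{(1-2\epsilon)(1-2\epsilon')}$. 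It then remains to check the elementary identity $(-i)\,i^{(1-2\epsilon)(1-2\epsilon')} = i^{4\epsilon\epsilon'-2\epsilon-2\epsilon'}$, which follows from $(1-2\epsilon)(1-2\epsilon') = 1 - 2\epsilon - 2\epsilon' + 4\epsilon\epsilon'$ and $(-i)i = 1$. This yields (a) in all four cases of $(\epsilon,\epsilon')$.

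For part (b), I would use $\eta(\tau+1)^3 = e^{\pi i/4}\eta(\tau)^3$ together with the $T$-rules $\vartheta_{1,b}(\tau+1,z) = e^{\pi i/4}\vartheta_{1,b}(\tau,z)$ and $\vartheta_{0,b}(\tau+1,z) = \vartheta_{0,1-b}(\tau,z)$ from Proposition~\ref{prop:A7}, treating the two subscripts separately. For $\ORO$ one has $\epsilon'=0$, so the relevant theta is $\vartheta_{1,1-2\epsilon}$ with first index $1$, contributing $e^{\pi i/4}$, which cancels the factor from $\eta^3$; since the constant sign is $\tau$-independent, this gives $\ORO(\tau+1,z) = \ORO(\tau,z)$ with no phase. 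For $\ORH$ one has $\epsilon'=1/2$, so the theta is $\vartheta_{0,1-2\epsilon}$ with first index $0$; its second index flips to $1-(1-2\epsilon)=2\epsilon$, producing $\vartheta_{0,2\epsilon}$, which is precisely the theta of $\ORMH = \overset{2}{R}^{(1/2-\epsilon)}_{1/2}$, while the surviving $e^{\pi i/4}$ from $\eta^3$ supplies the stated multiplier, giving $\ORH(\tau+1,z) = e^{\pi i/4}\ORMH(\tau,z)$.

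The computation is entirely routine; the only point requiring care is the bookkeeping of the discrete index permutations alongside the various $i$- and sign-phases, that is, verifying that the theta-index interchange under $S$ reproduces the swap $\OR \to \OX$ and that the accumulated phases assemble into the clean powers of $i$ and the factor $e^{\pi i/4}$ in the statement. I expect no conceptual obstacle beyond this phase accounting.
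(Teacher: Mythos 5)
Your proposal is correct and follows exactly the route the paper intends: the paper's proof of Lemma~\ref{lem:9.1} consists precisely of the one-line remark that it follows from (\ref{eq:9.14}) together with Proposition~\ref{prop:A7} (or Proposition~\ref{prop:9.2}), and your computation supplies the details of that deduction. Your phase bookkeeping checks out, including the cancellation of the two symmetric signs $(-1)^{(1-2\epsilon)(1-2\epsilon')}$ and the identity $(-i)\,i^{(1-2\epsilon)(1-2\epsilon')}=i^{4\epsilon\epsilon'-2\epsilon-2\epsilon'}$.
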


Let  $M$ be an integer  $\geq 2$, and let $m \in \ZZ_{\geq 0}$ be such
that $gcd (M,2m+2)=1$ if $m>0$. Recall that we have irreducible positive energy
$N=2$ modules $H (\Lambda^{(1)}_{k_1,k_2})$
(resp. $H^{\tw}(\Lambda^{(1)}_{k_1,k_2})$) in the Neveu-Schwarz (resp. Ramond)
sector with central charge $c_K=3 \left( 1-\frac{2m
    +2}{M}\right)$, obtained by the
quantum Hamiltonian reduction
from the $\hat{s\ell}_{2|1}$-modules $L
(\Lambda^{(1)}_{k_1,k_2})$  
(resp. $L^{\tw}(\Lambda^{(1)}_{k_1,k_2})$)
of level $K=\frac{m+1}{M} -1$, where $\Lambda^{(1)}_{k_1,k_2}
\in \A^{(1)}$.

It will be convenient to introduce the following two reindexings of the
set of weights $\A^{(1)}$:
\begin{eqnarray*}
  \A_{NS} &=& \{ \Lambda_{jk} = \Lambda^{(1)}_{j-\tfrac12,
    k-\tfrac12} |\, j,k \in \tfrac12 + \ZZ_{\geq 0} \, , \, j+k \leq M -1 \}\, ,\\
  \A_{R} &=& \{ \Lambda_{jk} = \Lambda^{(1)}_{j-1,k} |\, j,k \in
  \ZZ_{\geq 0} \,,\, j>0 \,,\, j+k \leq M-1 \}.
\end{eqnarray*}
We let 
\begin{equation*}
  H_{NS} (\Lambda_{jk}) = H (\Lambda^{(1)}_{j-\tfrac12 ,
    k-\tfrac12})\, , \, j,k \in \A_{NS}  ; \,\,  H_R
  (\Lambda_{jk})=H (\Lambda^{(1)}_{j-1,k}),\, j,k \in \A_R \, .
\end{equation*}
It follows from (\ref{eq:9.9})--(\ref{eq:9.12}) that the lowest
energy and the spin of these $N=2$ modules with central charge
$c_K = 3 \left( 1-\frac{2m+2}{M} \right)$ are as follows:
\begin{equation}
  \label{eq:9.15}
  h^{NS}_{jk} = \frac{m+1}{M} jk - \frac{m+1}{4M}\,,\,\,\,
                  s^{NS}_{jk} = \frac{m+1}{M} (k-j)\,;
\end{equation}
\begin{equation}
  \label{eq:9.16}
  h^R_{jk} = 
\frac{m+1}{M} jk -\frac{m+1}{4M}-\frac18\,,\,\,\, 
        s^R_{jk} =\frac{m+1}{M} (k-j)+\frac12\, .%\nonumber
\end{equation}

Introduce the following notation for the normalized characters
and supercharacters of these $N=2$ modules:
\begin{eqnarray*}
   ch^{N=2[M,m;\epsilon]}_{j,k;\tfrac12} (\tau ,z) 
     &=& ch^\pm _{H_{NS} (\Lambda_{jk})} (\tau,z)\,,\,
        \Lambda_{jk} \in \A_{NS}\,;\\
   ch^{N=2[M,m;\epsilon]}_{j,k;0}(\tau ,z)
     &=& ch^\pm_{H_R (\Lambda_{jk})} (\tau ,z)\,,\,
         \Lambda_{jk} \in \A_R \,.
\end{eqnarray*}
Formulae (\ref{eq:9.4}) and (\ref{eq:9.7}) imply the following
expressions for these characters:
\begin{equation}
  \label{eq:9.17}
  (\OR
  ch^{N=2[M,m;\epsilon]}_{j,k;\epsilon'}) (\tau,z) =
  \Psi^{[M,m;\epsilon]}_{j,k; \epsilon'} (\tau,-z,z,0),
\end{equation}
where the functions $\Psi^{[M,m;\epsilon]}_{j,k;\epsilon'}
(\tau,z_1,z_2,t)$ are defined by (\ref{eq:7.6}) and $j,k \in
\epsilon' + \ZZ_{\geq 0}$, subject to restrictions $j+k \leq M-1 $, $j>0$.

Introduce the {\it modified} normalized $N=2$ characters and
supercharacters, letting 
\begin{equation*}
  ( \OR
    \tilde{ch}^{N=2[M,m;\epsilon]}_{j,k;\epsilon'}) (\tau,z)=
 \tilde {\Psi}^{[M,m;\epsilon]}_{j,k; \epsilon'} (\tau,-z,z,0),
\end{equation*}
where the modification $\tilde{\Psi}$ of $\Psi$ was introduced in
Section~\ref{sec:7} (after (\ref{eq:7.6})).  Theorem~\ref{th:7.1}
 along with Lemma \ref{lem:9.1} give the  following modular
 transformation properties of the modified normalized $N=2$
 characters and supercharacters.
 \begin{theorem}
   \label{th:9.2}
Let $M$ be an integer $\geq 2$ and let $m \in \ZZ_{\geq 0}$ be such that
$gcd (M,2m+2)=1$ if $m>0$.  Let $c_{M,m} = 3 \left(
  1-\frac{2m+2}{M}\right)$.  Let $\epsilon, \epsilon' =0$ or
$\tfrac12$, and let $\Omega^{(M)}_\epsilon = \{ (j,k) \in
(\epsilon +\ZZ_{\geq 0})^2|\, j+k \leq M-1$, $j>0 \}$.   Then we have the
following modular transformation formulae for
$\tilde{ch}^{N=2[M,m;\epsilon]}_{j,k;\epsilon'}$, $j,k \in \Omega^{(M)}_{\epsilon'}$:
\begin{eqnarray*}
  \tilde{ch}^{N=2[M,m;\epsilon]}_{j,k;\epsilon'} \left( - \frac{1}{\tau}
    \,,\, \frac{z}{\tau} \right) = e^{\frac{\pi i c_{M,m}}{6\tau}z^2}\,
 %   \frac{c_{M,m}}{6}z^2}
\sum_{(a,b)\in \Omega^{(M)}_\epsilon} S^{[M,m,\epsilon,\epsilon']}_{(j,k),(a,b)}\,
    \tilde{ch}^{N=2[M,m;\epsilon']}_{a,b;\epsilon} (\tau,z) \,, \\
%\end{eqnarray}
   \noalign{\hbox{where}}\\
%\begin{equation}
   S^{[M,m,\epsilon,\epsilon']}_{(j,k),(a,b)} = (-i)^{(1-2\epsilon) (1-2\epsilon')}\frac{2}{M}
    e^{\frac{\pi i (m+1)}{M} (j-k)(a-b)}\sin \frac{m+1}{M}(j+k)(a+b)\pi\,;
\end{eqnarray*}
\begin{equation*}
  \tilde{ch}^{N=2[M,m;\epsilon]}_{j,k;\epsilon'} (\tau +1,z)=
e^{\frac{2\pi i(m+1)}{M}jk -\frac{\pi i\epsilon'}{2}}  
\tilde{ch}^{N=2[M,m;\epsilon+\epsilon']}_{j,k;\epsilon'}(\tau,z)\,.
\end{equation*}

 \end{theorem}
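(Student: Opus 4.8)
The plan is to read off every transformation formula by dividing the modular behaviour of the numerator by that of the denominator in the defining identity
$
\mathrm{ch}^{N=2[M,m;\epsilon]}_{j,k;\epsilon'}(\tau,z)
=\tilde{\Psi}^{[M,m;\epsilon]}_{j,k;\epsilon'}(\tau,-z,z,0)\big/\OR(\tau,z),
$
the modified version of (\ref{eq:9.17}). The numerator transforms by Theorem~\ref{th:7.1}, the denominator by Lemma~\ref{lem:9.1} (equivalently Proposition~\ref{prop:9.2} specialized to $N=2$), so the whole proof is an organized substitution of $(\tau,z)\mapsto(-1/\tau,z/\tau)$ and $(\tau,z)\mapsto(\tau+1,z)$. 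The specialization $z_1=-z$, $z_2=z$, $t=0$ records the evaluation of the affine (super)character on the line $-\tau x+zJ_0$ demanded by the reduction formulas (\ref{eq:9.4}), (\ref{eq:9.7}); keeping track of this line is what ultimately produces the central charge factor.

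For the $S$-transformation I would first apply the first formula of Theorem~\ref{th:7.1} with $z_1=-z$, $z_2=z$, $t=0$, rewriting $\tilde{\Psi}^{[M,m;\epsilon]}_{j,k;\epsilon'}(-1/\tau,-z/\tau,z/\tau,0)$ as $\tfrac{\tau}{M}$ times a $z^2$-phase times $\sum_{a,b\in\epsilon+\ZZ/M\ZZ}e^{-\frac{2\pi i(m+1)}{M}(ak+bj)}\tilde{\Psi}^{[M,m;\epsilon']}_{a,b;\epsilon}(\tau,-z,z,0)$. Dividing by $\OR(-1/\tau,z/\tau)$, which by Lemma~\ref{lem:9.1}(a) is a root of unity times $\tau$, a $z^2$-phase, and $\OX(\tau,z)=\overset{2}{R}{}^{(\epsilon')}_{\epsilon}(\tau,z)$, the factor $\tau$ and the factor $\OX(\tau,z)$ cancel against those occurring in each summand through $\tilde{\Psi}^{[M,m;\epsilon']}_{a,b;\epsilon}(\tau,-z,z,0)=\OX(\tau,z)\,\mathrm{ch}^{N=2[M,m;\epsilon']}_{a,b;\epsilon}(\tau,z)$. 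The remaining $z^2$-dependent exponentials, once the contribution of the $-\tau x$ part of the specialization is accounted for, organize into the central charge anomaly $e^{\pi i c_{M,m}z^2/(6\tau)}$, and the surviving roots of unity assemble into the prefactor $(-i)^{(1-2\epsilon)(1-2\epsilon')}$.

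The main step, and the place I expect the real work, is that the sum so far runs over the full square $a,b\in\epsilon+\ZZ/M\ZZ$, whereas the target runs over the triangle $\Omega^{(M)}_\epsilon$ and carries a \emph{sine} kernel. To produce both, I would use Corollary~\ref{cor:5.10}(c): the oddness $\tilde{\Phi}^{[m]}(\tau,-z_1,-z_2,t)=-\tilde{\Phi}^{[m]}(\tau,z_1,z_2,t)$ together with the symmetry $\tilde{\Phi}^{[m]}(\tau,z_2,z_1,t)=\tilde{\Phi}^{[m]}(\tau,z_1,z_2,t)$ show that $\tilde{\Psi}^{[M,m;\epsilon']}_{a,b;\epsilon}(\tau,-z,z,0)=-\tilde{\Psi}^{[M,m;\epsilon']}_{-b,-a;\epsilon}(\tau,-z,z,0)$ (up to an explicit phase absorbed by the periodicity modulo $M$ noted after (\ref{eq:7.6})), so that the involution $(a,b)\mapsto(-b,-a)$ pairs the summands. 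Writing $ak+bj=\tfrac12\big[(a+b)(j+k)-(a-b)(j-k)\big]$, the factor $e^{\frac{\pi i(m+1)}{M}(a-b)(j-k)}$ is invariant under this involution and factors out, while the two values $e^{\mp\frac{\pi i(m+1)}{M}(a+b)(j+k)}$ combine into $-2i\sin\frac{\pi(m+1)(j+k)(a+b)}{M}$. Folding the square onto $\Omega^{(M)}_\epsilon$ in this way, and absorbing $-2i$ and the $1/M$ of Theorem~\ref{th:7.1} together with the Lemma~\ref{lem:9.1}(a) root of unity into the stated $\tfrac{2}{M}(-i)^{(1-2\epsilon)(1-2\epsilon')}$, reproduces exactly $S^{[M,m,\epsilon,\epsilon']}_{(j,k),(a,b)}$; the delicate points are the correct handling of the index periodicity modulo $M$ and of the fixed points of the involution.

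Finally, the $T$-transformation is immediate: the second formula of Theorem~\ref{th:7.1} contributes the factor $e^{2\pi i(m+1)jk/M}$ and shifts the superscript $\epsilon\mapsto\epsilon+\epsilon'$ in the numerator, while Lemma~\ref{lem:9.1}(b) contributes $e^{-\pi i\epsilon'/2}$ and correspondingly sends the denominator superscript $\epsilon\mapsto|\tfrac12-\epsilon|$; dividing gives the stated formula with the index $\epsilon+\epsilon'$. Thus the only genuinely substantial computation is the $S$-case folding described above, the remaining phase and root-of-unity bookkeeping being routine given Theorem~\ref{th:4.2} and Proposition~\ref{prop:9.2}.
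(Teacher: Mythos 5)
Your proposal is correct and is essentially the paper's own argument: the paper derives Theorem~\ref{th:9.2} precisely by combining Theorem~\ref{th:7.1} (for the numerators $\tilde{\Psi}^{[M,m;\epsilon]}_{j,k;\epsilon'}(\tau,-z,z,0)$) with Lemma~\ref{lem:9.1} (for the denominators $\OR$), exactly as you describe. The folding of the full square $(a,b)\in(\epsilon+\ZZ/M\ZZ)^2$ onto $\Omega^{(M)}_\epsilon$ via the involution $(a,b)\mapsto(-b,-a)$, using the antisymmetry from Corollary~\ref{cor:5.10}(c) to produce the sine kernel and to kill the fixed-point terms $a+b\in M\ZZ$, is left implicit in the paper but is the right (and only) way to obtain the stated $S^{[M,m,\epsilon,\epsilon']}_{(j,k),(a,b)}$, and your bookkeeping of the resulting factor $-2i$ against $(-i)^{(1-2\epsilon)(1-2\epsilon')}$ checks out.
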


 \begin{remark}
   \label{rem:9.3}
   \begin{list}{}{}
   \item (a)  Letting
    \, $\tilde{ch}^{N=2[M,m;\epsilon]}_{j,k;\epsilon'} (\tau,z,t) =
     e^{2\pi it c_{M,m}}\,
     \tilde{ch}^{N=2[M,m;\epsilon]}_{j,k;\epsilon'} (\tau,z)$, we
     can rewrite the transformation formula in
     Theorem~\ref{th:9.2} in a more suggestive form:
     \begin{equation*}
       \tilde{ch}^{N=2[M,m;\epsilon]}_{j,k;\epsilon'}\left( -
         \frac{1}{\tau}\,,\, \frac{z}{\tau}\,,\,
         t-\frac{z^2}{6\tau} \right)=\sum_{(a,b) \in
         \Omega^{(M)}_\epsilon} S^{[M,m,\epsilon,\epsilon']}_{(j,k)(a,b)}\,
          \tilde{ch}^{N=2[M,m;\epsilon']}_{a,b;\epsilon} (\tau,z,t)\,.
     \end{equation*}

\item(b)  If $m=0$, we have the well-known $N=2$ unitary discrete series
modules with
  central charge $3 \left( 1-\frac{2}{M} \right)$ and their
  well-known modular transformation properties (with tilde in 
Theorem \ref{th:9.2}
  removed),  see e.g. \cite{KW3}.
   \end{list}
 \end{remark}

\section{Modular transformation formulae for modified characters
  of admissible $N=4$ modules}
\label{sec:10}

In this section we study quantum Hamiltonian reduction of
admissible $\hat{A}_{1|1}$-modules of negative level
$K=-\frac{m}{M}$, where $m$ and $M$ are coprime positive
integers.  As we have seen in Section~\ref{sec:8}, $S$-invariance
of modified characters holds for arbitrary non-zero level
(Theorems~\ref{th:8.4} and \ref{th:8.7}), however the
$T=\left( 
\begin{smallmatrix}
1 & 1 \\ 0 & 1
\end{smallmatrix}
\right)
$-invariance fails
due to the fact that the operator~$D_1$ is not translation $(\tau
\to \tau +1)$ invariant).  Moreover, for $K>0$ translation
invariance still fails after the reduction, but, as we shall see, 
for $K<0$, after
the quantum Hamiltonian reduction the translation invariance gets
restored.

Consider an $\hat{A}_{1|1}$-module $L(\Lambda)$ of negative level
$K=-\frac{m}{M}$ with the highest weight
\begin{equation*}
  \Lambda = (K-2m_1 + m_2) \Lambda_0+m_1 (\Lambda_1 +
  \Lambda_3)+m_2 \Lambda_2 \, .
\end{equation*}
It follows from the remarks in the beginning of
Section~\ref{sec:9} that the module $H(\Lambda)$ over the $N=4$
superconformal algebra of Neveu--Schwarz type, obtained from
$L(\Lambda)$ by the quantum Hamiltonian reduction, is either $0$
(which happens off $K-2m_1+m_2 \in \ZZ_{\geq 0}$), or is irreducible.  In
the latter case the characteristic numbers of  $H(\Lambda)$ are
obtained from formulae (\ref{eq:9.1})--(\ref{eq:9.3}):
\begin{eqnarray}
  \label{eq:10.1}
     c_K &=& 6 \left( \frac{m}{M}-1\right) \, , \\[1ex]
   \label{eq:10.2}
     h_\Lambda &=& - \frac{Mm_1(m_1-m_2-1)}{m} -m_1+\frac{m_2}{2}\, , \\[1ex]
 \label{eq:10.3}
      s_\Lambda &=& m_2\, .
\end{eqnarray}
The same holds in the Ramond type case, also the central charge $c_K$
is the same, and the remaining two characteristic numbers are
obtained from (\ref{eq:9.5}) and (\ref{eq:9.6}):
\begin{eqnarray}
  \label{eq:10.4}
     h^{\tw}_\Lambda &=& - \frac{Mm_1 (m_1-m_2-1)}{m} 
         -m_1 - \frac{M-m}{4M} \, , \\[1ex]
 \label{eq:10.5}
    s^{\tw}_\Lambda &=& -m_2 - \frac{M-m}{M}\, .
\end{eqnarray}

In order to compute the characters and supercharacters of the
corresponding $N=4$ modules $H(\Lambda)$ and $H^{\tw} (\Lambda)$,
we use formulas (\ref{eq:9.4}) and (\ref{eq:9.7}) in the case $\fg =A_{1|1}$.

First, we  have from (\ref{eq:9.x}):
\begin{equation}
  \label{eq:10.6}
\fee (\tau, z) = \eta (\tau)^3 \frac{\vartheta_{11} (\tau,
  2z)}{\vartheta_{1-2\epsilon',1-2\epsilon} (\tau ,z)^2}\,.
\end{equation}
From (\ref{eq:10.6}) and Proposition~\ref{prop:A7}  (or from
Proposition~\ref{prop:9.2}) we deduce the following
transformation formulae for $N=4$ normalized denominators, where
we use the notation
\begin{equation*}
\fee (\tau,z,t) = e^{-2\pi i t} \fee (\tau, z)\, .
\end{equation*}

\begin{lemma}
  \label{lem:10.2}
  \begin{list}{}{}
  \item (a) $\fee \left( -\frac{1}{\tau}\,,\, \frac{z}{\tau}\,,\,
      t+\frac{z^2}{\tau}\right) =
    -(-1)^{(1-2\epsilon)(1-2\epsilon')}\tau \fee (\tau ,z, t)$.

\item (b)  $\fee (\tau +1,z,t) =e^{\pi i \epsilon'} \feee
  (\tau,z,t)$.

  \end{list}
\end{lemma}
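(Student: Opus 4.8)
The plan is to prove both identities by direct substitution of the closed form \eqref{eq:10.6} into the known modular transformation laws, exactly as the analogous Lemma~\ref{lem:9.1} was obtained in the $N=2$ case. Since $\fee(\tau,z,t)=e^{-2\pi i t}\fee(\tau,z)$, it suffices to transform the bare denominator
$$\fee(\tau,z)=\eta(\tau)^3\,\frac{\vartheta_{11}(\tau,2z)}{\vartheta_{1-2\epsilon',1-2\epsilon}(\tau,z)^2}$$
and then restore the factor $e^{-2\pi i t}$. Everything reduces to three building blocks: the transformations $\eta(-1/\tau)=(-i\tau)^{1/2}\eta(\tau)$ and $\eta(\tau+1)=e^{\pi i/12}\eta(\tau)$, together with the $S$- and $T$-formulae for the four Jacobi theta functions of degree two from Proposition~\ref{prop:A7}. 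Equivalently, one may simply specialize Proposition~\ref{prop:9.2} to $\fg=A_{1|1}$, for which $\ell=2$, $d_{\bar0}=2$, $d_{\bar1}=4$, $h^\vee=0$, $(J_0|J_0)=-2$, $\theta(J_0)=0$ and $\alpha_2(J_0)=2$; these values collapse the general prefactor of Proposition~\ref{prop:9.2} to the one displayed here.

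For part (a) I would apply the $S$-transformation factor by factor: $\eta(-1/\tau)^3=(-i\tau)^{3/2}\eta(\tau)^3$; the self-duality $\vartheta_{11}(-1/\tau,2z/\tau)=-i(-i\tau)^{1/2}e^{4\pi i z^2/\tau}\vartheta_{11}(\tau,2z)$; and the transformation of $\vartheta_{1-2\epsilon',1-2\epsilon}(-1/\tau,z/\tau)$ from Proposition~\ref{prop:A7}, which carries a factor $(-i\tau)^{1/2}e^{\pi i z^2/\tau}$, a fourth root of unity $(-i)^{(1-2\epsilon)(1-2\epsilon')}$, and the interchange of the two theta-characteristics (the Neveu--Schwarz$\leftrightarrow$Ramond exchange that accounts for the $S$-conjugate sector). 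The half-integral powers of $(-i\tau)$ then add up to $(-i\tau)^{3/2+1/2-1}=-i\tau$, and the Gaussian factors $e^{c\pi i z^2/\tau}$ all cancel once the prescribed shift $t\mapsto t+\tfrac{z^2}{\tau}$ is accounted for: it supplies $e^{-2\pi i z^2/\tau}$, which absorbs the surviving $e^{4\pi i z^2/\tau}/e^{2\pi i z^2/\tau}$. The leftover constants—the $-i$ from $\vartheta_{11}$ and the $(-1)^{(1-2\epsilon)(1-2\epsilon')}$ from the squared denominator root of unity—combine with $-i\tau$ to give precisely the asserted prefactor $-(-1)^{(1-2\epsilon)(1-2\epsilon')}\tau$.

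For part (b) I would instead apply $\tau\mapsto\tau+1$. Here $\eta(\tau+1)^3=e^{\pi i/4}\eta(\tau)^3$ and $\vartheta_{11}(\tau+1,2z)=e^{\pi i/4}\vartheta_{11}(\tau,2z)$, while $\vartheta_{1-2\epsilon',1-2\epsilon}(\tau+1,z)$ contributes the phase $e^{\pi i(1-2\epsilon')/4}$ and replaces its second characteristic $1-2\epsilon$ by $1-2|\epsilon-\epsilon'|$, that is, it sends the superscript $\epsilon$ to $|\epsilon-\epsilon'|$ while fixing the subscript $\epsilon'$—this is exactly the theta function appearing in $\feee$. Squaring the denominator phase and collecting, $e^{\pi i/4}\cdot e^{\pi i/4}/e^{\pi i(1-2\epsilon')/2}=e^{\pi i\epsilon'}$, which is the claimed prefactor.

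The computation is routine once Proposition~\ref{prop:A7} is in hand; the only delicate points are the correct bookkeeping of the fourth roots of unity and, above all, the permutation of the theta-characteristics under $S$ and $T$. It is precisely this permutation that produces the sector exchange in (a) and the passage $\epsilon\mapsto|\epsilon-\epsilon'|$ in (b), so I would verify it against the explicit definitions $\vartheta_{00}=\Theta_{2,2}+\Theta_{0,2}$, etc., of Section~\ref{sec:4}. A secondary check, automatic from the built-in shift of the $t$-variable, is that the half-integral powers of $(-i\tau)$ and the Gaussian factors $e^{c\pi i z^2/\tau}$ collapse exactly.
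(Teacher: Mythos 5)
Your proposal is correct and takes essentially the same route as the paper, whose proof of Lemma~\ref{lem:10.2} consists precisely of deducing it from the closed form (\ref{eq:10.6}) together with Proposition~\ref{prop:A7} (equivalently, by specializing Proposition~\ref{prop:9.2} to $\fg=A_{1|1}$). One remark your careful bookkeeping makes visible: the $S$-transformation interchanges the theta-characteristics, so the right-hand side of part (a) should strictly carry the denominator with $\epsilon$ and $\epsilon'$ interchanged (as in the analogous Lemma~\ref{lem:9.1}(a)); since the prefactor $-(-1)^{(1-2\epsilon)(1-2\epsilon')}\tau$ is symmetric in $\epsilon,\epsilon'$, this does not affect anything downstream.
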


Next, formulae (\ref{eq:9.4}) and  (\ref{eq:9.7}) for characters
and supercharacters of $N=4$ non-twisted and twisted modules are
respectively:
\begin{eqnarray}
  \label{eq:10.7}
 (\fpm \ch^\pm_{H(\Lambda)})(\tau,z,t)
   &=& (\hat{R}^{A_{1|1}\pm}\ch^\pm_\Lambda)
    \left(\tau,z+\frac{\tau}{2}\,,\,z-\frac{\tau}{2},t\right)\,;\\
% %
\label{eq:10.8}
(\ftwpm\ch^\pm_{H^{\tw}(\Lambda)}) (\tau,z,t)
    &=& \left( \hat{R}^{A_{1|1}\tw,\pm} \ch^{\tw,\pm}_\Lambda \right)
         \left(\tau,z+\frac{\tau}{2}, z-\frac{\tau}{2},t\right)\,.
 \end{eqnarray}

As before, we change now notation by letting $\epsilon =\frac12$
(resp. $=0$) in the case of $+$ (resp. $-$), and $\epsilon'=\frac12$
 (resp. $=0$) in the non-twisted (resp. twisted) case.  For
 example,  $\ftwp = \fRh_0$.

Applying formulae (\ref{eq:10.7}) and (\ref{eq:10.8}) to $\Lambda
= \Lambda^{(s)}_{k_1,k_2}$ $(s=1,2,3,4)$ and using the
modification (\ref{eq:8.27}) of the numerators of the admissible
characters of $\hat{A}_{1|1}$, we obtain the following {\em
  partially modified} $N=4$ characters:
\begin{eqnarray*}
  \fReh \ch^{(\epsilon)'}_{H (\Lambda^{(s)}_{k_1,k_2})}(\tau,z,t)
     = D_1 \tilde{\Psi}^{[M,m-1;\epsilon]}_{j,-k;0} 
         \left(\tau, z+\frac{\tau}{2},z-\frac{\tau}{2},-t   \right)\,;\\[1ex]
  \left( \fRez \ch^{(\epsilon)'}_{H^{\tw}(\Lambda^{(s)}_{k_1,k_2})}\right)
     (\tau,z,t)
    = D_1 \tilde{\Psi}^{[M,m-1;\epsilon]}_{k+\frac12, -j+\frac12 ;\frac12}
      (\tau,z+\frac{\tau}{2}, z-\frac{\tau}{2}, -t)\, .
\end{eqnarray*}

The {\em modified} $N=4$ characters are obtained from the
partially modified ones by the shifts $j \mapsto j+\frac12\,,\,
k\mapsto k+\frac12$, and the corresponding shifts $z \mapsto z\pm
\frac{\tau}{2}$.  Namely, we let
\begin{eqnarray}
  \label{eq:10.9}
  \left( \fReh \tilde{\ch}^{(\epsilon)}_{H \left(
        \Lambda^{(s)}_{k_1,k_2}\right)}\right)(\tau,z,t)
  &=&  G^{[M,m;\epsilon]}_{j+\frac{1}{2},-k-\frac{1}{2};\frac12} (\tau,z,t)\,;\\[1ex]
\label{eq:10.10}
\left( \fRez
  \tilde{\ch}^{(\epsilon)}_{H^{\tw}{\left(\Lambda^{(s)}_{k_1,k_2}
      \right)}}\right) (\tau,z,t)
  &=& G^{[M,m;\epsilon]}_{k+1,-j;0} (\tau,z,t)\,, \\
\noalign{\hbox{where}}\nonumber \\
\label{eq:10.11}
  G^{[M,m;\epsilon]}_{j,k;\epsilon'} (\tau,z,t)
  &=& D_1 \tilde{\Psi}^{[M,m-1;\epsilon]}_{j,k;\epsilon'} (\tau,z,z,-t)\,.
\end{eqnarray}

\begin{remark}
  \label{rem:10.3}
  \begin{list}{}{}
  \item (a) One can replace $D_1$ by $D_0$ in (\ref{eq:10.11}).

\item (b)  We have:
  \begin{eqnarray*}
    \left( D_1 \Psi^{[M,m;\epsilon]}_{j,-k;\epsilon'} \right)
    \left( \tau, z+ \frac{\tau}{2}, z-\frac{\tau}{2},-t \right)
    = q^{\frac{m+1}{4M}} \left( D_1
     \Psi^{[M,m;\epsilon]}_{j+\frac12, -k-\frac12;\frac12- \epsilon'} 
       \right) (\tau,z,z,-t)\,,
\end{eqnarray*}
  \end{list}
\noindent{and the same identity holds for $\tilde{\Psi}$.  
Hence passing from
the partial modification to the modification of the $N=4$
characters amounts to removing the factor $q^{\frac{m+1}{4M}}$.}

\end{remark}

Since the action of $SL_2 (\ZZ)$ commutes with $D_1$, we deduce from
Theorem~\ref{th:7.1} modular transformation formulae
for the numerators of the modified $N=4$ characters:
\begin{eqnarray}
  \label{eq:10.12}
  G^{[M,m;\epsilon]}_{j,k;\epsilon'}\left(-\frac{1}{\tau},
    \frac{z}{\tau}, t+\frac{z^2}{\tau}   \right)
  &=& \frac{\tau^2}{M} \sum_{a,b \in \epsilon +\ZZ/M\ZZ}
      e^{-\frac{2\pi i m}{M}(ak+bj)}G^{[M,m;\epsilon']}_{a,b;\epsilon}
        (\tau,z,t)\,,\\[1ex]
  \label{eq:10.13}
   G^{[M,m;\epsilon]}_{j,k;\epsilon'} (\tau +1,z,t)
     & =& e^{\frac{2\pi i mjk}{M}} 
     G^{[M,m;|\epsilon - \epsilon'|]}_{j,k;\epsilon'}  (\tau,z,t)\,.          
\end{eqnarray}

We rewrite formula (\ref{eq:10.12}), using that
$G^{[M,m;\epsilon]}_{k,j;\epsilon'}$ changes sign if we permute
$k$ and $j$.  Permuting $a$ and $b$ in the RHS of
(\ref{eq:10.12}) and adding the obtained equation to
(\ref{eq:10.12}) we obtain, after some simple manipulations:
\begin{eqnarray}
  \label{eq:10.14}
\lefteqn{\hspace{-4.65in}  G^{[M,m;\epsilon]}_{j,-k;\epsilon'} 
  \left(-\frac{1}{\tau}, \frac{z}{\tau},
    t+ \frac{z^2}{\tau}\right)}\\
 = \frac{i\tau^2}{M} \sum_{a,b \in \epsilon +\ZZ/M\ZZ} 
    e^{-\frac{\pi i m}{M} (a-b)(j-k)} \sin \frac{\pi m}{M}(a+b)(j+k)
      G^{[m,M;\epsilon']}_{a,-b;\epsilon} (\tau,z,t)\,.\nonumber
\end{eqnarray}

We have used above the parametrization of the admissible weights
$\Lambda^{(s)}_{k_1,k_2}$ by the pairs $(j,k)$, given by
(\ref{eq:8.9}), (\ref{eq:8.11}), (\ref{eq:8.13}) and
(\ref{eq:8.15}).  In order to write down a unified formula for
modified $N=4$ characters, it is convenient to introduce new parameters:
\begin{eqnarray}
  \label{eq:10.15}
    \tilde{j} &=& j+\frac12\,,\, \tilde{k}=k+\frac12 
    \hbox{\,\,  in the non-twisted case,}\\
    \tilde{j} &=& k+1\,,\, \tilde{k}=j  \hbox{\,\, in the twisted case.}
      \nonumber
\end{eqnarray}
Then (\ref{eq:10.9}) and (\ref{eq:10.10}) can be written in a
unified way:
\begin{equation}
  \label{eq:10.16}
  \left(\fee \tilde{\ch}^{N=4[M,m;\epsilon]}_{\tilde{j},\tilde{k};\epsilon'}  \right) (\tau,z,t) = 
G^{[M,m;\epsilon]}_{\tilde{j},    -\tilde{k};\epsilon'}
    (\tau,z,t)\,.
\end{equation}

We describe below the precise parametrization of these modified
$N=4$ characters.  First, the following lemma is obtained by a
direct calculation from (\ref{eq:10.2})--(\ref{eq:10.5}).

\begin{lemma}
  \label{lem:10.4}

Let $\Lambda = \Lambda^{(s)}_{k_1,k_2}$ be an admissible weight
of level $K=-\frac{m}{M}$ for $\hat{A}_{1|1}$.  Assume that $H
(\Lambda) \neq 0$ 
, so that the $N=4$ superconformal algebra
modules $H(\Lambda)$ and $H^{\tw}(\Lambda)$ are irreducible.  Then
their lowest energies and spins are described by the following
formulae in the parameters $(\tilde{j},\tilde{k})$ given by
(\ref{eq:10.15}):
\begin{eqnarray*}
  s=1 : h_\Lambda &=& K\tilde{j}\tilde{k}+\tilde{j} -\frac{K+2}{4}\,,\,
             s_\Lambda = K (\tilde{k}-\tilde{j})\,,\\
        h^{\tw}_\Lambda &=& K\tilde{j}\tilde{k}+\tilde{k} -\frac{K+1}{4}\,,\,\,
             s^{\tw}_\Lambda =K (\tilde{k}-\tilde{j})-1\,;\,\\
   s=2 : h_\Lambda &=& K(M-\tilde{j})(M-\tilde{k})+M-\tilde{j}
               -\frac{K+2}{4}\,,\,
              s_{\Lambda} =K (\tilde{k}-\tilde{j})-2\,,\,\\
% %
         h^{\tw}_\Lambda  &=& K(M-\tilde{j}) (M-\tilde{k}) +M-\tilde{k}
                 -\frac{k+1}{4}\,,\,\,
             s^{\tw}_\Lambda = K (\tilde{k}-\tilde{j})+1 \, ; \\
% %
  s=3 :  h_\Lambda &=& K\tilde{j}\tilde{k}+\tilde{k} -\frac{K+2}{4}\,,\,
             s_\Lambda = K (\tilde{k}-\tilde{j})-2\,,\,\\
         h^{\tw}_\Lambda &=& K\tilde{j}\tilde{k}+\tilde{j} 
              -\frac{K+1}{4}  \,,\, s^{\tw}_\Lambda 
                  = K (\tilde{k}-\tilde{j}) +1\, ;\, \\ 
  s=4 : h_\Lambda &=& K (M-\tilde{j})(M-\tilde{k})+M-\tilde{k}
               -\frac{K+2}{4}\,,\, s_\Lambda = K (\tilde{k}-\tilde{j})\,,\\
        h^{\tw}_\Lambda &=& K (M-\tilde{j}) (M-\tilde{k})+M-\tilde{j}
               -\frac{K+1}{4}\,,\, 
             s^{\tw}_\Lambda = K(\tilde{k}- \tilde{j})-1\,.
\end{eqnarray*}

\end{lemma}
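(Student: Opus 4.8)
The plan is to substitute into the already-specialized formulas (\ref{eq:10.2})--(\ref{eq:10.5}) the explicit labels of the admissible highest weights $\Lambda^{(s)}_{k_1,k_2}$ and then re-express everything in the parameters $\tilde{j},\tilde{k}$ defined by (\ref{eq:10.15}). First I would record, for each $s=1,2,3,4$, the two relevant labels of $\Lambda^{(s)}_{k_1,k_2}$, namely the common coefficient $m_1$ of $\Lambda_1$ and $\Lambda_3$ and the coefficient $m_2$ of $\Lambda_2$, read off from Proposition~\ref{prop:3.14} after the substitution $m\mapsto -m$ indicated at the start of the negative-level discussion in Section~\ref{sec:8}, and subject to the constraint $k_1=k_3$ coming from (\ref{eq:8.1}) (which is precisely what makes the weight descend to $\hat{A}_{1|1}$). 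For example, for $s=1$ one reads off $m_1=-k_1K$, $m_2=k_2K$, while for $s=2$ one reads off $m_1=k_1K$, $m_2=-k_2K-2$, and similarly in the remaining two cases.

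The key algebraic mechanism, since $K=-\tfrac{m}{M}$, is the single identity $\tfrac{MK}{m}=-1$: this is what dissolves the denominators $m$ in (\ref{eq:10.2}) and (\ref{eq:10.4}), collapsing the quadratic term $-\tfrac{Mm_1(m_1-m_2-1)}{m}$ to $\pm k_1\bigl((k_1+k_2)K+1\bigr)$. Everything else in (\ref{eq:10.2})--(\ref{eq:10.5}) is linear in the labels, so after this collapse each of $h_\Lambda$, $s_\Lambda$, $h^{\tw}_\Lambda$, $s^{\tw}_\Lambda$ is a manifestly quadratic-plus-linear expression in $k_1,k_2,K$.

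Next I would reparametrize. For each type the passage from $(k_1,k_2)$ to $(j,k)$ is fixed by (\ref{eq:8.9}), (\ref{eq:8.11}), (\ref{eq:8.13}), (\ref{eq:8.15}), and then $(\tilde{j},\tilde{k})$ is given by (\ref{eq:10.15}) --- crucially with \emph{two different} prescriptions, $\tilde{j}=j+\tfrac12,\ \tilde{k}=k+\tfrac12$ in the non-twisted case versus $\tilde{j}=k+1,\ \tilde{k}=j$ in the twisted case. I would then carry out the substitution and verify, for each $s$ separately, that $h_\Lambda,s_\Lambda$ assume the claimed form when written in the non-twisted $(\tilde{j},\tilde{k})$ and that $h^{\tw}_\Lambda,s^{\tw}_\Lambda$ assume the claimed form when written in the twisted $(\tilde{j},\tilde{k})$; the appearance of $M-\tilde{j}$, $M-\tilde{k}$ in cases $s=2,4$ is exactly the reflection $k_i\mapsto M-(\cdots)$ already built into (\ref{eq:8.11}), (\ref{eq:8.15}). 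A couple of sample checks confirm the pattern: for $s=1$ non-twisted, $K\tilde{j}\tilde{k}+\tilde{j}-\tfrac{K+2}{4}$ with $\tilde{j}=k_1+\tfrac12,\ \tilde{k}=k_1+k_2+\tfrac12$ indeed reproduces $Kk_1(k_1+k_2)+k_1K+\tfrac12 Kk_2+k_1$, and for $s=1$ twisted, $K\tilde{j}\tilde{k}+\tilde{k}-\tfrac{K+1}{4}$ with $\tilde{j}=k_1+k_2+1,\ \tilde{k}=k_1$ reproduces the twisted energy.

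The main obstacle is not any individual computation but the bookkeeping. One must keep straight simultaneously: (i) the sign flip $m\mapsto-m$; (ii) the constraint $k_1=k_3$; (iii) the four distinct $(j,k)$-parametrizations; and, most treacherously, (iv) that the symbols $\tilde{j},\tilde{k}$ denote \emph{different} functions of $(j,k)$ in the twisted and non-twisted rows of the conclusion, so that a single pair $(k_1,k_2)$ feeds two inequivalent reparametrizations. I would therefore organize the verification as a $4\times 2$ table (type $\times$ sector), checking case-by-case the two universal constants $\tfrac{K+2}{4}$ and $\tfrac{K+1}{4}$, the integer spin shifts $0,\pm1,\pm2$, and whether the surviving linear term is $\tilde{j}$ or $\tilde{k}$. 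Once the table is filled, the lemma follows.
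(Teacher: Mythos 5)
Your proposal is correct and is exactly the route the paper takes: the paper's entire proof of Lemma~\ref{lem:10.4} is the remark that it ``is obtained by a direct calculation from (\ref{eq:10.2})--(\ref{eq:10.5})'', and your plan — read off $m_1,m_2$ from Proposition~\ref{prop:3.14} with $m\mapsto -m$ and $k_1=k_3$, use $K=-\tfrac{m}{M}$ to clear the denominator in the quadratic term, then pass through the $(j,k)$ and $(\tilde{j},\tilde{k})$ reparametrizations case by case — is precisely that calculation, and your sample check for $s=1$ is accurate.
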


Since the positive energy irreducible modules over the $N=4$ superconformal
algebras are determined by their characteristic numbers, we obtain
the following corollary.

\begin{corollary}
  \label{cor:10.5}
  We have the following isomorphisms of modules over the $N=4$
  superconformal algebra of Neveu--Schwarz type:
  \begin{equation*}
  H (\Lambda^{(1)}_{k_1,k_2})\simeq H (\Lambda^{(4)}_{k_1 +1 ,k_2})\,,\,
    H (\Lambda^{(3)}_{k_1,k_2})\simeq H(\Lambda^{(2)}_{k_1+1,k_2})\,,
  \end{equation*}
and similar isomorphims for the $N=4$ superconformal algebra of
Ramond type, replacing $H$ by $H^{\tw}$\,.
\end{corollary}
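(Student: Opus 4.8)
The plan is to invoke the characterization of irreducible positive-energy modules over the $N=4$ superconformal algebra by their three characteristic numbers---the central charge $c_K$, the lowest energy, and the spin---and to verify that the two modules in each claimed isomorphism share all three invariants. Since every module under consideration arises by quantum Hamiltonian reduction from an admissible $\hat{A}_{1|1}$-module of the fixed level $K=-\tfrac{m}{M}$, formula (\ref{eq:10.1}) shows that they all carry the same central charge $c_K = 6\left(\tfrac{m}{M}-1\right)$. Thus it remains only to match the lowest energies and the spins, and the one representation-theoretic input to be cited is precisely the uniqueness of an irreducible positive-energy $N=4$ module given $(c_K,h_\Lambda,s_\Lambda)$ in the Neveu--Schwarz sector (resp. $(c_K,h^{\tw}_\Lambda,s^{\tw}_\Lambda)$ in the Ramond sector).

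First I would read off the lowest energy and spin of each module directly from Lemma~\ref{lem:10.4}, after substituting the values of $(\tilde j,\tilde k)$ that correspond to the relevant $\Lambda^{(s)}_{k_1,k_2}$ via (\ref{eq:10.15}) together with the parametrizations (\ref{eq:8.9}), (\ref{eq:8.11}), (\ref{eq:8.13}), (\ref{eq:8.15}). For the isomorphism $H(\Lambda^{(1)}_{k_1,k_2})\simeq H(\Lambda^{(4)}_{k_1+1,k_2})$ in the Neveu--Schwarz sector, I would compute $(\tilde j,\tilde k)=(k_1+\tfrac12,\,k_1+k_2+\tfrac12)$ for the $s=1$ weight and $(\tilde j,\tilde k)=(M-k_1-k_2-\tfrac12,\,M-k_1-\tfrac12)$ for the $s=4$ weight with index shifted to $(k_1+1,k_2)$; the substitutions $M-\tilde j=k_1+k_2+\tfrac12$ and $M-\tilde k=k_1+\tfrac12$ then make the $s=4$ formulae of Lemma~\ref{lem:10.4} coincide termwise with the $s=1$ formulae, yielding equal $h_\Lambda$ and $s_\Lambda$. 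The isomorphism $H(\Lambda^{(3)}_{k_1,k_2})\simeq H(\Lambda^{(2)}_{k_1+1,k_2})$ is handled identically using the $s=3$ and $s=2$ rows, and the Ramond-sector statements follow by repeating the computation with the $h^{\tw}$ and $s^{\tw}$ entries of the same lemma and the twisted choice of $(\tilde j,\tilde k)$ in (\ref{eq:10.15}).

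The main point to watch, rather than a genuine obstacle, is bookkeeping consistency: I must check that the index shift $k_1\mapsto k_1+1$ sends an admissible weight in the range attached to $s=1$ (resp. $s=3$) to a weight still lying in the admissible range attached to $s=4$ (resp. $s=2$), so that both sides genuinely index nonzero irreducible modules $H(\Lambda)\neq 0$. This amounts to comparing the inequality systems (\ref{eq:8.9})/(\ref{eq:8.15}) and (\ref{eq:8.13})/(\ref{eq:8.11}) after the shift, which is a short and routine verification. Granting this range compatibility and the uniqueness of irreducible positive-energy $N=4$ modules by their characteristic numbers, the equalities of $(h,s)$ (and of $(h^{\tw},s^{\tw})$) established above immediately deliver the asserted isomorphisms.
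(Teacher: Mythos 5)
Your proposal matches the paper's argument exactly: the paper deduces the corollary from Lemma~\ref{lem:10.4} together with the fact that irreducible positive energy $N=4$ modules are determined by the triple $(c_K,h,s)$ (resp.\ $(c_K,h^{\tw},s^{\tw})$), which is precisely the computation you outline. Your extra check that the index shift $k_1\mapsto k_1+1$ stays within the admissible ranges is sound bookkeeping that the paper leaves implicit.
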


%the property (ii) of $H(Lambda)$
%Lemma~\ref{lem:10.1} 

%by Lemmma~\ref{lem:10.1}, $H(\Lambda^{(s)}_{k_1,k_2})=0$ iff $s=1$ or $3$ and   $\tilde{j}+\tilde{k}=M$, and, 

%Therefore, by the property (ii) of $H(\Lambda)$ and
%by Corollary~\ref{cor:10.5}, the
%$N=4$ modules in question are the remaining ones among the
%modules from the set $\{ H (\Lambda^{(1)}_{k_1,k_2}) \} \bigcup \{ H
%(\Lambda^{(3)}_{k_1,k_2})\}$, and the same holds in the twisted case.

Now we turn to modular transformation formulae for the
modified $N=4$ characters, defined by (\ref{eq:10.16}).  Using
Lemma~\ref{lem:10.2}(a) and (\ref{eq:10.14}), we obtain:
\begin{eqnarray}
   \label{eq:10.17}
 \lefteqn{\hspace{-5.25in} \tilde{\ch}^{N=4 [M,m;\epsilon]}_{\tilde{j},\tilde{k};\epsilon'}
    \left(-\frac{1}{\tau}, \frac{z}{\tau}\,, t+\frac{z^2}{\tau} \right)}\\
    = -(-1)^{(1-2\epsilon)(1-2\epsilon')}\frac{i\tau}{M}
      \sum_{\tilde{a},\tilde{b}\in \epsilon +\ZZ/M\ZZ}
        e^{-\frac{\pi i m}{M} (\tilde{a}-\tilde{b})(\tilde{j}-\tilde{k})}
          \sin \frac{\pi m}{M} (\tilde{a}+\tilde{b}) (\tilde{j}+\tilde{k})
        \,\,   \tilde{\ch}^{N=4[M,m;\epsilon']}_{\tilde{a},\tilde{b};\epsilon}
             (\tau,z,t)\,.\nonumber
\end{eqnarray}
%
%Using Lemma~\ref{lem:10.2}(b) and (\ref{eq:10.13}), we get 
%
%\begin{equation}
%  \label{eq:10.18}
%  \tilde{\ch}^{N=4
%    [M,m;\epsilon]}_{\tilde{j},\tilde{k};\epsilon'} (\tau +1,z,t)
%       = e^{-\frac{2\pi i m\tilde{j}\tilde{k}}{M}-\pi i \epsilon'}
%       \tilde{\ch}^{N=4[M,m;|\epsilon-\epsilon'|]}_{\tilde{j},\tilde{k};\epsil%on'}(\tau,z,t).
%\end{equation}
In order to obtain the final modular transformation formulae we need the
following remarks.

\begin{remark}   
\label{rem:10.6}
  \begin{list}{}{}
  \item (a)
    $\tilde{\ch}^{N=4[M,m;0]}_{\tilde{j},\tilde{k};\epsilon'}$
    and $e^{\frac{\pi im (\tilde{j}-\tilde{k})}{M}}
\tilde{\ch}^{N=4[M,m;\tfrac12]}_{\tilde{j},\tilde{k};\epsilon'} $  remain
    unchanged if we add to $\tilde{j}$ and to $\tilde{k}$
    some integer multiples of $M$.

 \item (b)  $\tilde{\ch}^{N=4 [M,m;\epsilon]}_{\tilde{j},\tilde{k};\epsilon'} 
 = - \tilde{\ch}^{N=4  [M,m;\epsilon]}_{-\tilde{k},-\tilde{j};\epsilon'}$.

 \item (c) $\tilde{\ch}^{N=4 [M,m;\epsilon]}_{\tilde{j},\tilde{k};\epsilon'}
  =0$ if $\tilde{j}+\tilde{k}\in M\ZZ$.

\item (d)  The $(\tilde{a},\tilde{b})$ coefficient and the
   $(M-\tilde{b},M-\tilde{a})$ coefficient in (\ref{eq:10.17}) are equal.

\item (e) $\tilde{\ch}^{N=4 [M,m;\epsilon]}_{0,\tilde{k};0}=-
e^{2\pi im\epsilon} 
\tilde{\ch}^{N=4 [M,m;\epsilon]}_{M-\tilde{k},0;0}$ hence 
$\tilde{\ch}^{N=4 [M,m;\epsilon]}_{0,\tilde{k};0}$  
for   $0<\tilde{k} <M$ can be replaced in (\ref{eq:10.17}) by
  $\tilde{\ch}^{N=4 [M,m;\epsilon]}_{\tilde{j},0;0}$, $0<\tilde{j}<M$.
   \end{list}
Claims (a) and (b) follow, by (\ref{eq:10.11}), from the corresponding
properties of the functions  $\tilde{\Psi}^{[M,m-1;\epsilon]}_{j,k;\epsilon'} 
(\tau,z,z,t)$. For (c), letting 
$\tilde{k}=-\tilde{j}$ 
in (b), we obtain that 
$\tilde{\ch}^{N=4[M,m;\epsilon]}_{\tilde{j},-\tilde{j};\epsilon'}=0$. 
Hence, if $\tilde{k}+\tilde{j}=nM$ with $n\in \ZZ$, we have, by (a):
$\tilde{\ch}^{N=4[M,m;\epsilon]}_{\tilde{j},\tilde{k};\epsilon'}=
\tilde{\ch}^{N=4[M,m;\epsilon]}_{\tilde{j},nM-\tilde{j};\epsilon'}= 
\tilde{\ch}^{N=4[M,m;\epsilon]}_{\tilde{j},-\tilde{j};\epsilon'}=0$.
The proof of (d) is straightforward. Claim (e) is obtained by letting 
$\tilde{j}=0$ in (b) and using (a).  
\end{remark}

Letting, as before, $\epsilon=\frac12$ in the Neveu--Schwarz case and $\epsilon =0$ in the Ramond case, introduce the following subsets in the $\tilde{j},\tilde{k}$-plane:
\begin{equation*}   
\Omega^{N=4(M)}_\epsilon = \{ (\tilde{j},\tilde{k}) \in   
\epsilon + \ZZ\, |\,\,\, 0<\tilde{j},\tilde{j}+\tilde{k}<M\,,\, 0\leq 
\tilde{k}<M\} \,.
\end{equation*}
It follows from Remark \ref{rem:10.6} and Corollary~\ref{cor:10.5} that these 
subsets parametrize the non-zero modified characters of irreducible modules 
over the Neveu--Schwarz type and the Ramond type $N=4$ superconformal 
algebras, obtained 
by the quantum Hamiltonian reduction from all the admissible 
$\hat{A}_{1|1}$-modules $L(\Lambda^{(s)}_{k_1,k_2})$ and 
$L^{\tw}(\Lambda^{(s)}_{k_1,k_2})$. 
 As a result, we can rewrite (\ref{eq:10.17}) as in the following
theorem. The $T$-transformation formula in this theorem
follows from Lemma~\ref{lem:10.2}(b) and (\ref{eq:10.13}). 

\begin{theorem}
  \label{th:10.7}
Let $M$ and $m$ be positive coprime integers, such that $M\geq 2$
and $\gcd(M,2m)=1$ if $m>1$, and let
$\epsilon,\epsilon'=0$ or $\frac12$. Let
$\tilde{\ch}^{N=4[M,m;\epsilon]}_{\tilde{j},\tilde{k};\epsilon'}
(\tau,z,t)$, $(\tilde{j},\tilde{k} )\in
\Omega^{N=4(M)}_{\epsilon'}$, be the modified characters and
supercharacters  of modules over the $N=4$ Neveu--Schwarz
(resp. Ramond) type superconformal algebras if $\epsilon=\frac12$
(resp. $\epsilon =0$), obtained by the quantum Hamiltonian
reduction from level $K=-\frac{m}{M}$ admissible $\hat{A}_{1|1}$-
modules (resp. twisted modules).  Then
\begin{eqnarray*}
\lefteqn{\hspace{-4.5in}  
\tilde{\ch}^{N=4[M,m;\epsilon]}_{\tilde{j},\tilde{k};\epsilon'}
   \left(  -\frac{1}{\tau}, \frac{z}{\tau}, t+\frac{z^2}{\tau}\right)
  =-(-1)^{(1-2\epsilon)(1-2\epsilon')}\frac{2 i\tau}{M}} \\[1ex]
     \times\sum_{(\tilde{a},\tilde{b}) \in\Omega^{N=4(M)}_\epsilon}
       e^{-\frac{\pi  im}{M}(\tilde{a}-\tilde{b})(\tilde{j}-\tilde{k})}
       \sin \frac{\pi m}{M} (\tilde{a}+\tilde{b})(\tilde{j}+\tilde{k})
       \,\, \tilde{\ch}^{N=4 [M,m;\epsilon']}_{\tilde{a},\tilde{b};\epsilon}
          (\tau,z,t) \,; \\[1ex]
 \tilde{\ch}^{N=4[M,m;\epsilon]}_{\tilde{j},\tilde{k};\epsilon'}
     (\tau+1,z,t) 
       =e^{-\frac{2\pi im \tilde{j}\tilde{k}}{M}- \pi i \epsilon'}
 \tilde{\ch}^{N=4 [M,m;|\epsilon-\epsilon'|]}_{\tilde{j},\tilde{k};\epsilon'}
      (\tau,z,t)\,.
\end{eqnarray*}

\end{theorem}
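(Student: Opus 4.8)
The plan is to deduce both transformation formulae entirely from material already assembled in this section: the $S$-transformation by folding the "unfolded" identity (\ref{eq:10.17}) down onto the fundamental domain $\Omega^{N=4(M)}_{\epsilon}$, and the $T$-transformation directly from Lemma~\ref{lem:10.2}(b) together with (\ref{eq:10.13}). No new modularity input is needed; the analytic content sits in (\ref{eq:10.17}), Lemma~\ref{lem:10.2}, and (\ref{eq:10.13}), and the remaining work is the bookkeeping of index sets supplied by Remark~\ref{rem:10.6}.

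For the $S$-transformation I would begin from (\ref{eq:10.17}), whose right-hand side sums over the full index set $\tilde a,\tilde b\in\epsilon+\ZZ/M\ZZ$, and reduce it to a sum over $\Omega^{N=4(M)}_{\epsilon}$ using the involution $(\tilde a,\tilde b)\mapsto(M-\tilde b,M-\tilde a)$. First I would discard the terms with $\tilde a+\tilde b\in M\ZZ$: these vanish by Remark~\ref{rem:10.6}(c). Since $\tilde j+\tilde k\in\ZZ$ in both the Neveu--Schwarz and the Ramond parametrization (\ref{eq:10.15}), this vanishing locus is exactly the fixed-point set of the involution, so the involution acts freely on the surviving terms and swaps the region $\tilde a+\tilde b<M$ with $\tilde a+\tilde b>M$. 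Next, Remark~\ref{rem:10.6}(a),(b),(d) combine to show that each surviving summand of (\ref{eq:10.17})---the product of the coefficient $e^{-\frac{\pi im}{M}(\tilde a-\tilde b)(\tilde j-\tilde k)}\sin\frac{\pi m}{M}(\tilde a+\tilde b)(\tilde j+\tilde k)$ with the character $\tilde\ch^{N=4[M,m;\epsilon']}_{\tilde a,\tilde b;\epsilon}$---is invariant under the involution. Summing over the full square therefore produces exactly twice the sum over $\{\tilde a+\tilde b<M\}$, which accounts for the factor $2$ appearing in the theorem.

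It then remains to match $\{\tilde a+\tilde b<M\}$ with $\Omega^{N=4(M)}_{\epsilon}=\{(\tilde a,\tilde b)\in\epsilon+\ZZ:0<\tilde a,\ \tilde a+\tilde b<M,\ 0\le\tilde b<M\}$: the latter additionally requires $\tilde a>0$, whereas the naive half-domain still contains the edge $\tilde a=0$ (present in the Ramond case $\epsilon=0$). I would eliminate this edge via Remark~\ref{rem:10.6}(e), which rewrites $\tilde\ch^{N=4[M,m;\epsilon]}_{0,\tilde k;0}$ in terms of $\tilde\ch^{N=4[M,m;\epsilon]}_{M-\tilde k,0;0}$ and thus reassigns the $\tilde a=0$ contributions to interior points with $\tilde a>0$. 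This pins down the support of the doubled sum as precisely $\Omega^{N=4(M)}_{\epsilon}$ and yields the first displayed formula. For the $T$-transformation I would use the factorization (\ref{eq:10.16}), $\fee\,\tilde\ch^{N=4[M,m;\epsilon]}_{\tilde j,\tilde k;\epsilon'}=G^{[M,m;\epsilon]}_{\tilde j,-\tilde k;\epsilon'}$: applying (\ref{eq:10.13}) with $(j,k)=(\tilde j,-\tilde k)$ gives the numerator a factor $e^{-2\pi im\tilde j\tilde k/M}$ and turns the superscript $\epsilon$ into $|\epsilon-\epsilon'|$, while Lemma~\ref{lem:10.2}(b), $\fee(\tau+1)=e^{\pi i\epsilon'}\feee$, transforms the denominator; dividing, the phases combine to $e^{-2\pi im\tilde j\tilde k/M-\pi i\epsilon'}$ and the character superscript becomes $|\epsilon-\epsilon'|$, which is the second displayed formula.

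I expect the main obstacle to be purely combinatorial rather than analytic: checking that the involution-folding, the vanishing locus of Remark~\ref{rem:10.6}(c), and the edge identification of Remark~\ref{rem:10.6}(e) interlock so that the doubled sum is supported exactly on $\Omega^{N=4(M)}_{\epsilon}$, with neither double counting nor omission, and keeping track of the extra periodicity phase $e^{\pi i m(\tilde j-\tilde k)/M}$ that enters in the $\epsilon=\tfrac12$ case of Remark~\ref{rem:10.6}(a). Everything else is a routine substitution.
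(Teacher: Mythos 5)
Your overall strategy is the same as the paper's (whose proof is essentially the paragraph preceding the theorem): fold the full sum in (\ref{eq:10.17}) onto $\Omega^{N=4(M)}_{\epsilon}$ using Remark~\ref{rem:10.6}, and get the $T$-formula from (\ref{eq:10.16}), (\ref{eq:10.13}) and Lemma~\ref{lem:10.2}(b). The $T$-part of your argument is correct as written. The problem is in the folding step for the Ramond-side index $\epsilon=0$.

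On the torus $(\epsilon+\ZZ/M\ZZ)^2$ the involution $\sigma:(\tilde a,\tilde b)\mapsto(-\tilde b,-\tilde a)$ does \emph{not} swap $\{\tilde a+\tilde b<M\}$ with $\{\tilde a+\tilde b>M\}$ when $\epsilon=0$: it maps the line $\tilde a\equiv 0$ onto the line $\tilde b\equiv 0$, and with representatives in $[0,M)$ \emph{both} of these edges lie in $\{\tilde a+\tilde b<M\}$. Hence ``full sum $=2\times$ sum over $\{\tilde a+\tilde b<M\}$'' is false; the correct identity is
$$\sum_{\text{torus}}=2\sum_{H^{\circ}}+2\sum_{\tilde b=0\ \text{edge}},$$
where $H^{\circ}$ is the interior $\{\tilde a,\tilde b>0,\ 0<\tilde a+\tilde b<M\}$, because the two edges are exchanged by $\sigma$ and contribute equal amounts, while $2\sum_{\{\tilde a+\tilde b<M\}}$ counts \emph{both} edges twice. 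Your subsequent repair via Remark~\ref{rem:10.6}(e) makes this worse rather than better: (e) converts the $\tilde a=0$ terms into $\tilde b=0$ terms, which are already present in $\{\tilde a+\tilde b<M\}$, so you would end with $2\sum_{H^{\circ}}+4\sum_{\tilde b=0}$, i.e.\ the characters $\tilde{\ch}^{N=4[M,m;\epsilon']}_{\tilde a,0;0}$ would appear with twice the correct coefficient. The right observation is that no edge surgery is needed at all: $\Omega^{N=4(M)}_{0}$, which by construction contains the $\tilde k=0$ edge but excludes the $\tilde j=0$ edge, is already an exact fundamental domain for $\sigma$ on the complement of the fixed locus $\{\tilde a+\tilde b\equiv 0\}$ (one checks $\sigma(\Omega)\cap\Omega=\emptyset$ and $|\Omega|=(M^2-M)/2$, with the fixed locus killed by Remark~\ref{rem:10.6}(c) and the vanishing of the sine). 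With that correction the factor $2$ and the index set come out exactly as in the statement; for $\epsilon=\tfrac12$ there are no coordinates $\equiv 0$ and your half-box argument is already correct.
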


\begin{remark} 
\label{rem:10.7} 
Note that $H(-m\Lambda_0)$, where $m\in \ZZ_{\geq 1}$, 
is a non-zero $N=4$ module
with $M=1$, whose modified character is zero. 
For example, the numerator of $\ch^-_{-\Lambda_0}$ is equal to 
$D_1( $normalization factor$\times 
\Phi^{[0]})$, which gives $0$ when we apply the quantum Hamiltonian reduction.
(It is easy to show that such a situation may occur only if $s=1$ or $3$ and 
$\tilde{j}+\tilde{k} =M$.)  
 \end{remark}   

\begin{remark} 
\label{rem:10.8} 
%We can replace $\tilde{\ch}$ by $\ch$ in Theorem \ref{th:10.7} 
If $m=1$, we
obtain a family of positive energy $N=4$ modules
with central charge $6(\frac{1}{M}-1)$, where $M$ is a positive integer.
It is easy to deduce from our calculations in Section \ref{sec:10}
the following formulae for their characters:
\begin{eqnarray*}
  \left(\fee \tilde{\ch}^{N=4[M,1;\epsilon]}_{\tilde{j},\tilde{k};\epsilon'} 
 \right) (\tau,z,t) =
-i(-1)^{2\epsilon}e^{-\frac{2\pi it}{M}}q^{-\frac{\tilde{j}\tilde{k}}{M}}e^{\frac{2\pi i}{M}(\tilde{j}-\tilde{k})z}\\
\times D_0\left(
%e^{\frac{2\pi i}{M}(-\tilde{k}z_1+\tilde{j}z_2}
\frac{\eta(M\tau)^3\vartheta_{11}(M\tau,z_1+z_2+(\tilde{j}-\tilde{k})\tau)}
{\vartheta_{11}(M\tau,z_1+\tilde{j}\tau +\epsilon)\vartheta_{11}(M\tau,z_2-\tilde{k}\tau +\epsilon)} \right)
|_{z_1=z_2=z}.
\end{eqnarray*}
In order to obtain the corresponding modified characters (hence a modular
invariant family), one has to add to the RHS the expression:
\begin{equation*}
i(-1)^{2\epsilon}\frac{\tilde{j}+\tilde{k}}{M}e^{-\frac{2\pi it}{M}}q^{-\frac{\tilde{j}\tilde{k}}{M}}e^{\frac{2\pi i}{M}(\tilde{j}-\tilde{k})z}
\frac{\eta(M\tau)^3\vartheta_{11}(M\tau,2z+(\tilde{j}-\tilde{k})\tau)}
{\vartheta_{11}(M\tau,z+\tilde{j}\tau +\epsilon)
\vartheta_{11}(M\tau,z-\tilde{k}\tau +\epsilon)}.
\end{equation*}
\end{remark} 

\appendix
\section{Appendix.~~A brief review of theta functions.}

\numberwithin{equation}{section}

In this appendix we review some basic facts about theta functions
(rather Jacobi forms), following the exposition in \cite{K2},
Chapter~13.

Let $L$ be a positive definite integral lattice of rank $\ell$
with a positive definite symmetric bilinear form $\bl$.  Let $\fh
= \CC \otimes_\ZZ L$ be the complexification of $L$ with the
bilinear form $\bl$, extended from $L$ by bilinearity.  Let
$\hat{\fh} = \fh \oplus \CC K \oplus \CC d$ be an $\ell
+2$-dimensional vector space over $\CC$ with the (non-degenerate)
symmetric bilinear form $\bl$, extended from $\fh$ by letting
$\fh \perp (\CC K + \CC d)$, $(K|K)=0$, $(d|d)=0$, $(K|d)=1$. We
shall identify $\hat{\fh}$ with $\hat{\fh}^*$, using this
bilinear form, so that any 
$h\in \hat{\fh}$ 
defines a linear function
$l_h$ on $\hat{\fh}$ via $l_h (h_1)=(h|h_1)$ .  

Let $X = \{ h \in \hat{\fh}|\Re (K|h)>0 \}$.
Define the following action of the additive group of the vector
space $\fh$ on $\hat{\fh}^*$ (cf. (\ref{eq:2.10})):
\begin{equation*}
  t_\alpha (h) = h+(K|h)\alpha - ((\alpha |h)+ \frac{(\alpha |
    \alpha)}{2}(K|h))K \, , \,\, \alpha \in \fh .
\end{equation*}
This action leaves the bilinear form $\bl$ on $\hat{\fh}$
invariant and fixes $K$, hence leaves the domain $X$ invariant.

A theta function (rather Jacobi form) of degree $k \in \ZZ_{\geq 0}$ is a
holomorphic function $F$ in the domain $X$, satisfying the
following four properties:

\begin{list}{}{}
\item (i)  $F (t_\alpha (h)) = F(h)$;

\item (ii)  $F (h+2\pi i \alpha) = F (h)$ for $\alpha \in L$;

\item (iii) $F (h+aK) = e^{ka} F (v)$ for all $a \in \CC$;

\item (iv)  $DF = 0$, where $D$ is the Laplace operator on $\hat{\fh}$ ,
  associated to $\bl$.

\end{list}

Denote by $Th_k$, $k \in \ZZ_{\geq 0}$, the vector space over $\CC$ of
all theta functions of degree $k$.

Let $P_k = \{ \lambda \in \hat{\fh} | (\lambda |K)=k$ and
$\bar{\lambda} \in L^*\}$, where $\bar{\lambda}$ stands for the
projection of $\lambda$ on $\fh$ and $L^* \subset \fh$ is the dual
lattice of the lattice $L$.
Given $\lambda \in P_k$, where $k$ is a positive integer, let
\begin{equation*}
  \Theta_\lambda = e^{-\frac{(\lambda|\lambda)}{2k}K}
  \sum_{\alpha \in L} e^{t_\alpha (\lambda)}\, .
\end{equation*}
This series converges to a holomorphic function in the domain
$X$, which is an example of a theta function of degree $k>0$
(properties (i)--(iii) are obvious, and property (iv) holds
since $D e^\lambda = (\lambda | \lambda) e^\lambda$).  Note that
\begin{equation*}
  \Theta_{ \lambda + k\alpha +aK } = \Theta_{\lambda} \hbox{ \,\, for
  \,\,} \alpha \in L, \,\, a \in \CC \, .
\end{equation*}

\begin{proposition}
  \label{prop:A1}
The set $\{ \Theta_\lambda |\lambda \in P_k \mod (kL+\CC K)\}$ is
a $\CC$-basis of $Th_k$ if $k>0$, and $Th_0=\CC$.
\end{proposition}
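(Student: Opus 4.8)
The plan is to work in the coordinates $h = 2\pi i(-\tau d + z + tK)$ of (0.5), in which $X = \{\Im \tau > 0\}$ and $q = e^{2\pi i \tau}$, and to translate the four defining properties (i)--(iv) of a degree $k$ theta function into explicit conditions on a Fourier expansion in the variable $z \in \fh$. Property (iii) forces $F(h) = e^{2\pi i k t}\Phi(\tau,z)$ for a holomorphic $\Phi$, while property (ii), which in these coordinates reads $\Phi(\tau, z+\alpha) = \Phi(\tau,z)$ for $\alpha \in L$, lets me expand $\Phi(\tau,z) = \sum_{\mu \in L^*} c_\mu(\tau) e^{2\pi i(\mu|z)}$ over the dual lattice $L^*$. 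In the same coordinates the element $\Theta_\lambda$ has, by (0.12), the shape $\Theta_\lambda = e^{2\pi i k t} \psi_{[\bar\lambda]}(\tau,z)$ with $\psi_{[\bar\lambda]}(\tau,z) = \sum_{\mu \equiv \bar\lambda \bmod kL} q^{(\mu|\mu)/2k} e^{2\pi i(\mu|z)}$, and the classes $\lambda \in P_k \bmod (kL + \CC K)$ are in bijection with $L^*/kL$ via $\lambda \mapsto \bar\lambda$, since the condition $(\lambda|K)=k$ fixes the level and the $\CC K$ direction is quotiented out.

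First I would settle linear independence, the easy half: distinct classes in $L^*/kL$ give the $\Theta_\lambda$ supported on disjoint cosets $\bar\lambda + kL$ of Fourier exponents $\mu$, so a vanishing linear combination forces every coefficient to vanish.

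For spanning I would feed the Fourier expansion into property (i). Writing $t_\alpha$ out from (2.10) in coordinates, invariance of $F$ under $t_\alpha$ becomes a quasi-periodicity of $\Phi$ in $z$ under the shift by $\tau\alpha$ with an explicit Gaussian multiplier; matching Fourier modes yields a recursion of the form $c_{\mu+k\alpha}(\tau) = q^{(\mu|\alpha)+\frac{k}{2}(\alpha|\alpha)} c_\mu(\tau)$, whose general solution is $c_\mu(\tau) = q^{(\mu|\mu)/2k}\, b_{[\mu]}(\tau)$ with $b_{[\mu]}$ depending only on the coset $[\mu] \in L^*/kL$. At this stage $\Phi = \sum_{[\mu]} b_{[\mu]}(\tau)\,\psi_{[\mu]}(\tau,z)$, so only the $\tau$-dependence of the $b_{[\mu]}$ remains to be removed. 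Here property (iv) enters: computing the Laplace operator $D$ in the $(\tau,z,t)$ coordinates turns $DF = 0$ into a heat equation relating $\partial_\tau \Phi$ to the $z$-Laplacian of $\Phi$ (the $\CC K \oplus \CC d$ hyperbolic plane produces the $\partial_\tau$ term and $\partial_t$ acts as $2\pi i k$). Since each $\psi_{[\mu]}$ already satisfies this heat equation — because $\Theta_\lambda$ satisfies $D\Theta_\lambda = 0$, as noted in the appendix via $D e^\lambda = (\lambda|\lambda)e^\lambda$ — subtracting gives $\sum_{[\mu]} b_{[\mu]}'(\tau)\,\psi_{[\mu]} = 0$, and the $z$-independence of the $\psi_{[\mu]}$ (the same disjoint-support argument) forces each $b_{[\mu]}$ to be constant. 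Thus $F = \sum_{[\mu]} b_{[\mu]}\,\Theta_{\lambda_\mu}$, so the $\Theta_\lambda$ span $Th_k$.

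The case $k = 0$ I would treat in parallel: property (iii) now makes $F$ independent of $t$, the quasi-periodicity from (i) admits only the mode $\mu = 0$, and the heat equation forces $c_0$ constant, giving $Th_0 = \CC$. The hard part will be the bookkeeping in the spanning step — computing the $t_\alpha$-multiplier and the operator $D$ in coordinates carefully enough that the recursion for $c_\mu$ and the heat equation produce exactly the quadratic exponent $(\mu|\mu)/2k$ matching $\Theta_\lambda$. The conceptual content, that the heat equation rigidifies the $\tau$-dependence once quasi-periodicity has pinned down the $z$-dependence, is standard, but the sign and normalization conventions in (2.10) and in $D$ must be tracked with care to avoid spurious shifts in $\mu$.
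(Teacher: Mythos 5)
Your argument for $k>0$ is correct, and it is in substance the proof the paper points to: the paper itself gives no argument beyond citing Lemma~13.2 and Proposition~13.3 of [K2], and what you describe --- Fourier expansion over $L^*$ from periodicity, the recursion $c_{\mu+k\alpha}=q^{(\mu|\alpha)+\frac{k}{2}(\alpha|\alpha)}c_\mu$ from $t_\alpha$-invariance with general solution $c_\mu=q^{(\mu|\mu)/2k}b_{[\mu]}$, and the heat equation from $DF=0$ killing $b_{[\mu]}'$ --- is exactly that proof. The bookkeeping you defer works out with the stated conventions, and linear independence by disjoint Fourier support is right. (One slip of the pen: ``$z$-independence of the $\psi_{[\mu]}$'' should read ``linear independence,'' which is what the disjoint-support argument gives.)

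The one genuine gap is the case $k=0$. Writing out $D$ in the coordinates $(\tau,z,t)$, the hyperbolic plane $\CC K\oplus\CC d$ contributes a term proportional to $\partial_t\partial_\tau$, so $DF=0$ becomes $4\pi i k\,\partial_\tau\Phi=\Delta_z\Phi$ (up to normalization). For $k=0$ this is not a heat equation but the identity $0=\Delta_z\Phi$, which is vacuous once you know $\Phi$ is constant in $z$; it places no constraint on the $\tau$-dependence. Indeed, with the four properties exactly as stated in the Appendix, any holomorphic function $f(\tau)$ of $\tau$ alone satisfies (i)--(iv) for $k=0$, so your claim that ``the heat equation forces $c_0$ constant'' fails, and $Th_0=\CC$ does not follow from these axioms alone. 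The conclusion requires the extra finiteness/support condition built into the definition of the space of theta functions in [K2], Chapter~13 (elements are finite linear combinations of the series $\Theta_\lambda$, equivalently the $\delta$-support of the exponents is constrained), which the paper's abridged definition suppresses. You should either import that condition explicitly or note that $Th_0=\CC$ is being taken as part of the definition; as written, the $k=0$ step does not close.
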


\begin{proof}
  See the proof of Proposition~13.3 and Lemma~13.2 in \cite{K2}.
\end{proof}

Introduce coordinates $(\tau,z,t)$ on $\hat{\fh}$ by
(\ref{eq:4.8}), so that $X = \{ (\tau,z,t)|\,\, \Im \tau >0 \}$ and
$q: = e^{2\pi i \tau} = e^{-K}$.  In these coordinates we have
the usual formula for a Jacobi form $\Theta_\lambda$, $\lambda \in P_k$,
of degree $k>0$:
\begin{equation}
  \label{eq:A1}
  \Theta_\lambda (\tau ,z,t)= e^{2\pi i kt} \sum_{\gamma \in
    L+\frac{\bar{\lambda}}{k}}q^{k\frac{(\gamma | \gamma)}{2}}
       e^{2\pi i k (\gamma |z)}\, .
\end{equation}

\begin{proposition}
  \label{prop:A2}
One has the following elliptic transformation formula of a Jacobi
form $\Theta_\lambda$ of degree $k$ for $\alpha \in L^*$:
\begin{equation*}
  \Theta_\lambda (\tau,z+\alpha \tau ,t) =q^{-\frac{k}{2} (\alpha|\alpha)}
    e^{-2\pi i k (\alpha |z)} \Theta_{\lambda +k\alpha} (\tau,z,t)\, .
\end{equation*}

\end{proposition}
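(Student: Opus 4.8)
The plan is to establish the formula by a direct computation starting from the series expression (\ref{eq:A1}), performing a completion of the square in the quadratic exponent and a shift of the summation lattice. First I would substitute $z + \alpha\tau$ for $z$ in (\ref{eq:A1}), giving
\begin{equation*}
  \Theta_\lambda(\tau, z+\alpha\tau, t) = e^{2\pi i kt}
    \sum_{\gamma \in L + \frac{\bar{\lambda}}{k}}
      q^{k\frac{(\gamma|\gamma)}{2}} e^{2\pi i k(\gamma|z+\alpha\tau)}.
\end{equation*}
Since $q = e^{2\pi i \tau}$, the cross term splits as $e^{2\pi i k(\gamma|\alpha)\tau} = q^{k(\gamma|\alpha)}$, so each summand acquires an extra factor $q^{k(\gamma|\alpha)}$ in its $q$-power.

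Next I would complete the square, using
\begin{equation*}
  \tfrac{(\gamma|\gamma)}{2} + (\gamma|\alpha)
    = \tfrac{(\gamma+\alpha|\gamma+\alpha)}{2} - \tfrac{(\alpha|\alpha)}{2},
\end{equation*}
and pull out the $\gamma$-independent factor $q^{-\frac{k}{2}(\alpha|\alpha)}$. Writing $e^{2\pi i k(\gamma|z)} = e^{-2\pi i k(\alpha|z)} e^{2\pi i k(\gamma+\alpha|z)}$ then extracts the remaining prefactor $e^{-2\pi i k(\alpha|z)}$, leaving a sum of the shifted summands $q^{k(\gamma+\alpha|\gamma+\alpha)/2} e^{2\pi i k(\gamma+\alpha|z)}$.

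The final step is the change of summation variable $\gamma' = \gamma + \alpha$, and this is where the hypothesis $\alpha \in L^*$ enters, in two ways that I would check carefully. As $\gamma$ runs over $L + \frac{\bar{\lambda}}{k}$, the shifted index $\gamma'$ runs over $L + \frac{\bar{\lambda}}{k} + \alpha$; and since $\overline{\lambda + k\alpha} = \bar{\lambda} + k\alpha$ (because $\alpha \in \fh$), this coset is exactly $L + \frac{\overline{\lambda+k\alpha}}{k}$, the index set for $\Theta_{\lambda+k\alpha}$. Moreover $\bar{\lambda} \in L^*$ and $k\alpha \in L^*$ give $\overline{\lambda+k\alpha}\in L^*$, so $\lambda + k\alpha \in P_k$ and the right-hand side is well-defined. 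Assembling the prefactors and recognizing the remaining sum (together with its $e^{2\pi i kt}$) as $\Theta_{\lambda+k\alpha}(\tau,z,t)$ then yields the claimed identity.

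Since this is a short manipulation of an absolutely convergent series on $X$, there is no genuine analytic obstacle; the only point requiring care is the bookkeeping in the change of variable, namely confirming that the shifted lattice coset coincides with the defining coset of $\Theta_{\lambda+k\alpha}$ and that $\lambda + k\alpha$ indeed still lies in $P_k$, so that both sides refer to legitimate Jacobi forms of degree $k$.
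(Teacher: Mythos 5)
Your computation is correct and is exactly the direct verification the paper has in mind (its proof is simply ``It is straightforward''): substitute $z+\alpha\tau$ into (\ref{eq:A1}), complete the square, and shift the summation variable, checking that the shifted coset is $L+\overline{\lambda+k\alpha}/k$ and that $\lambda+k\alpha\in P_k$ because $k\alpha\in L^*$. No issues.
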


\begin{proof} It is straightforward.
 
\end{proof}

Recall the action of the group $SL_2 (\RR)$ in the domain $X$
%:
%
%\begin{equation*}\left(
%  \begin{array}[]{cc}
%    a&b\\c&d
%  \end{array}\right)
%\, \cdot \, (\tau,z,t) = \left( \frac{a\tau +b}{c\tau+d}\,,\, 
%  \frac{z}{c\tau+d} \,,\,t-\frac{c (z|z)}{2(c\tau +d)} \right)\, ,
%
%\end{equation*}
%
and the action of the corresponding metaplectic group on the
space of meromorphic functions on $X$, given in Section 4.
%:
%
%\begin{equation*}
%  F|_A (\tau,z,t) = (c\tau +d)^{\frac{\ell}{2}}F (A\cdot
%  (\tau,z,t))\, , 
%\end{equation*}
%
%where 
%
%\begin{equation*}
%  A= \left(
%  \begin{array}[]{cc}
%    a&b \\c &d
%  \end{array}\right) \in SL_2 (\RR)\, .
%\end{equation*}

\begin{proposition}
  \label{prop:A3}
One has the following modular transformation formulae of a Jacobi
form $\Theta_\lambda$ of degree $k>0$:
\begin{list}{}{}
\item (a)  $\Theta_\lambda \left( -\frac{1}{\tau}\,,\,\frac{z}{\tau}\,,\,
    t -\frac{(z|z)}{2\tau} \right) = (-i\tau)^\frac{\ell}{2}|L^*/kL|^{-\frac12}
 \displaystyle{\sum_{\mu \in P_k\!\!\! \mod \!(kL +\CC K)}}
e^{-\frac{2\pi i}{k}(\bar{\lambda} |\bar{\mu})} \Theta_\mu (\tau,z,t)$.

\item (b) $\Theta_\lambda (\tau +1,z,t) = e^{\frac{\pi
      i(\lambda|\lambda)}{k}}  \Theta_\lambda (\tau,z,t)$, 

provided that $k(\alpha|\alpha)\in 2\ZZ$ for 
$\alpha \in L$ (in particular, 
provided that the lattice $L$ is even).

 \item (c)  The space $Th_k$ is invariant with respect to the
   (right) action of the group $SL_2 (\ZZ)$, provided that
   $k(\alpha|\alpha) \in 2 \ZZ$ for all $\alpha \in L$.

\end{list}

\end{proposition}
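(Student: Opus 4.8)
The three parts have increasing difficulty, so the plan is to dispose of (b) first, carry out the substantive Poisson-summation argument in (a), and then deduce (c) formally. Throughout I would work in the coordinates $(\tau,z,t)$, so that $\Theta_\lambda$ is given by the explicit series (\ref{eq:A1}) and $q=e^{2\pi i\tau}$.

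For (b), I would simply substitute $\tau\mapsto\tau+1$ into (\ref{eq:A1}). Each summand $q^{k(\gamma|\gamma)/2}e^{2\pi ik(\gamma|z)}$ acquires the factor $e^{\pi ik(\gamma|\gamma)}$, where $\gamma=\alpha+\bar{\lambda}/k$ with $\alpha\in L$. Expanding, $k(\gamma|\gamma)=k(\alpha|\alpha)+2(\alpha|\bar{\lambda})+(\bar{\lambda}|\bar{\lambda})/k$; the first term lies in $2\ZZ$ by the hypothesis $k(\alpha|\alpha)\in 2\ZZ$, and $(\alpha|\bar{\lambda})\in\ZZ$ since $\alpha\in L$ and $\bar{\lambda}\in L^*$. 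Hence $e^{\pi ik(\gamma|\gamma)}=e^{\pi i(\bar{\lambda}|\bar{\lambda})/k}=e^{\pi i(\lambda|\lambda)/k}$ is independent of $\gamma$ (the two norms agreeing for the chosen representatives in $P_k$) and may be pulled out of the sum, yielding (b).

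The heart of the matter is (a), the classical Jacobi theta $S$-transformation, which I would establish by Poisson summation exactly as in Chapter~13 of \cite{K2}. First I would insert the transformed arguments into (\ref{eq:A1}): the prefactor $e^{2\pi ikt}$ becomes $e^{2\pi ikt}e^{-\pi ik(z|z)/\tau}$, and after completing the square in the exponent $-\tfrac{\pi ik}{\tau}(\gamma|\gamma)+\tfrac{2\pi ik}{\tau}(\gamma|z)=\tfrac{\pi ik}{\tau}(z|z)-\tfrac{\pi ik}{\tau}(\gamma-z|\gamma-z)$, the two $(z|z)$-contributions cancel, leaving the pure Gaussian sum $e^{2\pi ikt}\sum_{\gamma\in\bar{\lambda}/k+L}e^{-\frac{\pi ik}{\tau}(\gamma-z|\gamma-z)}$ over the coset $\bar{\lambda}/k+L$. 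Applying the Poisson summation formula for $L\subset\fh_\RR$ (with dual lattice $L^*$) converts this into a sum over $\beta\in L^*$; the Fourier transform of the Gaussian supplies the factor $(-i\tau/k)^{\ell/2}$ together with the inverse covolume of $L$, and the characters produce $e^{2\pi i(\beta|\bar{\lambda})/k}$. Finally I would partition $L^*$ into cosets modulo $kL$, recognizing each inner coset sum as a theta function $\Theta_\mu$ with $\bar{\mu}\equiv\beta$; collecting the constants into $(-i\tau)^{\ell/2}|L^*/kL|^{-1/2}$ (using $|L^*/kL|=k^\ell\,\mathrm{covol}(L)^2$) and the phase into $e^{-2\pi i(\bar{\lambda}|\bar{\mu})/k}$ gives the stated formula.

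For (c), I would argue formally. By Proposition~\ref{prop:A1} the functions $\{\Theta_\mu\}$ span $Th_k$, so part (a) shows that the metaplectic action (\ref{eq:4.6a}) of $S=\left(\begin{smallmatrix}0&-1\\1&0\end{smallmatrix}\right)$ sends each basis element into $Th_k$, and part (b) does the same for $T=\left(\begin{smallmatrix}1&1\\0&1\end{smallmatrix}\right)$, the hypothesis $k(\alpha|\alpha)\in 2\ZZ$ being exactly what is needed to invoke (b). Since $S$ and $T$ generate $SL_2(\ZZ)$, the whole space $Th_k$ is invariant. I expect the main obstacle to be the careful bookkeeping in (a): confirming that the Gaussian Fourier-transform factor, the lattice covolume, and the reindexing over $L^*/kL$ combine precisely into $(-i\tau)^{\ell/2}|L^*/kL|^{-1/2}$, and that the branch of the square root is the one fixed in Section~\ref{sec:4}. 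The analytic input (Poisson summation, Gaussian Fourier transform) is standard; the risk lies entirely in tracking the discriminant and root-of-unity constants.
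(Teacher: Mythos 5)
Your proposal is correct, and all three parts go through as you describe: the cancellation of the $(z|z)$-terms after completing the square, the identity $|L^*/kL|=k^{\ell}\,\mathrm{covol}(L)^2$, and the use of $k(\alpha|\alpha)\in 2\ZZ$ together with $(\alpha|\bar{\lambda})\in\ZZ$ in part (b) are exactly the right bookkeeping. The route is, however, organized differently from the paper's. The paper gives no computation at all: it points to the commutation relation (\ref{eq:A7}), $(DF)|_A=(c\tau+d)^2D(F|_A)$, and defers to \cite{K2}, Theorem~13.5. That is the ``intrinsic'' route: one first checks that the slash action preserves each of the four defining properties of $Th_k$ --- with (\ref{eq:A7}) supplying the only nonobvious one, harmonicity --- so that part (c) comes first and Proposition~\ref{prop:A1} guarantees $\Theta_\lambda|_S$ is \emph{some} linear combination of the $\Theta_\mu$; the explicit $S$-matrix is then pinned down afterwards. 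You instead prove (a) head-on by Poisson summation applied to the Gaussian sum over the coset $\bar{\lambda}/k+L$, and deduce (c) formally from (a), (b) and the fact that $S$ and $T$ generate $SL_2(\ZZ)$. Your approach has the advantage of producing the constants $(-i\tau)^{\ell/2}|L^*/kL|^{-1/2}e^{-2\pi i(\bar{\lambda}|\bar{\mu})/k}$ in one stroke and of making visible that (a) needs no parity hypothesis on $L$; the paper's approach is more conceptual about \emph{why} $Th_k$ is modular-invariant as a space, but still requires a computation equivalent to yours to identify the coefficients in (a). One small point worth making explicit in your write-up: since $\Theta_{\lambda+aK}=\Theta_\lambda$, the exponent $(\lambda|\lambda)$ in (b) is only well defined because one takes the representative of $\lambda$ with $(\lambda|\lambda)=(\bar{\lambda}|\bar{\lambda})$, as you note parenthetically.
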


\begin{proof}  The proof of (a) is based on the formula
(see (\ref{eq:4.6}) and (\ref{eq:4.6a}) for notation):
\begin{equation}
\label{eq:A7}
(DF)|_A=(c\tau+d)^2 D(F|_A),  
%\frac{1}{\tau^2} 
\hbox{\,\, where\,\, } A=\left(
\begin{array}[]{cc}
  a & b \\c & d 
\end{array}
\right)\, \in SL_2(\RR).
\end{equation}
The rest is straightforward.  See \cite{K2}, Theorem~13.5.
\end{proof}

\begin{remark}
  \label{rem:A4}
Note that $P_k = \{ kd + \bar{\lambda} +aK |\,\, \bar{\lambda} \in
L^*$, $a \in \CC \}$.  Hence we may use a slightly different
notation:
\begin{equation*}
  \Theta_\lambda = \Theta_{\bar{\lambda},k}\, , 
\end{equation*}
%
%so that the basis 
%
%\begin{equation*}
%  \{ \Theta_{\bar{\lambda},k}|\bar{\lambda} \in L^*/kL \}\, .
%\end{equation*}
%
so that the basis $\{\Theta_\lambda|\, \lambda \in P_k \!\!\! \mod kL+\CC K \}$
of $Th_k$ is identified with the basis
%\begin{equation*}
 $ \{ \Theta_{\bar{\lambda},k}|\,\bar{\lambda} \in L^*/kL \}$.
%\end{equation*}
%

\end{remark}

\begin{example}
  \label{ex:A5}

Let $L=\ZZ$ with the bilinear form $(a|b) = 2ab$, so that $L^*
=\tfrac12 \ZZ$ .  Then for a positive integer $k$ we have the
following basis of $Th_k$ \,\,$(\tau,z,t\in \CC\, , \, \Im \tau >0)$:
\begin{equation}
  \label{eq:A2}
  \Theta_{j,k}(\tau ,z ,t) =e^{2\pi i kt} \sum_{n \in \ZZ
    +\frac{j}{2k}}q^{kn^2} e^{2\pi i knz}\, , \quad j \in \ZZ/2k\ZZ\,.
\end{equation}
The  elliptic transformation formula is as follows $(n \in \ZZ)$:
\begin{equation}
  \label{eq:A3}
  \Theta_{j,k} (\tau,z+n \tau,t) = q^{-\frac{kn^2}{4}}e^{-\pi i
    knz} \Theta_{j+kn,k}(\tau,z,t) \, .
\end{equation}
The modular transformation formulae are:
\begin{equation}
  \label{eq:A4}
  \Theta_{j,k} \left( -\frac{1}{\tau}\,,\, \frac{z}{\tau}\,,\,    t-\frac{z^2}{2\tau} \right)
     = \left( \frac{-i\tau}{2k}\right)^{\tfrac12} \sum_{j'\in \ZZ   /2k\ZZ} 
         e^{-\frac{\pi i jj'}{k}} \Theta_{j',k} (\tau ,z,t)\, , 
\end{equation}
\begin{equation}
  \label{eq:A5}
  \Theta_{j,k }(\tau +1,z,t) =e^{\frac{\pi i j^2}{2k}}
    \Theta_{{j,k}}(\tau,z,t).
\end{equation}
\end{example}
%%%%%%      insert from pp 14,15  %%%%%%%%%%%%%%%

Especially important are the celebrated four Jacobi theta
functions of degree two (we put $t=0$ here) \cite{M}:
\begin{eqnarray*}
  \vartheta_{00} = \Theta_{2,2} + \Theta_{0,2} \, , \, 
     \vartheta_{01} =-\Theta_{2,2} + \Theta_{0,2}\, , \,
  \vartheta_{1 0} = \Theta_{1,2} + \Theta_{-1,2} \, , \, 
      \vartheta_{11} = i \Theta_{1,2} -i\Theta_{-1,2}\, .
\end{eqnarray*}
Due to the Jacobi triple product identity, the following infinite
products can be expressed in terms of the four Jacobi theta functions
and the $\eta$-function $\eta(\tau) = q^{\frac{1}{24}}
\prod^\infty_{n=1} (1-q^n)$: 
\begin{eqnarray}
  \label{eq:4.7}
  \prod^\infty_{n=1}  (1+e^{2\pi i z} q^{n-\frac12}) 
     (1+e^{-2\pi i z} q^{n-\frac12})= q^{\frac{1}{24}} 
        \frac{\vartheta_{00} (\tau ,z)}{\eta (\tau)}\nonumber,\\[1ex]
 \prod^\infty_{n=1}   (1-e^{2\pi i z} q^{n-\frac12})
     (1-e^{-2\pi i z} q^{n-\frac12})= q^{\frac{1}{24}} 
        \frac{\vartheta_{01} (\tau ,z)}{\eta (\tau)}\nonumber, \\[1ex]
\prod^\infty_{n=1}  (1+e^{-2\pi i z} q^n)
     (1+e^{2\pi i z} q^{n-1})= q^{-\frac{1}{12}} 
        e^{\pi i z}\frac{\vartheta_{10} (\tau ,z)}{\eta (\tau)},\\[1ex]
 \prod^\infty_{n=1} (1-e^{-2\pi i z} q^n)
     (1-e^{2\pi i z} q^{n-1})= iq^{-\frac{1}{12}} 
        e^{\pi i z}\frac{\vartheta_{11} (\tau ,z)}{\eta (\tau)}\, .
\nonumber
\end{eqnarray}

%%%%%  end copied section

\begin{proposition}\cite{M}
  \label{prop:A6}  For $a,b =0$ or $1$ and $n \in \ZZ$ one has:
\begin{equation*}
  \vartheta_{ab} (\tau,z+n\tau) = (-1)^{bn} q^{-\frac{n^2}{2}}
  e^{-2\pi i nz} \vartheta_{ab} (\tau ,z)\, .
\end{equation*}

\end{proposition}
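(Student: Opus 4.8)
The plan is to reduce everything to the elliptic transformation formula (\ref{eq:A3}) for the basic theta functions $\Theta_{j,k}$, applied with $k=2$, together with the periodicity $\Theta_{j,2}=\Theta_{j+4,2}$ (which holds since, by (\ref{eq:A2}), $\Theta_{j,k}$ depends on $j$ only modulo $2k$). First I would set $t=0$ throughout: the $t$-dependence is an overall factor $e^{2\pi i kt}$ that is untouched by the shift $z\mapsto z+n\tau$, so it may be restored at the end. Then I would express each $\vartheta_{ab}$ through its defining combination of $\Theta_{j,2}$'s.

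Specializing (\ref{eq:A3}) to $k=2$ gives
\[
\Theta_{j,2}(\tau,z+n\tau)=q^{-\frac{n^2}{2}}e^{-2\pi i nz}\,\Theta_{j+2n,2}(\tau,z),\qquad n\in\ZZ.
\]
Applying this termwise to the four definitions $\vartheta_{00}=\Theta_{2,2}+\Theta_{0,2}$, $\vartheta_{01}=-\Theta_{2,2}+\Theta_{0,2}$, $\vartheta_{10}=\Theta_{1,2}+\Theta_{-1,2}$, $\vartheta_{11}=i\Theta_{1,2}-i\Theta_{-1,2}$ pulls out the common prefactor $q^{-n^2/2}e^{-2\pi i nz}$ and replaces each lower index $j$ by $j+2n$.

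The remaining point is to identify the shifted combinations with $\pm\vartheta_{ab}$. Since $\Theta_{j,2}$ has period $4$ in $j$, only the residue of $2n$ modulo $4$ matters, so I would split into the cases $n$ even and $n$ odd. When $n$ is even, $2n\equiv 0$, each $\Theta_{j+2n,2}=\Theta_{j,2}$ and the combination returns to $\vartheta_{ab}$ unchanged, matching $(-1)^{bn}=1$. When $n$ is odd, $2n\equiv 2$, so $\Theta_{2,2}\leftrightarrow\Theta_{0,2}$ and $\Theta_{1,2}\leftrightarrow\Theta_{3,2}=\Theta_{-1,2}$; under this index swap the symmetric combinations $\vartheta_{00},\vartheta_{10}$ are fixed while $\vartheta_{01},\vartheta_{11}$ change sign, which is exactly $(-1)^{bn}$ since $b=1$ occurs precisely for $\vartheta_{01},\vartheta_{11}$. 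Collecting the prefactor and the sign yields the asserted formula.

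There is no genuine obstacle here: the statement is a direct consequence of (\ref{eq:A3}) and the mod-$4$ periodicity in the lower index. The only step requiring a little care is the bookkeeping of the index swap for odd $n$ and the verification that the sign it produces agrees with $(-1)^{bn}$ in each of the four cases; this is the computation I would carry out explicitly. Alternatively, one may prove the case $n=1$ first and obtain general $n$ by an immediate induction, using that the prefactors compose correctly (e.g. for $n=2$ one gets $(-1)^{b}\cdot(-1)^{b}=(-1)^{2b}$ and $q^{-1/2}e^{-2\pi i(z+\tau)}\cdot q^{-1/2}e^{-2\pi i z}=q^{-2}e^{-4\pi i z}$), which I would include as a short remark.
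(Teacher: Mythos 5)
Your proof is correct and follows the same route as the paper, which simply cites the elliptic transformation formula (\ref{eq:A3}); your case analysis on the parity of $n$ and the verification that the index swap mod $4$ produces exactly the sign $(-1)^{bn}$ in each of the four cases fills in the details the paper leaves implicit.
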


\begin{proof}
  It follows from (\ref{eq:A3}).
\end{proof}

\begin{proposition} (\cite{M}, p.~36)
  \label{prop:A7} For $a,b =0$ or $1$ one has:
%%%     copied frm bottom of p 16
%
  \begin{eqnarray*}
    \vartheta_{ab} \left( - \frac{1}{\tau} , \frac{z}{\tau} \right)
        &=& (-i)^{ab} (-i\tau)^{\frac12} e^{\frac{\pi i
            z^2}{\tau}} \vartheta_{ba} (\tau ,z )\, ;\\
    \vartheta_{0a} (\tau +1,z) &=& \vartheta_{0b} (\tau,z)\, , \, {\,
      \hbox{where\,\,} } a \neq b \, ;\\
     \vartheta_{1a} (\tau +1,z) &=& e^{\frac{\pi i}{4}}
     \vartheta_{1a}(\tau ,z)\, .
  \end{eqnarray*}
\end{proposition}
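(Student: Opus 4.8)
The plan is to reduce every identity to the four degree-two building blocks $\Theta_{j,2}$ ($j\in\ZZ/4\ZZ$), whose modular behaviour is already recorded in the Appendix, and then to carry out the elementary change of basis back to the $\vartheta_{ab}$. Recall that $\vartheta_{00}=\Theta_{2,2}+\Theta_{0,2}$, $\vartheta_{01}=-\Theta_{2,2}+\Theta_{0,2}$, $\vartheta_{10}=\Theta_{1,2}+\Theta_{-1,2}$ and $\vartheta_{11}=i\Theta_{1,2}-i\Theta_{-1,2}$, where $\Theta_{-1,2}=\Theta_{3,2}$; thus each $\vartheta_{ab}(\tau,z)=\vartheta_{ab}(\tau,z,0)$ is a fixed $\CC$-linear combination of the $\Theta_{j,2}(\tau,z,0)$. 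Since both sides of each identity to be proved are such combinations, it suffices to transform the $\Theta_{j,2}$ individually and collect coefficients.

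For the two $T$-transformations I would simply invoke (\ref{eq:A5}) with $k=2$, which gives $\Theta_{j,2}(\tau+1,z,t)=e^{\pi ij^{2}/4}\Theta_{j,2}(\tau,z,t)$, so that the phases are $1,\,e^{\pi i/4},\,-1,\,e^{\pi i/4}$ for $j=0,1,2,3$. Feeding these into the four defining combinations is immediate: on the index pair $\{0,2\}$ the sign $-1$ attached to $\Theta_{2,2}$ interchanges $\vartheta_{00}$ and $\vartheta_{01}$, giving $\vartheta_{0a}(\tau+1,z)=\vartheta_{0b}(\tau,z)$ with $a\neq b$; on the pair $\{1,3\}$ the common factor $e^{\pi i/4}$ pulls straight out of both $\vartheta_{10}$ and $\vartheta_{11}$, giving $\vartheta_{1a}(\tau+1,z)=e^{\pi i/4}\vartheta_{1a}(\tau,z)$. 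No $z$-dependent factor appears, as expected since (\ref{eq:A5}) carries none.

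The $S$-transformation is the substantive computation. Here I would apply the $\tau\mapsto-1/\tau$ law (Proposition~\ref{prop:A3}, i.e. (\ref{eq:A4})) with $k=2$ to each $\Theta_{j,2}(\tau,z,0)$, obtaining $\tfrac12(-i\tau)^{1/2}e^{\pi iz^{2}/\tau}\sum_{j'\in\ZZ/4\ZZ}e^{-\pi ijj'/2}\Theta_{j',2}(\tau,z,0)$. The phase matrix $\bigl(e^{-\pi ijj'/2}\bigr)_{j,j'}$ has rows $(1,1,1,1)$, $(1,-i,-1,i)$, $(1,-1,1,-1)$, $(1,i,-1,-i)$. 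Substituting these into the four combinations and regrouping, the row sums collapse neatly: $\vartheta_{00}\mapsto\vartheta_{00}$, $\vartheta_{01}\mapsto\vartheta_{10}$, $\vartheta_{10}\mapsto\vartheta_{01}$, while $\vartheta_{11}=i(\Theta_{1,2}-\Theta_{3,2})$ produces the combination $2\cdot\tfrac12(-i\tau)^{1/2}(\Theta_{1,2}-\Theta_{3,2})=-i\,(-i\tau)^{1/2}\vartheta_{11}$. In each case the factor $2$ generated by the row sum cancels the $\tfrac12=(1/4)^{1/2}$ coming from degree $k=2$, leaving precisely the power $(-i\tau)^{1/2}$, the Gaussian $e^{\pi iz^{2}/\tau}$, the transpose $\vartheta_{ba}$, and the phase $(-i)^{ab}$ (trivial except for $\vartheta_{11}$), which is exactly the first displayed formula.

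The one point requiring genuine care, and which I regard as the main obstacle, is the bookkeeping of the Gaussian prefactor $e^{\pi iz^{2}/\tau}$ together with the correct power of $\tau$. The transformation law shifts the $t$-slot of $\Theta_{j,2}$ by a multiple of $(z|z)/(2\tau)$, whereas the $\vartheta_{ab}$ are evaluated at $t=0$; to reconcile these I would use the degree property $\Theta_{j,2}(\tau,z,t)=e^{4\pi it}\Theta_{j,2}(\tau,z,0)$ to convert the $t$-shift into an explicit exponential, noting that in the present coordinates the form on $L=\ZZ$ gives $(z|z)=z^{2}/2$, so the surviving factor is $e^{\pi i\cdot 2\cdot(z^{2}/2)/\tau}=e^{\pi iz^{2}/\tau}$. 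Since this prefactor normalization is the only place an error can creep in, the safest self-contained route is to bypass the specialized formula and re-derive the $S$-transform of $\Theta_{j,2}(\tau,z,0)$ directly by Poisson summation on the series (\ref{eq:A2}); this yields both the prefactor $\tfrac12(-i\tau)^{1/2}e^{\pi iz^{2}/\tau}$ and the phase matrix above in one stroke, after which the linear algebra of the previous paragraph finishes the proof.
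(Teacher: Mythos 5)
Your proposal is correct and follows exactly the paper's route: the paper's proof is the one-line remark that the proposition follows from (\ref{eq:A4}) and (\ref{eq:A5}) with $k=2$, and you have simply carried out that change of basis between the $\Theta_{j,2}$ and the $\vartheta_{ab}$ explicitly (your phase matrix, row sums, and the normalization $(z|z)=z^2/2$ giving the prefactor $e^{\pi i z^2/\tau}$ all check out).
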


\begin{proof}
  It follows from (\ref{eq:A4}), (\ref{eq:A5}).

\end{proof}

% \begin{theorem}
%   \label{th:6.5}
% Let $M$ be a positive integer and let $m$  be a non-negative integer

% \end{theorem}

%%%%%%%%%%%%%%%%%%%%%%%%%%%%%%%%%%%%%%%%%%%%%%%%%%%%%%%%%%%%%%%%%%%%%%%%%%%%%%%%%%%%%%%%%

\end{document}